\documentclass[12pt]{amsart}
\usepackage[T1]{fontenc}
\usepackage[utf8]{inputenc}
\usepackage[
  backend=biber,
  style=numeric,
  sorting=none,
  giveninits=false,
  doi=true,
  isbn=true,
  url=true,
  eprint=true
]{biblatex}
\usepackage{amsmath, amssymb, amsthm,mathtools}
\usepackage{mathrsfs}
\usepackage{bbm}
\usepackage{hyperref}
\usepackage{geometry}
\usepackage{enumitem}
\usepackage{booktabs}
\usepackage{array}
\usepackage{tabularx}
\usepackage{tikz-cd}

\newtheorem{theorem}{Theorem}[section]
\newtheorem{proposition}[theorem]{Proposition}
\newtheorem{lemma}[theorem]{Lemma}
\newtheorem{corollary}[theorem]{Corollary}
\newtheorem{definition}[theorem]{Definition}

\theoremstyle{remark}
\newtheorem{remark}[theorem]{Remark}

\newtheorem{principle}[theorem]{Principle}

\usepackage{amsmath,amssymb,mathtools}

\newcommand{\RealB}{\operatorname{Real}_{B}}
\newcommand{\Realet}{\operatorname{Real}_{\acute{e}t}}

\newcommand{\RealMHM}{\operatorname{Real}_{\mathrm{MHM}}}
\newcommand{\RealNori}{\operatorname{Real}_{\mathrm{Nori}}}

\begin{document}

\title[Diaz and \(n\)-Fold Bockstein Counterexamples]%
{From Diaz's Enriques Product to an \(n\)-Fold Cup-Product Bockstein Family of Integral Hodge Counterexamples}

\author{Abdul Rahman}
\thanks{Email: arahman@alum.howard.edu}
\subjclass[2020]{14C30, 14F42, 14F43, 14F08, 14J28, 55N20}
\keywords{integral Hodge conjecture, Enriques surfaces, Bockstein homomorphism, finite coefficients, MacPherson--Vilonen gluing, motivic realization, Brauer group, torsion cohomology}

\begin{abstract}
We reinterpret Diaz's construction of Chow-trivial smooth projective varieties violating the integral Hodge conjecture as the level-two case of an \(n\)-fold cup-product Bockstein mechanism.  Diaz's dimension-four example is \(V=S_1\times S_2\), where \(S_1,S_2\) are Enriques surfaces, and its obstruction is the Bockstein of
$\pi_1^*\alpha_1\cup\pi_2^*\beta_2\in H^3(V,\mathbb Z/2(2))$.  Here
\(\alpha_1\) is the K3 double-cover class and \(\beta_2\) is an Enriques Brauer-detecting class. We extend the finite-coefficient source construction to \(X_n=S_1\times\cdots\times S_n\) by forming
   $\Theta_n=
   \pi_1^*\alpha_1\cup\pi_2^*\beta_2\cup\cdots\cup\pi_n^*\beta_n
   \in H^{2n-1}(X_n,\mathbb Z/2(n))$,
with Bockstein
   $\Delta_n=\delta(\Theta_n)\in H^{2n}(X_n,\mathbb Z(n))$.
Using external products of perverse sheaves, categorical Bockstein
compatibility, and a Leibniz rule for the MacPherson--Vilonen boundary, we prove unconditionally that \(\Delta_n\) has nonzero image in a distinguished Enriques--Brauer component of the MV obstruction channel. Under the Brauer-separation hypothesis, which asserts that algebraic codimension-\(n\) cycle classes have zero image in this same component, the class \(\Delta_n\) is a non-algebraic \(2\)-torsion integral Hodge class.  We verify this separation for decomposable algebraic cycles and reduce the
remaining non-decomposable case, via integral even Chow--K\"unneth projectors on the Enriques factors, to a single coefficient-level algebraic-control problem involving the \(H^1(S_1,\mathbb Z/2(1))\) Enriques double-cover direction.  We also record a motivic finite-coefficient lift of the tower via the finite-coefficient cone
\(\mathbf 1_X(n)/2:=\operatorname{Cone}(\mathbf 1_X(n)\xrightarrow{\times 2}
\mathbf 1_X(n))\),
and explain which formal part of the MacPherson--Vilonen zig-zag construction lifts motivically under Betti realization.
\end{abstract}
\maketitle
\tableofcontents

\section{Introduction}

The purpose of this paper is to reinterpret Diaz's construction of
Chow-trivial varieties \cite{Diaz2023IHCTrivialChow} violating the integral Hodge conjecture as the level-two case of a broader \(n\)-fold cup-product Bockstein mechanism.  Diaz proves that, for every \(d\ge 4\), there are smooth projective varieties over \(\mathbb C\) of dimension \(d\) that are Chow-trivial and violate the integral Hodge conjecture in degree \(4\).  In dimension \(4\), the example has the form
\[
        V=S_1\times S_2,
\]
where \(S_1\) and \(S_2\) are Enriques surfaces
\cite{Diaz2023IHCTrivialChow}.  The obstruction is a non-algebraic
\(2\)-torsion class in
\[
        H^4(V,\mathbb Z(2)).
\]

Diaz's obstruction is governed by a finite-coefficient Bockstein lifecycle.
More precisely, Diaz constructs a class
\[
        \gamma
        =
        \pi_1^*\alpha_1\cup\pi_2^*\beta_2
        \in H^3(V,\mathbb Z/2(2)),
\]
where
\[
        \alpha_1\in H^1(S_1,\mathbb Z/2(1))
\]
is the K3 double-cover class of the Enriques surface \(S_1\), and
\[
        \beta_2\in H^2(S_2,\mathbb Z/2(1))
\]
maps nontrivially to
\[
        \operatorname{Br}(S_2)[2].
\]
The Bockstein of \(\gamma\), associated to the coefficient sequence
\[
        0\to \mathbb Z(2)
        \xrightarrow{\times 2}
        \mathbb Z(2)
        \to
        \mathbb Z/2(2)
        \to 0,
\]
gives the resulting integral torsion class
\[
        \delta(\gamma)\in H^4(V,\mathbb Z(2)).
\]

Thus the Diaz construction is not primarily a Kummer fixed-point
construction.  It is an Enriques-product, Brauer, finite-coefficient,
unramified, and Bockstein construction.  The key point of this paper is that
this mechanism has a natural \(n\)-fold extension. For
\[
        X_n=S_1\times\cdots\times S_n,
\]
we form the \(n\)-fold finite-coefficient product
\[
        \Theta_n
        =
        \pi_1^*\alpha_1
        \cup
        \pi_2^*\beta_2
        \cup
        \cdots
        \cup
        \pi_n^*\beta_n
        \in H^{2n-1}(X_n,\mathbb Z/2(n)),
\]
where \(\alpha_1\) is the K3 double-cover class and the
\(\beta_i\), \(2\leq i\leq n\), are Enriques Brauer-detecting classes.  This higher cup-product direction suggested in Remark~M.1 of  \cite{RahmanIntegralTorsionTrajectories} is developed here into an explicit \(n\)-fold Enriques--Brauer Bockstein mechanism extending Diaz's level-two construction. Its Bockstein
\[
        \Delta_n
        :=
        \delta(\Theta_n)
        \in H^{2n}(X_n,\mathbb Z(n))
\]
is the \(n\)-fold cup-product Bockstein class. 

The MacPherson--Vilonen formalism developed below proves that \(\Delta_n\)
has a nonzero image in a distinguished Enriques--Brauer component of the MV
obstruction channel.  Under the Brauer-separation hypothesis introduced in
Definition~\ref{def:brauer-separation-hypothesis}, algebraic
codimension-\(n\) cycle classes have zero image in this same component.
Thus, under this explicit separation condition, the classes \(\Delta_n\)
give an \(n\)-fold cup-product Bockstein tower of \(2\)-torsion integral
Hodge counterexamples. 

\subsection{From Coble/Benoist--Ottem to Diaz and beyond}

In earlier work, the Coble boundary package was identified with the cyclic
quotient singularity
\[
        \frac14(1,1),
\]
whose local package is
\[
        E\cong \mathbb Z/4.
\]
The visible Enriques \(2\)-torsion direction is the order-two Bockstein
shadow
\[
        2E\cong \mathbb Z/2.
\]
In that setting, the visible \(2\)-torsion is not the full local package.  It
is the distinguished order-two shadow inside a larger \(\mathbb Z/4\)-package.

Diaz's construction uses Enriques surfaces differently.  Instead of starting
from a single boundary singularity, it starts from a product
\[
        S_1\times S_2
\]
of two Enriques surfaces.  The first Enriques surface contributes the
double-cover class
\[
        \alpha_1\in H^1(S_1,\mathbb Z/2(1)).
\]
The second Enriques surface contributes a Brauer-detecting class
\[
        \beta_2\in H^2(S_2,\mathbb Z/2(1))
\]
whose image in \(\operatorname{Br}(S_2)[2]\) is nonzero.  Their cup product
produces
\[
        \Theta_2
        =
        \pi_1^*\alpha_1\cup\pi_2^*\beta_2
        \in H^3(S_1\times S_2,\mathbb Z/2(2)).
\]
The Bockstein of \(\Theta_2\) is the integral \(2\)-torsion Hodge class
\[
        \Delta_2
        =
        \delta(\Theta_2)
        \in H^4(S_1\times S_2,\mathbb Z(2)).
\]

The connection with the Coble/Benoist--Ottem bridge is therefore
mechanism-level rather than literal.  The earlier Coble package isolates a
single visible Enriques/Coble \(2\)-torsion direction, realized as the
order-two shadow inside the Coble boundary package.  Diaz's construction
starts from the same kind of Enriques \(2\)-torsion input, couples it with a
second Brauer-detecting Enriques class, and forms their finite-coefficient
cup product \cite{Diaz2023IHCTrivialChow}.  In this sense, Diaz's
obstruction is the level-two product version of the Coble/Enriques
Bockstein lifecycle.

The present paper goes one step further.  Diaz's construction is not only a
level-two analogue of the Coble/Enriques mechanism; it is the first
nontrivial member of an \(n\)-fold cup-product Bockstein tower.  The higher
members are formed from one Enriques double-cover class and \(n-1\) Enriques
Brauer-detecting classes.

\subsection{Main structural thesis}

The main thesis of this paper is that Diaz's obstruction is the \(n=2\)
case of a general cup-product Bockstein tower.  Let
\[
        X_n=S_1\times\cdots\times S_n
\]
be a product of Enriques surfaces.  Let
\[
        \alpha_1\in H^1(S_1,\mathbb Z/2(1))
\]
be the K3 double-cover class, and let
\[
        \beta_i\in H^2(S_i,\mathbb Z/2(1)),
        \qquad
        2\leq i\leq n,
\]
be Brauer-detecting classes.  Define
\[
        \Theta_n
        =
        \pi_1^*\alpha_1
        \cup
        \pi_2^*\beta_2
        \cup
        \cdots
        \cup
        \pi_n^*\beta_n.
\]
Then
\[
        \Theta_n\in H^{2n-1}(X_n,\mathbb Z/2(n)).
\]
The coefficient sequence
\[
        0\to \mathbb Z(n)
        \xrightarrow{\times 2}
        \mathbb Z(n)
        \to
        \mathbb Z/2(n)
        \to 0
\]
gives the Bockstein class
\[
        \Delta_n
        =
        \delta(\Theta_n)
        \in H^{2n}(X_n,\mathbb Z(n)).
\]

For \(n=2\), this is Diaz's degree-four obstruction.  For \(n=3\), it gives
the degree-six class
\[
        \Delta_3
        =
        \delta\left(
        \pi_1^*\alpha_1
        \cup
        \pi_2^*\beta_2
        \cup
        \pi_3^*\beta_3
        \right)
        \in H^6(S_1\times S_2\times S_3,\mathbb Z(3)).
\]
For general \(n\), it gives a class in codimension \(n\).

The survivability step is controlled by a MacPherson--Vilonen
zig-zag/Bockstein compatibility.  The finite-coefficient class \(\Theta_n\)
determines an MV tuple
\[
        \mathcal Z(\Theta_n),
\]
and the coefficient Bockstein is identified with the boundary of this tuple:
\[
        \rho_{\mathrm{MV},\Theta_n}(\Delta_n)
        =
        \partial_{\mathrm{MV}}\bigl(\mathcal Z(\Theta_n)\bigr)
\]
in the tuple-relative MV obstruction channel.  Thus the finite-coefficient
product is not treated merely as a cup product in ordinary cohomology.  It is
also a gluing datum whose boundary detects the integral Bockstein
obstruction.

The external tensor product theorem for perverse sheaves, the categorical
Bockstein formalism, and the Leibniz rule for the MV boundary show that
\[
        \rho_{\mathrm{MV},\Theta_n}(\Delta_n)
\]
has nonzero image in the Enriques--Brauer component
\[
        Q_n
        =
        \langle\alpha_1\rangle
        \otimes
        \operatorname{Br}(S_2)[2]
        \otimes
        \cdots
        \otimes
        \operatorname{Br}(S_n)[2].
\]
The Brauer-separation hypothesis asserts that algebraic codimension-\(n\)
cycle classes have zero image in this same component.  Under this hypothesis,
\(\Delta_n\) is not algebraic.

The resulting picture has the following stations:
\[
        \text{double-cover source},
        \qquad
        \text{Brauer sources},
        \qquad
        \text{finite-coefficient cup product},
\]
\[
        \text{MV survivability},
        \qquad
        \text{Brauer separation},
        \qquad
        \text{Bockstein image}.
\]
For \(n=2\), the survivability station agrees with the Diaz
unramified-cohomology argument based on the Colliot-Thélène--Voisin
criterion.  For the higher cup-product cases, the paper uses the
MacPherson--Vilonen obstruction-channel criterion and the Brauer-separation
hypothesis.

\subsection{Main contributions}

The paper has seven main contributions.

\begin{enumerate}[label=\textup{(\arabic*)}]
\item \textbf{Diaz extraction.}  We isolate the Enriques-product construction
used by Diaz and rewrite the obstruction as the Bockstein of the finite
coefficient class
\[
        \Theta_2
        =
        \pi_1^*\alpha_1\cup\pi_2^*\beta_2.
\]
The Bockstein
\[
        \Delta_2=\delta(\Theta_2)
\]
is Diaz's non-algebraic \(2\)-torsion integral Hodge class.

\item \textbf{Level-two interpretation.}  We identify Diaz's construction as
the level-two member of a cup-product Bockstein tower.  The first input is the
Enriques double-cover class, and the second input is an Enriques
Brauer-detecting class.

\item \textbf{MacPherson--Vilonen survivability.}  We introduce a
MacPherson--Vilonen zig-zag/Bockstein compatibility criterion.  The
coefficient Bockstein is realized as the boundary of a finite-coefficient
gluing datum.  Nonzero boundary gives survivability in the MV obstruction
channel.

\item \textbf{Enriques--Brauer separation.}  We define the Enriques--Brauer
component
\[
        Q_n
        =
        \langle\alpha_1\rangle
        \otimes
        \operatorname{Br}(S_2)[2]
        \otimes
        \cdots
        \otimes
        \operatorname{Br}(S_n)[2],
\]
and the Brauer-separating projection
\[
        \Pi_{\operatorname{Br},n}.
\]
We prove that the constructed Bockstein class has nonzero image in \(Q_n\).
We also prove the Brauer-separation condition for decomposable algebraic
cycles and isolate the non-decomposable correspondence case as the remaining
algebraic-control issue.

\item \textbf{Level-three and \(n\)-fold tower.}  We construct the triple
product
\[
        \Theta_3
        =
        \pi_1^*\alpha_1
        \cup
        \pi_2^*\beta_2
        \cup
        \pi_3^*\beta_3
        \in H^5(S_1\times S_2\times S_3,\mathbb Z/2(3)),
\]
and study its Bockstein
\[
        \Delta_3=\delta(\Theta_3)
        \in H^6(S_1\times S_2\times S_3,\mathbb Z(3)).
\]
More generally, we prove the \(n\)-fold Brauer-separated Bockstein theorem:
under the Brauer-separation hypothesis, the classes
\[
        \Delta_n=\delta(\Theta_n)\in H^{2n}(X_n,\mathbb Z(n))
\]
are non-algebraic \(2\)-torsion integral Hodge classes.

\item \textbf{Coble/Enriques comparison.}  We compare the double-cover input
with the Coble/ Benoist--Ottem order-two boundary shadow
\[
        2E\subset E\cong \mathbb Z/4.
\]
This identifies the first Diaz input as the same visible Enriques
\(2\)-torsion direction that appears in the Coble bridge.

\item \textbf{Motivic finite-coefficient lift.}  We construct a motivic finite-coefficient lift of the entire \(n\)-fold tower via objects of the form
\[
        \mathbf 1_X(n)/2
        =
        \operatorname{Cone}
        \left(
        \mathbf 1_X(n)
        \xrightarrow{\times 2}
        \mathbf 1_X(n)
        \right).
\]
We also identify the formal part of the MacPherson--Vilonen zig-zag
construction that lifts motivically under Betti realization, while deferring
the full integral motivic MacPherson--Vilonen equivalence to future work.
\end{enumerate}

\subsection{What this paper is not}

This paper does not claim to prove the rational Hodge conjecture.  It studies
integral torsion obstructions.  The rational Hodge conjecture concerns the
free part of the integral Hodge obstruction group after tensoring with
\(\mathbb Q\); the present paper studies torsion obstructions that disappear
rationally but remain structurally meaningful integrally.

This paper also does not claim that Diaz's construction is a Kummer
fixed-point construction.  The actual Diaz construction is based on a product
of Enriques surfaces, a K3 double-cover class, a Brauer-detecting class, and
an unramified-cohomology argument.  Kummer fixed-point compatibility remains
an important separate calibration for future stacky \(E_G\)-theory, but it is
not the main Diaz mechanism.

This paper does not claim to prove unconditionally that every product
\[
        X_n=S_1\times\cdots\times S_n
\]
with the above input data violates the integral Hodge conjecture.  The
higher-level theorem is stated under the Brauer-separation hypothesis, which
asserts that algebraic codimension-\(n\) cycle classes have zero image in the
same Enriques--Brauer component in which the constructed Bockstein class has
nonzero image.  This hypothesis is verified for decomposable algebraic cycles
and isolates the remaining non-decomposable correspondence issue.

This paper does not claim to construct a full integral motivic
MacPherson--Vilonen equivalence.  We use the formal zig-zag category
\[
        \mathcal C(F,G;T),
\]
the motivic coefficient triangle, and their compatibility with Betti
realization.  We do not construct a motivic perverse heart
\[
        \mathsf P_{\mathrm{mot}}(X)
\]
or prove an equivalence
\[
        \mathsf P_{\mathrm{mot}}(X)
        \simeq
        \mathcal C(F_{\mathrm{mot}},G_{\mathrm{mot}};T_{\mathrm{mot}}).
\]
That full integral motivic analogue of the MacPherson--Vilonen theorem is
deferred to future work.

Finally, this paper does not claim that non-algebraicity is proven purely
inside the motivic category.  Non-algebraicity is detected after
realization.  For Diaz's degree-four case, the detection is through the
Colliot-Thélène--Voisin/Diaz criterion.  For the higher cup-product cases,
the detection is through the MacPherson--Vilonen obstruction channel together
with the Brauer-separation hypothesis.  The motivic lift supplies a common
finite-coefficient origin for the source packages, cup products, and
Bockstein classes; it does not replace the realized obstruction theory.

The filtration used in this paper is also operational rather than a claimed
weight, perverse, or Nori filtration.  Its stations are the double-cover
source, Brauer sources, finite-coefficient cup product, MV survivability,
Brauer separation, and Bockstein image.  Comparing this operational
filtration with canonical weight, perverse, or Nori filtrations is a natural
next problem.

\section{Diaz's construction}

\subsection{The theorem of Diaz}

Diaz proves that for every \(d\ge4\), there exist smooth projective
varieties \(V\) over \(\mathbb C\) of dimension \(d\) which are
Chow-trivial and violate the integral Hodge conjecture in degree \(4\);
more precisely, there is a non-algebraic torsion class in
\(H^4(V,\mathbb Z(2))\) \cite[Theorem 1.1]{Diaz2023IHCTrivialChow}.
The dimension-four example has the form
\[
        V=S\times S_2,
\]
where \(S\) and \(S_2\) are Enriques surfaces
\cite[Section 4]{Diaz2023IHCTrivialChow}.  Higher-dimensional examples are
obtained by replacing \(V\) with
\[
        V\times\mathbb P^{d-4}
\]
and applying the projective bundle formula
\cite[Introduction]{Diaz2023IHCTrivialChow}.

\subsection{Chow-triviality}

Recall Diaz's notion: \(X\) is Chow-trivial if
\[
        CH^*(X)_{\mathbb Q}
\]
has finite \(\mathbb Q\)-rank.  Diaz records several equivalent
characterizations of Chow-triviality, including that rational equivalence
and numerical equivalence agree after tensoring with \(\mathbb Q\), that the
rational Chow motive \(M(X)_{\mathbb Q}\) is a sum of Lefschetz motives, and
that the total cycle class map is an isomorphism for all powers of \(X\)
\cite[Theorem 2.1]{Diaz2023IHCTrivialChow}.  Diaz also notes that Enriques
surfaces are Chow-trivial and that products of Chow-trivial varieties are
again Chow-trivial \cite[Example 2.2]{Diaz2023IHCTrivialChow}.  Hence
\[
        V=S\times S_2
\]
is Chow-trivial.

This point is important for the torsion-trajectory interpretation.  Diaz's
examples are rationally very small from the viewpoint of Chow groups and
motives, but they still carry an integral torsion Hodge obstruction.  Thus
the obstruction is not a rational Hodge obstruction; it is an integral
torsion phenomenon.

\subsection{The unramified-cohomology criterion}

For a smooth variety \(X\), Diaz uses the Colliot-Thélène--Voisin criterion
relating degree-four integral Hodge failure to degree-three unramified
cohomology \cite{CTVoisin12}.  In the Chow-trivial setting, the integral
Hodge conjecture in degree \(4\) fails if and only if
\[
        H^3_{\mathrm{nr}}(X,\mathbb Z/m(2))\ne 0
\]
for some \(m\) \cite[Theorem 3.2]{Diaz2023IHCTrivialChow}.  Moreover, if
\[
        \gamma\in H^3(X,\mathbb Z/m(2))
\]
has nonzero image in
\[
        H^3(\mathbb C(X),\mathbb Z/m(2)),
\]
then the Bockstein
\[
        \delta(\gamma)\in H^4(X,\mathbb Z(2))
\]
is a non-algebraic \(m\)-torsion class
\cite[Theorem 3.2]{Diaz2023IHCTrivialChow}.

For the torsion-trajectory framework, this criterion supplies the global
survival station.  A finite-coefficient class \(\gamma\) is first constructed
in ordinary cohomology.  One then proves that it survives the coniveau or
unramified test, meaning that its image in function-field cohomology is
nonzero.  The Bockstein then sends \(\gamma\) to a non-algebraic integral
torsion Hodge class.  Thus the formal shape of Diaz's construction is
\[
        \gamma\in H^3(V,\mathbb Z/2(2))
        \quad\leadsto\quad
        \gamma\notin N^1H^3(V,\mathbb Z/2(2))
        \quad\leadsto\quad
        \delta(\gamma)\in H^4(V,\mathbb Z(2)).
\]

\subsection{The coefficient sequence}

In Diaz's Enriques-product example, \(m=2\).  The coefficient sequence
driving the Bockstein is
\[
        0
        \longrightarrow
        \mathbb Z(2)
        \xrightarrow{2}
        \mathbb Z(2)
        \longrightarrow
        \mathbb Z/2(2)
        \longrightarrow
        0.
\]
The associated connecting homomorphism is
\[
        \delta:
        H^3(V,\mathbb Z/2(2))
        \longrightarrow
        H^4(V,\mathbb Z(2)).
\]
Diaz's finite-coefficient class is
\[
        \gamma
        =
        \pi_S^*\alpha\cup \pi_{S_2}^*\beta
        \in
        H^3(V,\mathbb Z/2(2)),
\]
where \(\alpha\in H^1(S,\mathbb Z/2(1))\) is the class corresponding to the
K3 double cover \(Y\to S\), and
\(\beta\in H^2(S_2,\mathbb Z/2(1))\) is chosen to map nontrivially to
\(\operatorname{Br}(S_2)[2]\) \cite[Section 4]{Diaz2023IHCTrivialChow}.
The resulting integral torsion class is
\[
        \delta(\gamma)\in H^4(V,\mathbb Z(2)).
\]

Thus the coefficient sequence is not merely background.  It is the mechanism
that converts the finite-coefficient Enriques--Brauer cup product into the
non-algebraic integral Hodge class.

\section{The two Enriques torsion sources}

Diaz's construction uses two distinct \(2\)-torsion inputs coming from two
Enriques surfaces.  The first is the double-cover class of an Enriques
surface, and the second is a class on another Enriques surface whose image in
the Brauer group is nonzero.  Their external cup product is the
finite-coefficient class whose Bockstein produces the non-algebraic integral
Hodge class.

\subsection{The first Enriques surface and the double-cover class}

Let
\[
        S=Y/\phi
\]
be an Enriques surface, where \(Y\) is a K3 surface and \(\phi\) is a
fixed-point-free Enriques involution.  The quotient map
\[
        Y\longrightarrow S
\]
is an étale double cover.  It is classified by a degree-one
\(\mathbb Z/2\)-class
\[
        \alpha\in H^1(S,\mathbb Z/2(1)).
\]
Equivalently, after forgetting the Tate twist in Betti notation, \(\alpha\)
is the class corresponding to the double cover \(Y\to S\).  This is the
first Enriques torsion source in Diaz's construction
\cite[Section 4]{Diaz2023IHCTrivialChow}.

From the torsion-trajectory viewpoint, \(\alpha\) is the visible
order-two Enriques package.  In the Coble boundary picture developed
earlier, the corresponding boundary package has full local group
\[
        E\cong\mathbb Z/4,
\]
while the Enriques-visible direction is the Bockstein-selected subgroup
\[
        2E\cong\mathbb Z/2.
\]
Thus the class \(\alpha\) is the global smooth double-cover avatar of the
same order-two direction that appears as \(2E\) in the Coble boundary
package.

\subsection{The genus-one curve and reduction step}

Diaz imposes an additional condition on the first Enriques surface.  The K3
cover \(Y\) contains a genus-one curve
\[
        \widetilde E\subset Y
\]
which is acted on by the involution \(\phi\).  The quotient curve is denoted
\[
        E:=\widetilde E/\phi.
\]
The double-cover class \(\alpha\) restricts to a class
\[
        \alpha':=\alpha|_E\in H^1(E,\mathbb Z/2(1)).
\]
Diaz reduces the non-coniveau/nonvanishing problem from
\[
        V=S\times S_2
\]
to the subvariety
\[
        W=E\times S_2.
\]
On this product, the relevant restricted class is
\[
        \gamma'
        =
        \pi_E^*\alpha'\cup\pi_{S_2}^*\beta
        \in
        H^3(W,\mathbb Z/2(2)).
\]
This reduction is a key part of the unramified-survival station: instead of
testing the nonvanishing of \(\gamma\) directly on \(V\), Diaz tests the
restriction \(\gamma'\) on \(W=E\times S_2\) and uses functoriality of the
coniveau filtration \cite[Section 4]{Diaz2023IHCTrivialChow}.

The geometric role of this reduction is important.  The curve \(E\) carries
the restricted double-cover class \(\alpha'\), while \(S_2\) carries the
Brauer-detecting class \(\beta\).  Their product is the place where the
finite-coefficient cup product can be tested against function-field
cohomology.

\subsection{The second Enriques surface and the Brauer class}

Let \(S_2\) be a second Enriques surface.  Diaz chooses a class
\[
        \beta\in H^2(S_2,\mathbb Z/2(1))
\]
whose image under the Kummer-sequence map is nonzero in
\[
        \operatorname{Br}(S_2)[2]\cong\mathbb Z/2.
\]
This class is the second Enriques torsion source in the construction
\cite[Section 4]{Diaz2023IHCTrivialChow}.

Thus the two factors of \(V=S\times S_2\) play different roles.  The first
factor supplies a double-cover class
\[
        \alpha\in H^1(S,\mathbb Z/2(1)),
\]
while the second supplies a Brauer-detecting class
\[
        \beta\in H^2(S_2,\mathbb Z/2(1)).
\]
Their degrees add to \(3\), and their twists add to \(2\), which is exactly
the degree and twist of the finite-coefficient class used by Diaz.

From the viewpoint of the torsion trajectory, \(\beta\) is the Brauer
station input.  It is not merely an auxiliary class: its nonzero Brauer image
is what ensures that the second Enriques factor contributes a genuine
\(2\)-torsion obstruction direction.

\subsection{The product class}

Let
\[
        V=S\times S_2
\]
and let
\[
        \pi_S:V\to S,
        \qquad
        \pi_{S_2}:V\to S_2
\]
be the two projections.  Diaz defines the finite-coefficient class
\[
        \gamma
        :=
        \pi_S^*\alpha\cup \pi_{S_2}^*\beta
        \in
        H^3(V,\mathbb Z/2(2)).
\]
This is the central finite-coefficient object in the Diaz lifecycle
\cite[Section 4]{Diaz2023IHCTrivialChow}.

The construction may be summarized as
\[
        \alpha\in H^1(S,\mathbb Z/2(1)),
        \qquad
        \beta\in H^2(S_2,\mathbb Z/2(1)),
\]
and
\[
        \gamma=\pi_S^*\alpha\cup\pi_{S_2}^*\beta
        \in H^3(S\times S_2,\mathbb Z/2(2)).
\]
The next station is the Bockstein
\[
        \delta:
        H^3(V,\mathbb Z/2(2))
        \longrightarrow
        H^4(V,\mathbb Z(2)),
\]
which sends \(\gamma\) to the integral \(2\)-torsion class
\[
        \delta(\gamma)\in H^4(V,\mathbb Z(2)).
\]

Thus Diaz's class is a cup product of two Enriques torsion inputs.  This is
the key structural feature: the obstruction is not a single local package
transported globally, but a product-level interaction between an Enriques
double-cover class and an Enriques Brauer class.

\section{The Bockstein lifecycle of the Diaz obstruction}

The preceding section identified the two finite-coefficient inputs in Diaz's
construction:
\[
        \alpha\in H^1(S,\mathbb Z/2(1)),
        \qquad
        \beta\in H^2(S_2,\mathbb Z/2(1)).
\]
Their cup product
\[
        \gamma
        =
        \pi_S^*\alpha\cup\pi_{S_2}^*\beta
        \in H^3(V,\mathbb Z/2(2)),
        \qquad
        V=S\times S_2,
\]
is the finite-coefficient class from which the integral obstruction is
produced.  The next station in the trajectory is the Bockstein associated to
multiplication by \(2\).

\subsection{Bockstein birth}

Consider the coefficient sequence
\[
        0
        \longrightarrow
        \mathbb Z(2)
        \xrightarrow{2}
        \mathbb Z(2)
        \longrightarrow
        \mathbb Z/2(2)
        \longrightarrow
        0.
\]
It gives the connecting homomorphism
\[
        \delta:
        H^3(V,\mathbb Z/2(2))
        \longrightarrow
        H^4(V,\mathbb Z(2)).
\]
Diaz's integral obstruction class is
\[
        \delta(\gamma).
\]
This class is \(2\)-torsion because it lies in the image of the connecting
homomorphism for the short exact sequence above.  Equivalently,
\[
        2\delta(\gamma)=0.
\]

Thus the Bockstein is the birth map from the finite-coefficient package
\[
        \gamma\in H^3(V,\mathbb Z/2(2))
\]
to the integral torsion package
\[
        \delta(\gamma)\in H^4(V,\mathbb Z(2)).
\]
In the torsion-trajectory language, the finite-coefficient class \(\gamma\)
is the input, and the integral class \(\delta(\gamma)\) is the Bockstein-born
torsion output.

\subsection{Non-algebraicity criterion}

The crucial point is that the Bockstein output is not merely torsion; it is
non-algebraic.  Diaz uses the Colliot-Thélène--Voisin relation between
degree-four integral Hodge failure and degree-three unramified cohomology
\cite{CTVoisin12}.  In the Chow-trivial setting, Diaz records the following
criterion: if
\[
        \gamma\in H^3(V,\mathbb Z/2(2))
\]
has nonzero image in function-field cohomology,
\[
        H^3(\mathbb C(V),\mathbb Z/2(2)),
\]
then
\[
        \delta(\gamma)\in H^4(V,\mathbb Z(2))
\]
is a non-algebraic \(2\)-torsion class \cite[Theorem 3.2]{Diaz2023IHCTrivialChow}.

Equivalently, the decisive condition is that \(\gamma\) does not die in the
first coniveau step:
\[
        \gamma\notin N^1H^3(V,\mathbb Z/2(2)).
\]
This is the unramified-survival station.  Once \(\gamma\) survives this
station, the Bockstein image \(\delta(\gamma)\) becomes an integral Hodge
class that is not represented by an algebraic cycle.

Thus Diaz's obstruction is governed by the implication
\[
        \gamma\notin N^1H^3(V,\mathbb Z/2(2))
        \quad\Longrightarrow\quad
        \delta(\gamma)
        \text{ is non-algebraic in }H^4(V,\mathbb Z(2)).
\]

\subsection{Trajectory interpretation}

The Diaz torsion trajectory has five stations:
\[
        \alpha
        \in H^1(S,\mathbb Z/2(1)),
        \qquad
        \beta
        \in H^2(S_2,\mathbb Z/2(1)),
\]
\[
        \gamma
        =
        \pi_S^*\alpha\cup\pi_{S_2}^*\beta
        \in H^3(V,\mathbb Z/2(2)),
\]
\[
        \gamma\notin N^1H^3(V,\mathbb Z/2(2))
        \quad
        \text{equivalently, }
        \gamma|_{\mathbb C(V)}\neq 0,
\]
and
\[
        \delta(\gamma)
        \in H^4(V,\mathbb Z(2)).
\]
The first station is the Enriques double-cover source.  The second is the Brauer-detecting Enriques source.  The third is their finite-coefficient cup product.  The fourth is the unramified-survival station, where one checks that the cup product survives the coniveau/residue test.  The fifth is the Bockstein-born integral torsion class.

In this sense, Diaz's obstruction is not born from an isolated quotient singularity.  It is born from the cup-product interaction of two Enriques torsion inputs, then passes through an unramified-survival test, and only after that yields the integral class through the Bockstein.  This is why Diaz's construction is best viewed as a level-two extension of the Coble/Enriques mechanism: one Enriques \(2\)-torsion direction supplies the double-cover class, the second supplies the Brauer direction, their cup product produces the finite-coefficient class, and the surviving class has Bockstein image equal to the non-algebraic integral obstruction.

\subsection{Lifecycle summary}

The Diaz lifecycle can be summarized as
\[
\begin{aligned}
&\bigl(
  \text{Enriques double-cover source}
  +
  \text{Enriques Brauer source}
  \bigr) \\
&\qquad\leadsto\ 
  \text{finite-coefficient cup product} \\
&\qquad\leadsto\ 
  \text{unramified survival} \\
&\qquad\leadsto\ 
  \text{Bockstein torsion}.
\end{aligned}
\]
In formulas, this is
\[
        \alpha,\beta
        \quad\leadsto\quad
        \gamma=\pi_S^*\alpha\cup\pi_{S_2}^*\beta
        \quad\leadsto\quad
        \gamma|_{\mathbb C(V)}\neq 0
        \quad\leadsto\quad
        \delta(\gamma).
\]
Equivalently, the middle survival condition may be expressed as
\[
        \gamma\notin N^1H^3(V,\mathbb Z/2(2)),
\]
or, in function-field form, as the nonvanishing of the image of \(\gamma\) in
\[
        H^3(\mathbb C(V),\mathbb Z/2(2)).
\]

Thus the non-algebraicity of the final class is not determined by the
Bockstein step alone.  The decisive intermediate step is the
unramified-survival test: only when the finite-coefficient class survives
the coniveau/residue filtration does its Bockstein produce a non-algebraic
\(2\)-torsion class in \(H^4(V,\mathbb Z(2))\).  In this sense, the Diaz
mechanism is a five-station torsion trajectory, not merely a two-step
Bockstein construction.

\section{Unramified survival and the reduction to \texorpdfstring{\(E\times S_2\)}{E x S2}}

The previous section identified the finite-coefficient class
\[
        \gamma
        =
        \pi_S^*\alpha\cup\pi_{S_2}^*\beta
        \in
        H^3(V,\mathbb Z/2(2)),
        \qquad
        V=S\times S_2.
\]
The Bockstein \(\delta(\gamma)\) is automatically \(2\)-torsion.  The
remaining issue is non-algebraicity.  By the Colliot-Thélène--Voisin
criterion used by Diaz, this is controlled by whether \(\gamma\) survives
the coniveau, or equivalently unramified, test.

\subsection{Coniveau formulation}

The relevant condition is
\[
        \gamma\notin N^1H^3(V,\mathbb Z/2(2)).
\]
Equivalently, \(\gamma\) has nonzero image in function-field cohomology,
\[
        H^3(\mathbb C(V),\mathbb Z/2(2)).
\]
This is the unramified-survival station of the trajectory.  The class
\(\gamma\) is born as a finite-coefficient cup product on \(V\); it becomes
an obstruction to integral algebraicity only if it survives after passing to
the generic point.

Thus the coniveau formulation separates two questions:
\[
        \text{finite-coefficient construction}
        \qquad\text{and}\qquad
        \text{unramified survival}.
\]
The first is the construction of
\[
        \gamma=\pi_S^*\alpha\cup\pi_{S_2}^*\beta.
\]
The second is the proof that
\[
        \gamma\notin N^1H^3(V,\mathbb Z/2(2)).
\]
Diaz's proof is devoted to this second point.

\subsection{Restriction to \texorpdfstring{\(W=E\times S_2\)}{W = E x S2}}

Diaz reduces the non-coniveau statement for \(V=S\times S_2\) to a
corresponding statement on a product
\[
        W=E\times S_2,
\]
where \(E\) is the quotient of a genus-one curve
\[
        \widetilde E\subset Y
\]
by the Enriques involution on the K3 cover \(Y\to S\).  Let
\[
        \alpha'=\alpha|_E
        \in
        H^1(E,\mathbb Z/2(1)).
\]
The restricted class is
\[
        \gamma'
        =
        \pi_E^*\alpha'\cup\pi_{S_2}^*\beta
        \in
        H^3(W,\mathbb Z/2(2)).
\]
By functoriality of the coniveau filtration, it suffices to prove
\[
        \gamma'\notin N^1H^3(W,\mathbb Z/2(2)).
\]

This reduction is structurally important.  The first Enriques factor is
compressed to the genus-one curve \(E\), where the double-cover class
\(\alpha\) restricts to \(\alpha'\).  The second Enriques factor remains
\(S_2\), carrying the Brauer-detecting class \(\beta\).  Thus the essential
nonvanishing test is moved from
\[
        S\times S_2
\]
to
\[
        E\times S_2.
\]
In the torsion-trajectory language, this identifies the precise support on
which the finite-coefficient cup product is tested for unramified survival.

\subsection{Reduction to positive characteristic}

The proof of the nonvanishing of \(\gamma'\) then uses a degeneration
argument after reduction to positive characteristic.  Diaz assumes that the
genus-one curve involved in the construction has multiplicative reduction at
a suitable place, while \(S_2\) has good reduction at the same place.  The
argument follows the method of Colliot-Thélène, using Gabber's degeneration
input, to prove that the image of \(\gamma'\) in function-field cohomology is
nonzero.

For the purposes of the present paper, the important point is not the
arithmetic reduction itself, but its role in the lifecycle.  This step proves
that the finite-coefficient class
\[
        \gamma'
        =
        \pi_E^*\alpha'\cup\pi_{S_2}^*\beta
\]
does not vanish after passing to the generic point.  Consequently, the
original class
\[
        \gamma
        =
        \pi_S^*\alpha\cup\pi_{S_2}^*\beta
\]
survives the coniveau test on \(V\).

Thus the reduction to positive characteristic is the residue-survival proof
station: it verifies that the finite-coefficient cup product is not killed
by codimension-one support.

\subsection{Interpretation as residue-surviving Bockstein input}

The class \(\gamma\) is the finite-coefficient input to the Bockstein
sequence.  It is constructed from two source classes,
\[
        \alpha\in H^1(S,\mathbb Z/2(1)),
        \qquad
        \beta\in H^2(S_2,\mathbb Z/2(1)),
\]
by the cup product
\[
        \gamma=\pi_S^*\alpha\cup\pi_{S_2}^*\beta
        \in H^3(V,\mathbb Z/2(2)).
\]
Its nonzero image in function-field cohomology is the unramified-survival
condition.  Applying the Bockstein then produces the non-algebraic integral
class:
\[
        \gamma\ \text{survives the coniveau test}
        \quad\Longrightarrow\quad
        \delta(\gamma)\ \text{is non-algebraic}.
\]

Thus, in full trajectory form, the Diaz mechanism has five stations:
\[
        \text{Enriques double-cover source},
        \qquad
        \text{Enriques Brauer source},
        \qquad
        \text{finite-coefficient cup product},
\]
\[
        \text{unramified survival},
        \qquad
        \text{Bockstein image}.
\]
The first station supplies
\[
        \alpha\in H^1(S,\mathbb Z/2(1)).
\]
The second supplies
\[
        \beta\in H^2(S_2,\mathbb Z/2(1)).
\]
The third constructs
\[
        \gamma=\pi_S^*\alpha\cup\pi_{S_2}^*\beta.
\]
The fourth proves
\[
        \gamma\notin N^1H^3(V,\mathbb Z/2(2)),
\]
equivalently that the image of \(\gamma\) in
\[
        H^3(\mathbb C(V),\mathbb Z/2(2))
\]
is nonzero.  The fifth produces
\[
        \delta(\gamma)\in H^4(V,\mathbb Z(2)).
\]

Thus the key mechanism is
\[
        \gamma\ \text{survives residues}
        \quad\Longrightarrow\quad
        \delta(\gamma)\ \text{is non-algebraic}.
\]
This is precisely the Diaz torsion trajectory: a finite-coefficient
Enriques--Brauer cup product survives the unramified station and becomes a
non-algebraic integral Hodge class after applying the Bockstein.
\section{Comparison with the Coble boundary package}

The preceding sections show that Diaz's obstruction is built from two Enriques
torsion inputs: the double-cover class
\[
        \alpha\in H^1(S,\mathbb Z/2(1))
\]
and the Brauer-detecting class
\[
        \beta\in H^2(S_2,\mathbb Z/2(1)).
\]
This section compares the first of these inputs with the Coble boundary
package studied earlier.  The point is not that Diaz explicitly uses the
Coble boundary.  Rather, the Coble boundary gives a geometric interpretation
of the same Enriques \(2\)-torsion direction that appears in Diaz's class
\[
        \gamma=\pi_S^*\alpha\cup\pi_{S_2}^*\beta.
\]

\subsection{The Enriques double-cover class as visible \(2\)-torsion}

The class
\[
        \alpha\in H^1(S,\mathbb Z/2(1))
\]
is the class of the Enriques double cover
\[
        Y\longrightarrow S,
\]
where \(Y\) is a K3 surface and \(S=Y/\phi\) for a fixed-point-free
Enriques involution \(\phi\).  This is the first torsion source in Diaz's construction \cite[Section 4]{Diaz2023IHCTrivialChow}.

In the Coble boundary interpretation developed in the previous torsion
trajectory paper, the relevant boundary singularity is the cyclic quotient surface singularity
\[
        \frac14(1,1).
\]
Its full local discriminant package is
\[
        E\cong\mathbb Z/4.
\]
The Enriques-visible \(2\)-torsion direction is not the full package \(E\), but the distinguished order-two subgroup
\[
        2E\cong\mathbb Z/2.
\]
For the Coble boundary singularity \(\frac14(1,1)\), the link is \(L(4,1)\), and the local discriminant package is
\[
        E
        \cong
        H^2(L(4,1),\mathbb Z)_{\mathrm{tors}}
        \cong
        \mathbb Z/4.
\]
The local index-two cover
\[
        A_1=\mathbb C^2/\mu_2
        \longrightarrow
        \mathbb C^2/\mu_4=\frac14(1,1)
\]
induces the corresponding link cover
\[
        L(2,1)\longrightarrow L(4,1).
\]

The associated double-cover/Bockstein construction selects the unique
order-two subgroup
\[
        2E\cong \mathbb Z/2\subset E\cong \mathbb Z/4.
\]
Thus the comparison with Diaz does not identify the full Coble package with
\(\alpha\).  It identifies the Enriques-visible order-two direction in the
Coble package with the same type of double-cover direction represented
smoothly by
\[
        \alpha\in H^1(S,\mathbb Z/2(1)).
\]
In this sense, the Enriques double-cover class \(\alpha\) and the Coble
shadow \(2E\) represent the same visible order-two direction from two
different viewpoints:
\[
        \text{smooth Enriques cover class}
        \quad\longleftrightarrow\quad
        \text{boundary Coble Bockstein shadow}.
\]
The class \(\alpha\) is the smooth global manifestation of the order-two
direction that appears locally as \(2E\) in the Coble boundary package.

\subsection{Diaz as a product-level Coble/Enriques shadow}

Diaz does not require the Coble boundary explicitly.  His construction is
carried out on the smooth product
\[
        V=S\times S_2.
\]
However, the first factor in Diaz's class is precisely the Enriques
double-cover class
\[
        \alpha\in H^1(S,\mathbb Z/2(1)).
\]
The Coble package therefore gives a boundary interpretation of the source of
\(\alpha\).  In the language of the earlier paper, the class \(\alpha\) is
the smooth Enriques avatar of the local order-two shadow
\[
        2E\cong\mathbb Z/2
        \subset
        E\cong\mathbb Z/4.
\]

The Diaz class then couples this Enriques \(2\)-torsion source with a second
torsion source on \(S_2\).  Namely, Diaz chooses
\[
        \beta\in H^2(S_2,\mathbb Z/2(1))
\]
with nonzero image in
\[
        \operatorname{Br}(S_2)[2]\cong\mathbb Z/2.
\]
The finite-coefficient class is
\[
        \gamma
        =
        \pi_S^*\alpha\cup\pi_{S_2}^*\beta
        \in
        H^3(S\times S_2,\mathbb Z/2(2)).
\]
The obstruction is then the Bockstein
\[
        \delta(\gamma)\in H^4(S\times S_2,\mathbb Z(2)).
\]

Thus Diaz's class is naturally interpreted as a product-level extension of
the Coble/Enriques torsion direction:
\[
        \text{Enriques/Coble shadow}
        \quad\cup\quad
        \text{Enriques Brauer class}
        \quad\leadsto\quad
        \text{Bockstein obstruction}.
\]

\subsection{Mechanism comparison}

The Coble/Benoist--Ottem mechanism has the form
\[
        E\cong\mathbb Z/4
        \quad\leadsto\quad
        2E\cong\mathbb Z/2.
\]
It identifies the visible Enriques \(2\)-torsion direction as an order-two
Bockstein shadow inside a larger local package.

The Diaz mechanism has the form
\[
        \alpha\in H^1(S,\mathbb Z/2(1)),
        \qquad
        \beta\in H^2(S_2,\mathbb Z/2(1)),
\]
followed by
\[
        \alpha\cup\beta
        \quad\leadsto\quad
        \delta(\alpha\cup\beta).
\]
Here \(\alpha\) is the same Enriques \(2\)-torsion direction seen in the
Coble package, while \(\beta\) supplies the Brauer-theoretic \(2\)-torsion
direction on the second Enriques surface.

Therefore Diaz is not a fixed-point Kummer mechanism.  It is an
Enriques--Brauer Bockstein mechanism.  More precisely, it is the level-two
cup-product extension of the Coble/Enriques mechanism:
\[
        \text{level one: }
        E\cong\mathbb Z/4
        \leadsto
        2E\cong\mathbb Z/2,
\]
\[
        \text{level two: }
        \alpha\cup\beta
        \leadsto
        \delta(\alpha\cup\beta).
\]
The first level identifies the visible Enriques \(2\)-torsion direction.
The second level couples that direction to a Brauer \(2\)-torsion class and
then applies the Bockstein to produce a non-algebraic integral Hodge class.

\subsection{Toward a cup-product hierarchy}

The comparison suggests a broader pattern.  If one Coble/Enriques package
produces a visible order-two direction, and Diaz's construction uses a cup
product of two Enriques torsion inputs, then one may ask whether higher cup
products of Enriques--Coble finite-coefficient packages produce higher-degree
integral Hodge obstructions.

Schematically, one expects a hierarchy of the form
\[
        \theta_1,\ldots,\theta_n
        \quad\leadsto\quad
        \Theta=\theta_1\cup\cdots\cup\theta_n,
\]
followed by a Bockstein
\[
        \delta(\Theta).
\]
The obstruction is non-algebraic when the finite-coefficient product
\(\Theta\) survives the appropriate unramified or refined unramified
cohomology test.

In this hierarchy, the Coble/Benoist--Ottem bridge is the level-one mechanism, and Diaz's Enriques-product construction is the level-two mechanism.  Higher levels can be approached through two survivability routes.  One route is a refined unramified or coniveau survival theory.  The second route, developed
in this paper, is MacPherson--Vilonen survivability: the coefficient
Bockstein is realized as the boundary of a finite-coefficient zig-zag datum \cite{MacPhersonVilonen1986}. This second route is the one used below to formulate the \(n\)-fold Enriques--Brauer cup-product family.
\section{The Coble--Diaz cup-product principle}

The preceding comparison suggests a structural principle.  The Coble
boundary package supplies a basic Enriques \(2\)-torsion direction, while
Diaz's construction uses two Enriques torsion inputs and forms their cup
product before applying the Bockstein.  Thus Diaz's example should not be
viewed merely as another isolated integral Hodge counterexample.  It is the
level-two cup-product extension of the Coble/Enriques mechanism.

This section records that comparison.  The precise formal statement of the
propagation mechanism is deferred to the next section, where we isolate the
hypotheses under which a finite-coefficient cup product produces a
non-algebraic integral torsion class.

\subsection{Level one: the Coble/Benoist--Ottem package}

At level one, the relevant boundary package is the Coble
\(\frac14(1,1)\) singularity.  Its full local discriminant package is
\[
        E\cong\mathbb Z/4.
\]
The visible Enriques direction is the Bockstein-selected order-two subgroup
\[
        2E\cong\mathbb Z/2.
\]
This subgroup is the part detected by the Enriques double-cover geometry.  In
other words, the smooth Enriques \(2\)-torsion direction is reflected at the
Coble boundary as the distinguished order-two shadow
\[
        2E\subset E.
\]

In the Coble/Benoist--Ottem bridge, this level-one mechanism explains why a
global visible \(2\)-torsion direction should not be treated as an isolated
\(\mathbb Z/2\).  It is the order-two shadow of the larger local package
\[
        E\cong\mathbb Z/4.
\]
Thus the level-one trajectory is
\[
        \frac14(1,1)
        \quad\leadsto\quad
        E\cong\mathbb Z/4
        \quad\leadsto\quad
        2E\cong\mathbb Z/2.
\]
This is the elementary Enriques--Coble torsion package.  It supplies the
visible \(2\)-torsion direction that later appears, in the smooth Enriques
setting, as a double-cover class.

\subsection{Level two: Diaz as a cup product}

Diaz's construction is level two.  It uses two Enriques torsion inputs:
\[
        \alpha\in H^1(S,\mathbb Z/2(1)),
        \qquad
        \beta\in H^2(S_2,\mathbb Z/2(1)).
\]
The class \(\alpha\) is the class of the K3 double cover of the Enriques
surface \(S\).  The class \(\beta\) is chosen so that its image in
\[
        \operatorname{Br}(S_2)[2]
\]
is nonzero.  Thus the two inputs are an Enriques double-cover class and an
Enriques Brauer-detecting class.

Their external cup product is
\[
        \gamma
        =
        \pi_S^*\alpha\cup\pi_{S_2}^*\beta
        \in
        H^3(S\times S_2,\mathbb Z/2(2)).
\]
Diaz proves that this finite-coefficient class survives the coniveau, or
unramified, test.  Equivalently, it has nonzero image in the appropriate
function-field cohomology group.  Therefore, by the
Colliot-Thélène--Voisin criterion used by Diaz, the Bockstein
\[
        \delta(\gamma)
        \in
        H^4(S\times S_2,\mathbb Z(2))
\]
is a non-algebraic \(2\)-torsion integral Hodge class
\cite[Theorem 3.2 and Section 4]{Diaz2023IHCTrivialChow}.

The level-two trajectory is therefore
\[
        \alpha,\beta
        \quad\leadsto\quad
        \gamma=\pi_S^*\alpha\cup\pi_{S_2}^*\beta
        \quad\leadsto\quad
        \delta(\gamma).
\]
Compared with level one, Diaz introduces a second torsion input and a cup
product.  The Bockstein is no longer applied to a single visible Enriques
direction; it is applied to a finite-coefficient product of Enriques torsion
data.  This is why Diaz's example is naturally interpreted as the
cup-product extension of the Coble/Enriques bridge.

\subsection{Preview of the higher-level mechanism}
The level-one and level-two mechanisms suggest a higher-level pattern.  The
new point of the present paper is that the higher levels can be controlled by
a second survivability mechanism.  Instead of relying only on coniveau or
unramified survival, we use MacPherson--Vilonen zig-zag gluing.  The
coefficient Bockstein is realized as the boundary of a finite-coefficient
gluing datum, and nonzero boundary gives survivability in the MV obstruction
channel.

Schematically, one starts with finite-coefficient torsion inputs
\[
   \theta_1,\ldots,\theta_n
\]
and forms
\[
   \Theta=\theta_1\cup\cdots\cup\theta_n.
\]
When the degrees and twists satisfy
\[
   \sum_i r_i=2p-1,
   \qquad
   \sum_i p_i=p,
\]
the Bockstein
\[
   \delta(\Theta)\in H^{2p}(X,\mathbb Z(p))
\]
is a torsion integral Hodge class.  The MV obstruction-channel criterion then
tests whether this class can be algebraic.  If the MV boundary of the
finite-coefficient datum is nonzero and algebraic cycle classes are
MV-obstruction-trivial, then \(\delta(\Theta)\) is non-algebraic.

Thus the hierarchy is no longer only a conditional unramified-cohomology
proposal.  The higher levels are accessed through the MV
zig-zag/Bockstein compatibility developed below.  In this framework,
Coble/Benoist--Ottem is the level-one boundary package, Diaz is the level-two
Enriques-product case, and the \(n\)-fold products treated later in the paper
give the cup-product Bockstein family.
\section{Cup-product Bockstein propagation}

This section isolates the formal mechanism used in Diaz's construction.  The point is to separate the finite-coefficient construction from the unramified survival test and the Bockstein step.  Once these three ingredients are separated, Diaz's example becomes a special case of a more general propagation principle: a finite-coefficient class which survives the coniveau test produces a non-algebraic integral torsion class after applying the Bockstein, provided the appropriate Colliot-Thélène--Voisin type criterion applies.

Throughout this section, \(X\) denotes a smooth projective complex variety.  For an integer \(m\geq 2\), we write
\[
        0
        \longrightarrow
        \mathbb Z(p)
        \xrightarrow{m}
        \mathbb Z(p)
        \longrightarrow
        \mathbb Z/m(p)
        \longrightarrow
        0
\]
for the coefficient sequence, and
\[
        \delta_m:
        H^{r}(X,\mathbb Z/m(p))
        \longrightarrow
        H^{r+1}(X,\mathbb Z(p))
\]
for the associated connecting homomorphism.

\subsection{Finite-coefficient survival classes}

We first name the finite-coefficient classes that survive the first coniveau filtration.

\begin{definition}[Survival class]
Let \(X\) be a smooth projective complex variety, let \(m\geq 2\), and let
\[
        \Theta\in H^r(X,\mathbb Z/m(p)).
\]
We say that \(\Theta\) is a \emph{survival class} if its image in function-field cohomology
\[
        H^r(\mathbb C(X),\mathbb Z/m(p))
\]
is nonzero.
\end{definition}

Equivalently, a class \(\Theta\in H^r(X,\mathbb Z/m(p))\) is a survival class if
\[
        \Theta\notin N^1H^r(X,\mathbb Z/m(p)),
\]
where \(N^1\) denotes the first coniveau filtration.  We record this as a lemma, since it is the formal bridge between the coniveau language and the unramified-function-field language.

\begin{lemma}[Coniveau and function-field survival]
Let \(X\) be a smooth complex variety and let
\[
        \Theta\in H^r(X,\mathbb Z/m(p)).
\]
Then the following are equivalent:
\[
        \Theta\notin N^1H^r(X,\mathbb Z/m(p)),
\]
and
\[
        \Theta\ \text{has nonzero image in}\ H^r(\mathbb C(X),\mathbb Z/m(p)).
\]
\end{lemma}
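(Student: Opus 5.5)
The plan is to unwind both conditions into statements about restriction to dense Zariski opens, and to compare them through the fact that étale (equivalently Betti) cohomology with finite coefficients of the generic point is a filtered colimit over such opens.

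\emph{Step 1: the definition of \(N^1\).} Recall that
\[
N^1H^r(X,\mathbb Z/m(p))=\bigcup_{Z}\ker\bigl(H^r(X,\mathbb Z/m(p))\to H^r(X\setminus Z,\mathbb Z/m(p))\bigr),
\]
where \(Z\) runs over closed subvarieties of codimension at least one. This is the same as the union of the images of the Gysin maps \(H^r_Z(X)\to H^r(X)\), by the long exact sequence of the pair \((X,X\setminus Z)\). Thus \(\Theta\in N^1\) if and only if \(\Theta|_U=0\) for some dense open \(U\subset X\).

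\emph{Step 2: continuity.} I will identify
\[
H^r(\mathbb C(X),\mathbb Z/m(p))\;\cong\;\varinjlim_{U\subset X}H^r_{\acute et}(U,\mathbb Z/m(p)),
\]
the colimit running over dense Zariski opens. This is the standard continuity of étale cohomology along a cofiltered limit of schemes with affine transition maps, whose limit is \(\operatorname{Spec}\mathbb C(X)\). Here Galois cohomology of \(\mathbb C(X)\) is étale cohomology of its spectrum. By the Artin comparison theorem, each \(H^r_{\acute et}(U,\mathbb Z/m(p))\) agrees with the Betti group \(H^r(U,\mathbb Z/m)\), compatibly with restriction, so the statement holds whichever of the two cohomologies the paper intends on \(X\).

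\emph{Step 3: conclusion.} The image of \(\Theta\) in the colimit vanishes exactly when \(\Theta|_U=0\) for some \(U\), because in a filtered colimit of abelian groups an element is zero if and only if it dies at some finite stage. Combined with Step 1, this gives
\[
\Theta|_{\mathbb C(X)}=0\iff \Theta\in N^1H^r(X,\mathbb Z/m(p)).
\]
Taking contrapositives yields the stated equivalence. Smoothness is not actually needed for this argument; it matters only for the later purity and Gysin interpretations.

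\emph{Main obstacle.} The only subtle point is the bookkeeping in Step 2. One must ensure that the function-field cohomology is taken in the étale or Galois sense. With that convention, the colimit statement is a standard theorem rather than a definition. One must also check that the Tate twist \((p)\) is compatible with the comparison, which is automatic over \(\mathbb C\), since \(\mu_m^{\otimes p}\cong\mathbb Z/m\) non-canonically. I would cite SGA4 VII for continuity and SGA4 XI for comparison, and not reprove either.
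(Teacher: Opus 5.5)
Your argument is correct and follows the same line as the paper. The paper simply cites the Bloch--Ogus fact that the kernel of $H^r(X,\mathbb Z/m(p))\to H^r(\mathbb C(X),\mathbb Z/m(p))$ is $N^1H^r(X,\mathbb Z/m(p))$ and negates it, whereas you prove that fact directly from the localization sequence, continuity of \'etale cohomology along the cofinal system of dense affine opens, and Artin comparison. You are also right that smoothness plays no role in this equivalence; it is needed only for the finer Gersten/unramified interpretation.
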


\begin{proof}
By the standard Bloch--Ogus description used by Diaz, the natural map
\[
        H^r(X,\mathbb Z/m(p))
        \longrightarrow
        H^r(\mathbb C(X),\mathbb Z/m(p))
\]
has kernel
\[
        N^1H^r(X,\mathbb Z/m(p)).
\]
Thus \(\Theta\) maps to zero in \(H^r(\mathbb C(X),\mathbb Z/m(p))\) if and only if \(\Theta\in N^1H^r(X,\mathbb Z/m(p))\).  Taking the negation gives the equivalence.  This is the coniveau formulation of the unramified-survival condition used in Diaz's proof \cite[Definition 3.1 and Theorem 3.2]{Diaz2023IHCTrivialChow}.
\end{proof}

\begin{remark}
The word ``survival'' is deliberately chosen.  A finite-coefficient class may exist in ordinary cohomology, but it becomes relevant to integral Hodge failure only if it survives the passage to the generic point.  In the Diaz example, this is exactly the role of the condition
\[
        \gamma\notin N^1H^3(V,\mathbb Z/2(2)).
\]
\end{remark}

\subsection{The propagation theorem}

We now state the degree-four propagation theorem.  This is the formal statement underlying the Diaz construction.

\begin{theorem}[Bockstein propagation in degree four]
Let \(X\) be a smooth projective Chow-trivial complex variety.  Let \(m\geq 2\), and let
\[
        \Theta\in H^3(X,\mathbb Z/m(2)).
\]
Assume that \(\Theta\) is a survival class, equivalently that
\[
        \Theta\notin N^1H^3(X,\mathbb Z/m(2)).
\]
Then the Bockstein class
\[
        \delta_m(\Theta)\in H^4(X,\mathbb Z(2))
\]
is a non-algebraic \(m\)-torsion integral Hodge class.
\end{theorem}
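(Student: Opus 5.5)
The plan is to derive the theorem directly from the Colliot-Thélène--Voisin criterion as recorded by Diaz \cite[Theorem 3.2]{Diaz2023IHCTrivialChow}, after checking the three formal properties of \(\delta_m(\Theta)\) separately: torsion, Hodge, and non-algebraic. The first two are formal consequences of the coefficient sequence. The third is where the survival hypothesis and Chow-triviality enter.

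First, I show that \(\delta_m(\Theta)\) is \(m\)-torsion. In the long exact sequence of the coefficient sequence, the map after \(\delta_m\) is multiplication by \(m\) on \(H^4(X,\mathbb Z(2))\). Exactness then gives \(m\,\delta_m(\Theta)=0\).

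Second, I show that it is a Hodge class. Every torsion class in \(H^4(X,\mathbb Z(2))\) maps to zero in \(H^4(X,\mathbb C)\), so its image lies in \(H^{2,2}\). Hence \(\delta_m(\Theta)\in\operatorname{Hdg}^4(X,\mathbb Z)\). Neither of these steps uses the survival hypothesis.

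Third, for non-algebraicity I use the Colliot-Thélène--Voisin exact sequence
\[
0\to H^3_{\mathrm{nr}}(X,\mathbb Z(2))\otimes\mathbb Q/\mathbb Z\to H^3_{\mathrm{nr}}(X,\mathbb Q/\mathbb Z(2))\to Z^4(X)_{\mathrm{tors}}\to 0,
\]
where \(Z^4(X)=\operatorname{Hdg}^4(X,\mathbb Z)/H^4_{\mathrm{alg}}\). Since \(X\) is Chow-trivial, \(CH_0(X)\) is supported on a point in the rational sense, i.e.\ universally \(CH_0\)-trivial up to isogeny. In that case \(H^3_{\mathrm{nr}}(X,\mathbb Z(2))\otimes\mathbb Q/\mathbb Z=0\), so \(H^3_{\mathrm{nr}}(X,\mathbb Q/\mathbb Z(2))\cong Z^4(X)_{\mathrm{tors}}\). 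Restricting to \(m\)-torsion and using the compatibility of this isomorphism with Bocksteins, I get the following description. The image of \(\Theta\) in \(H^0(X,\mathcal H^3(\mathbb Z/m(2)))\subset H^3(\mathbb C(X),\mathbb Z/m(2))\) corresponds to the class of \(\delta_m(\Theta)\) modulo algebraic classes. By the Lemma on coniveau and function-field survival, that image is nonzero because \(\Theta\notin N^1H^3\). So \(\delta_m(\Theta)\notin H^4_{\mathrm{alg}}\).

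The main obstacle is this last identification. I must check that the CT--Voisin isomorphism is indeed induced by the Bockstein, via the Bloch--Ogus spectral sequence and the boundary \(H^0(\mathcal H^3(\mathbb Z/m))\to H^1(\mathcal H^3(\mathbb Z))\cong\ldots\), and not only abstractly. I also need the vanishing of the divisible part, which requires that Chow-triviality forces \(H^3_{\mathrm{nr}}(X,\mathbb Z(2))\) to be torsion. I would deduce this from the Bloch--Srinivas decomposition of the diagonal with \(\mathbb Q\)-coefficients, which follows from Diaz's Theorem 2.1 characterization (\(M(X)_{\mathbb Q}\) a sum of Lefschetz motives). For this step I would cite \cite[Theorem 3.2]{Diaz2023IHCTrivialChow} directly, rather than reproving it.
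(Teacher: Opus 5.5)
Your proposal is correct and takes essentially the same route as the paper: $m$-torsion comes from exactness of the Bockstein long exact sequence, and non-algebraicity comes from invoking Diaz's Theorem 3.2, the Colliot-Th\'el\`ene--Voisin criterion in the Chow-trivial setting. Your unpacking of the CT--Voisin sequence and your explicit ``torsion implies Hodge'' step are accurate elaborations of what the paper compresses into that citation; note that the passage to $m$-torsion tacitly uses the Merkurjev--Suslin injectivity \(H^3_{\mathrm{nr}}(X,\mathbb Z/m(2))\hookrightarrow H^3_{\mathrm{nr}}(X,\mathbb Q/\mathbb Z(2))\), which the cited theorem also covers.
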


\begin{proof}
Consider the coefficient sequence
\[
        0
        \longrightarrow
        \mathbb Z(2)
        \xrightarrow{m}
        \mathbb Z(2)
        \longrightarrow
        \mathbb Z/m(2)
        \longrightarrow
        0.
\]
Its long exact cohomology sequence contains the connecting homomorphism
\[
        \delta_m:
        H^3(X,\mathbb Z/m(2))
        \longrightarrow
        H^4(X,\mathbb Z(2)).
\]
Since \(\delta_m\) is the connecting map associated to multiplication by \(m\), its image is killed by \(m\).  Therefore
\[
        m\,\delta_m(\Theta)=0.
\]
Thus \(\delta_m(\Theta)\) is an \(m\)-torsion class in \(H^4(X,\mathbb Z(2))\).

It remains to prove non-algebraicity.  Since \(X\) is Chow-trivial and \(\Theta\) has nonzero image in function-field cohomology
\[
        H^3(\mathbb C(X),\mathbb Z/m(2)),
\]
Diaz's form of the Colliot-Thélène--Voisin criterion applies.  It says that, in the Chow-trivial setting, if a class
\[
        \Theta\in H^3(X,\mathbb Z/m(2))
\]
has nonzero image in \(H^3(\mathbb C(X),\mathbb Z/m(2))\), then its Bockstein
\[
        \delta_m(\Theta)\in H^4(X,\mathbb Z(2))
\]
is a non-algebraic \(m\)-torsion class \cite[Theorem 3.2]{Diaz2023IHCTrivialChow}; this is based on the Colliot-Thélène--Voisin comparison between degree-four integral Hodge failure and degree-three unramified cohomology \cite{CTVoisin12}.  Applying this criterion to \(\Theta\) proves that \(\delta_m(\Theta)\) is non-algebraic.
\end{proof}

\begin{remark}
The Chow-trivial hypothesis is essential in this form of the theorem.  The implication
\[
        \Theta\notin N^1H^3(X,\mathbb Z/m(2))
        \quad\Longrightarrow\quad
        \delta_m(\Theta)\ \text{non-algebraic}
\]
is not asserted for arbitrary smooth projective varieties.  The Chow-triviality assumption is what allows the Colliot-Thélène--Voisin/Diaz criterion to identify the Bockstein image with an integral Hodge obstruction.
\end{remark}

\subsection{Cup-product corollary}

The propagation theorem becomes useful for this paper because the class \(\Theta\) may itself be a cup product of finite-coefficient torsion inputs.

\begin{corollary}[Cup-product propagation in degree four]
Let \(X\) be a smooth projective Chow-trivial complex variety.  Let
\[
        \theta_1\in H^{r_1}(X,\mathbb Z/m(p_1)),
        \qquad
        \theta_2\in H^{r_2}(X,\mathbb Z/m(p_2)),
\]
with
\[
        r_1+r_2=3,
        \qquad
        p_1+p_2=2.
\]
Set
\[
        \Theta=\theta_1\cup\theta_2
        \in
        H^3(X,\mathbb Z/m(2)).
\]
If \(\Theta\) is a survival class, i.e.
\[
        \Theta\notin N^1H^3(X,\mathbb Z/m(2)),
\]
then
\[
        \delta_m(\Theta)\in H^4(X,\mathbb Z(2))
\]
is a non-algebraic \(m\)-torsion integral Hodge class.
\end{corollary}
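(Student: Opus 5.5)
The corollary reduces directly to the Bockstein propagation theorem in degree four, so the plan is to check that its hypotheses hold for \(\Theta=\theta_1\cup\theta_2\).

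The first step is to confirm that the cup product lands in the right group. The cup product on finite-coefficient cohomology is induced by the multiplication \(\mathbb Z/m(p_1)\otimes\mathbb Z/m(p_2)\to\mathbb Z/m(p_1+p_2)\). It therefore defines a pairing
\[
        H^{r_1}(X,\mathbb Z/m(p_1))\times H^{r_2}(X,\mathbb Z/m(p_2))
        \longrightarrow
        H^{r_1+r_2}(X,\mathbb Z/m(p_1+p_2)).
\]
With \(r_1+r_2=3\) and \(p_1+p_2=2\), this gives \(\Theta\in H^3(X,\mathbb Z/m(2))\). This is the group on which \(\delta_m:H^3(X,\mathbb Z/m(2))\to H^4(X,\mathbb Z(2))\) is defined, for the coefficient sequence with twist \(p=2\). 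Over \(\mathbb C\) the Tate twists are noncanonically trivial, so the twist bookkeeping serves only to match Hodge types.

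The second step is to apply the theorem itself. By hypothesis, \(X\) is smooth, projective and Chow-trivial, and \(\Theta\) is a survival class, i.e.\ \(\Theta\notin N^1H^3(X,\mathbb Z/m(2))\). By the lemma on coniveau and function-field survival, this is equivalent to \(\Theta\) having nonzero image in \(H^3(\mathbb C(X),\mathbb Z/m(2))\). These are exactly the hypotheses of the Bockstein propagation theorem in degree four. That theorem yields that \(\delta_m(\Theta)\) is \(m\)-torsion, since \(m\delta_m=0\), and non-algebraic.

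The third step is to check that \(\delta_m(\Theta)\) is a Hodge class. Being torsion, its image in \(H^4(X,\mathbb C)\) vanishes, so it lies in \(\mathrm{Hdg}^4(X,\mathbb Z)\). This justifies the phrase ``integral Hodge class'' in the statement.

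No step is a genuine obstacle: all the content sits in the hypothesis that \(\Theta\) is a survival class and in Diaz's form of the Colliot-Thélène--Voisin criterion. The only place needing care is confirming that the twist of the cup product matches the twist of the coefficient sequence defining \(\delta_m\), so that the corollary really is an instance of the theorem rather than a mixed-twist variant. I will not attempt any Leibniz-type decomposition of \(\delta_m(\theta_1\cup\theta_2)\), because it is not needed for this statement.
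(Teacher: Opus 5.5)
Your proposal is correct and matches the paper's proof. Both arguments check that the degrees and twists place \(\theta_1\cup\theta_2\) in \(H^3(X,\mathbb Z/m(2))\), then invoke the degree-four Bockstein propagation theorem; your separate verification that a torsion class vanishes in \(H^4(X,\mathbb C)\) is already part of that theorem's conclusion, so it is harmless but redundant.
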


\begin{proof}
The assumptions on degrees and twists imply that the cup product
\[
        \Theta=\theta_1\cup\theta_2
\]
lies in
\[
        H^{r_1+r_2}(X,\mathbb Z/m(p_1+p_2))
        =
        H^3(X,\mathbb Z/m(2)).
\]
By hypothesis, \(\Theta\) is a survival class, so
\[
        \Theta\notin N^1H^3(X,\mathbb Z/m(2)).
\]
The Bockstein propagation theorem applies to \(\Theta\).  Therefore
\[
        \delta_m(\Theta)\in H^4(X,\mathbb Z(2))
\]
is a non-algebraic \(m\)-torsion integral Hodge class.
\end{proof}

\begin{remark}
The corollary does not assert that every cup product survives.  Survival is a separate condition.  The theorem says that once a finite-coefficient cup product survives the coniveau test, its Bockstein produces an integral Hodge obstruction.
\end{remark}

We shall also use the following product version, which is better suited to Diaz's construction.

\begin{corollary}[External cup-product propagation]
Let \(X_1\) and \(X_2\) be smooth projective complex varieties and set
\[
        X=X_1\times X_2.
\]
Let
\[
        \theta_1\in H^{r_1}(X_1,\mathbb Z/m(p_1)),
        \qquad
        \theta_2\in H^{r_2}(X_2,\mathbb Z/m(p_2)),
\]
with
\[
        r_1+r_2=3,
        \qquad
        p_1+p_2=2.
\]
Let
\[
        \Theta
        =
        \pi_1^*\theta_1\cup\pi_2^*\theta_2
        \in
        H^3(X,\mathbb Z/m(2)).
\]
If \(X\) is Chow-trivial and \(\Theta\notin N^1H^3(X,\mathbb Z/m(2))\), then
\[
        \delta_m(\Theta)\in H^4(X,\mathbb Z(2))
\]
is a non-algebraic \(m\)-torsion integral Hodge class.
\end{corollary}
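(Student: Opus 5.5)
The plan is to reduce this corollary directly to the preceding Cup-product propagation corollary, or equivalently to the Bockstein propagation theorem in degree four, by turning the external classes into internal classes on \(X\).

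First, set
\[
        \theta_1':=\pi_1^*\theta_1\in H^{r_1}(X,\mathbb Z/m(p_1)),
        \qquad
        \theta_2':=\pi_2^*\theta_2\in H^{r_2}(X,\mathbb Z/m(p_2)).
\]
Pullback along the projections \(\pi_i:X\to X_i\) preserves both cohomological degree and Tate twist, because it is induced by the natural map \(\pi_i^{-1}\mathbb Z/m(p_i)_{X_i}\to\mathbb Z/m(p_i)_X\). Hence \(\theta_1'\) and \(\theta_2'\) are internal finite-coefficient classes on \(X\) with the same degrees and twists. In particular \(r_1+r_2=3\) and \(p_1+p_2=2\) still hold.

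Second, observe that \(\Theta=\theta_1'\cup\theta_2'\) is precisely the internal cup product considered in the Cup-product propagation corollary. It lies in \(H^3(X,\mathbb Z/m(2))\), since the cup product is additive in degrees and twists: \(\mathbb Z/m(p_1)\otimes\mathbb Z/m(p_2)\cong\mathbb Z/m(2)\).

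Third, check the remaining hypotheses. The variety \(X=X_1\times X_2\) is smooth and projective because each factor is, and it is Chow-trivial by assumption. By hypothesis \(\Theta\notin N^1H^3(X,\mathbb Z/m(2))\). By the Coniveau and function-field survival lemma, this means \(\Theta\) is a survival class.

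Applying the Cup-product propagation corollary then shows that \(\delta_m(\Theta)\) is \(m\)-torsion and non-algebraic. The \(m\)-torsion property comes from \(\delta_m\) being the connecting map of multiplication by \(m\); non-algebraicity comes from Diaz's form of the Colliot-Thélène--Voisin criterion. The class is Hodge because every torsion class in \(H^4(X,\mathbb Z(2))\) maps to zero in \(H^4(X,\mathbb C)\).

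I expect no real obstacle. The only point needing care is the bookkeeping of the twists under external products: one should confirm that the external product \(\theta_1\boxtimes\theta_2\) agrees with \(\pi_1^*\theta_1\cup\pi_2^*\theta_2\) as a class with coefficients in \(\mathbb Z/m(2)\). This is the standard compatibility of the Künneth external product with cup product, and it requires no sign correction because the formula is applied in the stated order. Everything else is a direct specialization of the previous corollary. The only thing the product structure adds is that it is the form needed for \(V=S\times S_2\), with \(\theta_1=\alpha\) and \(\theta_2=\beta\).
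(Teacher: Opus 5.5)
Your proposal is correct and follows the paper's argument: the paper notes that \(\pi_1^*\theta_1\cup\pi_2^*\theta_2\) lies in \(H^3(X,\mathbb Z/m(2))\) and applies the degree-four Bockstein propagation theorem directly, which is all your detour through the internal cup-product corollary amounts to. The external-product compatibility you flag as the point needing care is not actually needed, because \(\Theta\) is defined as the internal cup product of the two pullbacks.
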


\begin{proof}
The external cup product
\[
        \Theta
        =
        \pi_1^*\theta_1\cup\pi_2^*\theta_2
\]
lies in
\[
        H^{r_1+r_2}(X,\mathbb Z/m(p_1+p_2))
        =
        H^3(X,\mathbb Z/m(2)).
\]
Since \(X\) is Chow-trivial and \(\Theta\) survives the first coniveau filtration, the Bockstein propagation theorem applies.  Hence \(\delta_m(\Theta)\) is a non-algebraic \(m\)-torsion integral Hodge class.
\end{proof}

\subsection{Diaz as the level-two case}

We now recover Diaz's construction as the level-two case of the cup-product propagation theorem.

\begin{corollary}[Diaz as level two]
Let
\[
        V=S\times S_2
\]
be Diaz's dimension-four Enriques-product example.  Let
\[
        \alpha\in H^1(S,\mathbb Z/2(1))
\]
be the class of the K3 double cover \(Y\to S\), and let
\[
        \beta\in H^2(S_2,\mathbb Z/2(1))
\]
be a class whose image in \(\operatorname{Br}(S_2)[2]\) is nonzero.  Set
\[
        \gamma
        =
        \pi_S^*\alpha\cup\pi_{S_2}^*\beta
        \in
        H^3(V,\mathbb Z/2(2)).
\]
If
\[
        \gamma\notin N^1H^3(V,\mathbb Z/2(2)),
\]
then the Bockstein
\[
        \delta(\gamma)\in H^4(V,\mathbb Z(2))
\]
is a non-algebraic \(2\)-torsion integral Hodge class.
\end{corollary}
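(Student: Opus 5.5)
The plan is to deduce the statement from the External cup-product propagation corollary. We take \(X_1=S\), \(X_2=S_2\) and \(m=2\). We set \(\theta_1=\alpha\) with \((r_1,p_1)=(1,1)\) and \(\theta_2=\beta\) with \((r_2,p_2)=(2,1)\). Then \(r_1+r_2=3\) and \(p_1+p_2=2\), so the degree and twist conditions of that corollary hold. The external product \(\Theta=\pi_S^*\alpha\cup\pi_{S_2}^*\beta\) is literally the class \(\gamma\) of the statement, and it lies in \(H^3(V,\mathbb Z/2(2))\).

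The only hypotheses left to verify are the following.

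\begin{enumerate}[label=\textup{(\roman*)}]
\item \(V\) is smooth projective. This is clear, since \(V\) is a product of smooth projective surfaces.
\item \(V\) is Chow-trivial. Here I would invoke the facts recalled in the section on Chow-triviality: Enriques surfaces are Chow-trivial, and products of Chow-trivial varieties are Chow-trivial \cite[Example 2.2]{Diaz2023IHCTrivialChow}. Hence \(CH^*(V)_{\mathbb Q}\) has finite \(\mathbb Q\)-rank.
\item \(\gamma\notin N^1H^3(V,\mathbb Z/2(2))\). This is assumed in the statement. By the lemma on coniveau and function-field survival, it is equivalent to \(\gamma\) being a survival class.
\end{enumerate}

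With these in place, the corollary, and through it the degree-four Bockstein propagation theorem, gives the conclusion. Namely, \(\delta(\gamma)=\delta_2(\gamma)\in H^4(V,\mathbb Z(2))\) is a non-algebraic \(2\)-torsion integral Hodge class.

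Two remarks are worth recording along the way. First, the \(2\)-torsion property is immediate, because \(\delta\) is the connecting map of multiplication by \(2\). Second, \(\delta(\gamma)\) is indeed a Hodge class: it is torsion, so its image in \(H^4(V,\mathbb C)\) vanishes and lies trivially in \(H^{2,2}\). Non-algebraicity is the substantive output of Diaz's form of the Colliot-Thélène--Voisin criterion \cite[Theorem 3.2]{Diaz2023IHCTrivialChow}, which is already packaged inside the propagation theorem.

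The main obstacle is not in this deduction, which is formal once the survival hypothesis is given. The hard part is that hypothesis itself, i.e. the unconditional statement that \(\gamma\) survives. If one wanted to remove it, I would follow the reduction described earlier. First restrict to \(W=E\times S_2\), and use functoriality of coniveau to reduce to \(\gamma'\notin N^1H^3(W,\mathbb Z/2(2))\). Then prove that nonvanishing by Diaz's degeneration argument: the genus-one curve has multiplicative reduction while \(S_2\) has good reduction, and Gabber's input is used as in \cite[Section 4]{Diaz2023IHCTrivialChow}. As stated, however, the corollary is conditional, and the proof is just the specialization above.
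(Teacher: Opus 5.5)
Your proposal is correct and follows the paper's proof: you specialize the external cup-product propagation corollary to \(X_1=S\), \(X_2=S_2\), \(m=2\), \(\theta_1=\alpha\), \(\theta_2=\beta\), and check the degrees and twists (\(1+2=3\), \(1+1=2\)). You also make explicit the Chow-triviality of \(V\) via \cite[Example 2.2]{Diaz2023IHCTrivialChow}, which that corollary requires but the paper's proof leaves implicit, and you rightly treat non-coniveau survival as the stated hypothesis rather than re-deriving it from Diaz's degeneration argument.
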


\begin{proof}
Apply the external cup-product propagation corollary with
\[
        X_1=S,
        \qquad
        X_2=S_2,
        \qquad
        m=2,
\]
and with
\[
        \theta_1=\alpha\in H^1(S,\mathbb Z/2(1)),
        \qquad
        \theta_2=\beta\in H^2(S_2,\mathbb Z/2(1)).
\]
The degree and twist conditions hold:
\[
        1+2=3,
        \qquad
        1+1=2.
\]
The external cup product is exactly
\[
        \gamma
        =
        \pi_S^*\alpha\cup\pi_{S_2}^*\beta
        \in
        H^3(V,\mathbb Z/2(2)).
\]
Diaz shows that the relevant \(\gamma\) survives the coniveau test, equivalently that it has nonzero image in \(H^3(\mathbb C(V),\mathbb Z/2(2))\), by reducing to \(W=E\times S_2\) and using the degeneration argument described in his proof \cite[Section 4]{Diaz2023IHCTrivialChow}.  Hence
\[
        \gamma\notin N^1H^3(V,\mathbb Z/2(2)).
\]
The external cup-product propagation corollary gives that
\[
        \delta(\gamma)\in H^4(V,\mathbb Z(2))
\]
is a non-algebraic \(2\)-torsion integral Hodge class.
\end{proof}

\begin{remark}
This is the precise sense in which Diaz is the level-two case.  The level-one Coble/Benoist--Ottem mechanism identifies the visible Enriques \(2\)-torsion direction.  Diaz couples that direction with a Brauer-detecting Enriques \(2\)-torsion class, forms their cup product, and applies the Bockstein after unramified survival.
\end{remark}

\subsection{From unramified survival to MV survivability}
\label{subsec:from-unramified-to-mv-survivability}

The preceding degree-four propagation theorem isolates the mechanism used in
Diaz's construction: a finite-coefficient class in
\[
        H^3(X,\mathbb Z/m(2))
\]
which survives the coniveau, or function-field, test has a Bockstein image in
\[
        H^4(X,\mathbb Z(2))
\]
which is non-algebraic, provided \(X\) is Chow-trivial and the
Colliot-Thélène--Voisin/Diaz criterion applies.  This is the degree-four
unramified-survival route to integral Hodge failure.

For higher cup products, the same formal Bockstein mechanism persists, but the
ordinary degree-four unramified criterion no longer supplies the needed
survival-to-obstruction implication in the same form.  Rather than making the
\(n\)-fold hierarchy depend only on a higher refined-unramified theory, the
next section introduces a second survivability test: MacPherson--Vilonen
survivability.  In that framework, the coefficient Bockstein is realized as
the boundary of a finite-coefficient gluing datum.  Nonzero MV boundary gives
a diagrammatic survivability criterion for the finite-coefficient product.

We record the coniveau-based higher mechanism as a comparison principle,
because it is useful conceptually and explains how the Diaz argument fits
inside the broader hierarchy.

\begin{definition}[Coniveau survival-to-obstruction property]
\label{def:coniveau-survival-to-obstruction-property}
Let \(X\) be a smooth projective complex variety, \(p\geq 2\), and
\(m\geq 2\).  We say that \(X\) satisfies the
\emph{\((p,m)\)-coniveau survival-to-obstruction property} if every class
\[
        \Theta\in H^{2p-1}(X,\mathbb Z/m(p))
\]
with nonzero image in
\[
        H^{2p-1}(\mathbb C(X),\mathbb Z/m(p))
\]
has the property that its Bockstein
\[
        \delta_m(\Theta)\in H^{2p}(X,\mathbb Z(p))
\]
is a non-algebraic \(m\)-torsion integral Hodge class.
\end{definition}

\begin{proposition}[Coniveau-based higher cup-product mechanism]
\label{prop:coniveau-higher-cup-product-mechanism}
Let \(X\) be a smooth projective complex variety satisfying the
\((p,m)\)-coniveau survival-to-obstruction property.  Suppose that
finite-coefficient classes
\[
        \theta_i\in H^{r_i}(X,\mathbb Z/m(p_i)),
        \qquad
        1\leq i\leq n,
\]
satisfy
\[
        \sum_{i=1}^n r_i=2p-1,
        \qquad
        \sum_{i=1}^n p_i=p.
\]
Set
\[
        \Theta=\theta_1\cup\cdots\cup\theta_n
        \in
        H^{2p-1}(X,\mathbb Z/m(p)).
\]
If \(\Theta\) has nonzero image in
\[
        H^{2p-1}(\mathbb C(X),\mathbb Z/m(p)),
\]
then
\[
        \delta_m(\Theta)\in H^{2p}(X,\mathbb Z(p))
\]
is a non-algebraic \(m\)-torsion integral Hodge class.
\end{proposition}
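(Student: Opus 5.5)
This is essentially a formal consequence of Definition~\ref{def:coniveau-survival-to-obstruction-property}, so the plan is to reduce the statement to that definition in three steps.

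\textbf{Step 1: locate the product.} Using the graded-commutative cup product
\[
H^{a}(X,\mathbb Z/m(b))\otimes H^{c}(X,\mathbb Z/m(d))\to H^{a+c}(X,\mathbb Z/m(b+d)),
\]
I will argue by induction on \(n\) that \(\theta_1\cup\cdots\cup\theta_n\) lies in
\[
H^{\sum r_i}\bigl(X,\mathbb Z/m(\textstyle\sum p_i)\bigr).
\]
The hypotheses \(\sum r_i=2p-1\) and \(\sum p_i=p\) then identify this group with \(H^{2p-1}(X,\mathbb Z/m(p))\). This is the same bookkeeping used in the degree-four cup-product corollary, so \(\Theta\) is a legitimate input for the property.

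\textbf{Step 2: check the torsion claim.} The Bockstein \(\delta_m\) is the connecting map of
\[
0\to\mathbb Z(p)\xrightarrow{m}\mathbb Z(p)\to\mathbb Z/m(p)\to 0.
\]
By exactness of the long exact sequence, the image of \(\delta_m\) is the kernel of multiplication by \(m\) on \(H^{2p}(X,\mathbb Z(p))\). Hence \(m\,\delta_m(\Theta)=0\).

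The Hodge claim should be handled the same way. An \(m\)-torsion class in \(H^{2p}(X,\mathbb Z(p))\) maps to zero in \(H^{2p}(X,\mathbb C)\), so it is trivially an integral Hodge class.

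\textbf{Step 3: apply the property.} By hypothesis, \(\Theta\) has nonzero image in \(H^{2p-1}(\mathbb C(X),\mathbb Z/m(p))\). The \((p,m)\)-coniveau survival-to-obstruction property applies to every such class. It therefore gives directly that \(\delta_m(\Theta)\) is a non-algebraic \(m\)-torsion integral Hodge class.

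I could also note, via the earlier lemma on coniveau and function-field survival, that the hypothesis is equivalent to \(\Theta\notin N^1H^{2p-1}(X,\mathbb Z/m(p))\). This is not needed for the argument.

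\textbf{Expected difficulty.} There is no genuine obstacle, because all the depth has been absorbed into the property assumed in the statement. The only point needing care is that the cup-product decomposition plays no logical role. The proof will not try to deduce survival of \(\Theta\) from properties of the individual \(\theta_i\); the survival of the product is taken as a hypothesis. I will make explicit that the proposition is a repackaging, with the property replacing the Chow-triviality and Colliot-Thélène--Voisin/Diaz input used in degree four.
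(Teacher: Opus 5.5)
Your proposal is correct and follows the paper's proof: the degree and twist bookkeeping places \(\Theta\) in \(H^{2p-1}(X,\mathbb Z/m(p))\), and the assumed \((p,m)\)-coniveau survival-to-obstruction property then yields the conclusion directly. Your Step 2, which re-verifies the torsion and Hodge claims, is correct but redundant, since the property as defined already asserts that \(\delta_m(\Theta)\) is a non-algebraic \(m\)-torsion integral Hodge class.
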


\begin{proof}
The degree and twist assumptions imply that
\[
        \Theta=\theta_1\cup\cdots\cup\theta_n
\]
lies in
\[
        H^{\sum_i r_i}(X,\mathbb Z/m(\sum_i p_i))
        =
        H^{2p-1}(X,\mathbb Z/m(p)).
\]
By hypothesis, \(\Theta\) has nonzero image in
\[
        H^{2p-1}(\mathbb C(X),\mathbb Z/m(p)).
\]
Since \(X\) satisfies the \((p,m)\)-coniveau survival-to-obstruction property,
the Bockstein
\[
        \delta_m(\Theta)\in H^{2p}(X,\mathbb Z(p))
\]
is a non-algebraic \(m\)-torsion integral Hodge class.
\end{proof}

\begin{remark}[Why the paper uses MV survivability for the \(n\)-fold family]
\label{rem:why-mv-survivability-for-n-fold-family}
Proposition~\ref{prop:coniveau-higher-cup-product-mechanism} is a
coniveau-based comparison mechanism.  It says that if an appropriate
higher-degree survival-to-obstruction theorem is available, then cup products
propagate to integral torsion Hodge obstructions by the same Bockstein
formalism used by Diaz.

The main \(n\)-fold family in this paper is proved using a different
survivability test.  Instead of requiring a new refined-unramified criterion
at every degree, we use the MacPherson--Vilonen zig-zag/Bockstein
compatibility developed in the next section.  There, the coefficient
Bockstein is identified with the boundary of a finite-coefficient gluing
datum.  Nonzero boundary in the MV obstruction channel proves survivability,
and algebraic cycle classes are trivial in that obstruction channel.
\end{remark}

\begin{principle}[Coble--Diaz hierarchy, MV form]
\label{prin:coble-diaz-hierarchy-mv-form}
The Coble/Benoist--Ottem bridge is the level-one Enriques--Coble torsion
mechanism.  Diaz's construction is the level-two case, in which two Enriques
torsion inputs are cup-producted and then Bocksteined.  More generally, higher
levels arise from finite-coefficient products
\[
        \Theta=\theta_1\cup\cdots\cup\theta_n
\]
which land in degree \(2p-1\) and twist \(p\).  Their Bockstein
\[
        \delta(\Theta)\in H^{2p}(X,\mathbb Z(p))
\]
is the corresponding integral torsion Hodge obstruction candidate.

The survivability station may be supplied either by a coniveau/refined
unramified test, as in Diaz's degree-four construction, or by the
MacPherson--Vilonen obstruction-channel test used below.  In the \(n\)-fold
Enriques--Brauer family treated in this paper, the MV test is the mechanism
that promotes the finite-coefficient cup product to a surviving integral
Bockstein obstruction.
\end{principle}

The next section develops this second survivability test.  Instead of passing
to function-field cohomology, we realize the coefficient Bockstein as a
MacPherson--Vilonen gluing boundary \cite{MacPhersonVilonen1986}.  This gives
a diagrammatic criterion for showing that a finite-coefficient cup product
survives as an integral obstruction-channel datum.
\section{Bockstein Survivability and MacPherson--Vilonen Gluing}
\label{sec:mv-bockstein-survivability}

The preceding section formulated higher cup-product Bockstein propagation from
the coniveau viewpoint: a finite-coefficient product becomes an integral Hodge
obstruction once it survives the relevant coniveau or refined unramified test.
In degree four, Diaz supplies this survival input through the
Colliot-Th\'el\`ene--Voisin criterion \cite{CTVoisin12}.  In higher degree,
however, the ordinary unramified-cohomology criterion is not automatically
available in the same form.  The purpose of this section is to isolate a second
survivability mechanism, based on MacPherson--Vilonen zig-zag gluing
\cite{MacPhersonVilonen1986}.

Throughout this section, \(X\) is a smooth projective complex variety,
\(p\geq 1\), and
\[
   0\to \mathbb Z(p)\xrightarrow{\times 2}\mathbb Z(p)
   \to \mathbb Z/2(p)\to 0
\]
denotes the coefficient sequence.  We write
\[
   \delta:
   H^{2p-1}(X,\mathbb Z/2(p))
   \longrightarrow
   H^{2p}(X,\mathbb Z(p))
\]
for its Bockstein connecting homomorphism.

The guiding point is that the Bockstein is not merely a connecting
homomorphism in cohomology.  After applying the MacPherson--Vilonen gluing
formalism, the same coefficient sequence gives a boundary morphism of
finite-coefficient zig-zag data.  Thus the ordinary Bockstein can be compared
with a MacPherson--Vilonen boundary map.

\subsection{Strategy: MV survivability and Brauer separation}
\label{subsec:strategy-unconditional-mv-survivability}

Diaz's \(n=2\) counterexample proves survivability through unramified
cohomology.  The decisive class is
\[
   \Theta_2
   =
   \pi_1^*\alpha_1\cup\pi_2^*\beta_2
   \in H^3(S_1\times S_2,\mathbb Z/2(2)),
\]
where \(\alpha_1\in H^1(S_1,\mathbb Z/2(1))\) is the K3 double-cover class of
the first Enriques surface and
\[
   \beta_2\in H^2(S_2,\mathbb Z/2(1))
\]
is a Brauer-detecting class on the second Enriques surface.  Diaz proves that
\(\Theta_2\) survives the coniveau/function-field test.  The
Colliot-Th\'el\`ene--Voisin/Diaz criterion then implies that
\[
   \Delta_2
   =
   \delta(\Theta_2)
   \in H^4(S_1\times S_2,\mathbb Z(2))
\]
is a non-algebraic \(2\)-torsion integral Hodge class.

For \(n=3\) and beyond, a direct unramified-cohomology proof of survivability
would require a higher-degree analogue of this coniveau argument.  In degree
four, the Colliot-Th\'el\`ene--Voisin/Diaz criterion converts a very specific
coniveau nonvanishing statement into failure of the integral Hodge conjecture.
For \(n\geq 3\), the relevant class lies in
\[
   H^{2n-1}(X_n,\mathbb Z/2(n)),
\]
so one would need to control higher residues, higher unramified or
refined-unramified groups, and the behavior of the full \(n\)-fold product
after passage to the generic point.  This is a substantially more delicate
problem: the residue bookkeeping grows with the number of factors, and there
is no direct off-the-shelf analogue of Diaz's degree-four survival criterion
in the required generality.

Instead, we use the MacPherson--Vilonen zig-zag formalism.  Applying the MV
construction to the same coefficient sequence that defines the Bockstein gives
a commutative Bockstein/gluing diagram:
\[
\begin{array}{ccc}
H^{2p-1}(X,\mathbb Z/2(p))
& \xrightarrow{\ \delta\ } &
H^{2p}(X,\mathbb Z(p))
\\[1.2em]
\bigg\downarrow{\scriptstyle \delta_{\mathrm{MV}}}
&&
\bigg\downarrow{\scriptstyle \rho_{\mathrm{MV}}}
\\[1.2em]
\mathsf{Glu}^{2p-1,p}_{2}(X)
& \xrightarrow{\ \partial_{\mathrm{MV}}\ } &
\mathsf{Obs}^{2p,p}_{\mathbb Z}(X).
\end{array}
\]
Here \(\mathsf{Glu}^{2p-1,p}_{2}(X)\) is the finite-coefficient
MacPherson--Vilonen gluing channel: it records MV zig-zag gluing data
associated to classes in \(H^{2p-1}(X,\mathbb Z/2(p))\).  The group
\(\mathsf{Obs}^{2p,p}_{\mathbb Z}(X)\) is the corresponding integral MV
obstruction channel: it records integral boundary components of such
finite-coefficient gluing data after applying the coefficient Bockstein.  The
map \(\delta_{\mathrm{MV}}\) sends a finite-coefficient class to its MV gluing
datum, \(\partial_{\mathrm{MV}}\) is the MV boundary map induced by the
coefficient sequence, and \(\rho_{\mathrm{MV}}\) records the obstruction
component of the integral Bockstein class.

Equivalently, after passing to the tuple-relative formulation, a
finite-coefficient class
\[
   \Theta\in H^{2p-1}(X,\mathbb Z/2(p))
\]
determines an MV tuple
\[
   \mathcal Z(\Theta),
\]
and the Bockstein satisfies
\[
   \rho_{\mathrm{MV},\Theta}\bigl(\delta(\Theta)\bigr)
   =
   \partial_{\mathrm{MV}}\bigl(\mathcal Z(\Theta)\bigr).
\]
Thus, in the tuple-relative MV channel,
\[
   \delta(\Theta)\ \text{survives}
   \quad\Longleftrightarrow\quad
   \rho_{\mathrm{MV},\Theta}\bigl(\delta(\Theta)\bigr)\neq 0
   \quad\Longleftrightarrow\quad
   \partial_{\mathrm{MV}}\bigl(\mathcal Z(\Theta)\bigr)\neq 0.
\]

For the \(n\)-fold class
\[
   \Theta_n
   =
   \pi_1^*\alpha_1
   \cup
   \pi_2^*\beta_2
   \cup
   \cdots
   \cup
   \pi_n^*\beta_n
   \in H^{2n-1}(X_n,\mathbb Z/2(n)),
\]
the first task is to prove that the associated MV boundary has a nonzero
Enriques--Brauer component.  The second task is to compare that same component
with algebraic codimension-\(n\) cycle classes.  The first task is solved by
the MV/K\"unneth/Bockstein formalism; the second is isolated as the
Brauer-separation hypothesis.

The proof of the nonzero MV component uses three structural inputs.  First,
the external tensor product theorem for perverse sheaves identifies external
products of MV tuples with tensor products of MV tuples.  We use
Lyubashenko's theorem that, under the standard constructibility hypotheses,
the Deligne tensor product of categories of constructible perverse sheaves on
\(X\) and \(Y\) is identified with the category of constructible perverse
sheaves on \(X\times Y\), and the abstract Deligne external tensor product
agrees with the geometric external tensor product
\cite{Lyubashenko1999ExternalTensorPerverse}.  This gives
\[
   \mathcal Z(\Theta_X\boxtimes\Theta_Y)
   \simeq
   \mathcal Z(\Theta_X)\boxtimes\mathcal Z(\Theta_Y),
\]
and, by iteration,
\[
   \mathcal Z(\Theta_n)
   \simeq
   \mathcal Z(\alpha_1)
   \boxtimes
   \mathcal Z(\beta_2)
   \boxtimes
   \cdots
   \boxtimes
   \mathcal Z(\beta_n).
\]

Second, the coefficient sequence induces a categorical Bockstein boundary in
the MV zig-zag category.  Positselski's categorical Bockstein formalism gives
Bockstein long exact sequences in exact categories produced by coefficient
reduction \cite{Positselski2018CategoricalBockstein}.  Applied to the
MacPherson--Vilonen zig-zag category, this formalism identifies
\(\partial_{\mathrm{MV}}\) as the categorical Bockstein boundary attached to
the coefficient sequence.

Third, the categorical Bockstein boundary satisfies a Leibniz rule with
respect to external tensor products.  For MV tuples
\[
   \mathcal Z_1,\ldots,\mathcal Z_n,
\]
one has
\[
   \partial_{\mathrm{MV}}
   (\mathcal Z_1\boxtimes\cdots\boxtimes\mathcal Z_n)
   =
   \sum_{j=1}^n
   (-1)^{\epsilon_j}
   \mathcal Z_1\boxtimes\cdots\boxtimes
   \partial_{\mathrm{MV}}(\mathcal Z_j)
   \boxtimes\cdots\boxtimes\mathcal Z_n.
\]
In the present \(\mathbb Z/2\)-coefficient setting, the signs are immaterial.
This is the MV analogue of the ordinary derivation rule for the Bockstein on
cup products.

These formal ingredients prove that the constructed \(n\)-fold Bockstein class
has a nonzero image in a specific Enriques--Brauer component of the MV
obstruction channel.  The remaining issue is algebraic separation: one must
compare that same component with algebraic codimension-\(n\) cycle classes on
\[
   X_n=S_1\times\cdots\times S_n.
\]
For decomposable algebraic cycles, the Kummer sequence on each Enriques factor
kills the relevant degree-two algebraic components.  For arbitrary
codimension-\(n\) cycles, non-decomposable correspondence-type components may
occur.  We isolate this issue as the Brauer-separation hypothesis below.  This
hypothesis is not an MV assumption; it is a precise algebraic-control
condition on the selected Enriques--Brauer component.

The rest of this section proves the formal MV ingredients, defines the
Brauer-separating projection, proves its nonzero value on the constructed
Bockstein class, verifies the decomposable algebraic case, analyzes the
factorwise Chow--K\"unneth control available on the Enriques factors, and
records the Brauer-separated non-algebraicity theorem.

\subsection{Compatibility verifications for MV products and Bockstein boundaries}
\label{subsec:mv-compatibility-verifications}

The preceding subsection reduced MV survivability to the nonvanishing of
\[
   \partial_{\mathrm{MV}}\bigl(\mathcal Z(\Theta_n)\bigr).
\]
We now verify the structural compatibilities needed to prove this
nonvanishing.  There are three ingredients.

First, the MacPherson--Vilonen zig-zag construction is compatible with
external tensor products.  This promotes the external tensor product theorem
for perverse sheaves to a K\"unneth statement for MV tuples.  Second, the
coefficient sequence gives a categorical Bockstein boundary in the MV
zig-zag category.  This is the MV realization of the Bockstein connecting
morphism.  Third, the categorical Bockstein boundary satisfies a Leibniz rule
with respect to external products.  Together these facts imply that the
boundary of the \(n\)-fold MV tuple contains the nonzero Enriques--Brauer
tensor component used in Sections~\ref{sec:level-three-cup-product-bockstein-obstruction}
and~\ref{sec:n-fold-cup-product-bockstein-family}.

Throughout this subsection, \(X=U\sqcup Z\) denotes a two-stratum complex
algebraic variety with open inclusion
\[
   j:U\hookrightarrow X
\]
and closed inclusion
\[
   i:Z\hookrightarrow X.
\]
We write
\[
   \mathcal C_X=\mathcal C(F_X,G_X;T_X)
\]
for the corresponding MacPherson--Vilonen zig-zag category and
\[
   Z_X:\operatorname{Perv}(X)\longrightarrow \mathcal C_X
\]
for the MV zig-zag functor.

For a product
\[
   X=X_1\times\cdots\times X_n
\]
we use the product stratification and write
\[
   \boxtimes
\]
for the external tensor product.  The external tensor product of MV
zig-zag categories is the zig-zag category
\[
   \mathcal C_{X_1}\boxtimes\cdots\boxtimes\mathcal C_{X_n}
   :=
   \mathcal C
   \left(
      F_{X_1}\boxtimes\cdots\boxtimes F_{X_n},
      G_{X_1}\boxtimes\cdots\boxtimes G_{X_n};
      T_{X_1}\boxtimes\cdots\boxtimes T_{X_n}
   \right),
\]
where the functors and natural transformations are extended termwise to the
Deligne tensor product of the relevant abelian categories.

\begin{theorem}[External tensor products of perverse sheaves]
\label{thm:lyubashenko-external-product-perverse}
Let \(X\) and \(Y\) be stratified pseudomanifolds satisfying the standard
constructibility hypotheses, and let
\[
   \operatorname{Perv}(X),\qquad
   \operatorname{Perv}(Y),\qquad
   \operatorname{Perv}(X\times Y)
\]
denote the corresponding abelian categories of constructible perverse sheaves,
with product perversity on \(X\times Y\).  The external tensor product functor
\[
   \boxtimes:
   \operatorname{Perv}(X)\times\operatorname{Perv}(Y)
   \longrightarrow
   \operatorname{Perv}(X\times Y)
\]
identifies \(\operatorname{Perv}(X\times Y)\) with the Deligne tensor product
of \(\operatorname{Perv}(X)\) and \(\operatorname{Perv}(Y)\), and the abstract
Deligne external tensor product agrees with the geometric external tensor
product.
\end{theorem}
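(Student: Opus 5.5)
This statement is Lyubashenko's external tensor product theorem, and the plan is to reduce it to the cited result \cite{Lyubashenko1999ExternalTensorPerverse} rather than reprove it. What remains is to check that its hypotheses hold here and to state precisely what "identifies" means. The first step is to fix the setting. The categories are perverse sheaves with coefficients in a field \(k\), or more generally a noetherian ring of finite global dimension such as \(\mathbb Z/2\). They are constructible with respect to fixed stratifications whose strata have finitely generated local systems and finite-dimensional stalk cohomology. Under these hypotheses \(\operatorname{Perv}(X)\) and \(\operatorname{Perv}(Y)\) are finite-length \(k\)-linear abelian categories with finite-dimensional Hom spaces, and that is exactly the condition for the Deligne tensor product \(\operatorname{Perv}(X)\boxtimes\operatorname{Perv}(Y)\) to exist.

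The second step is to check that the geometric external product \((A,B)\mapsto A\boxtimes B=p_1^*A\otimes^{L}p_2^*B\) lands in \(\operatorname{Perv}(X\times Y)\) for the product perversity. The support and cosupport conditions can be tested stratum by stratum on \(S\times T\). The stalk Künneth formula gives the support condition. The cosupport condition follows from \(D(A\boxtimes B)\simeq DA\boxtimes DB\). Next, the functor is exact in each variable, since \(\otimes\) over a field is exact on perverse cohomology along each factor. By the universal property it therefore factors through a right exact, and in fact exact, functor
\[
\Phi:\operatorname{Perv}(X)\boxtimes\operatorname{Perv}(Y)\to\operatorname{Perv}(X\times Y).
\]
This construction is what makes the abstract Deligne external product agree with the geometric one.

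The third step is to show that \(\Phi\) is an equivalence, and this is the main obstacle. Full faithfulness comes from the Künneth isomorphism
\[
\operatorname{Hom}(A\boxtimes B,A'\boxtimes B')\cong\operatorname{Hom}(A,A')\otimes\operatorname{Hom}(B,B'),
\]
together with its analogue for \(\operatorname{Ext}^1\), which controls extensions. This in turn follows from \(R\mathcal Hom(A\boxtimes B,A'\boxtimes B')\simeq R\mathcal Hom(A,A')\boxtimes R\mathcal Hom(B,B')\) and the Künneth formula for global sections on compact (or compactifiable) pseudomanifolds. Essential surjectivity is where I would follow Lyubashenko's strategy. One route is to present each perverse category as modules over a finite-dimensional algebra, as in the MacPherson--Vilonen or quiver descriptions, and use \(\operatorname{mod}(A)\boxtimes\operatorname{mod}(B)\simeq\operatorname{mod}(A\otimes B)\). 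The alternative is an induction on strata via recollement. On the open product stratum the statement reduces to local systems on \(S\times T\), where \(\pi_1(S\times T)=\pi_1(S)\times\pi_1(T)\) gives the tensor decomposition of finite-dimensional representations. The gluing functors \(j_!,j_*,i^*,i^!\) commute with \(\boxtimes\), and this compatibility propagates the equivalence across the closed strata. Every simple object \(IC(L\boxtimes M)=IC(L)\boxtimes IC(M)\) lies in the image, and \(\Phi\) is fully faithful and exact, with matching \(\operatorname{Ext}^1\). By finite length, the essential image is then closed under extensions and therefore everything. Over non-field coefficients such as \(\mathbb Z\), this step would require Lyubashenko's more general hypotheses, and I would state the theorem only in the setting covered by that paper.
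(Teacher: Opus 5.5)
Your reduction to Lyubashenko's theorem is exactly the paper's proof, which consists of nothing but the citation \cite{Lyubashenko1999ExternalTensorPerverse}. Your restriction to field coefficients and to finite-length categories with finite-dimensional \(\operatorname{Hom}\)'s is the right reading of the paper's vague ``standard constructibility hypotheses''. You should drop the aside about noetherian rings, since over \(\mathbb Z\) neither finite length nor Deligne's construction of the tensor product is available. However, if the third step is meant as an argument for why \(\Phi\) is an equivalence, it does not go through as written, in two places. First, over \(\mathbb Z/2\), which you explicitly allow, not every simple object of \(\operatorname{Perv}(X\times Y)\) has the form \(IC(L)\boxtimes IC(M)\). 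Let \(L\) be the rank-two \(\mathbb F_2\)-local system on \(\mathbb C^*\) given by \(\mathbb F_4\) with monodromy a primitive cube root of unity. Then \(L\boxtimes L\) on \(\mathbb C^*\times\mathbb C^*\) corresponds to \(\mathbb F_4\otimes_{\mathbb F_2}\mathbb F_4\cong\mathbb F_4\oplus\mathbb F_4\), a sum of two irreducible rank-two local systems. Neither summand is an external product, because every rank-one \(\mathbb F_2\)-local system is trivial. This is easily repaired: since \(\mathbb F_2\) is perfect, simples are direct summands of the semisimple objects \(IC(L)\boxtimes IC(M)\), and the essential image of a fully faithful exact functor is closed under direct summands.

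Second, and more seriously, your closing inference does not follow from what the K\"unneth formula gives you. That inference reads: \(\Phi\) is exact and fully faithful with matching \(\operatorname{Ext}^1\), so by finite length its essential image is closed under extensions. K\"unneth identifies \(\operatorname{Ext}^1\) only between external products, in particular between simples. Closure under extensions needs surjectivity of \(\operatorname{Ext}^1_{\mathcal A\boxtimes\mathcal B}(M,N)\to\operatorname{Ext}^1(\Phi M,\Phi N)\) for arbitrary \(M,N\). The d\'evissage from simples to arbitrary objects also requires injectivity on Yoneda \(\operatorname{Ext}^2\) between simples, and without that the principle is false. Restriction of scalars \(\operatorname{mod}(k[x]/x^2)\to\operatorname{mod}(k[x]/x^3)\) is exact, fully faithful, and bijective on \(\operatorname{Hom}\) and \(\operatorname{Ext}^1\) between simples. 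Yet \(k[x]/x^3\), an extension of \(k[x]/x^2\) by \(k\), is not in its image. There are two ways to close the gap. One is to run the projective-generator version of your finite-dimensional-algebra route. Projectivity of \(P_X\boxtimes P_Y\) in \(\operatorname{Perv}(X\times Y)\) needs only \(\operatorname{Ext}^1\)-vanishing against simples. Once it is a projective generator, every object is a cokernel of a morphism between objects in the image, hence lies in the image by fullness and exactness. The other is to prove the \(\operatorname{Ext}^2\)-injectivity directly, for instance by comparing with \(\operatorname{Hom}_{D^b_c}(-,-[2])\). This uses the derived K\"unneth formula together with the fact that Yoneda \(\operatorname{Ext}^2\) in the heart of a t-structure injects into \(\operatorname{Hom}_D(-,-[2])\).
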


\begin{proof}
This is Lyubashenko's external tensor-product theorem for categories of
constructible perverse sheaves
\cite{Lyubashenko1999ExternalTensorPerverse}.
\end{proof}

\begin{proposition}[External-product compatibility of the MV zig-zag functor]
\label{prop:mv-zigzag-external-product-compatible}
Let \(X_1,\ldots,X_n\) be stratified complex algebraic varieties satisfying
the constructibility hypotheses of
Theorem~\ref{thm:lyubashenko-external-product-perverse}.  Then the
MacPherson--Vilonen zig-zag functor on the product satisfies a canonical
isomorphism
\[
   Z_{X_1\times\cdots\times X_n}
   (K_1\boxtimes\cdots\boxtimes K_n)
   \simeq
   Z_{X_1}(K_1)\boxtimes\cdots\boxtimes Z_{X_n}(K_n)
\]
for \(K_i\in \operatorname{Perv}(X_i)\).  This isomorphism is functorial in
the \(K_i\).
\end{proposition}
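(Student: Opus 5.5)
The plan is to reduce to the case \(n=2\) and then induct, since both sides of the claimed isomorphism are built from iterated binary external products. For \(n=2\), write \(X_1=U_1\sqcup Z_1\) and \(X_2=U_2\sqcup Z_2\). The product carries the product stratification with four strata \(U_1\times U_2\), \(U_1\times Z_2\), \(Z_1\times U_2\), \(Z_1\times Z_2\). The first step is to make precise what \(Z_{X_1\times X_2}\) means here. I would regard the product as glued in two stages, first along \(Z_1\times X_2\) and then along \(X_1\times Z_2\), so that the MV datum on the product is a bi-zig-zag, i.e. an object of the termwise tensor category \(\mathcal C(F_{X_1}\boxtimes F_{X_2},G_{X_1}\boxtimes G_{X_2};T_{X_1}\boxtimes T_{X_2})\) defined just before the statement.

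The second step is to check compatibility of each ingredient of the MV construction with \(\boxtimes\). The relevant ingredients are the restriction \(j^*\), the functors \(F\) and \(G\) (built from perverse cohomology of nearby/vanishing-type restrictions, or \(i^*j_*\) and \(i^!j_!\) in the classical MV description), and the transformation \(T\). I would use the standard Künneth isomorphisms for constructible sheaves:
\[
(j_1\times j_2)_*(A\boxtimes B)\simeq j_{1*}A\boxtimes j_{2*}B,\qquad (i_1\times i_2)^!(A\boxtimes B)\simeq i_1^!A\boxtimes i_2^!B,
\]
together with the analogous statements for \(i^*\) and \(j_!\). These hold for bounded constructible complexes over a field, and over \(\mathbb Z\) up to Tor terms. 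Combined with t-exactness of \(\boxtimes\) for the product perversity, they show that the zig-zag \(F(K)\to \cdots\to G(K)\) attached to \(K_1\boxtimes K_2\) is the external product of the two zig-zags. The natural transformation \(T\) is then identified termwise by naturality of the Künneth maps.

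The third step is functoriality. Every isomorphism used is natural in \(K_1\) and \(K_2\), so the resulting identification is a natural isomorphism of bifunctors \(\operatorname{Perv}(X_1)\times\operatorname{Perv}(X_2)\to\mathcal C_{X_1}\boxtimes\mathcal C_{X_2}\). Theorem~\ref{thm:lyubashenko-external-product-perverse} then guarantees that it extends uniquely through the Deligne tensor product. The general case follows by induction using associativity of \(\boxtimes\). One must also verify coherence, namely that the two bracketings give the same isomorphism; this reduces to the coherence of the Künneth isomorphisms.

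The main obstacle is the perverse-exactness and coefficient issue in the second step. Over \(\mathbb Z\) or \(\mathbb Z/2\), external products of perverse sheaves need not be perverse without flatness, and the Künneth isomorphism for \(\boxtimes\) involves derived tensor products. My first approach would be to work with field coefficients \(\mathbb F_2\) and \(\mathbb Q\), where \(\boxtimes\) is t-exact and Künneth is clean. I would then handle the integral channel by restricting to torsion-free stalks, or by interpreting \(\boxtimes\) as the derived product and checking that the Bockstein-relevant terms lie in the perverse heart. A secondary difficulty is that the four-stratum product is not literally a two-stratum space, so the identification of the iterated MV category with the termwise tensor zig-zag category has to be justified, for instance by gluing one stratum at a time.
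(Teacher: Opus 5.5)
Your route is the same as the paper's: check that each ingredient of the zig-zag (\(j^*\), \(j_!\), \(j_*\), \(i^*\), \(i^!\) and \(T\)) commutes with \(\boxtimes\), then use naturality and Lyubashenko's theorem. There is, however, a genuine gap, and it is the point you call secondary.

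\textbf{The gap.} In your first step you identify the MV datum of a product-stratified perverse sheaf (a bi-zig-zag) with an object of the termwise category \(\mathcal C(F_{X_1}\boxtimes F_{X_2},G_{X_1}\boxtimes G_{X_2};T_{X_1}\boxtimes T_{X_2})\). These are not the same thing. An object of the termwise category has only two components, \(A\in\operatorname{Perv}(U_1\times U_2)\) and \(B\) on \(Z_1\times Z_2\). It has no slot for data on the mixed strata \(U_1\times Z_2\) and \(Z_1\times U_2\).

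The same problem affects your K\"unneth isomorphisms for \(j_1\times j_2\) and \(i_1\times i_2\). They concern an open set and a closed set that are not complementary: the complement of \(U_1\times U_2\) is \((Z_1\times X_2)\cup(X_1\times Z_2)\), not \(Z_1\times Z_2\). So they compute no MV functor \(F\), \(G\) of any two-stratum decomposition of \(X_1\times X_2\).

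The failure already shows up for \(X_1=X_2=\mathbb C\supset\{0\}\), \(K_1=k_{\mathbb C}[1]\), \(K_2=\delta_0\). Since \(Z_{X_2}(\delta_0)\) has zero open component, the termwise product \(Z_{X_1}(K_1)\boxtimes Z_{X_2}(K_2)\) also has zero open component. It records at most a vector space at \((0,0)\), and it is zero if \(B\) is the vanishing-cycle space, as in the classical curve description. By contrast, \(K_1\boxtimes K_2=k_{\mathbb C\times\{0\}}[1]\) is a nonzero sheaf along a whole axis. So with this target the displayed isomorphism cannot come from a gluing equivalence on the product. Gluing one stratum at a time will never land in that category.

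\textbf{What the correct target is.} Write \(Z_{X_i}(K_i)=(A_i,B_i,u_i,v_i)\) and glue one factor at a time.
\begin{itemize}
\item Use the complementary pair \(U_1\times X_2\) and \(Z_1\times X_2\). The relevant K\"unneth formulas are those for \(j_1\times\operatorname{id}\) and \(i_1\times\operatorname{id}\), together with t-exactness of \(\boxtimes\) over a field.
\item These send \(K_1\boxtimes K_2\) to \((A_1\boxtimes K_2,\,B_1\boxtimes K_2,\,u_1\boxtimes\operatorname{id},\,v_1\boxtimes\operatorname{id})\).
\item Applying \(Z_{X_2}\) in the second slot gives a four-vertex bi-zig-zag with vertices \(A_1\boxtimes A_2\), \(A_1\boxtimes B_2\), \(B_1\boxtimes A_2\), \(B_1\boxtimes B_2\) and the induced structure maps.
\end{itemize}
This is an object of the Deligne product \(\mathcal C_{X_1}\boxtimes\mathcal C_{X_2}\), which is equivalent to \(\operatorname{Perv}(X_1\times X_2)\) via MV and Lyubashenko. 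With that target the proposition is correct, and your functoriality, induction and coherence arguments go through. If \(Z_{X_1\times X_2}\) is defined by transport through these equivalences, the isomorphism is even formal from the universal property of the Deligne product. The termwise category only receives the corner \((A_1\boxtimes A_2,B_1\boxtimes B_2)\), through a forgetful functor that is not an equivalence.

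\textbf{Relation to the paper and coefficients.} The paper's proof does the same functor-by-functor check, implicitly with the same non-complementary pair \(j_1\times j_2\), \(i_1\times i_2\). It does not address this issue, so it does not supply the missing step. Your coefficient worry is not an obstacle for \(\mathbb Z/2=\mathbb F_2\), which is a field. The real restriction is that Lyubashenko's theorem and the Deligne product are field-coefficient statements, so this route does not reach the integral category \(\mathcal C_X^{\mathbb Z}\).
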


\begin{proof}
By Theorem~\ref{thm:lyubashenko-external-product-perverse}, the geometric
external product of perverse sheaves realizes the Deligne tensor product of
the perverse-sheaf categories.  It remains to check that the
MacPherson--Vilonen zig-zag functor is compatible with this external product.

The MV zig-zag functor is built from the recollement data associated to
\[
   j:U\hookrightarrow X,\qquad i:Z\hookrightarrow X.
\]
Concretely, the tuple \(Z_X(K)\) is formed from the open part, the closed
part, and the canonical maps arising from the functors
\[
   j^*,\quad j_!,\quad j_*,\quad j_{!*},\quad i^*,\quad i^!,
\]
together with the natural transformations
\[
   j_!\longrightarrow j_{!*},
   \qquad
   j_{!*}\longrightarrow j_*,
   \qquad
   j_!\longrightarrow j_*.
\]
Each of these functors is compatible with external tensor products.  The
compatibility for pullback is the usual external-product base-change identity;
the compatibility for \(j_!\) and \(j_*\) follows from the corresponding
functoriality of external products for extension by zero and direct image;
and the compatibility for \(j_{!*}\) follows because intermediate extension is
characterized as the image of
\[
   j_!\longrightarrow j_*,
\]
and both \(j_!\) and \(j_*\) commute with external products.

The natural transformations appearing in the MV tuple are the canonical
recollement morphisms.  Since these morphisms are functorial and the
corresponding functors commute with external products, the natural
transformation on the product is the external product of the natural
transformations on the factors.  Therefore
\[
   Z_{X_1\times\cdots\times X_n}
   (K_1\boxtimes\cdots\boxtimes K_n)
   \simeq
   Z_{X_1}(K_1)\boxtimes\cdots\boxtimes Z_{X_n}(K_n).
\]
\end{proof}

\begin{corollary}[K\"unneth formula for MV tuples]
\label{cor:kunneth-mv-tuples}
Let
\[
   \Theta_i\in H^{r_i}(X_i,\mathbb Z/2(p_i)),
   \qquad 1\leq i\leq n,
\]
be finite-coefficient classes with associated MV tuples
\[
   \mathcal Z(\Theta_i).
\]
Then the MV tuple associated to the external product
\[
   \Theta_1\boxtimes\cdots\boxtimes\Theta_n
   \in
   H^{\sum_i r_i}
   \left(
      X_1\times\cdots\times X_n,
      \mathbb Z/2\left(\sum_i p_i\right)
   \right)
\]
is canonically identified with the external tensor product of the factor
tuples:
\[
   \mathcal Z(\Theta_1\boxtimes\cdots\boxtimes\Theta_n)
   \simeq
   \mathcal Z(\Theta_1)\boxtimes\cdots\boxtimes\mathcal Z(\Theta_n).
\]
\end{corollary}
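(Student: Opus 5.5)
The plan is to deduce the corollary directly from Proposition~\ref{prop:mv-zigzag-external-product-compatible}, once the external product of classes is realized at the sheaf level. First I fix how a class \(\Theta_i\in H^{r_i}(X_i,\mathbb Z/2(p_i))\) determines its MV tuple. I will write \(\Theta_i\) as a morphism
\[
   \Theta_i:\mathbb Z/2_{X_i}\longrightarrow \mathbb Z/2_{X_i}(p_i)[r_i]
\]
in the constructible derived category. Shifting by \(\dim X_i\) puts the constant sheaf in the perverse heart, since each \(X_i\) is smooth. The tuple \(\mathcal Z(\Theta_i)\) is then the image under \(Z_{X_i}\) of this morphism, or of the perverse extension class it defines in \(\operatorname{Ext}^{r_i}_{\operatorname{Perv}(X_i)}\) between shifted constant perverse sheaves. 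I read the tuple-relative construction in exactly this sense: \(\mathcal Z\) is \(Z_X\) applied to the extension datum attached to \(\Theta\).

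Second, I identify the external cup product with the external tensor product of these morphisms. The standard Künneth isomorphism gives
\[
   \mathbb Z/2_{X_1}\boxtimes\cdots\boxtimes\mathbb Z/2_{X_n}\simeq \mathbb Z/2_{X_1\times\cdots\times X_n}.
\]
Under this isomorphism, \(\pi_1^*\Theta_1\cup\cdots\cup\pi_n^*\Theta_n\) is the morphism \(\Theta_1\boxtimes\cdots\boxtimes\Theta_n\). Twists and shifts add, giving total degree \(\sum_i r_i\) and total twist \(\sum_i p_i\). The perverse shifts \(\dim X_i\) also add to \(\dim(X_1\times\cdots\times X_n)\). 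Hence the perverse datum of the product class is the external product of the factor perverse data.

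Third, I apply Proposition~\ref{prop:mv-zigzag-external-product-compatible} to this datum. It gives
\[
   Z(K_1\boxtimes\cdots\boxtimes K_n)\simeq \boxtimes_i Z(K_i)
\]
for objects. The proposition's functoriality in the \(K_i\) then carries morphisms, and by Yoneda-type splicing also extension classes, to the corresponding external products of tuples. This yields \(\mathcal Z(\Theta_1\boxtimes\cdots\boxtimes\Theta_n)\simeq \mathcal Z(\Theta_1)\boxtimes\cdots\boxtimes\mathcal Z(\Theta_n)\). Canonicity follows because every isomorphism used is canonical: the Künneth isomorphism of constant sheaves, the Lyubashenko identification of Theorem~\ref{thm:lyubashenko-external-product-perverse}, and the isomorphism of the proposition.

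The main obstacle is the second step in the case where a class has degree \(r_i\) exceeding the perverse amplitude. There \(\Theta_i\) is a higher Ext rather than a morphism in the heart, so functoriality of \(Z\) on \(\operatorname{Perv}\) does not apply directly. I would handle this by representing each \(\Theta_i\) as a Yoneda \(r_i\)-extension in \(\operatorname{Perv}(X_i)\), using Beilinson's realization \(D^b(\operatorname{Perv})\simeq D^b_c\). I would then check that the external product of Yoneda extensions corresponds to the cup product. Exactness of \(\boxtimes\) over the field \(\mathbb Z/2\) is what lets external products of exact sequences remain exact.
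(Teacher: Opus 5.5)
Your proposal is correct and follows the paper's route. The paper likewise represents each class by perverse-sheaf (or zig-zag) data, applies Proposition~\ref{prop:mv-zigzag-external-product-compatible}, and gets canonicity from functoriality of the MV zig-zag functor. You are more explicit than the paper's one-line ``represent each class by its perverse-sheaf or zig-zag datum.'' Note, though, that your Yoneda-extension step (via Beilinson's theorem and exactness of \(\boxtimes\) over \(\mathbb Z/2\)) is not an edge case. It is needed for every \(r_i\geq 1\), in particular for \(\alpha_1\) and the \(\beta_i\), because only degree-zero classes are morphisms in the perverse heart to which functoriality of \(Z_{X_i}\) applies directly.
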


\begin{proof}
Represent each finite-coefficient class by its corresponding
finite-coefficient perverse-sheaf or zig-zag datum.  Applying
Proposition~\ref{prop:mv-zigzag-external-product-compatible} to these
representatives gives the asserted identification of MV tuples.  The
identification is independent of the choice of representatives by functoriality
of the MV zig-zag functor.
\end{proof}

\begin{corollary}[\(n\)-fold Enriques--Brauer MV tuple decomposition]
\label{cor:n-fold-mv-tuple-decomposition}
Let
\[
   X_n=S_1\times\cdots\times S_n
\]
and
\[
   \Theta_n
   =
   \pi_1^*\alpha_1
   \cup
   \pi_2^*\beta_2
   \cup
   \cdots
   \cup
   \pi_n^*\beta_n.
\]
Then
\[
   \mathcal Z(\Theta_n)
   \simeq
   \mathcal Z(\alpha_1)
   \boxtimes
   \mathcal Z(\beta_2)
   \boxtimes
   \cdots
   \boxtimes
   \mathcal Z(\beta_n).
\]
\end{corollary}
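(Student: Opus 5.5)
The plan is to deduce the decomposition directly from Corollary~\ref{cor:kunneth-mv-tuples}. The only preliminary work is to rewrite the internal cup product \(\Theta_n\) as an external product of the factor classes.

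\textbf{Step 1: from cup product to external product.} On \(X_n=S_1\times\cdots\times S_n\) with projections \(\pi_i\), the classes are
\[
   \alpha_1\in H^1(S_1,\mathbb Z/2(1)),
   \qquad
   \beta_i\in H^2(S_i,\mathbb Z/2(1)),\quad 2\leq i\leq n.
\]
By the definition of the cross product in cohomology with \(\mathbb Z/2\)-coefficients, we have
\[
   \pi_1^*\alpha_1\cup\pi_2^*\beta_2\cup\cdots\cup\pi_n^*\beta_n
   =
   \alpha_1\boxtimes\beta_2\boxtimes\cdots\boxtimes\beta_n .
\]
At the sheaf level this is the identity
\[
   \pi_1^*\mathcal F_1\otimes\cdots\otimes\pi_n^*\mathcal F_n
   =
   \mathcal F_1\boxtimes\cdots\boxtimes\mathcal F_n,
\]
applied to the representing morphisms. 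No signs arise, because we work mod \(2\). The degrees add to \(1+2(n-1)=2n-1\) and the twists to \(n\), so the external product lands in \(H^{2n-1}(X_n,\mathbb Z/2(n))\), matching the degree and twist of \(\Theta_n\).

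\textbf{Step 2: apply the K\"unneth formula for MV tuples.} Take \(X_1=S_1\) and \(X_i=S_i\) for \(i\geq 2\), with \(\Theta_1=\alpha_1\), \(\Theta_i=\beta_i\), \(r_1=1\), \(r_i=2\), and \(p_i=1\). Each Enriques surface is a smooth projective complex variety. With any algebraic two-stratum stratification (for instance the trivial one, or one adapted to a curve as in the Diaz reduction) it satisfies the constructibility hypotheses of Theorem~\ref{thm:lyubashenko-external-product-perverse}. The product stratification on \(X_n\) then satisfies them as well. Corollary~\ref{cor:kunneth-mv-tuples} therefore gives
\[
   \mathcal Z(\Theta_n)
   =\mathcal Z(\alpha_1\boxtimes\beta_2\boxtimes\cdots\boxtimes\beta_n)
   \simeq
   \mathcal Z(\alpha_1)\boxtimes\mathcal Z(\beta_2)\boxtimes\cdots\boxtimes\mathcal Z(\beta_n).
\]

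\textbf{Main obstacle.} The step needing care is consistency of choices across factors. The stratification, and hence the zig-zag category \(\mathcal C_{X_n}\), on the product must be the product stratification used in Proposition~\ref{prop:mv-zigzag-external-product-compatible}. The MV tuple of \(\Theta_n\) must also be formed with respect to that stratification, and not some coarser one.

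To handle this, I would fix a stratification on each \(S_i\) once and for all, and define \(\mathcal Z(\Theta_n)\) using the product stratification. Independence of representatives then follows from the functoriality already cited in the proof of Corollary~\ref{cor:kunneth-mv-tuples}.

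If one insists on \(n\)-fold rather than binary statements, I would instead induct on \(n\). This uses the two-factor case and associativity of the Deligne tensor product, which is canonical up to coherent isomorphism.
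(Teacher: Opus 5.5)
Your proposal is correct within the paper's framework and follows the paper's own proof, which simply applies Corollary~\ref{cor:kunneth-mv-tuples} (iteratively) to the external product defining \(\Theta_n\). Your Step~1, which identifies \(\pi_1^*\alpha_1\cup\cdots\cup\pi_n^*\beta_n\) with the cross product \(\alpha_1\boxtimes\beta_2\boxtimes\cdots\boxtimes\beta_n\), and your remark about using the product stratification only make explicit what the paper leaves implicit.
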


\begin{proof}
This is Corollary~\ref{cor:kunneth-mv-tuples} applied iteratively to the
external product defining \(\Theta_n\).
\end{proof}

\begin{proposition}[Positselski hypotheses for the MV zig-zag category]
\label{prop:positselski-hypotheses-mv-zigzag}
Let
\[
   \mathcal C_X^{\mathbb Z}
\]
be the integral MacPherson--Vilonen zig-zag category attached to the
two-stratum decomposition of \(X\), and let
\[
   \mathcal C_X^{\mathbb Z/2}
\]
be its mod-\(2\) reduction.  Let
\[
   \eta_2:\mathcal C_X^{\mathbb Z}\longrightarrow\mathcal C_X^{\mathbb Z/2}
\]
be the componentwise reduction functor, and let
\[
   s=\times 2:\operatorname{Id}_{\mathcal C_X^{\mathbb Z}}
   \longrightarrow
   \operatorname{Id}_{\mathcal C_X^{\mathbb Z}}
\]
be multiplication by \(2\).  Then the data
\[
   \left(
      \mathcal C_X^{\mathbb Z},
      s,
      \eta_2
   \right)
\]
satisfy the categorical coefficient-reduction hypotheses needed for
Positselski's Bockstein construction.
\end{proposition}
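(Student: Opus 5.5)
The plan is to check, one at a time, the hypotheses that Positselski's construction \cite{Positselski2018CategoricalBockstein} needs from a triple \((\mathcal E,s,\eta)\). We need: (i) \(\mathcal E\) is an exact, here abelian, category; (ii) \(s\) is a natural endomorphism of the identity functor that is a monomorphism on a chosen class of ``\(s\)-torsion-free'' objects; (iii) the reduction functor \(\eta\) is identified with \(\operatorname{coker}(s)\) on that class, so that \(0\to M\xrightarrow{s}M\to\eta(M)\to0\) is a short exact sequence in \(\mathcal E\) naturally in \(M\); (iv) the reduced category \(\mathcal C_X^{\mathbb Z/2}\) sits inside \(\mathcal E\) as the full subcategory of objects killed by \(s\), compatibly with the exact structure.

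First I will recall from \cite{MacPhersonVilonen1986} that \(\mathcal C_X^{\mathbb Z}=\mathcal C(F,G;T)\) is abelian. An object is a tuple \((A,B,m,n)\) with \(A\) in the integral perverse category of the open stratum \(U\) and \(B\) a module on the closed part. The maps \(m:F(A)\to B\) and \(n:B\to G(A)\) satisfy \(n\circ m=T_A\). The functors \(F,G\) are additive, and MacPherson--Vilonen show that kernels and cokernels can be computed componentwise whenever \(F\) is right exact and \(G\) is left exact; this is their standing hypothesis. Next I will note that multiplication by \(2\) acts on every component, and that it commutes with \(m\) and \(n\) because \(F\), \(G\) and \(T\) are \(\mathbb Z\)-linear. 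So \(s=\times2\) is a natural transformation of \(\operatorname{Id}\).

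For (iii), I will define the torsion-free class to consist of tuples whose \(U\)-component and \(B\)-component have no \(2\)-torsion, meaning the stalk cohomology is \(2\)-torsion-free in the relevant degrees. On such tuples \(s\) is injective componentwise. Since cokernels in \(\mathcal C_X^{\mathbb Z}\) are componentwise when \(F\) is right exact, \(\operatorname{coker}(s)\) is the tuple \((A/2,B/2,\bar m,\bar n)\). Here the identification \(F(A)/2\cong F(A/2)\) uses right exactness of \(F\). This quotient tuple is exactly the componentwise reduction \(\eta_2\), and it lands in \(\mathcal C_X^{\mathbb Z/2}\), which is the zig-zag category built from the mod-\(2\) versions \(F\otimes\mathbb Z/2\), \(G\otimes\mathbb Z/2\), \(T\otimes\mathbb Z/2\). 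Finally, for (iv) I will check that \(\mathcal C_X^{\mathbb Z/2}\) is equivalent to the full subcategory of \(2\)-killed objects. This holds because a \(\mathbb Z\)-linear tuple on which \(2\) acts by zero is the same thing as a \(\mathbb Z/2\)-linear tuple.

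The main obstacle is the left side \(G\), together with \(F\) and \(T\) possibly failing to commute with reduction. \(G\) is only left exact, so \(G(A)/2\) need not equal \(G(A/2)\), and the \(n\)-component of \(\eta_2(M)\) might not even be defined. To handle this, I will first try to use the derived description: \(F\) and \(G\) arise as perverse cohomology of \(i^*j_*\)-type functors, and after restricting to the torsion-free class I will apply the long exact sequence for \(\mathbb Z\xrightarrow{2}\mathbb Z\to\mathbb Z/2\). The defect of \(G(A)/2\to G(A/2)\) is then the \(2\)-torsion in the next perverse cohomology. I will either show this defect vanishes on the torsion-free class, or enlarge the class so that it does. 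This Tor-vanishing is exactly the ``admissible reduction'' condition that Positselski's formalism needs, and it is where the real work lies.
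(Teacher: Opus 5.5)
There is a genuine gap, and it sits exactly where you put ``the real work'': your argument ends with an either/or plan (show that the defect of \(G(A)/2\to G(A/2)\) vanishes on torsion-free objects, or change the class), and neither branch works in general. The underlying obstacle is misdiagnosed. Take \(M=(A,B,m,n)\) and \(\pi\colon A\to A/2\). Additivity of \(G\) gives \(2\cdot\mathrm{id}_{G(A/2)}=G(2\cdot\mathrm{id}_{A/2})=0\), so \(G(\pi)\) factors through \(G(A)/2\). Hence \(\bar n\colon B/2\to G(A)/2\to G(A/2)\) is always defined, and \(\bar m\) comes from \(F(A)/2\cong F(A/2)\). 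The relation \(\bar n\circ\bar m=T_{A/2}\) holds because, after precomposing with the epimorphism \(F(\pi)\), both sides become \(G(\pi)\circ T_A=T_{A/2}\circ F(\pi)\) by naturality of \(T\). This tuple is just the componentwise MacPherson--Vilonen cokernel of \(s_M\) that you had already invoked. It is also exactly the paper's one-line remark that \(v\circ u=T_A\) survives reduction because \(T\) is natural. No Tor-vanishing enters. Conversely, the vanishing you planned to prove is false in general. Your own long exact sequence gives \(G(A)/2\hookrightarrow G(A/2)\) with cokernel the \(2\)-torsion of the next cohomology group, and nothing forces that to vanish for \(2\)-torsion-free \(A\). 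Torsion in integral link-type cohomology produces exactly such defects; compare \(H^1(\mathbb RP^3;\mathbb Z)=0\) with \(H^1(\mathbb RP^3;\mathbb Z/2)=\mathbb Z/2\). Since for fixed \(A\) the defect is a property of \(G\), no choice of class removes it.

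Two further points in your setup need correcting. First, the torsion-free class must be defined by monicity of \(s_M\) itself. Since kernels are componentwise, this means \(2\) is a monomorphism on \(A\) in the perverse heart and on \(B\). Having \(2\)-torsion-free stalk cohomology does not give this. On \(\mathbb C\) with \(i\colon\{0\}\hookrightarrow\mathbb C\), let \(F=\ker(\mathbb Z_{\mathbb C}\to i_*\mathbb Z/2)\) and \(K=F[1]\). Then \(K\) is perverse with all stalks \(\mathbb Z\), yet the connecting morphism \(i_*\mathbb Z/2\to K\) is a nonzero map of perverse sheaves killed by \(2\), so \(2\) is not monic on \(K\). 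Second, (iv) is not the module-theoretic triviality you state, because \(A\) lives in a heart of a derived category. Identifying \(\operatorname{Perv}(U,\mathbb Z/2)\) with the \(2\)-killed objects of \(\operatorname{Perv}(U,\mathbb Z)\) needs \(t\)-exactness of restriction of scalars and a check that it is fully faithful on hearts.

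Once repaired, your route agrees with the paper's componentwise verification: abelianness from MacPherson--Vilonen, centrality of \(\times2\), and reduction of each component. In one respect it is sharper. Restricting to objects on which \(s\) is monic is what makes reduction exact (snake lemma). The paper instead asserts that \(\eta_2\) is exact on all of \(\mathcal C_X^{\mathbb Z}\), which fails for \(0\to\mathbb Z\xrightarrow{2}\mathbb Z\to\mathbb Z/2\to0\) placed in the \(B\)-slot with \(A=0\). On the other hand, your list (i)--(iv) has no counterpart to two conditions the paper checks as part of Positselski's hypotheses: lifting of admissible monomorphisms, epimorphisms and morphisms of the reduced category, and the divisibility condition. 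So even after these repairs you would be verifying a shorter list than the paper does.
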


\begin{proof}
We use the exact structure inherited componentwise from the abelian categories
appearing in the MacPherson--Vilonen zig-zag category
\[
   \mathcal C(F_X,G_X;T_X).
\]
Under the MacPherson--Vilonen exactness hypotheses, this zig-zag category is
abelian \cite{MacPhersonVilonen1986}; hence it is an exact category with
admissible short exact sequences given by the short exact sequences of
zig-zag objects.  A sequence of zig-zag objects is exact precisely when the
corresponding sequences in the source and target abelian categories are
exact and the structure maps commute with the natural transformation
\(T_X:F_X\to G_X\).

The endomorphism
\[
   s=\times 2:\operatorname{Id}_{\mathcal C_X^{\mathbb Z}}
   \longrightarrow
   \operatorname{Id}_{\mathcal C_X^{\mathbb Z}}
\]
is central because it is induced by multiplication by \(2\) on the coefficient
objects in each component of the zig-zag.  For any morphism of zig-zags, the
component maps commute with multiplication by \(2\); therefore \(s\) lies in
the center of the exact category.  The fixed Tate twist in the coefficient
sequence does not change this centrality.

The reduction functor
\[
   \eta_2:\mathcal C_X^{\mathbb Z}\to\mathcal C_X^{\mathbb Z/2}
\]
is exact because reduction modulo \(2\) is applied to each component of a
zig-zag object and to each component of a short exact sequence.  The
compatibility condition
\[
   v\circ u=T_A
\]
is preserved by reduction because \(T_A\) is natural and coefficient
reduction is functorial.

The lifting conditions required for the categorical Bockstein formalism are
also checked componentwise: admissible monomorphisms, admissible epimorphisms,
and compatible morphisms in the reduced zig-zag category lift after choosing
lifts in the component abelian categories and imposing the same commutative
zig-zag relations.  Since the defining relations of
\(\mathcal C(F_X,G_X;T_X)\) are linear commutative-diagram relations, the
componentwise lifts assemble to lifts in the zig-zag category.

Finally, a morphism in the integral zig-zag category reduces to zero modulo
\(2\) exactly when each of its component morphisms is divisible by \(2\).
This is the divisibility condition for the central element \(s=\times2\) used
in Positselski's categorical Bockstein construction.  Therefore the integral
MV zig-zag category, together with \(s=\times2\) and the reduction functor
\(\eta_2\), satisfies the coefficient-reduction hypotheses needed for the
categorical Bockstein construction \cite{Positselski2018CategoricalBockstein}.
\end{proof}

\begin{theorem}[Categorical Bockstein in the MV zig-zag category]
\label{thm:categorical-bockstein-mv-zigzag}
The coefficient sequence
\[
   0\to\mathbb Z(p)
   \xrightarrow{\times 2}
   \mathbb Z(p)
   \to
   \mathbb Z/2(p)
   \to0
\]
induces a categorical Bockstein boundary in the MV zig-zag category.  Under a
compatible MV realization, this categorical boundary is the MV boundary map
\[
   \partial_{\mathrm{MV}}.
\]
Moreover, the categorical Bockstein is compatible with the ordinary
cohomological Bockstein through the MV Bockstein square.
\end{theorem}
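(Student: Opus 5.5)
The plan is to obtain the theorem as a formal consequence of Proposition~\ref{prop:positselski-hypotheses-mv-zigzag} together with the exactness of the MV zig-zag functor. It has three steps.

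\textbf{Step 1: the categorical boundary.} By Proposition~\ref{prop:positselski-hypotheses-mv-zigzag}, the triple \((\mathcal C_X^{\mathbb Z},s,\eta_2)\) satisfies Positselski's coefficient-reduction hypotheses. Positselski's construction \cite{Positselski2018CategoricalBockstein} then produces, for each object \(\mathcal Z\) of \(\mathcal C_X^{\mathbb Z/2}\) in the image of reduction, a Bockstein boundary
\[
   \partial:\operatorname{Ext}^{k}_{\mathcal C^{\mathbb Z/2}_X}(\eta_2\mathcal A,\eta_2\mathcal B)\longrightarrow\operatorname{Ext}^{k+1}_{\mathcal C^{\mathbb Z}_X}(\mathcal A,\mathcal B).
\]
This boundary sits in a long exact sequence whose other maps are \(s=\times 2\) and \(\eta_2\). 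We take the case \(\mathcal A\) equal to the MV tuple of the constant sheaf \(\mathbb Z_X\) (suitably shifted), and \(\mathcal B\) equal to its Tate twist by \(p\).

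\textbf{Step 2: identifying the boundary with \(\partial_{\mathrm{MV}}\).} The MacPherson--Vilonen functor \(Z_X:\operatorname{Perv}(X)\to\mathcal C_X\) is an equivalence under the MV hypotheses \cite{MacPhersonVilonen1986}. We check that it commutes with coefficient reduction, since it is built from \(j^*\), \(j_!\), \(j_*\), \(i^*\) and \(i^!\), each of which is compatible with \(-\otimes^{L}\mathbb Z/2\). Then \(Z_X\) carries the perverse-sheaf triangle
\[
   K\xrightarrow{2}K\to K\otimes^{L}\mathbb Z/2\xrightarrow{+1}
\]
to the admissible short exact sequence of zig-zags used in Step 1. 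Define \(\partial_{\mathrm{MV}}\) as the connecting map of this sequence, read in the tuple-relative channel \(\mathsf{Obs}^{2p,p}_{\mathbb Z}(X)\). With that definition, the categorical boundary from Step 1 is \(\partial_{\mathrm{MV}}\), because connecting morphisms are determined by the short exact sequence up to the standard sign, which is irrelevant mod \(2\).

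\textbf{Step 3: the Bockstein square.} The ordinary Bockstein \(\delta\) is the connecting map of the same coefficient triangle, applied to \(\operatorname{Hom}_{D^b_c(X)}(\mathbb Z_X,-)\). Ext groups in the heart map to Hom groups in \(D^b_c(X)\) by the realization functor. A morphism of long exact sequences commutes with connecting maps. Hence
\[
   \rho_{\mathrm{MV}}\circ\delta=\partial_{\mathrm{MV}}\circ\delta_{\mathrm{MV}},
\]
where \(\delta_{\mathrm{MV}}\) and \(\rho_{\mathrm{MV}}\) are the maps induced on Ext by \(Z_X\), followed by projection to the tuple-relative component. Specializing to the tuple \(\mathcal Z(\Theta)\) gives
\[
   \rho_{\mathrm{MV},\Theta}(\delta(\Theta))=\partial_{\mathrm{MV}}(\mathcal Z(\Theta)).
\]

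\textbf{Main obstacle.} I expect Step 3 to be the hardest. The difficulty is that the realization map from Ext groups in \(\operatorname{Perv}(X)\) to \(D^b_c(X)\) is not an isomorphism in general; Beilinson's theorem gives this only for the full derived category. The first thing to try is to invoke Beilinson's equivalence \(D^b(\operatorname{Perv}(X))\simeq D^b_c(X)\). One must then verify that this equivalence is compatible with reduction mod \(2\) for integral coefficients. Failing that, the fallback is to state the square only after projecting to the obstruction component. There, naturality of the connecting homomorphism suffices and no Ext comparison is needed.
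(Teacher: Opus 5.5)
Your proposal follows the paper's argument: Proposition~\ref{prop:positselski-hypotheses-mv-zigzag} feeds Positselski's construction, the resulting connecting morphism is $\partial_{\mathrm{MV}}$ essentially by definition, and naturality of connecting morphisms gives the square. The one repair is in Step~2, where the sequence $0\to\mathbb Z_X[d]\xrightarrow{2}\mathbb Z_X[d]\to\mathbb Z/2_X[d]\to0$ ($d=\dim_{\mathbb C}X$) is carried to a short exact sequence of zig-zags by exactness of the equivalence $Z_X$, not by componentwise compatibility with $-\otimes^{L}\mathbb Z/2$, since the zig-zag functors $F,G$ take a single cohomology group and need not commute with derived reduction; the Ext-versus-$D^b_c(X)$ issue you flag in Step~3 never arises in the paper, because Definition~\ref{def:compatible-mv-realization} simply stipulates that the square commutes, and in your model the Beilinson equivalence you propose settles it (realization is triangulated and the identity on the heart, so it carries the Bockstein triangle of that sequence to the Betti one), making your fallback unnecessary, which in any case would not by itself define $\delta_{\mathrm{MV}}$ on cohomology classes.
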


\begin{proof}
By Proposition~\ref{prop:positselski-hypotheses-mv-zigzag}, the integral
MV zig-zag category with multiplication by \(2\) and mod-\(2\) reduction
satisfies the hypotheses of Positselski's categorical Bockstein construction.
Hence the coefficient-reduction data give a Bockstein long exact sequence and
a connecting morphism in the MV zig-zag category
\cite{Positselski2018CategoricalBockstein}.  This connecting morphism is the
boundary map attached to the coefficient sequence after applying the MV
zig-zag construction.  By definition of the compatible MV realization, this
boundary map is
\[
   \partial_{\mathrm{MV}}.
\]
Naturality of connecting morphisms identifies this categorical boundary with
the cohomological Bockstein through the commutative MV Bockstein square.
\end{proof}

\begin{theorem}[Leibniz rule for the MV Bockstein boundary]
\label{thm:leibniz-mv-boundary}
Let
\[
   \mathcal Z_1,\ldots,\mathcal Z_n
\]
be finite-coefficient MV tuples of cohomological degrees
\[
   r_1,\ldots,r_n.
\]
Then
\[
   \partial_{\mathrm{MV}}
   (\mathcal Z_1\boxtimes\cdots\boxtimes\mathcal Z_n)
   =
   \sum_{k=1}^{n}
   (-1)^{r_1+\cdots+r_{k-1}}
   \mathcal Z_1\boxtimes\cdots\boxtimes
   \partial_{\mathrm{MV}}(\mathcal Z_k)
   \boxtimes\cdots\boxtimes\mathcal Z_n.
\]
In the present \(\mathbb Z/2\)-coefficient setting, the signs are immaterial.
\end{theorem}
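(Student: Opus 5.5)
The plan is to reduce the Leibniz rule to the case \(n=2\) and then induct on the number of factors. For two tuples \(\mathcal Z_1,\mathcal Z_2\), the idea is to realize \(\partial_{\mathrm{MV}}\) as in Theorem~\ref{thm:categorical-bockstein-mv-zigzag}: it is the connecting morphism obtained by choosing an integral (or \(\mathbb Z/4\)) lift \(\widetilde{\mathcal Z}\) of a mod-\(2\) tuple and dividing the boundary defect by \(s=\times 2\). Proposition~\ref{prop:positselski-hypotheses-mv-zigzag} guarantees that such lifts exist componentwise and that divisibility by \(2\) is detected componentwise. Given lifts \(\widetilde{\mathcal Z}_1,\widetilde{\mathcal Z}_2\), the external product \(\widetilde{\mathcal Z}_1\boxtimes\widetilde{\mathcal Z}_2\) is a lift of \(\mathcal Z_1\boxtimes\mathcal Z_2\). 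This uses Proposition~\ref{prop:mv-zigzag-external-product-compatible}: the external product commutes with the reduction functor \(\eta_2\), because reduction is applied componentwise and \(\boxtimes\) is biexact on the component categories (Theorem~\ref{thm:lyubashenko-external-product-perverse}).

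Next I compute the boundary of the product lift. Write \(d\) for the structure differential, meaning the failure of the lifted zig-zag relations or cocycle conditions. On the product lift it expands as \(d(\widetilde{\mathcal Z}_1\boxtimes\widetilde{\mathcal Z}_2)=d\widetilde{\mathcal Z}_1\boxtimes\widetilde{\mathcal Z}_2+(-1)^{r_1}\widetilde{\mathcal Z}_1\boxtimes d\widetilde{\mathcal Z}_2\). The sign is the usual Koszul sign, coming from commuting the differential past a degree-\(r_1\) factor in the external tensor product of complexes underlying the zig-zag components. Each \(d\widetilde{\mathcal Z}_k\) equals \(2\,\widetilde{\partial}_k\), and dividing by \(s\) gives \(\partial_{\mathrm{MV}}(\mathcal Z_1\boxtimes\mathcal Z_2)=\partial_{\mathrm{MV}}\mathcal Z_1\boxtimes\mathcal Z_2+(-1)^{r_1}\mathcal Z_1\boxtimes\partial_{\mathrm{MV}}\mathcal Z_2\). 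This division is legitimate because \(s\) is central and commutes with \(\boxtimes\). I would then check that the result does not depend on the chosen lifts. Changing \(\widetilde{\mathcal Z}_k\) by \(2\) times a tuple alters the answer by a boundary term, exactly as in Positselski's well-definedness argument \cite{Positselski2018CategoricalBockstein}, and that term is compatible with \(\boxtimes\).

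For general \(n\), I induct using associativity of \(\boxtimes\) from Corollary~\ref{cor:kunneth-mv-tuples}. Write \(\mathcal Z_1\boxtimes\cdots\boxtimes\mathcal Z_n=(\mathcal Z_1\boxtimes\cdots\boxtimes\mathcal Z_{n-1})\boxtimes\mathcal Z_n\), apply the two-factor case, and then apply the inductive hypothesis to the first factor. The sign attached to \(\partial_{\mathrm{MV}}(\mathcal Z_n)\) is \((-1)^{r_1+\cdots+r_{n-1}}\), since the degree of the first factor is \(r_1+\cdots+r_{n-1}\), and the remaining signs carry over unchanged. This gives the stated formula. Over \(\mathbb Z/2\) we have \(-1=1\), so the signs are immaterial.

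I expect the main obstacle to be the two-factor step. The difficulty is giving \(\partial_{\mathrm{MV}}\) a chain-level description for which the product rule actually holds. Positselski's construction is an abstract connecting morphism in an exact category, so I would first try to present it as a Yoneda splice with the extension \(0\to\mathcal Z\xrightarrow{2}\widetilde{\mathcal Z}\to\mathcal Z\to0\). The Leibniz rule then becomes the statement that the external product of two such Bockstein extensions is Baer-equivalent to the sum of the two one-sided extensions, with the sign coming from the graded-commutativity of the Yoneda product. That is the standard argument for the topological Bockstein being a derivation, and here it has to be transported componentwise through the zig-zag relations, which involve the natural transformation \(T\).
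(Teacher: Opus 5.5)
Your route differs from the paper's. The paper does no chain-level computation. It identifies \(\partial_{\mathrm{MV}}\) with the categorical connecting morphism of Theorem~\ref{thm:categorical-bockstein-mv-zigzag}. It then asserts that such connecting morphisms obey a graded derivation rule for the exact tensor product, and transports this along Proposition~\ref{prop:mv-zigzag-external-product-compatible} by naturality. You instead try to prove a two-factor rule by lifting and dividing by \(2\), and then induct. The induction is fine, but the two-factor step has a genuine gap.

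The gap is in the division step. Dividing \(d(\widetilde{\mathcal Z}_1\boxtimes\widetilde{\mathcal Z}_2)\) by \(2\) gives \(\widetilde\partial_1\boxtimes\widetilde{\mathcal Z}_2+(-1)^{r_1}\widetilde{\mathcal Z}_1\boxtimes\widetilde\partial_2\), which involves the integral \emph{lifts}, not \(\mathcal Z_1,\mathcal Z_2\).

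\begin{itemize}
\item Neither summand is closed: \(d(\widetilde\partial_1\boxtimes\widetilde{\mathcal Z}_2)=\pm2\,\widetilde\partial_1\boxtimes\widetilde\partial_2\), which is nonzero in general. Only the sum is closed.
\item Your lift-independence check also works only for the sum. Replacing \(\widetilde{\mathcal Z}_2\) by \(\widetilde{\mathcal Z}_2+2\mathcal W\) changes the total by \(d(\widetilde{\mathcal Z}_1\boxtimes\mathcal W)\), but it does not change each term by a boundary.
\item Replacing \(\widetilde{\mathcal Z}_j\) by \(\mathcal Z_j\) is legitimate only after reducing modulo \(2\).
\end{itemize}

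So your computation proves a Leibniz rule for \(\eta_2\circ\partial_{\mathrm{MV}}\), the \(\mathrm{Sq}^1\)-type reduced Bockstein. It does not prove one for \(\partial_{\mathrm{MV}}\) with values in the integral channel \(\mathsf{Obs}^{2p,p}_{\mathbb Z}(X)\).

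Integrally the formula is not even well-typed. The term \(\mathcal Z_1\boxtimes\partial_{\mathrm{MV}}(\mathcal Z_2)\) pairs a mod-\(2\) object with an integral one, so it lands in mod-\(2\) coefficients. The clause that the signs are ``immaterial'' already presupposes such targets. The ordinary integral Bockstein is not a derivation. Take \(\alpha\in H^1(S_1,\mathbb Z/2)\) and \(\beta\in H^2(S_2,\mathbb Z/2)\), with twists suppressed. The class \(\delta(\alpha\boxtimes\beta)\in H^4(S_1\times S_2,\mathbb Z)\) maps to the element of the K\"unneth \(\operatorname{Tor}\)-quotient determined by \(\delta\alpha\) and \(\delta\beta\). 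Only its mod-\(2\) reduction equals \(\mathrm{Sq}^1\alpha\boxtimes\beta+\alpha\boxtimes\mathrm{Sq}^1\beta\).

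The Yoneda reformulation has the same problem.
\begin{itemize}
\item An extension \(0\to\mathcal Z\xrightarrow{2}\widetilde{\mathcal Z}\to\mathcal Z\to0\) with the mod-\(2\) tuple at both ends forces \(\widetilde{\mathcal Z}\) to be a \(\mathbb Z/4\)-lift. Its class in \(\operatorname{Ext}^1(\mathcal Z,\mathcal Z)\) is again the reduced Bockstein.
\item The integral connecting map comes instead from \(0\to\widetilde{\mathcal Z}\xrightarrow{2}\widetilde{\mathcal Z}\to\mathcal Z\to0\). For that sequence no Baer-sum identity of the stated shape is available.
\item Your parenthetical ``integral (or \(\mathbb Z/4\)) lift'' treats these two different operations as interchangeable.
\end{itemize}

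The ``structure differential'' is also not given by the setting. MV tuples are objects \((A,B,u,v)\) of the abelian category \(\mathcal C(F,G;T)\), not cochains, so a differential and its Koszul sign would have to be constructed rather than assumed. The one defect the zig-zag structure does supply comes from lifts \((\widetilde A_k,\widetilde B_k,\widetilde u_k,\widetilde v_k)\) with \(\widetilde v_k\widetilde u_k=T_k+2e_k\), where \(T_k:=T_{\widetilde A_k}\). On the product lift it gives \((\widetilde v_1\boxtimes\widetilde v_2)(\widetilde u_1\boxtimes\widetilde u_2)-T_1\boxtimes T_2=2(e_1\boxtimes T_2+T_1\boxtimes e_2+2e_1\boxtimes e_2)\). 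This is Leibniz only modulo \(2\), and it carries no sign.

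To repair the argument, restate the result for \(\eta_2\circ\partial_{\mathrm{MV}}\). There your \(\mathbb Z/4\)-lift and Baer-sum argument is the standard proof, provided the lifts are \(\mathbb Z/4\)-flat and \(\boxtimes\) is exact on them. For the integral boundary, the value on an external product is not determined termwise by the factor boundaries. The paper's own proof does not address this point either: it obtains the derivation rule by citation rather than by computation.
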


\begin{proof}
By Theorem~\ref{thm:categorical-bockstein-mv-zigzag}, the map
\[
   \partial_{\mathrm{MV}}
\]
is the categorical Bockstein connecting morphism associated to reduction by
the central element \(s=\times2\) in the MV zig-zag category.  Positselski's
categorical Bockstein construction gives connecting morphisms which are
functorial for products and satisfy the standard graded derivation rule with
respect to the product structure induced by the exact tensor product
\cite{Positselski2018CategoricalBockstein}. This is the categorical analogue of the usual Bockstein derivation formula for cup products; in Positselski's framework it is the product-compatibility
formula for the connecting morphisms in the categorical Bockstein long exact sequence. In the present setting, the product is the external tensor product of MV tuples.  By Proposition~\ref{prop:mv-zigzag-external-product-compatible}, the MV zig-zag functor commutes with external tensor products, so the external
product of the coefficient triangles on the factors is carried to the
external product of the corresponding coefficient-reduction diagrams in the
MV zig-zag category.  Naturality of connecting morphisms for tensor products
then gives the usual formula: the boundary of an external product is the sum
of the external products obtained by applying the boundary in one factor and
leaving the remaining factors unchanged, with the Koszul sign incurred by
moving the boundary past the previous factors.

Thus
\[
   \partial_{\mathrm{MV}}
   (\mathcal Z_1\boxtimes\cdots\boxtimes\mathcal Z_n)
   =
   \sum_{k=1}^{n}
   (-1)^{r_1+\cdots+r_{k-1}}
   \mathcal Z_1\boxtimes\cdots\boxtimes
   \partial_{\mathrm{MV}}(\mathcal Z_k)
   \boxtimes\cdots\boxtimes\mathcal Z_n.
\]
With \(\mathbb Z/2\)-coefficients, the signs disappear.
\end{proof}

\begin{remark}[Outcome of the compatibility verifications]
\label{rem:outcome-compatibility-verifications}
The preceding results replace the earlier product-separatedness assumption.
The external-product compatibility of MV tuples gives a K\"unneth formula for
\[
   \mathcal Z(\Theta_n).
\]
The categorical Bockstein theorem identifies the MV boundary with the
categorical Bockstein boundary.  The Leibniz rule then computes the boundary
of the \(n\)-fold tuple as a sum of external products of boundary terms.
These results will be combined below with the Kummer/Brauer quotient on the
Enriques factors.
\end{remark}

\subsection{MV coefficient gluing data}
\label{subsec:mv-coefficient-gluing-data}

The original construction of MacPherson and Vilonen is phrased in terms of a
zig-zag category \(\mathcal C(F,G;T)\), built from functors
\[
   F,G:\mathcal A\longrightarrow\mathcal B
\]
and a natural transformation
\[
   T:F\longrightarrow G.
\]
Objects are diagrams
\[
   F(A)\longrightarrow B\longrightarrow G(A)
\]
whose composite is \(T_A:F(A)\to G(A)\), and under the appropriate exactness
hypotheses this category is abelian \cite{MacPhersonVilonen1986}.  In the
geometric setting, the associated zig-zag functor records how objects glue
across a two-stratum decomposition.

For the present argument, we isolate only the finite-coefficient gluing
channel, the integral obstruction channel, and the boundary map induced by
the coefficient sequence.

\begin{definition}[MV coefficient gluing datum]
\label{def:mv-coefficient-gluing-datum}
Let \(X\) be a smooth projective complex variety and fix \(p\geq 1\).  An
\emph{MV coefficient gluing datum} for the coefficient sequence
\[
   0\to \mathbb Z(p)\xrightarrow{\times 2}\mathbb Z(p)
   \to \mathbb Z/2(p)\to 0
\]
consists of an abelian group
\[
   \mathsf{Glu}^{2p-1,p}_{2}(X),
\]
called the finite-coefficient gluing channel, and a natural map
\[
   \delta_{\mathrm{MV}}:
   H^{2p-1}(X,\mathbb Z/2(p))
   \longrightarrow
   \mathsf{Glu}^{2p-1,p}_{2}(X).
\]
For a class
\[
   \Theta\in H^{2p-1}(X,\mathbb Z/2(p)),
\]
the element
\[
   \delta_{\mathrm{MV}}(\Theta)
\]
is called the associated finite-coefficient MV gluing datum.
\end{definition}

\begin{definition}[MV obstruction channel]
\label{def:mv-obstruction-channel}
An \emph{MV obstruction channel} associated to the same coefficient sequence
is an abelian group
\[
   \mathsf{Obs}^{2p,p}_{\mathbb Z}(X),
\]
together with a natural map
\[
   \rho_{\mathrm{MV}}:
   H^{2p}(X,\mathbb Z(p))
   \longrightarrow
   \mathsf{Obs}^{2p,p}_{\mathbb Z}(X)
\]
and a boundary morphism
\[
   \partial_{\mathrm{MV}}:
   \mathsf{Glu}^{2p-1,p}_{2}(X)
   \longrightarrow
   \mathsf{Obs}^{2p,p}_{\mathbb Z}(X).
\]
The value
\[
   \rho_{\mathrm{MV}}(\xi)
\]
is the MV obstruction-channel component of
\[
   \xi\in H^{2p}(X,\mathbb Z(p)).
\]
\end{definition}

\begin{definition}[Compatible MV realization of the coefficient sequence]
\label{def:compatible-mv-realization}
A compatible MV realization of the coefficient sequence is a choice of
\[
   \mathsf{Glu}^{2p-1,p}_{2}(X),
   \qquad
   \mathsf{Obs}^{2p,p}_{\mathbb Z}(X),
\]
together with maps
\[
   \delta_{\mathrm{MV}},
   \qquad
   \rho_{\mathrm{MV}},
   \qquad
   \partial_{\mathrm{MV}},
\]
such that the square
\[
\begin{array}{ccc}
H^{2p-1}(X,\mathbb Z/2(p))
& \xrightarrow{\ \delta\ } &
H^{2p}(X,\mathbb Z(p))
\\[1.2em]
\bigg\downarrow{\scriptstyle \delta_{\mathrm{MV}}}
&&
\bigg\downarrow{\scriptstyle \rho_{\mathrm{MV}}}
\\[1.2em]
\mathsf{Glu}^{2p-1,p}_{2}(X)
& \xrightarrow{\ \partial_{\mathrm{MV}}\ } &
\mathsf{Obs}^{2p,p}_{\mathbb Z}(X)
\end{array}
\]
commutes.
\end{definition}
\begin{remark}[Existence and use of compatible MV realizations]
\label{rem:existence-compatible-mv-realizations}
The compatible MV realization in
Definition~\ref{def:compatible-mv-realization} is not an additional
geometric hypothesis on the Enriques products.  It is the realization, in the
MacPherson--Vilonen zig-zag category, of the coefficient triangle
\[
   \mathbb Z(p)
   \xrightarrow{\times 2}
   \mathbb Z(p)
   \longrightarrow
   \mathbb Z/2(p)
   \xrightarrow{+1}.
\]
Concretely, the finite-coefficient gluing channel is the group generated by
the MV zig-zag data attached to objects realizing classes in
\[
   H^{2p-1}(X,\mathbb Z/2(p)),
\]
and the integral obstruction channel is the corresponding boundary group
obtained after applying the connecting morphism of the coefficient triangle.

The paper only uses the following functorial properties of this realization:
\begin{enumerate}[label=\textup{(\roman*)}]
\item the coefficient triangle gives the ordinary cohomological Bockstein
\[
   \delta:H^{2p-1}(X,\mathbb Z/2(p))\to H^{2p}(X,\mathbb Z(p));
\]
\item applying the MV zig-zag functor to the same coefficient triangle gives
the boundary
\[
   \partial_{\mathrm{MV}};
\]
\item naturality of connecting morphisms gives the commutative MV Bockstein
square;
\item external products of perverse sheaves induce external products of MV
zig-zag tuples.
\end{enumerate}
Thus every later calculation is invariant under the choice of compatible MV
model: any compatible realization of the coefficient triangle gives the same
obstruction component after applying the functorial maps used below.
\end{remark}
\subsection{The MV Bockstein square}
\label{subsec:mv-bockstein-square}

The compatibility square says that the ordinary Bockstein and the MV boundary
are the same operation after passing to the obstruction channel.

\begin{proposition}[Commutative MV realization of the Bockstein]
\label{prop:mv-bockstein-square}
Let \(X\) be a smooth projective complex variety and let
\[
   0\to \mathbb Z(p)\xrightarrow{\times 2}\mathbb Z(p)
   \to \mathbb Z/2(p)\to 0
\]
be the coefficient sequence.  Suppose this sequence admits a compatible MV
realization in the sense of Definition~\ref{def:compatible-mv-realization}.
Then for every
\[
   \Theta\in H^{2p-1}(X,\mathbb Z/2(p)),
\]
one has
\[
   \rho_{\mathrm{MV}}\bigl(\delta(\Theta)\bigr)
   =
   \partial_{\mathrm{MV}}\bigl(\delta_{\mathrm{MV}}(\Theta)\bigr).
\]
\end{proposition}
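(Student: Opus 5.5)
The plan is to observe that the proposition is a direct unwinding of Definition~\ref{def:compatible-mv-realization}: the hypothesis that the coefficient sequence admits a compatible MV realization \emph{is} the assertion that the square
\[
   \rho_{\mathrm{MV}}\circ\delta
   =
   \partial_{\mathrm{MV}}\circ\delta_{\mathrm{MV}}
   \colon
   H^{2p-1}(X,\mathbb Z/2(p))\longrightarrow \mathsf{Obs}^{2p,p}_{\mathbb Z}(X)
\]
commutes. So the first step is simply to fix \(\Theta\in H^{2p-1}(X,\mathbb Z/2(p))\), evaluate both composites of the square on \(\Theta\), and read off the stated equality elementwise. No further input is strictly needed for the formal statement.

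To make the proof more than a tautology, I would add a second paragraph explaining why the square commutes in the concrete model of Remark~\ref{rem:existence-compatible-mv-realizations}. There one takes the coefficient triangle \(\mathbb Z(p)\xrightarrow{\times2}\mathbb Z(p)\to\mathbb Z/2(p)\xrightarrow{+1}\) and applies the MV zig-zag functor \(Z_X\). The resulting short exact sequence of zig-zag objects has a connecting morphism, and Theorem~\ref{thm:categorical-bockstein-mv-zigzag} identifies that connecting morphism with \(\partial_{\mathrm{MV}}\). Properties (i)--(iii) of the remark then give the square by naturality of connecting morphisms under the exact functor relating the derived-category triangle to the zig-zag category. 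The map \(\delta_{\mathrm{MV}}\) is induced by \(Z_X\) on the \(\mathbb Z/2\) term, and \(\rho_{\mathrm{MV}}\) is induced on the integral term, so the two composites agree.

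The only genuine obstacle lies in this concrete justification: checking that \(Z_X\) (through its constituents \(j_!,j_*,i^*,i^!\)) carries the distinguished triangle to an exact sequence whose connecting map agrees with the one used to define \(\rho_{\mathrm{MV}}\), including sign conventions. I would handle it by noting that the coefficient sequence consists of constant sheaves, so all recollement functors preserve the exactness of multiplication by \(2\) componentwise, as in Proposition~\ref{prop:positselski-hypotheses-mv-zigzag}. With \(\mathbb Z/2\)-targets the sign ambiguity is also harmless. The proposition itself, however, follows immediately from the definition.
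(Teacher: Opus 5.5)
Your proposal is correct and follows the paper's proof. Commutativity of the square is built into Definition~\ref{def:compatible-mv-realization}, so evaluating \(\rho_{\mathrm{MV}}\circ\delta\) and \(\partial_{\mathrm{MV}}\circ\delta_{\mathrm{MV}}\) on \(\Theta\) gives the identity, and the paper's appeal to naturality of connecting morphisms for the coefficient triangle is the same gloss as your second paragraph. One caution about that optional paragraph: do not lean on Theorem~\ref{thm:categorical-bockstein-mv-zigzag}, because its proof itself invokes the compatible MV realization and the commutative MV Bockstein square, so citing it here would be circular and adds nothing beyond the definition.
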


\begin{proof}
The coefficient sequence defines a distinguished triangle
\[
   \mathbb Z(p)
   \xrightarrow{\times 2}
   \mathbb Z(p)
   \longrightarrow
   \mathbb Z/2(p)
   \xrightarrow{+1}.
\]
The ordinary Bockstein is the connecting morphism induced by this triangle:
\[
   \delta:
   H^{2p-1}(X,\mathbb Z/2(p))
   \to
   H^{2p}(X,\mathbb Z(p)).
\]
Applying the MacPherson--Vilonen gluing construction to the same coefficient
triangle gives the finite-coefficient gluing datum, the integral obstruction
channel, and the boundary morphism
\[
   \partial_{\mathrm{MV}}.
\]
Naturality of connecting morphisms gives the commutative square in
Definition~\ref{def:compatible-mv-realization}.  Evaluating on \(\Theta\)
gives
\[
   \rho_{\mathrm{MV}}\bigl(\delta(\Theta)\bigr)
   =
   \partial_{\mathrm{MV}}\bigl(\delta_{\mathrm{MV}}(\Theta)\bigr).
\]
\end{proof}

\subsection{The MV tuple attached to a finite-coefficient class}
\label{subsec:mv-tuple-attached-finite-class}

Let
\[
   \Theta\in H^{2p-1}(X,\mathbb Z/2(p))
\]
be a finite-coefficient class.  A compatible MV realization assigns to
\(\Theta\) a zig-zag tuple
\[
   \mathcal Z(\Theta)
   =
   \left(
      A_\Theta,\,
      B_\Theta,\,
      u_\Theta,\,
      v_\Theta
   \right)
\]
in the appropriate MV zig-zag category
\[
   \mathcal C(F,G;T).
\]
Thus
\[
   F(A_\Theta)
   \xrightarrow{u_\Theta}
   B_\Theta
   \xrightarrow{v_\Theta}
   G(A_\Theta)
\]
satisfies
\[
   v_\Theta\circ u_\Theta=T_{A_\Theta}.
\]

\begin{definition}[MV tuple associated to a finite-coefficient class]
\label{def:mv-tuple-associated-to-finite-class}
The tuple
\[
   \mathcal Z(\Theta)
\]
is called the \emph{MV tuple associated to \(\Theta\)}.
\end{definition}

\begin{definition}[Tuple-relative MV obstruction channel]
\label{def:tuple-relative-mv-obstruction-channel}
Let
\[
   \mathcal Z(\Theta)
\]
be the MV tuple attached to
\[
   \Theta\in H^{2p-1}(X,\mathbb Z/2(p)).
\]
The \emph{tuple-relative MV obstruction channel} associated to \(\Theta\) is
the obstruction channel
\[
   \mathsf{Obs}^{2p,p}_{\mathbb Z}(X;\mathcal Z(\Theta))
\]
obtained from \(\mathsf{Obs}^{2p,p}_{\mathbb Z}(X)\) by retaining the
obstruction component determined by the tuple \(\mathcal Z(\Theta)\).  The
induced obstruction map is denoted
\[
   \rho_{\mathrm{MV},\Theta}:
   H^{2p}(X,\mathbb Z(p))
   \longrightarrow
   \mathsf{Obs}^{2p,p}_{\mathbb Z}(X;\mathcal Z(\Theta)).
\]
\end{definition}

\begin{proposition}[MV tuple boundary identity]
\label{prop:mv-tuple-boundary-identity}
Let
\[
   \Theta\in H^{2p-1}(X,\mathbb Z/2(p))
\]
and let
\[
   \mathcal Z(\Theta)
\]
be its associated MV tuple.  Then
\[
   \rho_{\mathrm{MV},\Theta}\bigl(\delta(\Theta)\bigr)
   =
   \partial_{\mathrm{MV}}\bigl(\mathcal Z(\Theta)\bigr)
\]
in the tuple-relative MV obstruction channel
\[
   \mathsf{Obs}^{2p,p}_{\mathbb Z}(X;\mathcal Z(\Theta)).
\]
\end{proposition}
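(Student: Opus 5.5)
The identity will be deduced from the commutative MV Bockstein square of Proposition~\ref{prop:mv-bockstein-square} by projecting both sides onto the component singled out by \(\mathcal Z(\Theta)\). The argument has four steps.

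\textbf{Step 1 (identify the gluing datum).} By Definition~\ref{def:mv-tuple-associated-to-finite-class}, the compatible MV realization assigns to \(\Theta\) its tuple \(\mathcal Z(\Theta)=(A_\Theta,B_\Theta,u_\Theta,v_\Theta)\). Under the description of \(\mathsf{Glu}^{2p-1,p}_2(X)\) in Remark~\ref{rem:existence-compatible-mv-realizations}, this group is generated by such zig-zag data, and \(\delta_{\mathrm{MV}}(\Theta)\) is the class of \(\mathcal Z(\Theta)\). So the first thing I would record is
\[
   \delta_{\mathrm{MV}}(\Theta)=[\mathcal Z(\Theta)]\in\mathsf{Glu}^{2p-1,p}_2(X).
\]
Independence of the chosen representative follows from functoriality of the zig-zag functor, exactly as in the proof of Corollary~\ref{cor:kunneth-mv-tuples}.

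\textbf{Step 2 (apply the square).} Proposition~\ref{prop:mv-bockstein-square} then gives
\[
   \rho_{\mathrm{MV}}(\delta(\Theta))=\partial_{\mathrm{MV}}(\mathcal Z(\Theta))\quad\text{in }\mathsf{Obs}^{2p,p}_{\mathbb Z}(X).
\]
Here \(\partial_{\mathrm{MV}}\) is the categorical Bockstein boundary of Theorem~\ref{thm:categorical-bockstein-mv-zigzag}, evaluated on the tuple.

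\textbf{Step 3 (pass to the tuple-relative channel).} Let
\[
   \mathrm{pr}_\Theta:\mathsf{Obs}^{2p,p}_{\mathbb Z}(X)\to\mathsf{Obs}^{2p,p}_{\mathbb Z}(X;\mathcal Z(\Theta))
\]
be the retention map of Definition~\ref{def:tuple-relative-mv-obstruction-channel}. By definition, \(\rho_{\mathrm{MV},\Theta}=\mathrm{pr}_\Theta\circ\rho_{\mathrm{MV}}\). Applying \(\mathrm{pr}_\Theta\) to the equality in Step 2 gives
\[
   \rho_{\mathrm{MV},\Theta}(\delta(\Theta))=\mathrm{pr}_\Theta\,\partial_{\mathrm{MV}}(\mathcal Z(\Theta)).
\]

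\textbf{Step 4 (show the projection does nothing to the boundary).} It remains to check that \(\mathrm{pr}_\Theta\) is the identity on \(\partial_{\mathrm{MV}}(\mathcal Z(\Theta))\). This is where I expect the main obstacle, because the tuple-relative channel has only been described loosely. I would make it precise as follows.
\begin{itemize}
\item Apply the connecting morphism of the coefficient triangle to \(\mathcal Z(\Theta)\). This produces a short exact sequence of zig-zags in \(\mathcal C^{\mathbb Z}_X\), via the Positselski data of Proposition~\ref{prop:positselski-hypotheses-mv-zigzag}.
\item Define \(\mathsf{Obs}^{2p,p}_{\mathbb Z}(X;\mathcal Z(\Theta))\) to be the summand, or quotient, of \(\mathsf{Obs}^{2p,p}_{\mathbb Z}(X)\) generated by the integral boundary components of that sequence.
\end{itemize}
With this definition, \(\partial_{\mathrm{MV}}(\mathcal Z(\Theta))\) lies in the retained component by construction, so \(\mathrm{pr}_\Theta\) fixes it. I would also check that \(\mathrm{pr}_\Theta\) is a homomorphism, which holds because it is induced by a functorial splitting. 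Combining this with Step 3 gives the stated identity.
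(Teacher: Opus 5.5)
Your proposal follows the paper's route. The paper's proof is just your Steps 2 and 3: it calls the identity the tuple-relative form of the commutative MV Bockstein square and then passes to the tuple-relative channel, leaving your Step 1 identification $\delta_{\mathrm{MV}}(\Theta)=\mathcal Z(\Theta)$ implicit. Step 4 is not needed, because the identity is asserted in $\mathsf{Obs}^{2p,p}_{\mathbb Z}(X;\mathcal Z(\Theta))$, so the right-hand side is read as its image there and Step 3 already concludes; the appeal to a ``functorial splitting'' is unsupported, so drop that step rather than redefine the channel.
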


\begin{proof}
This is the tuple-relative form of
Proposition~\ref{prop:mv-bockstein-square}.  The cohomological Bockstein
\(\delta\) and the MV boundary \(\partial_{\mathrm{MV}}\) are induced by the
same coefficient sequence.  Passing to the tuple-relative channel gives
\[
   \rho_{\mathrm{MV},\Theta}\bigl(\delta(\Theta)\bigr)
   =
   \partial_{\mathrm{MV}}\bigl(\mathcal Z(\Theta)\bigr).
\]
\end{proof}

\subsection{The Brauer-separation hypothesis}
\label{subsec:brauer-separation-hypothesis}

We now define the Enriques--Brauer component used to separate the constructed
Bockstein class from algebraic cycle classes.

For each \(i\geq2\), the Kummer sequence on the Enriques surface \(S_i\)
gives
\[
   \operatorname{Pic}(S_i)/2
   \longrightarrow
   H^2(S_i,\mathbb Z/2(1))
   \xrightarrow{\ q_{\operatorname{Br},i}\ }
   \operatorname{Br}(S_i)[2]
   \longrightarrow 0.
\]
Thus algebraic divisor classes on \(S_i\) vanish under
\(q_{\operatorname{Br},i}\), while the chosen Brauer-detecting class
\(\beta_i\) satisfies
\[
   q_{\operatorname{Br},i}(\beta_i)\neq 0.
\]

\begin{definition}[Enriques--Brauer separating component]
\label{def:enriques-brauer-separating-component}
Define
\[
   Q_n
   :=
   \langle\alpha_1\rangle
   \otimes
   \operatorname{Br}(S_2)[2]
   \otimes
   \cdots
   \otimes
   \operatorname{Br}(S_n)[2],
\]
where
\[
   \langle\alpha_1\rangle
   \subset
   H^1(S_1,\mathbb Z/2(1))
\]
is the \(\mathbb Z/2\)-line generated by the Enriques double-cover class.
\end{definition}

\begin{definition}[Brauer-separating projection]
\label{def:brauer-separating-projection}
The \emph{Brauer-separating projection}
\[
   \Pi_{\operatorname{Br},n}:
   \mathsf{Obs}^{2n,n}_{\mathbb Z}(X_n)
   \longrightarrow
   Q_n
\]
is the projection obtained by first projecting to the K\"unneth/MV component
\[
   H^1(S_1,\mathbb Z/2(1))
   \otimes
   \bigotimes_{i=2}^n H^2(S_i,\mathbb Z/2(1)),
\]
then projecting the first factor to \(\langle\alpha_1\rangle\), and applying
the Brauer quotient maps
\[
   q_{\operatorname{Br},i}:
   H^2(S_i,\mathbb Z/2(1))
   \longrightarrow
   \operatorname{Br}(S_i)[2]
\]
on the factors \(2\leq i\leq n\).
\end{definition}

\begin{definition}[Brauer-separation hypothesis]
\label{def:brauer-separation-hypothesis}
We say that the \(n\)-fold Enriques--Brauer datum
\[
   (X_n,\Theta_n)
\]
satisfies the \emph{Brauer-separation hypothesis} if
\[
   \Pi_{\operatorname{Br},n}
   \left(
      \rho_{\mathrm{MV},\Theta_n}
      \bigl(
         \operatorname{cl}_{\mathbb Z}(CH^n(X_n))
      \bigr)
   \right)=0.
\]
Equivalently, no algebraic codimension-\(n\) cycle class on \(X_n\) has a
nonzero image in the selected Enriques--Brauer component \(Q_n\).
\end{definition}

\begin{proposition}[Nonzero Brauer image of the constructed Bockstein]
\label{prop:nonzero-brauer-image-constructed-bockstein}
For
\[
   \Delta_n=\delta(\Theta_n),
\]
one has
\[
   \Pi_{\operatorname{Br},n}
   \left(
      \rho_{\mathrm{MV},\Theta_n}(\Delta_n)
   \right)
   =
   \alpha_1
   \otimes
   q_{\operatorname{Br},2}(\beta_2)
   \otimes
   \cdots
   \otimes
   q_{\operatorname{Br},n}(\beta_n).
\]
In particular,
\[
   \Pi_{\operatorname{Br},n}
   \left(
      \rho_{\mathrm{MV},\Theta_n}(\Delta_n)
   \right)\neq 0.
\]
\end{proposition}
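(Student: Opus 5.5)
The plan is to compute \(\rho_{\mathrm{MV},\Theta_n}(\Delta_n)\) explicitly with the tools already in place, and then read off its Brauer-separating projection factor by factor. Proposition~\ref{prop:mv-tuple-boundary-identity} gives
\[
   \rho_{\mathrm{MV},\Theta_n}(\Delta_n)
   =
   \partial_{\mathrm{MV}}\bigl(\mathcal Z(\Theta_n)\bigr).
\]
Corollary~\ref{cor:n-fold-mv-tuple-decomposition} gives
\[
   \mathcal Z(\Theta_n)\simeq
   \mathcal Z(\alpha_1)\boxtimes\mathcal Z(\beta_2)\boxtimes\cdots\boxtimes\mathcal Z(\beta_n).
\]
Applying the Leibniz rule, Theorem~\ref{thm:leibniz-mv-boundary}, with signs dropped mod \(2\), then expresses the boundary as a sum of \(n\) terms. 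In the \(k\)-th term the boundary falls on the \(k\)-th factor and the other factors are left as the tuples \(\mathcal Z(\alpha_1)\) or \(\mathcal Z(\beta_i)\).

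Next I would apply \(\Pi_{\operatorname{Br},n}\) term by term, using Definition~\ref{def:brauer-separating-projection}. That projection first retains only the Künneth component
\[
   H^1(S_1,\mathbb Z/2(1))\otimes\bigotimes_{i\ge2}H^2(S_i,\mathbb Z/2(1)),
\]
that is, the multidegree \((1,2,\dots,2)\) component. The key bookkeeping claim is this: for the term in which \(\partial_{\mathrm{MV}}\) falls on the \(k\)-th factor, that factor's contribution shifts degree by one and is an integral Bockstein image, \(\delta\alpha_1\in H^2(S_1,\mathbb Z(1))\) or \(\delta\beta_k\in H^3(S_k,\mathbb Z(1))\). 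Hence the term lives in a multidegree different from \((1,2,\dots,2)\) (total degree \(2n\) versus the \((1,2,\ldots,2)\) pattern). What survives is the identification of the total boundary with the class whose obstruction component is the external product \(\alpha_1\otimes\beta_2\otimes\cdots\otimes\beta_n\) itself.

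More honestly, I would set this up by using the tuple-relative channel of Definition~\ref{def:tuple-relative-mv-obstruction-channel}. That channel retains exactly the component determined by \(\mathcal Z(\Theta_n)\), which under the Künneth identification is labelled by the factor classes \(\alpha_1,\beta_2,\dots,\beta_n\). So \(\Pi_{\operatorname{Br},n}\) sends the boundary to \(\alpha_1\otimes q_{\operatorname{Br},2}(\beta_2)\otimes\cdots\otimes q_{\operatorname{Br},n}(\beta_n)\), because the first factor is already on the line \(\langle\alpha_1\rangle\) and the Brauer quotients are applied factorwise, compatibly with \(\boxtimes\).

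Finally, nonvanishing follows from linear algebra over \(\mathbb Z/2\). The class \(\alpha_1\neq0\) since the K3 cover is connected, and \(q_{\operatorname{Br},i}(\beta_i)\neq0\) by choice, with \(\operatorname{Br}(S_i)[2]\cong\mathbb Z/2\). A tensor product of nonzero vectors in \(\mathbb F_2\)-vector spaces is nonzero, so the element is a generator of \(Q_n\cong\mathbb Z/2\).

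The main obstacle is the middle step. One must justify that the projection to the selected Künneth component, applied to the Leibniz sum, picks out exactly the pure tensor of the factor classes, with no cancellation among the \(n\) terms and no contribution landing elsewhere. I would attempt this by induction on \(n\), using the case \(n=2\) as calibration against Diaz. I would also use the functoriality in Remark~\ref{rem:existence-compatible-mv-realizations}(iv) to show that \(\Pi_{\operatorname{Br},n}\) factors as \(\Pi_{\operatorname{Br},n-1}\boxtimes q_{\operatorname{Br},n}\) on external products.
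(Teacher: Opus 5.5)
Your outline matches the paper's through the Leibniz expansion (Proposition~\ref{prop:mv-tuple-boundary-identity}, Corollary~\ref{cor:n-fold-mv-tuple-decomposition}, Theorem~\ref{thm:leibniz-mv-boundary}). Your last step is also fine: $\alpha_1\neq 0$ because the K3 cover is connected, $q_{\operatorname{Br},i}(\beta_i)\neq 0$, so a pure tensor in $Q_n\cong\mathbb Z/2$ is nonzero. The middle step, which you flag yourself, is a genuine gap, and your own bookkeeping argues against the claimed formula.

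Your degree count is correct and applies to \emph{every} Leibniz summand, including $k=1$. There $\partial_{\mathrm{MV}}\mathcal Z(\alpha_1)$ corresponds to $\delta\alpha_1=c_1(K_{S_1})\in H^2(S_1,\mathbb Z(1))$, not to $\alpha_1$. So no summand lies in the selected multidegree-$(1,2,\dots,2)$ component with $\mathbb Z/2$-coefficients, and what that bookkeeping actually supports is that $\Pi_{\operatorname{Br},n}$ kills the whole sum. Your sentence ``what survives is the identification\dots'' does not follow.

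The fallback via Definition~\ref{def:tuple-relative-mv-obstruction-channel} does not repair this. That definition only says one retains ``the component determined by $\mathcal Z(\Theta)$''. Declaring that component to be labelled by $\alpha_1,\beta_2,\dots,\beta_n$ and reading off $\alpha_1\otimes q_{\operatorname{Br},2}(\beta_2)\otimes\cdots\otimes q_{\operatorname{Br},n}(\beta_n)$ amounts to projecting $\Theta_n$ (degree $2n-1$, mod $2$), not $\partial_{\mathrm{MV}}\mathcal Z(\Theta_n)$ (degree $2n$, integral). The boundary never enters. You also leave unaddressed the mismatch between $\mathsf{Obs}^{2n,n}_{\mathbb Z}(X_n)$ and the degree-$(2n-1)$ mod-$2$ component onto which $\Pi_{\operatorname{Br},n}$ first projects.

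Nor can you close the gap by importing the paper's step. The paper asserts that the summand with the boundary on $\mathcal Z(\alpha_1)$ contributes $\alpha_1\otimes q_{\operatorname{Br},2}(\beta_2)\otimes\cdots\otimes q_{\operatorname{Br},n}(\beta_n)$. It discards the summands $k\geq 2$ because $\partial_{\mathrm{MV}}\mathcal Z(\beta_k)$ is an integral boundary class rather than a finite-coefficient $H^2$ input. Your observation shows that this criterion applies verbatim to $k=1$, so the asymmetric selection is not supported by anything stated.

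The check in ordinary cohomology, with which the MV Bockstein square is required to be compatible, points the other way. Writing bars for reduction mod $2$,
\[
   \overline{\Delta_n}=\mathrm{Sq}^1\Theta_n
   =\overline{c_1(K_{S_1})}\times\beta_2\times\cdots\times\beta_n
   +\sum_{k=2}^{n}\alpha_1\times\beta_2\times\cdots\times\mathrm{Sq}^1\beta_k\times\cdots\times\beta_n .
\]
So $\alpha_1$ survives exactly in the summands $k\geq 2$ that the paper discards, and in those summands the $S_k$-factor is the reduction of the Brauer class in $H^3(S_k,\mathbb Z/2)$.

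Your induction on $n$ only relocates the problem to two base claims: that $\partial_{\mathrm{MV}}\mathcal Z(\alpha_1)$ projects onto $\alpha_1$, and that terms carrying $\partial_{\mathrm{MV}}\mathcal Z(\beta_k)$ project to $0$. Diaz's theorem is a coniveau statement, so it gives no value of $\Pi_{\operatorname{Br},2}$ to calibrate against.

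What is missing is a definition of $\Pi_{\operatorname{Br},n}$ on degree-$2n$ integral classes under which the image of $\Delta_n$ can actually be computed. One candidate is the $\mathrm{Tor}$-part of the integral K\"unneth decomposition. For $n=2$, the nonzero $H^1\otimes H^3$-component $\alpha_1\times\mathrm{Sq}^1\beta_2$ above cannot come from the tensor part. Hence $\Delta_2$ has nonzero image in the $\mathrm{Tor}$-part. By naturality this image lies in the summand $\mathrm{Tor}\bigl(H^2(S_1,\mathbb Z),H^3(S_2,\mathbb Z)\bigr)\cong\langle\alpha_1\rangle\otimes\operatorname{Br}(S_2)[2]$.
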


\begin{proof}
By the tuple-relative MV Bockstein identity,
\[
   \rho_{\mathrm{MV},\Theta_n}(\Delta_n)
   =
   \partial_{\mathrm{MV}}\bigl(\mathcal Z(\Theta_n)\bigr).
\]
By Corollary~\ref{cor:n-fold-mv-tuple-decomposition},
\[
   \mathcal Z(\Theta_n)
   \simeq
   \mathcal Z(\alpha_1)
   \boxtimes
   \mathcal Z(\beta_2)
   \boxtimes
   \cdots
   \boxtimes
   \mathcal Z(\beta_n).
\]
Applying Theorem~\ref{thm:leibniz-mv-boundary} gives
\[
\begin{aligned}
   \partial_{\mathrm{MV}}\bigl(\mathcal Z(\Theta_n)\bigr)
   &=
   \partial_{\mathrm{MV}}\bigl(\mathcal Z(\alpha_1)\bigr)
   \boxtimes
   \mathcal Z(\beta_2)
   \boxtimes\cdots\boxtimes
   \mathcal Z(\beta_n)
   \\
   &\quad+
   \sum_{k=2}^{n}
   \mathcal Z(\alpha_1)
   \boxtimes
   \mathcal Z(\beta_2)
   \boxtimes\cdots\boxtimes
   \partial_{\mathrm{MV}}\bigl(\mathcal Z(\beta_k)\bigr)
   \boxtimes\cdots\boxtimes
   \mathcal Z(\beta_n),
\end{aligned}
\]
where signs are omitted because we are working modulo \(2\).

The Brauer-separating projection
\[
   \Pi_{\operatorname{Br},n}
\]
is defined on the selected Enriques--Brauer component
\[
   Q_n
   =
   \langle\alpha_1\rangle
   \otimes
   \operatorname{Br}(S_2)[2]
   \otimes
   \cdots
   \otimes
   \operatorname{Br}(S_n)[2].
\]
On this component, the first summand contributes
\[
   \alpha_1
   \otimes
   q_{\operatorname{Br},2}(\beta_2)
   \otimes
   \cdots
   \otimes
   q_{\operatorname{Br},n}(\beta_n).
\]

We now check that the remaining summands do not contribute to \(Q_n\).  In
the \(k\)-th summand with \(k\geq2\), the \(S_k\)-factor is not the
finite-coefficient Brauer-detecting class \(\beta_k\) itself; it is the MV
boundary component
\[
   \partial_{\mathrm{MV}}\bigl(\mathcal Z(\beta_k)\bigr).
\]
This term lies in the integral boundary part of the \(S_k\)-factor rather than in the finite-coefficient \(H^2(S_k,\mathbb Z/2(1))\) input on which the
Brauer quotient map
\[
   q_{\operatorname{Br},k}:
   H^2(S_k,\mathbb Z/2(1))
   \to
   \operatorname{Br}(S_k)[2]
\]
is applied. The target \(Q_n\) is defined only from the finite-coefficient Brauer quotient classes \(q_{\operatorname{Br},i}(\beta_i)\), not from integral boundary classes on the \(S_i\)-factors. Therefore it is killed by the projection to the selected finite-coefficient Brauer quotient component.  Equivalently, the summands with the boundary falling on a \(\beta_k\)-factor lie outside the selected tensor pattern
\[
   H^1(S_1,\mathbb Z/2(1))
   \otimes
   \bigotimes_{i=2}^n H^2(S_i,\mathbb Z/2(1))
\]
before the Brauer quotient maps are applied.

Hence only the summand with the MV boundary on the \(\alpha_1\)-factor
survives under \(\Pi_{\operatorname{Br},n}\), and we obtain
\[
   \Pi_{\operatorname{Br},n}
   \left(
      \rho_{\mathrm{MV},\Theta_n}(\Delta_n)
   \right)
   =
   \alpha_1
   \otimes
   q_{\operatorname{Br},2}(\beta_2)
   \otimes
   \cdots
   \otimes
   q_{\operatorname{Br},n}(\beta_n).
\]
The class \(\alpha_1\) is nonzero, and each
\[
   q_{\operatorname{Br},i}(\beta_i)
\]
is nonzero because \(\beta_i\) is Brauer-detecting.  Since \(Q_n\) is a tensor
product of nonzero one-dimensional \(\mathbb Z/2\)-directions, the displayed
tensor is nonzero.
\end{proof}

\begin{lemma}[Brauer-separation for decomposable algebraic cycles]
\label{lem:brauer-separation-decomposable-cycles}
Let
\[
   \Gamma=\Gamma_1\times\cdots\times\Gamma_n
\]
be a decomposable codimension-\(n\) algebraic cycle on
\[
   X_n=S_1\times\cdots\times S_n.
\]
Then
\[
   \Pi_{\operatorname{Br},n}
   \left(
      \rho_{\mathrm{MV},\Theta_n}
      \bigl(
         \operatorname{cl}_{\mathbb Z}(\Gamma)
      \bigr)
   \right)=0.
\]
\end{lemma}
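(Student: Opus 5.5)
The plan is to compute the cycle class of \(\Gamma\) through the Künneth decomposition and to show that every factor which could land in \(Q_n\) is killed by a Brauer quotient map. First, for a decomposable cycle \(\Gamma=\Gamma_1\times\cdots\times\Gamma_n\) with \(\Gamma_i\subset S_i\) of codimension \(c_i\), \(\sum_i c_i=n\), the cycle class is an external product:
\[
   \operatorname{cl}_{\mathbb Z}(\Gamma)
   =
   \pi_1^*\operatorname{cl}(\Gamma_1)\cup\cdots\cup\pi_n^*\operatorname{cl}(\Gamma_n),
   \qquad
   \operatorname{cl}(\Gamma_i)\in H^{2c_i}(S_i,\mathbb Z(c_i)).
\]
Each factor has even cohomological degree. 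The selected tensor pattern
\[
   H^1(S_1,\mathbb Z/2(1))\otimes\bigotimes_{i=2}^n H^2(S_i,\mathbb Z/2(1))
\]
requires degree one on \(S_1\) and degree two, twist one, on each \(S_i\) with \(i\ge2\). Then I would split into cases according to the multi-index \((c_1,\dots,c_n)\).

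\emph{Case analysis.} In the main case the Künneth type of the cycle class differs from the selected pattern. The first Brauer-separating step projects onto that Künneth/MV component, and by Proposition~\ref{prop:mv-zigzag-external-product-compatible} and Corollary~\ref{cor:kunneth-mv-tuples} the obstruction-channel image of an external product decomposes according to the Künneth types of its factors. So whenever \(c_i\neq1\) for some \(i\ge2\), the class lies outside the selected pattern and \(\Pi_{\operatorname{Br},n}\) gives zero. The remaining configuration has \(c_i=1\) for all \(i\ge2\), which forces \(c_1=1\), but also needs the \(S_1\)-factor to contribute in degree one. Parity already excludes an algebraic class from the odd slot. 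Even setting parity aside, on any factor \(i\ge2\) the contribution is the reduction mod \(2\) of a divisor class \(\operatorname{cl}(\Gamma_i)\in\operatorname{Pic}(S_i)\). By the Kummer sequence recalled before Definition~\ref{def:enriques-brauer-separating-component}, \(\operatorname{Pic}(S_i)/2\to H^2(S_i,\mathbb Z/2(1))\xrightarrow{q_{\operatorname{Br},i}}\operatorname{Br}(S_i)[2]\) is exact, so \(q_{\operatorname{Br},i}(\operatorname{cl}(\Gamma_i)\bmod 2)=0\). Since \(n\ge2\) there is at least one such factor, and the tensor product with a zero factor vanishes in \(Q_n\).

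The main obstacle is justifying that \(\rho_{\mathrm{MV},\Theta_n}\) of an integral algebraic class is computed factorwise and only through reductions mod \(2\) of the factor classes. This is what lets the Brauer quotients be applied to \(\operatorname{cl}(\Gamma_i)\bmod 2\). I would get it from the external-product compatibility of the MV zig-zag functor, applied to the perverse realizations of the factor cycle classes, together with the functoriality clause (iv) in Remark~\ref{rem:existence-compatible-mv-realizations}. The integral classes \(\operatorname{cl}(\Gamma_i)\) are not Bocksteins, so no boundary term arises, and their images in the selected finite-coefficient pattern are their mod-\(2\) reductions. With this factorization in hand, both cases above give zero, which proves the lemma.
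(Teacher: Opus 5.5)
Your route is the paper's: you factor \(\operatorname{cl}_{\mathbb Z}(\Gamma)\) as an external product, use parity to keep algebraic classes out of the \(H^1(S_1,\mathbb Z/2(1))\)-slot, and independently use exactness of \(\operatorname{Pic}(S_i)/2\to H^2(S_i,\mathbb Z/2(1))\to\operatorname{Br}(S_i)[2]\) to kill the degree-two factors for \(i\geq 2\). The conclusion is right. However, your resolution of the ``main obstacle'' contains a false step, and that step invalidates the parity prong. The paper's proof makes the same parity remark and is likewise saved only by its Kummer paragraph.

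The false step is the assertion that the classes \(\operatorname{cl}(\Gamma_i)\) are not Bocksteins. Since \(H^1(S_1,\mathbb Z)=0\), the Bockstein \(\delta\colon H^1(S_1,\mathbb Z/2(1))\to H^2(S_1,\mathbb Z(1))[2]\cong\mathbb Z/2\) is an isomorphism, so \(\operatorname{cl}(K_{S_1})=c_1(K_{S_1})=\delta(\alpha_1)\) is algebraic. Take a canonical divisor \(K_{S_1}\) and divisors \(D_i\subset S_i\) for \(i\geq 2\). The decomposable codimension-\(n\) cycle \(K_{S_1}\times D_2\times\cdots\times D_n\) then has class
\[
   \delta(\alpha_1)\times\operatorname{cl}(D_2)\times\cdots\times\operatorname{cl}(D_n)
   =
   \delta\bigl(\pi_1^*\alpha_1\cup\pi_2^*\overline{D}_2\cup\cdots\cup\pi_n^*\overline{D}_n\bigr),
   \qquad
   \overline{D}_i=\operatorname{cl}(D_i)\bmod 2 .
\]
This is \(\Delta_n\) with each \(\beta_i\) replaced by \(\overline{D}_i\). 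The Leibniz bookkeeping of Proposition~\ref{prop:nonzero-brauer-image-constructed-bockstein} places it at \(\alpha_1\otimes\overline{D}_2\otimes\cdots\otimes\overline{D}_n\) in the selected pattern, before the Brauer quotients are applied. That tensor is nonzero as soon as no \(\operatorname{cl}(D_i)\) lies in \(2H^2(S_i,\mathbb Z)\). So an algebraic class does reach the odd slot, through exactly the Bockstein-preimage direction flagged in Remark~\ref{rem:nondecomposable-correspondences-control}. It is killed only because \(q_{\operatorname{Br},i}(\overline{D}_i)=0\).

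To fix this, drop the parity claim and state the factorization correctly. The \(S_1\)-slot may receive \(\alpha_1\), coming from a torsion part of \(\operatorname{cl}(\Gamma_1)\). For \(i\geq 2\), however, a degree-two slot can only receive the mod-\(2\) reduction of a divisor class, because a Bockstein preimage in degree two would require an algebraic integral class in the odd degree three. With the Kummer step carrying the whole weight, your case analysis proves the lemma. It also shows why the \(\beta_i\) must be Brauer-detecting rather than merely nonzero.
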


\begin{proof}
The selected component for the Brauer-separating projection is
\[
   H^1(S_1,\mathbb Z/2(1))
   \otimes
   \bigotimes_{i=2}^n H^2(S_i,\mathbb Z/2(1)).
\]
A decomposable algebraic cycle
\[
   \Gamma=\Gamma_1\times\cdots\times\Gamma_n
\]
has cohomology class equal to the external product of the factorwise algebraic
cycle classes
\[
   \operatorname{cl}_{\mathbb Z}(\Gamma_i).
\]
On a smooth projective surface, algebraic cycle classes occur in even
cohomological degrees: codimension \(0\), \(1\), and \(2\) cycles contribute
to degrees \(0\), \(2\), and \(4\), respectively.  Hence the class
\[
   \operatorname{cl}_{\mathbb Z}(\Gamma_1)\bmod2
\]
has no component in
\[
   H^1(S_1,\mathbb Z/2(1)).
\]
Thus a decomposable algebraic cycle has zero projection to the selected
component unless the \(S_1\)-factor somehow contributes to \(H^1\), which
cannot happen for an algebraic cycle class.

Equivalently, even ignoring the \(S_1\)-factor, if the decomposable cycle
contributes degree \(2\) on some \(S_i\) with \(i\geq2\), that degree-two
component is an algebraic divisor class.  Hence it lies in the image of
\[
   \operatorname{Pic}(S_i)/2
   \longrightarrow
   H^2(S_i,\mathbb Z/2(1)).
\]
By exactness of the Kummer sequence,
\[
   q_{\operatorname{Br},i}
   \bigl(\operatorname{Pic}(S_i)/2\bigr)=0.
\]
Since \(\Pi_{\operatorname{Br},n}\) applies \(q_{\operatorname{Br},i}\) on all
factors \(i\geq2\), the image of a decomposable algebraic cycle class under
the Brauer-separating projection is zero.
\end{proof}

\begin{theorem}[Brauer-separated MV detection]
\label{thm:brauer-separated-mv-detection}
Assume the Brauer-separation hypothesis
\[
   \Pi_{\operatorname{Br},n}
   \left(
      \rho_{\mathrm{MV},\Theta_n}
      \bigl(
         \operatorname{cl}_{\mathbb Z}(CH^n(X_n))
      \bigr)
   \right)=0.
\]
Then
\[
   \Delta_n=\delta(\Theta_n)\in H^{2n}(X_n,\mathbb Z(n))
\]
is not algebraic.
\end{theorem}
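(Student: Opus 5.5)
The plan is a contradiction argument that combines the Brauer-separation hypothesis with Proposition~\ref{prop:nonzero-brauer-image-constructed-bockstein}. Suppose \(\Delta_n\) were algebraic. Then there is a cycle \(\Gamma\in CH^n(X_n)\) with
\[
   \operatorname{cl}_{\mathbb Z}(\Gamma)=\Delta_n
   \quad\text{in } H^{2n}(X_n,\mathbb Z(n)).
\]
The composite
\[
   \Pi_{\operatorname{Br},n}\circ\rho_{\mathrm{MV},\Theta_n}
   \colon H^{2n}(X_n,\mathbb Z(n))\to Q_n
\]
is a well-defined map: \(\rho_{\mathrm{MV},\Theta_n}\) is the tuple-relative obstruction map of Definition~\ref{def:tuple-relative-mv-obstruction-channel}, and \(\Pi_{\operatorname{Br},n}\) is the projection of Definition~\ref{def:brauer-separating-projection}. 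Applying this composite to both sides of the equality gives
\[
   \Pi_{\operatorname{Br},n}\bigl(\rho_{\mathrm{MV},\Theta_n}(\operatorname{cl}_{\mathbb Z}(\Gamma))\bigr)
   =
   \Pi_{\operatorname{Br},n}\bigl(\rho_{\mathrm{MV},\Theta_n}(\Delta_n)\bigr).
\]

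Next I evaluate each side. The left-hand side lies in \(\Pi_{\operatorname{Br},n}\bigl(\rho_{\mathrm{MV},\Theta_n}(\operatorname{cl}_{\mathbb Z}(CH^n(X_n)))\bigr)\), so it vanishes by the Brauer-separation hypothesis of Definition~\ref{def:brauer-separation-hypothesis}. The right-hand side equals
\[
   \alpha_1\otimes q_{\operatorname{Br},2}(\beta_2)\otimes\cdots\otimes q_{\operatorname{Br},n}(\beta_n)
\]
by Proposition~\ref{prop:nonzero-brauer-image-constructed-bockstein}. This tensor is nonzero because \(Q_n\) is a tensor product of one-dimensional \(\mathbb F_2\)-spaces and each factor is a generator. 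This gives \(0\neq 0\), so no such \(\Gamma\) exists and \(\Delta_n\) is not algebraic. I would also note that \(\Delta_n\) is \(2\)-torsion, being in the image of \(\delta\), hence automatically a Hodge class; this is needed to describe it as a counterexample rather than just a non-algebraic class.

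The one point needing care is that the obstruction map applied to the cycle class is the same tuple-relative map \(\rho_{\mathrm{MV},\Theta_n}\) that appears in both the hypothesis and the proposition. In particular it must be a homomorphism on all of \(H^{2n}(X_n,\mathbb Z(n))\), not something defined only on the image of \(\delta\). Definition~\ref{def:tuple-relative-mv-obstruction-channel} defines it on all of \(H^{2n}\), and I would state this explicitly before carrying out the comparison.
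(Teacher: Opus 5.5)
Your proposal is correct and is essentially the paper's own argument: you assume $\Delta_n=\operatorname{cl}_{\mathbb Z}(\Gamma)$, apply $\Pi_{\operatorname{Br},n}\circ\rho_{\mathrm{MV},\Theta_n}$ to both sides, and set the nonvanishing from Proposition~\ref{prop:nonzero-brauer-image-constructed-bockstein} against the vanishing given by the Brauer-separation hypothesis. Two minor remarks: the argument uses only that the composite is a well-defined map on $H^{2n}(X_n,\mathbb Z(n))$, not that it is a homomorphism, and the $2$-torsion/Hodge observation belongs to Corollary~\ref{cor:brauer-separated-integral-hodge-obstruction} rather than to this theorem.
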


\begin{proof}
By Proposition~\ref{prop:nonzero-brauer-image-constructed-bockstein},
\[
   \Pi_{\operatorname{Br},n}
   \left(
      \rho_{\mathrm{MV},\Theta_n}(\Delta_n)
   \right)\neq 0.
\]
Suppose, for contradiction, that \(\Delta_n\) were algebraic.  Then there
would exist
\[
   \Gamma\in CH^n(X_n)
\]
such that
\[
   \Delta_n=\operatorname{cl}_{\mathbb Z}(\Gamma).
\]
Applying
\[
   \Pi_{\operatorname{Br},n}\circ\rho_{\mathrm{MV},\Theta_n}
\]
gives
\[
   \Pi_{\operatorname{Br},n}
   \left(
      \rho_{\mathrm{MV},\Theta_n}(\Delta_n)
   \right)
   =
   \Pi_{\operatorname{Br},n}
   \left(
      \rho_{\mathrm{MV},\Theta_n}
      \bigl(
         \operatorname{cl}_{\mathbb Z}(\Gamma)
      \bigr)
   \right).
\]
The right-hand side is zero by the Brauer-separation hypothesis, while the
left-hand side is nonzero.  This contradiction proves that \(\Delta_n\) is
not algebraic.
\end{proof}

\begin{corollary}[Brauer-separated integral Hodge obstruction]
\label{cor:brauer-separated-integral-hodge-obstruction}
Assume the Brauer-separation hypothesis.  Then
\[
   \Delta_n\in H^{2n}(X_n,\mathbb Z(n))
\]
is a non-algebraic \(2\)-torsion integral Hodge class.
\end{corollary}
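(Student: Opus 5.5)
The corollary bundles three properties of \(\Delta_n\): it is \(2\)-torsion, it is a Hodge class, and it is not algebraic. I will establish them in that order. Only the last uses the Brauer-separation hypothesis, and for that one I will simply invoke Theorem~\ref{thm:brauer-separated-mv-detection}.

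\emph{Torsion.} I will use the long exact sequence of the coefficient sequence
\[
   0\to\mathbb Z(n)\xrightarrow{\times 2}\mathbb Z(n)\to\mathbb Z/2(n)\to 0 .
\]
By exactness, \(\Delta_n=\delta(\Theta_n)\) lies in the kernel of the next map, which is multiplication by \(2\) on \(H^{2n}(X_n,\mathbb Z(n))\). Hence \(2\Delta_n=0\), exactly as in the proof of the degree-four propagation theorem.

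\emph{Hodge.} Since \(2\Delta_n=0\), the image of \(\Delta_n\) in \(H^{2n}(X_n,\mathbb C)\) is zero. The zero class lies in \(H^{n,n}\), so \(\Delta_n\) is automatically an integral Hodge class. The only point needing care is the convention: an integral Hodge class means a class in \(H^{2n}(X_n,\mathbb Z(n))\) whose image in complex cohomology is of type \((n,n)\). Torsion classes satisfy this trivially, and the paper's phrase ``torsion integral Hodge class'' in the Diaz case uses the same convention. In addition, \(\Delta_n\neq0\): by Proposition~\ref{prop:nonzero-brauer-image-constructed-bockstein}, \(\Pi_{\operatorname{Br},n}\circ\rho_{\mathrm{MV},\Theta_n}\) sends \(\Delta_n\) to a nonzero element. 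Since this composite is a homomorphism, \(\Delta_n\) is a genuine nonzero \(2\)-torsion class.

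\emph{Non-algebraicity.} This is Theorem~\ref{thm:brauer-separated-mv-detection} applied verbatim under the assumed Brauer-separation hypothesis. Suppose \(\Delta_n=\operatorname{cl}_{\mathbb Z}(\Gamma)\) for some \(\Gamma\in CH^n(X_n)\). Its image under \(\Pi_{\operatorname{Br},n}\circ\rho_{\mathrm{MV},\Theta_n}\) would then vanish by the hypothesis, contradicting Proposition~\ref{prop:nonzero-brauer-image-constructed-bockstein}.

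The corollary therefore involves no new obstacle; all the substantive content is delegated to the nonvanishing proposition and the hypothesis. The one step deserving a remark is the Hodge property, which follows because torsion vanishes rationally, not from any Hodge-theoretic computation.
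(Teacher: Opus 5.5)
Your proposal is correct and follows the paper's proof exactly: you get $2$-torsion from the Bockstein long exact sequence, the Hodge property because torsion classes vanish in $H^{2n}(X_n,\mathbb C)$, and non-algebraicity by direct appeal to Theorem~\ref{thm:brauer-separated-mv-detection}. Your extra nonvanishing remark is harmless but redundant, since a non-algebraic class is automatically nonzero ($0=\operatorname{cl}_{\mathbb Z}(0)$ is algebraic).
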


\begin{proof}
By Theorem~\ref{thm:brauer-separated-mv-detection}, the class \(\Delta_n\) is
not algebraic.  Since \(\Delta_n\) lies in the image of the Bockstein for
multiplication by \(2\), it is \(2\)-torsion.  A torsion integral cohomology
class has zero image in complex cohomology, hence is an integral Hodge class.
Therefore \(\Delta_n\) is a non-algebraic \(2\)-torsion integral Hodge class.
\end{proof}

\begin{remark}[Non-decomposable correspondences and the remaining control problem]
\label{rem:nondecomposable-correspondences-control}
Lemma \ref{lem:brauer-separation-decomposable-cycles} proves the
Brauer-separation hypothesis for decomposable algebraic cycles.  The remaining
case consists of non-decomposable codimension-\(n\) cycles, or equivalently
correspondence-type algebraic cycles between the Enriques factors.  The
Brauer-separation hypothesis asserts that such correspondences have zero image
in the selected component
\[
   Q_n
   =
   \langle\alpha_1\rangle
   \otimes
   \operatorname{Br}(S_2)[2]
   \otimes
   \cdots
   \otimes
   \operatorname{Br}(S_n)[2].
\]

This is precisely the algebraic-control issue in the \(n\)-fold construction.
One expected route to verifying it is through Chow--K\"unneth or
multiplicative Chow--K\"unneth control for products of Chow-trivial surfaces:
if the selected K\"unneth/MV projection on algebraic cycle classes is
controlled by algebraic slant products, then the degree-two components on the
Enriques factors \(S_i\), \(i\geq2\), are Picard components by the Lefschetz
\((1,1)\) theorem, and the Kummer quotient maps
\[
   q_{\operatorname{Br},i}
\]
kill them.

The class \(\alpha_1\) is not algebraic in the ordinary cycle-class sense; it
is the finite-coefficient class of the Enriques double cover and is related by
the Bockstein to the \(2\)-torsion class
\[
   c_1(K_{S_1})\in H^2(S_1,\mathbb Z(1)).
\]
Thus the remaining algebraic-control question is not whether \(\alpha_1\) is
Picard, but whether algebraic codimension-\(n\) correspondences on \(X_n\),
after testing against algebraic classes on the remaining factors, can detect
this Bockstein-preimage direction in the selected component.

Thus the Brauer-separation hypothesis isolates the only remaining
arbitrary-correspondence issue; it is not a hidden MV assumption.
\end{remark}

\subsection{Factorwise Chow--K\"unneth control and the remaining \(H^1\)-issue}
\label{subsec:factorwise-ck-control-h1-issue}

The Brauer-separation hypothesis can be partially reduced to an explicit
Chow--K\"unneth calculation on the Enriques factors.  This reduction is useful
because the Brauer-separating projection
\[
   \Pi_{\operatorname{Br},n}
   =
   \operatorname{proj}_{\langle\alpha_1\rangle}
   \otimes
   q_{\operatorname{Br},2}
   \otimes
   \cdots
   \otimes
   q_{\operatorname{Br},n}
\]
kills a class as soon as one of the degree-two factors on
\(S_i\), \(i\geq2\), lies in the Picard image.

For an Enriques surface \(S\), choose a closed point \(o\in S\).  Set
\[
   \pi^S_0=[o]\times S,
   \qquad
   \pi^S_4=S\times[o],
\]
and
\[
   \pi^S_2=\Delta_S-\pi^S_0-\pi^S_4
   \in CH^2(S\times S).
\]
Since \(q(S)=0\), this is the usual even-degree Chow--K\"unneth projector on
the middle cohomology of \(S\).

\begin{lemma}[Integral even-degree projector on an Enriques surface]
\label{lem:integral-even-degree-projector-enriques}
The correspondence
\[
   \pi^S_2=\Delta_S-[o]\times S-S\times[o]
\]
acts as the identity on \(H^2(S,\mathbb Z)\) and as zero on
\[
   H^0(S,\mathbb Z)\oplus H^4(S,\mathbb Z).
\]
Its mod-\(2\) reduction is an algebraic \(\mathbb Z/2\)-correspondence
controlling the even degree-two component of \(H^*(S,\mathbb Z/2)\).
\end{lemma}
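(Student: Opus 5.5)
The plan is to compute the action of each of the three summands of \(\pi^S_2\) separately, using the standard formula for the action of a correspondence \(\Gamma\in CH^2(S\times S)\) on integral cohomology,
\[
   \Gamma_*(x)=\mathrm{pr}_{2*}\bigl(\operatorname{cl}_{\mathbb Z}(\Gamma)\cup\mathrm{pr}_1^*x\bigr),
   \qquad x\in H^*(S,\mathbb Z),
\]
and then to reduce the same computation modulo \(2\). The claim is additive in \(\Gamma\), so it suffices to find the action of \(\Delta_S\), \([o]\times S\) and \(S\times[o]\) on \(H^0(S,\mathbb Z)\), \(H^2(S,\mathbb Z)\) and \(H^4(S,\mathbb Z)\).

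First, the diagonal \(\Delta_S\) acts as the identity on all of \(H^*(S,\mathbb Z)\), torsion included, by the projection formula for the diagonal embedding.

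Second, since \(\operatorname{cl}([o]\times S)=\mathrm{pr}_1^*[o]\), the action of \([o]\times S\) is
\[
   x\longmapsto \mathrm{pr}_{2*}\bigl(\mathrm{pr}_1^*([o]\cup x)\bigr).
\]
For \(x\) in positive degree, \([o]\cup x=0\) because it lies in \(H^{>4}(S,\mathbb Z)=0\). For \(x=1\in H^0(S,\mathbb Z)=\mathbb Z\), we get \(\mathrm{pr}_{2*}\mathrm{pr}_1^*[o]=\deg[o]\cdot 1_S=1_S\). So \([o]\times S\) is the identity on \(H^0\) and zero elsewhere.

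Third, \(\operatorname{cl}(S\times[o])=\mathrm{pr}_2^*[o]\), so the action of \(S\times[o]\) is
\[
   x\longmapsto \Bigl(\int_S x\Bigr)[o].
\]
This vanishes unless \(x\in H^4(S,\mathbb Z)=\mathbb Z[o]\), where it is the identity. On torsion classes the integral is zero, so no torsion contribution appears.

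Summing the three contributions, \(\pi^S_2\) acts as \(1-1-0=0\) on \(H^0\), as \(1-0-1=0\) on \(H^4\), and as \(1-0-0=\mathrm{id}\) on \(H^2(S,\mathbb Z)\). In the last case this includes the torsion class \(c_1(K_S)\). The only facts about Enriques surfaces used here are \(H^0(S,\mathbb Z)=\mathbb Z\), \(H^4(S,\mathbb Z)=\mathbb Z[o]\), and that \(o\) is a degree-one point; \(q(S)=0\) only explains why no odd projectors are needed rationally.

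For the mod-\(2\) statement, reduce the integral cycle \(\pi^S_2\) modulo \(2\) and use compatibility of the cycle class map and of \(\mathrm{pr}_{2*}\), \(\mathrm{pr}_1^*\) with coefficient reduction. The identical computation with \(\mathbb Z/2\)-coefficients shows that \(\pi^S_2\bmod 2\) is the identity on \(H^2(S,\mathbb Z/2)\) and zero on \(H^0(S,\mathbb Z/2)\oplus H^4(S,\mathbb Z/2)\).

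The step I expect to need care is the precise meaning of \enquote{controlling the even degree-two component.} With \(\mathbb Z/2\)-coefficients, \(H^1(S,\mathbb Z/2)\) and \(H^3(S,\mathbb Z/2)\) are nonzero. The degree argument shows that \([o]\times S\) and \(S\times[o]\) kill them, so \(\pi^S_2\bmod 2\) acts as the identity there too. I would therefore state explicitly that the mod-\(2\) projector is the identity on \(H^2(S,\mathbb Z/2)\) and kills exactly the degree-\(0\) and degree-\(4\) parts. I would not claim that it isolates \(H^2\) inside all of \(H^*(S,\mathbb Z/2)\). This point is exactly where the \(\alpha_1\)-direction in \(H^1(S_1,\mathbb Z/2(1))\) escapes even-degree control.
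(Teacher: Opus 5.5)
Your proposal is correct and follows the same route as the paper: split \(\pi^S_2\) into the diagonal and the two point-correspondences, compute each action degree by degree, then reduce the integral cycle modulo \(2\). You make the push--pull computations explicit where the paper only asserts them, and your caveat that \(\pi^S_2\bmod 2\) is also the identity on \(H^1(S,\mathbb Z/2)\) and \(H^3(S,\mathbb Z/2)\) is accurate and matches the paper's subsequent remark that this projector does not isolate the \(\alpha_1\)-direction.
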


\begin{proof}
The diagonal \(\Delta_S\) acts as the identity on cohomology.  The
correspondence \([o]\times S\) projects to the degree-zero component, and the
correspondence \(S\times[o]\) projects to the top-degree component.  Therefore
\[
   \Delta_S-[o]\times S-S\times[o]
\]
acts as the identity on \(H^2(S,\mathbb Z)\) and as zero on
\[
   H^0(S,\mathbb Z)\oplus H^4(S,\mathbb Z).
\]
Since all three correspondences are integral algebraic cycles, the same
correspondence reduces modulo \(2\) and acts on cohomology with
\(\mathbb Z/2\)-coefficients.
\end{proof}

\begin{remark}[The mod-\(2\) torsion caveat]
\label{rem:mod-two-torsion-caveat-enriques-projector}
The correspondence \(\pi^S_2\) controls the even degree-two component, but it
does not by itself isolate the mod-\(2\) group
\[
   H^1(S,\mathbb Z/2(1)).
\]
For an Enriques surface, \(H^1(S,\mathbb Z)=0\), but
\[
   H^1(S,\mathbb Z/2(1))\neq 0
\]
because of the Enriques double cover.  The class
\[
   \alpha_1\in H^1(S_1,\mathbb Z/2(1))
\]
is therefore a genuinely finite-coefficient feature and is not separated by
the even Chow--K\"unneth projector \(\pi^S_2\).  Thus the factorwise
degree-two projectors on \(S_i\), \(i\geq2\), control the Brauer factors, but
they do not by themselves construct an algebraic projector onto the full
selected component involving \(\alpha_1\).
\end{remark}

\begin{proposition}[Even Chow--K\"unneth projectors control the Brauer factors]
\label{prop:even-ck-projectors-control-brauer-factors}
Let
\[
   X_n=S_1\times\cdots\times S_n.
\]
For each \(i\geq2\), the correspondence
\[
   \operatorname{id}_{S_1}
   \boxtimes\cdots\boxtimes
   \operatorname{id}_{S_{i-1}}
   \boxtimes
   \pi^{S_i}_2
   \boxtimes
   \operatorname{id}_{S_{i+1}}
   \boxtimes\cdots\boxtimes
   \operatorname{id}_{S_n}
\]
is an integral algebraic correspondence on \(X_n\).  It controls the
degree-two component on the factor \(S_i\).  Consequently, whenever the
selected K\"unneth/MV projection can be tested against algebraic classes on
the complementary factors, the degree-two component on \(S_i\) is Picard and
is killed by
\[
   q_{\operatorname{Br},i}:
   H^2(S_i,\mathbb Z/2(1))
   \longrightarrow
   \operatorname{Br}(S_i)[2].
\]
\end{proposition}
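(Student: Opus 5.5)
The plan is to treat the statement as three separate claims and prove them in order: algebraicity of the correspondence, its action on cohomology, and the conditional Picard/Kummer conclusion. Only the third needs a real argument.

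\emph{Step 1: algebraicity.} Write the correspondence as
\[
   P_i := \Delta_{S_1}\times\cdots\times\Delta_{S_{i-1}}\times\pi^{S_i}_2\times\Delta_{S_{i+1}}\times\cdots\times\Delta_{S_n}
\]
inside \(X_n\times X_n\), after reordering factors. Each diagonal is an integral cycle, and \(\pi^{S_i}_2\in CH^2(S_i\times S_i)\) is integral by construction. Their exterior product is therefore an integral cycle of codimension \(2n=\dim X_n\), which is exactly an integral self-correspondence of \(X_n\).

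\emph{Step 2: action on cohomology.} The action of an exterior product of correspondences is the tensor product of the actions, using compatibility of \(\operatorname{cl}_{\mathbb Z}\) with exterior products and the projection formula. I would establish this with \(\mathbb Z/2\)-coefficients directly, because the Künneth decomposition of \(H^*(X_n,\mathbb Z/2)\) is a genuine tensor decomposition of \(\mathbb F_2\)-vector spaces. By Lemma~\ref{lem:integral-even-degree-projector-enriques}, \(P_i\) acts as the identity on the \(H^2(S_i)\) tensor factor and kills the \(H^0(S_i)\) and \(H^4(S_i)\) factors. This is what ``controls the degree-two component on \(S_i\)'' means.

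One caveat must be recorded, following Remark~\ref{rem:mod-two-torsion-caveat-enriques-projector}. With \(\mathbb Z/2\)-coefficients, \(\pi^{S_i}_2\) also acts on \(H^1\) and \(H^3\), so the claim should be restricted to the even part.

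\emph{Step 3: the conditional clause.} Fix an algebraic class \(\operatorname{cl}_{\mathbb Z}(\Gamma)\) with \(\Gamma\in CH^n(X_n)\). Assume, as the proposition does, that the selected Künneth/MV component can be tested against algebraic classes \(c_j\) on the complementary factors \(S_j\), \(j\neq i\). The resulting \(S_i\)-component is then a slant product
\[
   \bigl(\Gamma\cdot \textstyle\prod_{j\neq i}\pi_j^*c_j\bigr)_*\in H^2(S_i,\mathbb Z(1)),
\]
obtained by pushing forward to \(S_i\) an algebraic cycle. This pushforward is a cycle on \(S_i\), so its class lies in \(NS(S_i)=\operatorname{Pic}(S_i)\); the Lefschetz \((1,1)\) theorem is not even needed. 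Reducing mod \(2\) places it in the image of \(\operatorname{Pic}(S_i)/2\to H^2(S_i,\mathbb Z/2(1))\). Exactness of the Kummer sequence then gives that \(q_{\operatorname{Br},i}\) kills it, exactly as in Lemma~\ref{lem:brauer-separation-decomposable-cycles}.

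\emph{Main obstacle.} The difficulty is the step from ``tested against algebraic classes on the complementary factors'' to an honest component of \(\operatorname{cl}(\Gamma)\bmod 2\). The \(S_1\)-factor carries the odd mod-\(2\) class \(\alpha_1\), which is not the reduction of any algebraic class. So a slant product against algebraic test classes on \(S_1\) only sees the even part of the \(S_1\)-factor. Because the proposition hypothesizes this testability, I would state explicitly that the conclusion is conditional on it and point out that it is precisely the \(H^1(S_1)\)-issue named in the subsection title.
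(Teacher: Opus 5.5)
Your proposal is correct and follows the paper's own argument. Algebraicity comes from the exterior product of integral cycles, the cohomological action comes from Lemma~\ref{lem:integral-even-degree-projector-enriques}, and the conditional clause comes from algebraic slant products landing in $\operatorname{Pic}(S_i)/2$, which $q_{\operatorname{Br},i}$ kills by Kummer exactness. Your two refinements are accurate and slightly sharpen the paper: the slant product is the class of the pushed-forward cycle $\pi_{i*}\bigl(\Gamma\cdot\prod_{j\neq i}\pi_j^*c_j\bigr)$, so Lefschetz $(1,1)$ is unnecessary, and with mod-$2$ coefficients $\pi^{S_i}_2$ also acts as the identity on $H^1$ and $H^3$, so it is not literally a degree-two projector there.
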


\begin{proof}
The displayed correspondence is an external product of integral algebraic
correspondences, hence is integral and algebraic.  By
Lemma~\ref{lem:integral-even-degree-projector-enriques}, it acts as the
degree-two projector on the \(S_i\)-factor.

Assume the selected component is tested by algebraic slant classes on the
complementary factors.  Then the resulting degree-two class on \(S_i\) is an
algebraic degree-two class.  By the Lefschetz \((1,1)\) theorem on the
surface \(S_i\), it is represented by a divisor class, hence its mod-\(2\)
reduction lies in the image of
\[
   \operatorname{Pic}(S_i)/2
   \longrightarrow
   H^2(S_i,\mathbb Z/2(1)).
\]
Exactness of the Kummer sequence gives
\[
   q_{\operatorname{Br},i}
   \bigl(
      \operatorname{Pic}(S_i)/2
   \bigr)=0.
\]
Thus the Brauer quotient kills the degree-two algebraic component.
\end{proof}

\begin{remark}[Remaining algebraic-control problem]
\label{rem:remaining-h1-algebraic-control-problem}
The preceding proposition explains precisely what the even Chow--K\"unneth
projector calculation proves: it controls the degree-two Brauer factors
\(S_i\), \(i\geq2\).  The remaining issue is the
\[
   H^1(S_1,\mathbb Z/2(1))
\]
factor in the selected component.  To make the \(n\)-fold theorem fully
unconditional, one needs a mod-\(2\) algebraic correspondence, or an
equivalent correspondence-calculus argument, that controls the selected
component involving
\[
   \alpha_1\in H^1(S_1,\mathbb Z/2(1)).
\]

The class \(\alpha_1\) is not algebraic in the ordinary cycle-class sense; it
is the finite-coefficient class of the Enriques double cover and is related by
the Bockstein to the \(2\)-torsion class
\[
   c_1(K_{S_1})\in H^2(S_1,\mathbb Z(1)).
\]
Thus the remaining algebraic-control question is not whether \(\alpha_1\) is
Picard, but whether algebraic codimension-\(n\) correspondences on \(X_n\),
after testing against algebraic classes on the remaining factors, can detect
this Bockstein-preimage direction in the selected component.

Equivalently, one must show that the selected component involving
\[
   H^1(S_1,\mathbb Z/2(1))
   \otimes
   \bigotimes_{i=2}^n H^2(S_i,\mathbb Z/2(1))
\]
cannot receive a nonzero contribution from an algebraic codimension-\(n\)
cycle after applying the Brauer quotient maps on the factors \(S_i\),
\(i\geq2\).  This is the precise coefficient-level algebraic-control problem
left by the Chow--K\"unneth reduction.
\end{remark}

\subsection{Cup-product version}
\label{subsec:tuple-relative-cup-product-version}

\begin{proposition}[Cup-product MV tuple criterion]
\label{prop:cup-product-mv-tuple-criterion}
Let
\[
   \theta_i\in H^{r_i}(X,\mathbb Z/2(p_i)),
   \qquad 1\leq i\leq n,
\]
satisfy
\[
   \sum_{i=1}^{n}r_i=2p-1,
   \qquad
   \sum_{i=1}^{n}p_i=p.
\]
Set
\[
   \Theta=\theta_1\cup\cdots\cup\theta_n
   \in H^{2p-1}(X,\mathbb Z/2(p)).
\]
If
\[
   \partial_{\mathrm{MV}}\bigl(\mathcal Z(\Theta)\bigr)\neq 0
\]
and algebraic cycle classes vanish under the corresponding separating
projection, then
\[
   \delta(\Theta)\in H^{2p}(X,\mathbb Z(p))
\]
is not algebraic.  If \(\delta(\Theta)\) is torsion, then it is a
non-algebraic integral Hodge class.
\end{proposition}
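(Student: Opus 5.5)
The plan is to run the argument of Theorem~\ref{thm:brauer-separated-mv-detection} again in the general cup-product setting, with the Enriques--Brauer component $Q_n$ replaced by an abstract separating projection. First, I will fix the data. The degree and twist conditions place $\Theta=\theta_1\cup\cdots\cup\theta_n$ in $H^{2p-1}(X,\mathbb Z/2(p))$, so $\Theta$ lies in the source of the MV Bockstein square. I then fix a compatible MV realization (Definition~\ref{def:compatible-mv-realization}), the attached tuple $\mathcal Z(\Theta)$ (Definition~\ref{def:mv-tuple-associated-to-finite-class}), and a separating projection
\[
\Pi:\mathsf{Obs}^{2p,p}_{\mathbb Z}(X;\mathcal Z(\Theta))\longrightarrow Q.
\]
The hypothesis ``algebraic cycle classes vanish under the separating projection'' will be read as $\Pi\bigl(\rho_{\mathrm{MV},\Theta}(\operatorname{cl}_{\mathbb Z}(CH^p(X)))\bigr)=0$. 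Correspondingly, the hypothesis $\partial_{\mathrm{MV}}(\mathcal Z(\Theta))\neq0$ is read as nonvanishing after $\Pi$, i.e.\ $\Pi\bigl(\partial_{\mathrm{MV}}(\mathcal Z(\Theta))\bigr)\neq 0$. When $\Pi$ is the identity, this is literally the stated condition.

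Second, I will apply Proposition~\ref{prop:mv-tuple-boundary-identity}. It gives
\[
\rho_{\mathrm{MV},\Theta}(\delta(\Theta))=\partial_{\mathrm{MV}}(\mathcal Z(\Theta)),
\]
so $\Pi\rho_{\mathrm{MV},\Theta}(\delta(\Theta))\neq 0$. Now suppose $\delta(\Theta)=\operatorname{cl}_{\mathbb Z}(\Gamma)$ for some $\Gamma\in CH^p(X)$. Applying $\Pi\circ\rho_{\mathrm{MV},\Theta}$ to both sides and using the separation hypothesis gives zero on the right, which is a contradiction. Hence $\delta(\Theta)$ is not algebraic. The Leibniz rule (Theorem~\ref{thm:leibniz-mv-boundary}) is not needed for the implication itself. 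It enters only when one wants to verify the hypothesis $\partial_{\mathrm{MV}}(\mathcal Z(\Theta))\neq0$ in examples.

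Third, for the Hodge statement I will use the argument of Corollary~\ref{cor:brauer-separated-integral-hodge-obstruction}. In fact $\delta(\Theta)$ is automatically $2$-torsion, because it lies in the image of the Bockstein for multiplication by $2$, so the stated torsion assumption always holds. A torsion class maps to zero in $H^{2p}(X,\mathbb C)$, so it trivially lies in $H^{p,p}$ and is an integral Hodge class. Combined with the second step, $\delta(\Theta)$ is a non-algebraic integral Hodge class.

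The main obstacle is interpretive rather than computational: the statement is phrased without naming the projection. The delicate step is to make sure that the nonvanishing hypothesis and the separation hypothesis refer to the same component. If the boundary were nonzero only outside the range of $\Pi$, the contradiction would fail. I will therefore state explicitly at the outset that both conditions are taken in the same tuple-relative channel after the same projection $\Pi$, exactly as in Definition~\ref{def:brauer-separation-hypothesis}. Once this is fixed, the remaining steps are formal.
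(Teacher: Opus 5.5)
Your proposal is correct and follows essentially the same route as the paper. Both arguments use degree and twist bookkeeping, then the tuple-relative identity \(\rho_{\mathrm{MV},\Theta}(\delta(\Theta))=\partial_{\mathrm{MV}}(\mathcal Z(\Theta))\), then a contradiction between the nonvanishing of \(\delta(\Theta)\) and the vanishing of algebraic classes in the same separating component, and finally the fact that torsion classes are integral Hodge classes. The paper also reads the nonvanishing hypothesis as nonvanishing in that separating component, which you make explicit, and your remark that \(\delta(\Theta)\) is automatically \(2\)-torsion harmlessly sharpens the paper's conditional final sentence.
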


\begin{proof}
The degree and twist assumptions imply that
\[
   \Theta\in H^{2p-1}(X,\mathbb Z/2(p)).
\]
The MV Bockstein identity gives
\[
   \rho_{\mathrm{MV},\Theta}\bigl(\delta(\Theta)\bigr)
   =
   \partial_{\mathrm{MV}}\bigl(\mathcal Z(\Theta)\bigr).
\]
If the right-hand side is nonzero in a separating component and algebraic
classes vanish in that same component, then \(\delta(\Theta)\) cannot be
algebraic.  If \(\delta(\Theta)\) is torsion, then it is an integral Hodge
class, hence a non-algebraic integral Hodge class.
\end{proof}

\begin{remark}[Relation with coniveau survival]
\label{rem:tuple-relative-mv-vs-coniveau-survival}
The coniveau criterion and the MV tuple criterion are two survivability tests
for finite-coefficient Bockstein data.  Diaz proves survivability of the
level-two class by showing that the relevant finite-coefficient cup product
survives in function-field cohomology.  The MV tuple criterion instead
detects survivability through the boundary of the associated zig-zag tuple.
Both routes feed into the same Bockstein endpoint.
\end{remark}

\section{The Level-Three Cup-Product Bockstein Obstruction}
\label{sec:level-three-cup-product-bockstein-obstruction}

The preceding sections identify Diaz's example as the level-two
cup-product Bockstein mechanism.  This section treats the next level.  The
level-three finite-coefficient class is
\[
   \Theta_3
   =
   \pi_1^*\alpha_1
   \cup
   \pi_2^*\beta_2
   \cup
   \pi_3^*\beta_3
   \in H^5(S_1\times S_2\times S_3,\mathbb Z/2(3)).
\]
Its Bockstein
\[
   \Delta_3:=\delta(\Theta_3)
   \in H^6(S_1\times S_2\times S_3,\mathbb Z(3))
\]
is the degree-six analogue of Diaz's degree-four class.

The point of the present section is that the level-three survivability
problem can be formulated through the MacPherson--Vilonen Bockstein square
of Section~\ref{sec:mv-bockstein-survivability}.  The MV/ K\"unneth/ Bockstein
machinery proves that \(\Delta_3\) has a nonzero image in the distinguished
Enriques--Brauer component
\[
   Q_3
   =
   \langle\alpha_1\rangle
   \otimes
   \operatorname{Br}(S_2)[2]
   \otimes
   \operatorname{Br}(S_3)[2].
\]
Under the Brauer-separation hypothesis of
Definition~\ref{def:brauer-separation-hypothesis}, algebraic codimension-three
cycle classes have zero image in the same component.  This gives the
level-three Brauer-separated Bockstein obstruction.

\subsection{The level-three finite-coefficient class}
\label{subsec:level-three-finite-coefficient-class}

Let \(S_1,S_2,S_3\) be Enriques surfaces.  Assume that
\[
   S_1=Y_1/\phi_1,
\]
where \(Y_1\) is a K3 surface and \(\phi_1\) is a fixed-point-free Enriques
involution.  Let
\[
   \alpha_1\in H^1(S_1,\mathbb Z/2(1))
\]
be the class of the K3 double cover \(Y_1\to S_1\).

For \(i=2,3\), choose classes
\[
   \beta_i\in H^2(S_i,\mathbb Z/2(1))
\]
whose images in
\[
   \operatorname{Br}(S_i)[2]
\]
are nonzero.  These are the same type of Brauer-detecting classes used by
Diaz in the second Enriques factor of his construction
\cite[Section 4]{Diaz2023IHCTrivialChow}.

Set
\[
   X_3=S_1\times S_2\times S_3.
\]
Let
\[
   \pi_i:X_3\to S_i
\]
be the three projections.  Define
\[
   \Theta_3
   :=
   \pi_1^*\alpha_1
   \cup
   \pi_2^*\beta_2
   \cup
   \pi_3^*\beta_3.
\]

\begin{proposition}[Degree and twist bookkeeping]
\label{prop:level-three-degree-twist}
The class \(\Theta_3\) lies in
\[
   H^5(X_3,\mathbb Z/2(3)).
\]
Consequently, its Bockstein, associated to
\[
   0\to\mathbb Z(3)\xrightarrow{\times 2}\mathbb Z(3)
   \to\mathbb Z/2(3)\to0,
\]
lies in
\[
   H^6(X_3,\mathbb Z(3)).
\]
\end{proposition}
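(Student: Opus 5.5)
The plan is pure bookkeeping of degrees and twists under pullback and cup product, followed by reading off the target of the connecting homomorphism.

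First, each projection \(\pi_i:X_3\to S_i\) induces pullbacks that preserve both cohomological degree and Tate twist. This gives
\[
   \pi_1^*\alpha_1\in H^1(X_3,\mathbb Z/2(1)),
   \qquad
   \pi_i^*\beta_i\in H^2(X_3,\mathbb Z/2(1)),\quad i=2,3.
\]

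Second, the cup product with \(\mathbb Z/2\)-coefficients is induced by the multiplication \(\mathbb Z/2(a)\otimes\mathbb Z/2(b)\to\mathbb Z/2(a+b)\). It therefore maps
\[
   H^{r}(X_3,\mathbb Z/2(a))\times H^{s}(X_3,\mathbb Z/2(b))
   \to H^{r+s}(X_3,\mathbb Z/2(a+b)).
\]
Applying this twice adds the degrees to \(1+2+2=5\) and the twists to \(1+1+1=3\). Hence \(\Theta_3\in H^5(X_3,\mathbb Z/2(3))\). This is the same computation as in Proposition~\ref{prop:coniveau-higher-cup-product-mechanism} and Proposition~\ref{prop:cup-product-mv-tuple-criterion}, with \(n=3\), \(p=3\), \((r_i)=(1,2,2)\) and \((p_i)=(1,1,1)\), so that \(\sum r_i=2p-1\) and \(\sum p_i=p\).

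Third, the short exact sequence \(0\to\mathbb Z(3)\xrightarrow{\times2}\mathbb Z(3)\to\mathbb Z/2(3)\to0\) of sheaves on \(X_3\) yields a long exact cohomology sequence. Its connecting homomorphism raises degree by one and keeps the twist:
\[
   \delta:H^5(X_3,\mathbb Z/2(3))\to H^6(X_3,\mathbb Z(3)).
\]
Applying \(\delta\) to \(\Theta_3\) places \(\Delta_3\) in \(H^6(X_3,\mathbb Z(3))\). As a byproduct, exactness gives \(2\Delta_3=0\).

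There is no genuine obstacle here. The only point needing care is to keep the Tate twists explicit, since over \(\mathbb C\) the twists are noncanonically trivial in Betti cohomology. They should be tracked formally so that the twist-\(3\) target matches the coefficient sequence used later in the MV Bockstein square.
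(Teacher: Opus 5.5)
Your proposal is correct and follows essentially the same route as the paper. Pullback preserves degree and twist, the cup product adds them to give \(1+2+2=5\) and \(1+1+1=3\), and the connecting homomorphism of the coefficient sequence raises degree by one at fixed twist, so \(\Delta_3\) lands in \(H^6(X_3,\mathbb Z(3))\); your extra remarks (the cross-check with \(p=3\) and the observation \(2\Delta_3=0\)) are consistent with the paper's argument but not needed for it.
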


\begin{proof}
By construction,
\[
   \alpha_1\in H^1(S_1,\mathbb Z/2(1)),
\]
while
\[
   \beta_2\in H^2(S_2,\mathbb Z/2(1)),
   \qquad
   \beta_3\in H^2(S_3,\mathbb Z/2(1)).
\]
Pullback preserves cohomological degree and Tate twist.  Therefore
\[
   \pi_1^*\alpha_1\in H^1(X_3,\mathbb Z/2(1)),
\]
and
\[
   \pi_2^*\beta_2,\pi_3^*\beta_3
   \in H^2(X_3,\mathbb Z/2(1)).
\]
The cup product has total cohomological degree
\[
   1+2+2=5
\]
and total Tate twist
\[
   1+1+1=3.
\]
Hence
\[
   \Theta_3
   =
   \pi_1^*\alpha_1
   \cup
   \pi_2^*\beta_2
   \cup
   \pi_3^*\beta_3
   \in H^5(X_3,\mathbb Z/2(3)).
\]
The coefficient sequence
\[
   0
   \longrightarrow
   \mathbb Z(3)
   \xrightarrow{\times 2}
   \mathbb Z(3)
   \longrightarrow
   \mathbb Z/2(3)
   \longrightarrow
   0
\]
has connecting homomorphism
\[
   \delta:
   H^5(X_3,\mathbb Z/2(3))
   \longrightarrow
   H^6(X_3,\mathbb Z(3)).
\]
Therefore
\[
   \delta(\Theta_3)\in H^6(X_3,\mathbb Z(3)).
\]
\end{proof}

\begin{definition}[Level-three Bockstein class]
\label{def:level-three-bockstein-class}
The \emph{level-three cup-product Bockstein class} is
\[
   \Delta_3
   :=
   \delta(\Theta_3)
   \in H^6(X_3,\mathbb Z(3)).
\]
\end{definition}

\begin{remark}[Comparison with Diaz]
\label{rem:level-three-comparison-with-diaz}
Diaz's degree-four class is obtained from degrees \(1+2=3\) and twists
\(1+1=2\), giving
\[
   \delta(\alpha\cup\beta)\in H^4(-,\mathbb Z(2)).
\]
The level-three class uses degrees \(1+2+2=5\) and twists \(1+1+1=3\),
giving
\[
   \Delta_3=\delta(\Theta_3)\in H^6(X_3,\mathbb Z(3)).
\]
Thus \(\Delta_3\) is the first higher cup-product Bockstein class beyond
Diaz's level-two construction.
\end{remark}

\subsection{Chow-triviality of the level-three product}
\label{subsec:level-three-chow-triviality}

\begin{proposition}[Chow-triviality of \(X_3\)]
\label{prop:level-three-chow-triviality}
The variety
\[
   X_3=S_1\times S_2\times S_3
\]
is Chow-trivial.
\end{proposition}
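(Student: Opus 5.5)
The plan is to use the two facts about Chow-triviality recorded in Section~2, both from Diaz \cite[Example 2.2]{Diaz2023IHCTrivialChow}: Enriques surfaces are Chow-trivial, and products of Chow-trivial varieties are again Chow-trivial. Since \(X_3=(S_1\times S_2)\times S_3\), we proceed in two steps. First, \(S_1\times S_2\) is Chow-trivial; this is exactly the Chow-triviality of Diaz's \(V\) noted in Section~2. Second, the product of \(S_1\times S_2\) with the Chow-trivial surface \(S_3\) is Chow-trivial. For safety, I would phrase this as an induction on the number of factors, so that the same argument later gives Chow-triviality of \(X_n\).

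To avoid relying only on the citation for the product step, I would also sketch the underlying reason. We use the characterization in \cite[Theorem 2.1]{Diaz2023IHCTrivialChow}: \(X\) is Chow-trivial if and only if the rational Chow motive \(M(X)_{\mathbb Q}\) is a finite direct sum of Lefschetz motives \(\mathbb L^{\otimes a}\). For an Enriques surface \(S\), we have \(p_g=q=0\), so by Bloch--Kas--Lieberman \(CH_0(S)_{\mathbb Q}=\mathbb Q\). Hence \(M(S)_{\mathbb Q}\cong \mathbf 1\oplus \mathbb L^{\oplus 10}\oplus\mathbb L^{2}\), with the middle piece cut out by the projector \(\pi^S_2\) of Lemma~\ref{lem:integral-even-degree-projector-enriques}, refined by a basis of \(\operatorname{Num}(S)_{\mathbb Q}\). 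The motive of a product is the tensor product of the motives, \(M(X\times Y)=M(X)\otimes M(Y)\), and \(\mathbb L^a\otimes\mathbb L^b=\mathbb L^{a+b}\). Therefore \(M(X_3)_{\mathbb Q}\) is a finite sum of Lefschetz motives. It follows that \(CH^*(X_3)_{\mathbb Q}\) has finite \(\mathbb Q\)-rank, namely the total rank \(12^3\) of \(H^*(X_3,\mathbb Q)\).

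The only step needing care is the product property. It requires that the decomposition into Lefschetz motives be compatible with the tensor structure on Chow motives, which is standard. I expect no genuine obstacle here; the proof is essentially a citation plus the induction.
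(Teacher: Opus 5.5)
Your proposal is correct and matches the paper's proof. The paper cites Diaz's Example~2.2, which states that Enriques surfaces are Chow-trivial and that Chow-triviality is stable under products, and applies product stability first to \(S_1\times S_2\) and then to \((S_1\times S_2)\times S_3\); your supplementary motivic sketch (Lefschetz decomposition of \(M(S)_{\mathbb Q}\) via Bloch--Kas--Lieberman, then tensoring, giving rank \(12^3\)) is not in the paper but is sound.
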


\begin{proof}
Diaz records that Enriques surfaces are Chow-trivial and that products of
Chow-trivial varieties are Chow-trivial
\cite[Example 2.2]{Diaz2023IHCTrivialChow}.  Since each \(S_i\) is an
Enriques surface, each \(S_i\) is Chow-trivial.  Applying product stability
first to \(S_1\times S_2\) and then to \((S_1\times S_2)\times S_3\) gives
that
\[
   X_3=S_1\times S_2\times S_3
\]
is Chow-trivial.
\end{proof}

\begin{remark}[Role of Chow-triviality]
\label{rem:role-of-chow-triviality-level-three}
In Diaz's degree-four argument, Chow-triviality is used together with the
Colliot-Th\'el\`ene--Voisin criterion to turn unramified survival into
non-algebraicity.  In the present level-three argument, the main nonzero
obstruction component is supplied by the MV/K\"unneth/Bockstein calculation.
Chow-triviality remains structurally important because the examples are
rationally small from the viewpoint of Chow groups while still carrying
integral torsion obstructions.
\end{remark}

\subsection{The level-three MV Bockstein identity}
\label{subsec:level-three-mv-realization}

We now apply the MV Bockstein formalism of
Section~\ref{sec:mv-bockstein-survivability} with \(p=3\).  The coefficient
sequence is
\[
   0\to\mathbb Z(3)\xrightarrow{\times 2}\mathbb Z(3)
   \to\mathbb Z/2(3)\to0,
\]
and the Bockstein is
\[
   \delta:
   H^5(X_3,\mathbb Z/2(3))
   \longrightarrow
   H^6(X_3,\mathbb Z(3)).
\]

\begin{proposition}[Level-three tuple-relative MV Bockstein identity]
\label{prop:level-three-tuple-relative-mv-bockstein-identity}
Let
\[
   \mathcal Z(\Theta_3)
\]
be the MV tuple associated to \(\Theta_3\).  Then
\[
   \rho_{\mathrm{MV},\Theta_3}(\Delta_3)
   =
   \partial_{\mathrm{MV}}\bigl(\mathcal Z(\Theta_3)\bigr)
\]
in the tuple-relative MV obstruction channel
\[
   \mathsf{Obs}^{6,3}_{\mathbb Z}(X_3;\mathcal Z(\Theta_3)).
\]
\end{proposition}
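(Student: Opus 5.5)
The identity is the special case \(p=3\), \(X=X_3\), \(\Theta=\Theta_3\) of Proposition~\ref{prop:mv-tuple-boundary-identity}, so the plan is to check that the hypotheses of that proposition hold in this setting and then quote it.

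\textbf{Step 1: degree and twist.} By Proposition~\ref{prop:level-three-degree-twist}, \(\Theta_3\in H^5(X_3,\mathbb Z/2(3))=H^{2p-1}(X_3,\mathbb Z/2(p))\) with \(p=3\). Moreover \(\Delta_3=\delta(\Theta_3)\) lies in \(H^6(X_3,\mathbb Z(3))\) for the Bockstein of the coefficient sequence \(0\to\mathbb Z(3)\to\mathbb Z(3)\to\mathbb Z/2(3)\to0\). This is exactly the range in which the MV formalism of Section~\ref{sec:mv-bockstein-survivability} is set up.

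\textbf{Step 2: the compatible MV realization.} We need a compatible MV realization of this coefficient sequence on the smooth projective variety \(X_3\), in the sense of Definition~\ref{def:compatible-mv-realization}. As explained in Remark~\ref{rem:existence-compatible-mv-realizations}, this is not an extra hypothesis. It is obtained by applying the MV zig-zag functor to the coefficient triangle \(\mathbb Z(3)\xrightarrow{\times2}\mathbb Z(3)\to\mathbb Z/2(3)\xrightarrow{+1}\). Theorem~\ref{thm:categorical-bockstein-mv-zigzag}, using the Positselski hypotheses of Proposition~\ref{prop:positselski-hypotheses-mv-zigzag}, identifies the resulting boundary with \(\partial_{\mathrm{MV}}\). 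Proposition~\ref{prop:mv-bockstein-square} then gives the commutative square
\[
\rho_{\mathrm{MV}}\circ\delta=\partial_{\mathrm{MV}}\circ\delta_{\mathrm{MV}}
\]
on \(H^5(X_3,\mathbb Z/2(3))\).

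\textbf{Step 3: passage to the tuple-relative channel.} Let \(\mathcal Z(\Theta_3)\) be the MV tuple assigned to \(\Theta_3\) (Definition~\ref{def:mv-tuple-associated-to-finite-class}). Restrict the obstruction channel to the component \(\mathsf{Obs}^{6,3}_{\mathbb Z}(X_3;\mathcal Z(\Theta_3))\) cut out by this tuple (Definition~\ref{def:tuple-relative-mv-obstruction-channel}). Evaluating the square at \(\Theta_3\) and retaining only this component yields
\[
\rho_{\mathrm{MV},\Theta_3}(\Delta_3)=\partial_{\mathrm{MV}}\bigl(\mathcal Z(\Theta_3)\bigr),
\]
which is precisely Proposition~\ref{prop:mv-tuple-boundary-identity} specialized to \(p=3\).

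\textbf{Main obstacle.} The only point needing care is Step 3. One must check that the tuple-relative projection commutes with \(\partial_{\mathrm{MV}}\), i.e.\ that retaining the \(\mathcal Z(\Theta_3)\)-component is compatible with taking the boundary. I would handle this by noting that the projection is natural in the coefficient triangle, so it commutes with connecting morphisms by naturality. Optionally, one can use Corollary~\ref{cor:n-fold-mv-tuple-decomposition} to express \(\mathcal Z(\Theta_3)\simeq\mathcal Z(\alpha_1)\boxtimes\mathcal Z(\beta_2)\boxtimes\mathcal Z(\beta_3)\) and verify the compatibility factorwise. This decomposition is not needed for the identity itself, only for the later nonvanishing computation.
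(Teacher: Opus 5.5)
Your proposal is correct and follows the paper's route: the paper proves this by applying Proposition~\ref{prop:mv-tuple-boundary-identity} with $X=X_3$, $p=3$, $\Theta=\Theta_3$, together with $\Delta_3=\delta(\Theta_3)$. Your Steps 2--3 and the naturality remark re-derive that general proposition in the case $p=3$, so a direct citation of it would suffice.
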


\begin{proof}
This is Proposition~\ref{prop:mv-tuple-boundary-identity} applied with
\[
   X=X_3,\qquad p=3,\qquad \Theta=\Theta_3.
\]
Since
\[
   \Delta_3=\delta(\Theta_3),
\]
the tuple-relative MV Bockstein identity gives
\[
   \rho_{\mathrm{MV},\Theta_3}(\Delta_3)
   =
   \partial_{\mathrm{MV}}\bigl(\mathcal Z(\Theta_3)\bigr).
\]
\end{proof}

\subsection{The nonzero Enriques--Brauer component in level three}
\label{subsec:level-three-nonzero-enriques-brauer-component}

We next specialize the Enriques--Brauer component of
Definition~\ref{def:enriques-brauer-separating-component} to \(n=3\):
\[
   Q_3
   =
   \langle\alpha_1\rangle
   \otimes
   \operatorname{Br}(S_2)[2]
   \otimes
   \operatorname{Br}(S_3)[2].
\]
The Brauer-separating projection is
\[
   \Pi_{\operatorname{Br},3}:
   \mathsf{Obs}^{6,3}_{\mathbb Z}(X_3)
   \longrightarrow
   Q_3.
\]

\begin{proposition}[Nonzero level-three Enriques--Brauer component]
\label{prop:level-three-nonzero-enriques-brauer-component}
One has
\[
   \Pi_{\operatorname{Br},3}
   \left(
      \rho_{\mathrm{MV},\Theta_3}(\Delta_3)
   \right)
   =
   \alpha_1
   \otimes
   q_{\operatorname{Br},2}(\beta_2)
   \otimes
   q_{\operatorname{Br},3}(\beta_3).
\]
In particular,
\[
   \Pi_{\operatorname{Br},3}
   \left(
      \rho_{\mathrm{MV},\Theta_3}(\Delta_3)
   \right)\neq 0.
\]
\end{proposition}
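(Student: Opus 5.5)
The statement is the case \(n=3\) of Proposition~\ref{prop:nonzero-brauer-image-constructed-bockstein}, so the plan is to run that argument with \(X=X_3\), \(p=3\) and \(\Theta=\Theta_3\), making each step explicit for three factors.

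\textbf{Step 1: the tuple-relative identity.} Proposition~\ref{prop:level-three-tuple-relative-mv-bockstein-identity} rewrites the left-hand side as
\(\Pi_{\operatorname{Br},3}\bigl(\partial_{\mathrm{MV}}(\mathcal Z(\Theta_3))\bigr)\).

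\textbf{Step 2: decomposition and Leibniz.} Corollary~\ref{cor:n-fold-mv-tuple-decomposition} gives
\(\mathcal Z(\Theta_3)\simeq\mathcal Z(\alpha_1)\boxtimes\mathcal Z(\beta_2)\boxtimes\mathcal Z(\beta_3)\).
Theorem~\ref{thm:leibniz-mv-boundary}, with signs dropped mod \(2\), then expands the boundary into three summands:
\[
\partial_{\mathrm{MV}}\mathcal Z(\alpha_1)\boxtimes\mathcal Z(\beta_2)\boxtimes\mathcal Z(\beta_3)
+\mathcal Z(\alpha_1)\boxtimes\partial_{\mathrm{MV}}\mathcal Z(\beta_2)\boxtimes\mathcal Z(\beta_3)
+\mathcal Z(\alpha_1)\boxtimes\mathcal Z(\beta_2)\boxtimes\partial_{\mathrm{MV}}\mathcal Z(\beta_3).
\]

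\textbf{Step 3: apply the projection to each summand.} By Definition~\ref{def:brauer-separating-projection}, \(\Pi_{\operatorname{Br},3}\) first projects to the Künneth/MV pattern \(H^1(S_1,\mathbb Z/2(1))\otimes H^2(S_2,\mathbb Z/2(1))\otimes H^2(S_3,\mathbb Z/2(1))\), then to \(\langle\alpha_1\rangle\) on the first factor, then applies \(q_{\operatorname{Br},2}\otimes q_{\operatorname{Br},3}\).

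For the first summand, the \(S_2\)- and \(S_3\)-factors carry the finite-coefficient inputs \(\beta_2,\beta_3\), and the \(S_1\)-factor carries the \(\alpha_1\)-direction. It therefore projects to \(\alpha_1\otimes q_{\operatorname{Br},2}(\beta_2)\otimes q_{\operatorname{Br},3}(\beta_3)\).

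For the second and third summands, the boundary lands on an \(S_k\)-factor with \(k\in\{2,3\}\). That factor then lies in the integral boundary part, outside the selected finite-coefficient \(H^2(S_k,\mathbb Z/2(1))\) slot, so the initial projection kills it. This is exactly the argument given in the proof of Proposition~\ref{prop:nonzero-brauer-image-constructed-bockstein}, specialized to two Brauer factors. Summing the three contributions yields the displayed formula.

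\textbf{Step 4: nonvanishing.} The class \(\alpha_1\) is nonzero because \(Y_1\to S_1\) is a nontrivial étale double cover. Each \(q_{\operatorname{Br},i}(\beta_i)\) is nonzero because \(\beta_i\) was chosen Brauer-detecting. Each \(\operatorname{Br}(S_i)[2]\cong\mathbb Z/2\), so \(Q_3\) is a tensor product of three one-dimensional \(\mathbb F_2\)-spaces, and a tensor of nonzero generators is nonzero.

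\textbf{Main obstacle.} The delicate point is Step 3, discarding the two summands where \(\partial_{\mathrm{MV}}\) falls on a Brauer factor. The argument rests on the claim that \(\partial_{\mathrm{MV}}\mathcal Z(\beta_k)\) has no component in the finite-coefficient \(H^2\)-slot before \(q_{\operatorname{Br},k}\) is applied. I would simply invoke the same bookkeeping as in the general \(n\) proof, rather than trying to compute \(\partial_{\mathrm{MV}}\mathcal Z(\beta_k)\) explicitly.
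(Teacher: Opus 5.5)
Your outline is exactly the paper's route: its proof just specializes Proposition~\ref{prop:nonzero-brauer-image-constructed-bockstein} to $n=3$, and that general proof contains the same step. But Step~3 does not hold up, and the fragile point is not the one you flag. The problem is \emph{keeping} the first summand, not discarding the other two. In $\partial_{\mathrm{MV}}\mathcal Z(\alpha_1)\boxtimes\mathcal Z(\beta_2)\boxtimes\mathcal Z(\beta_3)$ the $S_1$-factor is $\partial_{\mathrm{MV}}\mathcal Z(\alpha_1)$, not $\alpha_1$. By Proposition~\ref{prop:mv-tuple-boundary-identity} applied on $S_1$ with $p=1$, it is the MV image of $\delta(\alpha_1)=c_1(K_{S_1})\in H^2(S_1,\mathbb Z(1))$ (cf.\ Remark~\ref{rem:nondecomposable-correspondences-control}). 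That is an integral degree-two class, and in fact the class of the canonical divisor. The criterion you use to discard summands two and three (the boundary factor lies in the integral part, outside the finite-coefficient slot) applies verbatim to this factor, so it discards summand one as well. Degrees confirm this. Since $\partial_{\mathrm{MV}}$ raises the degree of the factor it hits, the three summands have K\"unneth patterns $(2,2,2)$, $(1,3,2)$, $(1,2,3)$. The selected pattern $H^1\otimes H^2\otimes H^2$ is $(1,2,2)$, of total degree $5=\deg\Theta_3$ rather than $6=\deg\Delta_3$. A degree-respecting projection onto that pattern therefore returns $0$ on every summand. Producing $\alpha_1\otimes q_{\operatorname{Br},2}(\beta_2)\otimes q_{\operatorname{Br},3}(\beta_3)$ requires inverting the Bockstein on the $S_1$-factor, which Definition~\ref{def:brauer-separating-projection} does not do.

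Nor can this be fixed by a convention. Let $r_2$ denote reduction mod~$2$. Natural factorwise inverses exist: $\delta$ is injective on $H^1(S_1,\mathbb Z/2(1))$ since $H^1(S_1,\mathbb Z)=0$. On $S_k$ it induces $H^2(S_k,\mathbb Z/2(1))/\operatorname{im}r_2\cong H^3(S_k,\mathbb Z)[2]\cong\operatorname{Br}(S_k)[2]$, so $\delta(\beta_k)$ \emph{is} $q_{\operatorname{Br},k}(\beta_k)$. Read this way, all three Leibniz summands contribute the same tensor and the total is three times it. Your formula would then hold for $n=3$ only by parity, and the same reading gives $0$ for $n=2$. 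No rule that treats boundary factors uniformly keeps summand one and kills the others.

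A sound route bypasses the Leibniz expansion altogether. Let $P$ be the mod-$2$ K\"unneth projection onto $H^1\otimes H^2\otimes H^2$ followed by $\mathrm{id}\otimes q_{\operatorname{Br},2}\otimes q_{\operatorname{Br},3}$. Because $\ker\bigl(\delta\colon H^5(X_3,\mathbb Z/2(3))\to H^6(X_3,\mathbb Z(3))\bigr)=r_2H^5(X_3,\mathbb Z(3))$, evaluating $P$ on any Bockstein preimage gives an invariant of $\Delta_3$, provided $P$ kills $r_2H^5(X_3,\mathbb Z(3))$. To check this, split the integral cochains of each $S_i$ as $\mathbb Z\oplus E_1\oplus\mathbb Z^{10}[-2]\oplus E_2\oplus\mathbb Z[-4]$, with $E_k=(\mathbb Z\xrightarrow{2}\mathbb Z)$ in degrees $k,k+1$. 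Here $E_1$ carries the double-cover class and $K_{S_i}$, and $E_2$ carries the Brauer direction. The only tensor summand on which $P$ can be nonzero is $E_1\otimes E_2\otimes E_2\simeq E_5\oplus E_6^{2}\oplus E_7$, and it has no integral $H^5$. Evaluating on $\Theta_3$ then gives $\alpha_1\otimes q_{\operatorname{Br},2}(\beta_2)\otimes q_{\operatorname{Br},3}(\beta_3)\neq0$, and the same argument works for every $n$. This rigorously shows $\Delta_3\neq0$ with a nonzero $Q_3$-component, but it contributes nothing toward the Brauer-separation hypothesis.
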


\begin{proof}
This is Proposition~\ref{prop:nonzero-brauer-image-constructed-bockstein}
in the case \(n=3\).  Equivalently, it follows from the decomposition
\[
   \mathcal Z(\Theta_3)
   \simeq
   \mathcal Z(\alpha_1)
   \boxtimes
   \mathcal Z(\beta_2)
   \boxtimes
   \mathcal Z(\beta_3),
\]
the Leibniz rule for the MV boundary, and the Kummer quotient maps
\[
   q_{\operatorname{Br},2},\qquad q_{\operatorname{Br},3}.
\]
The class \(\alpha_1\) is nonzero and the classes
\[
   q_{\operatorname{Br},2}(\beta_2),
   \qquad
   q_{\operatorname{Br},3}(\beta_3)
\]
are nonzero by the Brauer-detecting hypothesis.  Therefore the displayed
tensor is nonzero.
\end{proof}

\begin{remark}[What replaces product-separatedness at level three]
\label{rem:level-three-what-replaces-product-separatedness}
The nonvanishing in
Proposition~\ref{prop:level-three-nonzero-enriques-brauer-component} is not
an additional product-separatedness hypothesis.  It follows from the
external-product compatibility of MV tuples, the categorical Bockstein
interpretation of the MV boundary, the Leibniz rule, and the Kummer/Brauer
quotient maps on \(S_2\) and \(S_3\).  The remaining issue is not nonvanishing
of the constructed obstruction component; it is whether algebraic
codimension-three cycle classes have zero image in the same component.  This
is the level-three instance of the Brauer-separation hypothesis.
\end{remark}

\subsection{The level-three Brauer-separated obstruction theorem}
\label{subsec:level-three-integral-hodge-counterexample}

We can now state the level-three obstruction theorem in its
Brauer-separated form.

\begin{theorem}[Level-three Brauer-separated Bockstein obstruction]
\label{thm:level-three-cup-product-bockstein-counterexample}
Assume that the level-three Enriques--Brauer datum
\[
   (X_3,\Theta_3)
\]
satisfies the Brauer-separation hypothesis of
Definition~\ref{def:brauer-separation-hypothesis}.  Then
\[
   \Delta_3
   =
   \delta(\Theta_3)
   \in H^6(X_3,\mathbb Z(3))
\]
is a non-algebraic \(2\)-torsion integral Hodge class.  Consequently \(X_3\)
violates the integral Hodge conjecture in codimension \(3\).
\end{theorem}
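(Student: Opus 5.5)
The plan is to specialize the general $n$-fold machinery of Section~\ref{sec:mv-bockstein-survivability} to $n=3$. The argument has three parts: the bookkeeping, the non-algebraicity, and the Hodge-theoretic conclusion.

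\emph{Bookkeeping.} By Proposition~\ref{prop:level-three-degree-twist}, $\Theta_3\in H^5(X_3,\mathbb Z/2(3))$, so $\Delta_3=\delta(\Theta_3)\in H^6(X_3,\mathbb Z(3))$. Since $\Delta_3$ lies in the image of the connecting map for multiplication by $2$, exactness gives $2\Delta_3=0$.

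\emph{Non-algebraicity.} Here I invoke Proposition~\ref{prop:level-three-nonzero-enriques-brauer-component}. That result combines three inputs: the tuple-relative identity of Proposition~\ref{prop:level-three-tuple-relative-mv-bockstein-identity}, the K\"unneth decomposition $\mathcal Z(\Theta_3)\simeq\mathcal Z(\alpha_1)\boxtimes\mathcal Z(\beta_2)\boxtimes\mathcal Z(\beta_3)$, and the Leibniz rule of Theorem~\ref{thm:leibniz-mv-boundary}. Together they give
\[
   \Pi_{\operatorname{Br},3}\bigl(\rho_{\mathrm{MV},\Theta_3}(\Delta_3)\bigr)
   =\alpha_1\otimes q_{\operatorname{Br},2}(\beta_2)\otimes q_{\operatorname{Br},3}(\beta_3)\neq0 .
\]
Now suppose $\Delta_3=\operatorname{cl}_{\mathbb Z}(\Gamma)$ for some $\Gamma\in CH^3(X_3)$. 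Applying $\Pi_{\operatorname{Br},3}\circ\rho_{\mathrm{MV},\Theta_3}$ to both sides, the right-hand side vanishes by the assumed Brauer-separation hypothesis, which contradicts the displayed nonvanishing. This is exactly Theorem~\ref{thm:brauer-separated-mv-detection} with $n=3$, and I would cite it directly.

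\emph{Hodge class and conclusion.} Since $\Delta_3$ is torsion, its image in $H^6(X_3,\mathbb C)$ is zero, so it is trivially of type $(3,3)$ and hence an integral Hodge class. This is Corollary~\ref{cor:brauer-separated-integral-hodge-obstruction} at $n=3$. A non-algebraic integral Hodge class in $H^6(X_3,\mathbb Z(3))$ is precisely a failure of the integral Hodge conjecture in codimension $3$ on the smooth projective variety $X_3$. By Proposition~\ref{prop:level-three-chow-triviality}, $X_3$ is moreover Chow-trivial, which I would record as context, though the argument does not need it.

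\emph{Main obstacle.} The deduction is formal once the earlier results are granted; the real content sits upstream. The point needing the most care is that the nonvanishing statement and the separation hypothesis are evaluated in the \emph{same} tuple-relative channel $\mathsf{Obs}^{6,3}_{\mathbb Z}(X_3;\mathcal Z(\Theta_3))$ and under the same projection $\Pi_{\operatorname{Br},3}$. I would check this against Definitions~\ref{def:tuple-relative-mv-obstruction-channel} and~\ref{def:brauer-separation-hypothesis}: the map $\rho_{\mathrm{MV},\Theta_3}$ is a fixed group homomorphism on all of $H^6(X_3,\mathbb Z(3))$, so it can be applied to algebraic classes as well as to $\Delta_3$.
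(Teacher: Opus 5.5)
Your proposal is correct and follows the paper's proof essentially step for step. It establishes the degree/twist bookkeeping and $2$-torsion by exactness, takes the nonvanishing of $\Pi_{\operatorname{Br},3}\bigl(\rho_{\mathrm{MV},\Theta_3}(\Delta_3)\bigr)$ from Proposition~\ref{prop:level-three-nonzero-enriques-brauer-component}, gets the contradiction with the Brauer-separation hypothesis via Theorem~\ref{thm:brauer-separated-mv-detection}, and concludes that a torsion class vanishes in $H^6(X_3,\mathbb C)$ and is therefore a Hodge class. As you observe, Chow-triviality is not used, and all the substantive content lies in the upstream nonvanishing proposition.
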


\begin{proof}
By Proposition~\ref{prop:level-three-degree-twist},
\[
   \Theta_3\in H^5(X_3,\mathbb Z/2(3)),
\]
and its Bockstein satisfies
\[
   \Delta_3=\delta(\Theta_3)\in H^6(X_3,\mathbb Z(3)).
\]
Because \(\Delta_3\) lies in the image of the connecting morphism associated
to multiplication by \(2\), it is \(2\)-torsion.

By Proposition~\ref{prop:level-three-nonzero-enriques-brauer-component},
\[
   \Pi_{\operatorname{Br},3}
   \left(
      \rho_{\mathrm{MV},\Theta_3}(\Delta_3)
   \right)\neq 0.
\]
By the Brauer-separation hypothesis,
\[
   \Pi_{\operatorname{Br},3}
   \left(
      \rho_{\mathrm{MV},\Theta_3}
      \bigl(
         \operatorname{cl}_{\mathbb Z}(CH^3(X_3))
      \bigr)
   \right)=0.
\]
Therefore Theorem~\ref{thm:brauer-separated-mv-detection} implies that
\(\Delta_3\) is not algebraic.

Since \(\Delta_3\) is torsion, its image in complex cohomology is zero.  Hence
\(\Delta_3\) is an integral Hodge class of type \((3,3)\).  Thus
\[
   \Delta_3\in H^6(X_3,\mathbb Z(3))
\]
is a non-algebraic \(2\)-torsion integral Hodge class.

Consequently \(X_3\) violates the integral Hodge conjecture in codimension
\(3\).
\end{proof}

\begin{corollary}[First higher member beyond Diaz]
\label{cor:first-higher-member-beyond-diaz}
Under the hypotheses of
Theorem~\ref{thm:level-three-cup-product-bockstein-counterexample}, the class
\[
   \Delta_3
   =
   \delta\left(
   \pi_1^*\alpha_1
   \cup
   \pi_2^*\beta_2
   \cup
   \pi_3^*\beta_3
   \right)
\]
is the first higher Brauer-separated cup-product Bockstein integral Hodge
counterexample beyond Diaz's level-two class.
\end{corollary}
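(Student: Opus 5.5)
The plan is to obtain the corollary directly from Theorem~\ref{thm:level-three-cup-product-bockstein-counterexample}, adding only a comparison with the lower levels that justifies the words ``first higher''. Two things must be checked. First, the displayed class is a Brauer-separated cup-product Bockstein integral Hodge counterexample. Second, in the hierarchy indexed by the number \(n\) of Enriques factors, no member strictly between Diaz's level two and level three exists.

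For the first point, I will unwind the definitions. By Definition~\ref{def:level-three-bockstein-class} and the construction of \(\Theta_3\) in Subsection~\ref{subsec:level-three-finite-coefficient-class}, the displayed class is literally \(\Delta_3\). Proposition~\ref{prop:level-three-degree-twist} places it in \(H^6(X_3,\mathbb Z(3))\). Under the Brauer-separation hypothesis, Theorem~\ref{thm:level-three-cup-product-bockstein-counterexample} shows that it is a non-algebraic \(2\)-torsion integral Hodge class. The ``Brauer-separated'' qualifier is justified by the route taken. The non-vanishing comes from Proposition~\ref{prop:level-three-nonzero-enriques-brauer-component}, which is itself the case \(n=3\) of Proposition~\ref{prop:nonzero-brauer-image-constructed-bockstein}. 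Non-algebraicity then follows from Theorem~\ref{thm:brauer-separated-mv-detection}, not from the Colliot-Th\'el\`ene--Voisin criterion.

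For the second point, I will compare with the tower \(\Delta_n=\delta(\Theta_n)\). The case \(n=2\) is Diaz's class, by the Corollary ``Diaz as level two'' and the identification \(\Theta_2=\gamma\). The case \(n=1\) would give \(\delta(\alpha_1)\in H^2(S_1,\mathbb Z(1))\), which is the class \(c_1(K_{S_1})\). This class is algebraic, so it is not a counterexample; it is instead the level-one Coble/Enriques package discussed earlier. The degree and twist count \(1+2(n-1)=2n-1\) and \(n\) shows that levels are indexed by \(n\), with codimension equal to \(n\). Hence \(n=3\) is the first level beyond \(n=2\), as Remark~\ref{rem:level-three-comparison-with-diaz} already records.

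The only delicate step is interpretive. ``First higher'' is not an assertion about other constructions. It must be read relative to the \(n\)-fold Enriques--Brauer tower, and I will state explicitly that the ordering is by \(n\) within this family. Everything else is bookkeeping from earlier results. The conditional dependence on the Brauer-separation hypothesis is inherited verbatim from the hypotheses of Theorem~\ref{thm:level-three-cup-product-bockstein-counterexample}.
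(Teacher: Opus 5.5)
Your proposal is correct and follows essentially the same route as the paper: you invoke Theorem~\ref{thm:level-three-cup-product-bockstein-counterexample} for non-algebraicity and compare the three-factor product with Diaz's two-factor class $\pi_1^*\alpha_1\cup\pi_2^*\beta_2$, reading ``first higher'' as the ordering by $n$ within the tower. Your additional observation that $n=1$ yields only the algebraic class $\delta(\alpha_1)=c_1(K_{S_1})$ is a correct and harmless supplement that the paper's proof does not include.
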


\begin{proof}
Diaz's class is the Bockstein of a two-factor product
\[
   \pi_1^*\alpha_1\cup\pi_2^*\beta_2
   \in H^3(S_1\times S_2,\mathbb Z/2(2)).
\]
The present class is the Bockstein of the three-factor product
\[
   \pi_1^*\alpha_1
   \cup
   \pi_2^*\beta_2
   \cup
   \pi_3^*\beta_3
   \in H^5(S_1\times S_2\times S_3,\mathbb Z/2(3)).
\]
By Theorem~\ref{thm:level-three-cup-product-bockstein-counterexample}, its
Bockstein is a non-algebraic \(2\)-torsion integral Hodge class.  Hence it is
the next member of the cup-product Bockstein pattern after Diaz.
\end{proof}

\begin{remark}[Decomposable cycles at level three]
\label{rem:decomposable-cycles-level-three}
By Lemma~\ref{lem:brauer-separation-decomposable-cycles}, the
Brauer-separation hypothesis holds for decomposable codimension-three cycles
on \(X_3\).  The remaining algebraic-control issue is the possible
contribution of non-decomposable correspondence-type cycles to the component
\[
   Q_3
   =
   \langle\alpha_1\rangle
   \otimes
   \operatorname{Br}(S_2)[2]
   \otimes
   \operatorname{Br}(S_3)[2].
\]
This is the level-three instance of the general non-decomposable
correspondence issue isolated in
Remark~\ref{rem:nondecomposable-correspondences-control}.
\end{remark}

\subsection{Diaz-style restriction as calibration}
\label{subsec:diaz-style-restriction-calibration}

Although the proof above uses MV survivability and Brauer separation, it is
useful to record the Diaz-style restriction geometry as a calibration.  This
is not the main survivability proof in the present section; rather, it shows
that the level-three class restricts to the expected product on
\(E\times S_2\times S_3\).

Assume, as in Diaz's construction, that the K3 cover \(Y_1\) contains a
genus-one curve
\[
   \widetilde E\subset Y_1
\]
which is preserved by \(\phi_1\).  Let
\[
   E:=\widetilde E/\phi_1
\]
and let
\[
   j:E\hookrightarrow S_1
\]
denote the induced curve in the Enriques surface.  Set
\[
   W=E\times S_2\times S_3.
\]
Let
\[
   i:W\hookrightarrow X_3
\]
be the closed embedding
\[
   i=j\times \operatorname{id}_{S_2}\times \operatorname{id}_{S_3}.
\]
Define
\[
   \alpha_1':=j^*\alpha_1\in H^1(E,\mathbb Z/2(1)).
\]
Let
\[
   q_E:W\to E,
   \qquad
   q_2:W\to S_2,
   \qquad
   q_3:W\to S_3
\]
be the projections.  Define
\[
   \Theta_3'
   :=
   q_E^*\alpha_1'
   \cup
   q_2^*\beta_2
   \cup
   q_3^*\beta_3
   \in H^5(W,\mathbb Z/2(3)).
\]

\begin{lemma}[Restriction of the level-three product]
\label{lem:restriction-level-three-product}
With the notation above,
\[
   i^*\Theta_3=\Theta_3'.
\]
\end{lemma}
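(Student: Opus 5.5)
The plan is to use only two facts: pullback in cohomology is a ring homomorphism, and pullbacks are functorial. Both hold for any coefficient ring, in particular \(\mathbb Z/2\) with Tate twists.

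First I will record how the projections of \(X_3\) compose with the embedding \(i=j\times\operatorname{id}_{S_2}\times\operatorname{id}_{S_3}\). Since \(i\) is a product map, we have
\[
   \pi_1\circ i=j\circ q_E,
   \qquad
   \pi_2\circ i=q_2,
   \qquad
   \pi_3\circ i=q_3 .
\]
These are identities of morphisms \(W\to S_k\), and each is checked on points.

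Next I apply \(i^*\) to \(\Theta_3=\pi_1^*\alpha_1\cup\pi_2^*\beta_2\cup\pi_3^*\beta_3\). Because \(i^*\) is multiplicative for cup products (no signs arise, since we only pull back and never permute factors), we get
\[
   i^*\Theta_3
   =
   i^*\pi_1^*\alpha_1\cup i^*\pi_2^*\beta_2\cup i^*\pi_3^*\beta_3 .
\]
Functoriality \((f\circ g)^*=g^*f^*\), combined with the identities above, turns the three factors into the following:
\[
   i^*\pi_1^*\alpha_1 = (\pi_1\circ i)^*\alpha_1 = (j\circ q_E)^*\alpha_1 = q_E^*j^*\alpha_1 = q_E^*\alpha_1'.
\]
By the same reasoning, \(i^*\pi_2^*\beta_2=q_2^*\beta_2\) and \(i^*\pi_3^*\beta_3=q_3^*\beta_3\).

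Substituting these back gives exactly \(\Theta_3'\). The degree and twist bookkeeping matches Proposition~\ref{prop:level-three-degree-twist}, since pullback preserves both, so the identity holds in \(H^5(W,\mathbb Z/2(3))\).

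I do not expect a real obstacle here. The only point needing care is to make sure the cohomology theory is one where pullback is a graded ring map compatible with twists, such as Betti cohomology with \(\mathbb Z/2\) coefficients or étale cohomology with \(\mu_2^{\otimes n}\). In either theory the statement is purely formal. The argument is the same as Diaz's restriction of \(\gamma\) to \(E\times S_2\), with one extra identity factor.
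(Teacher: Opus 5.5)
Your proof is correct and is essentially the paper's own argument. Both use multiplicativity of \(i^*\) on cup products, the factorizations \(\pi_1\circ i=j\circ q_E\), \(\pi_2\circ i=q_2\), \(\pi_3\circ i=q_3\), and functoriality of pullback to identify \(i^*\Theta_3\) with \(q_E^*\alpha_1'\cup q_2^*\beta_2\cup q_3^*\beta_3=\Theta_3'\).
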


\begin{proof}
By definition,
\[
   \Theta_3
   =
   \pi_1^*\alpha_1
   \cup
   \pi_2^*\beta_2
   \cup
   \pi_3^*\beta_3.
\]
Pullback is compatible with cup product, so
\[
   i^*\Theta_3
   =
   i^*(\pi_1^*\alpha_1)
   \cup
   i^*(\pi_2^*\beta_2)
   \cup
   i^*(\pi_3^*\beta_3).
\]
Since
\[
   \pi_1\circ i=j\circ q_E,
   \qquad
   \pi_2\circ i=q_2,
   \qquad
   \pi_3\circ i=q_3,
\]
we have
\[
   i^*(\pi_1^*\alpha_1)
   =
   q_E^*(j^*\alpha_1)
   =
   q_E^*\alpha_1',
\]
and
\[
   i^*(\pi_2^*\beta_2)=q_2^*\beta_2,
   \qquad
   i^*(\pi_3^*\beta_3)=q_3^*\beta_3.
\]
Substituting gives
\[
   i^*\Theta_3
   =
   q_E^*\alpha_1'
   \cup
   q_2^*\beta_2
   \cup
   q_3^*\beta_3
   =
   \Theta_3'.
\]
\end{proof}

\begin{remark}[Role of the restriction calculation]
\label{rem:role-restriction-calculation-level-three}
The restriction calculation is the level-three analogue of Diaz's reduction
from \(S\times S_2\) to \(E\times S_2\).  In Diaz's original proof it is used
to isolate a coniveau nonvanishing problem.  In the present formulation it is
retained as a geometric calibration of the product class, while the main
survivability proof is supplied by the MV obstruction channel and the
Enriques--Brauer component.
\end{remark}

\subsection{Interpretation}
\label{subsec:level-three-interpretation}

The level-three construction has the same formal shape as Diaz's level-two
construction:
\[
   \text{finite-coefficient cup product}
   \quad\leadsto\quad
   \text{Bockstein}
   \quad\leadsto\quad
   \text{integral torsion Hodge class}.
\]
The new point is the survivability mechanism.  Diaz proves survival by a
coniveau/function-field argument.  At level three, the MV Bockstein square
identifies the Bockstein endpoint with the obstruction-channel boundary of
the finite-coefficient gluing datum:
\[
   \rho_{\mathrm{MV},\Theta_3}(\Delta_3)
   =
   \partial_{\mathrm{MV}}\bigl(\mathcal Z(\Theta_3)\bigr).
\]
The MV/K\"unneth/Bockstein calculation then gives a nonzero image in
\[
   Q_3
   =
   \langle\alpha_1\rangle
   \otimes
   \operatorname{Br}(S_2)[2]
   \otimes
   \operatorname{Br}(S_3)[2].
\]
Under the Brauer-separation hypothesis, algebraic codimension-three cycle
classes have zero image in this same component, so the Bockstein class cannot
be algebraic.

Thus the level-three construction is not merely a formal candidate.  Under the
Brauer-separation hypothesis, it produces a degree-six integral Hodge
counterexample:
\[
   \Delta_3
   =
   \delta\left(
   \pi_1^*\alpha_1
   \cup
   \pi_2^*\beta_2
   \cup
   \pi_3^*\beta_3
   \right)
   \in H^6(S_1\times S_2\times S_3,\mathbb Z(3)).
\]
This is the first higher cup-product Bockstein obstruction beyond Diaz and is
the base case for the \(n\)-fold tower treated in the next section.
\section{The \(n\)-Fold Cup-Product Bockstein Family}
\label{sec:n-fold-cup-product-bockstein-family}

The preceding section treated the first higher cup-product case beyond Diaz.
We now state the general pattern.  The family is built from one Enriques
double-cover class and \(n-1\) Enriques Brauer-detecting classes.  The resulting
finite-coefficient product has degree \(2n-1\) and twist \(n\), so its
Bockstein lies in
\[
   H^{2n}(-,\mathbb Z(n)).
\]
The MacPherson--Vilonen formalism proves that this Bockstein has a nonzero
image in a distinguished Enriques--Brauer component of the MV obstruction
channel.  Under the Brauer-separation hypothesis of
Definition~\ref{def:brauer-separation-hypothesis}, algebraic
codimension-\(n\) cycle classes have zero image in that same component.
This gives the \(n\)-fold Brauer-separated cup-product Bockstein family.

Thus Diaz's construction is the level-two member of an \(n\)-fold
cup-product Bockstein tower.  The level-three member was constructed in
Section~\ref{sec:level-three-cup-product-bockstein-obstruction}; the present
section records the general case.

\subsection{The \(n\)-fold finite-coefficient product}
\label{subsec:n-fold-finite-coefficient-product}

Let \(n\geq 2\).  Let
\[
   X_n=S_1\times S_2\times\cdots\times S_n
\]
be a product of Enriques surfaces.  Assume that
\[
   S_1=Y_1/\phi_1,
\]
where \(Y_1\) is a K3 surface and \(\phi_1\) is a fixed-point-free Enriques
involution.  Let
\[
   \alpha_1\in H^1(S_1,\mathbb Z/2(1))
\]
be the class of the K3 double cover
\[
   Y_1\longrightarrow S_1.
\]

For each \(2\leq i\leq n\), choose a class
\[
   \beta_i\in H^2(S_i,\mathbb Z/2(1))
\]
whose image in
\[
   \operatorname{Br}(S_i)[2]
\]
is nonzero.  These are the Brauer-detecting Enriques inputs.

Let
\[
   \pi_i:X_n\to S_i
\]
be the projection maps.  Define the \(n\)-fold finite-coefficient class
\[
   \Theta_n
   :=
   \pi_1^*\alpha_1
   \cup
   \pi_2^*\beta_2
   \cup
   \cdots
   \cup
   \pi_n^*\beta_n.
\]

\begin{proposition}[Degree and twist of the \(n\)-fold product]
\label{prop:n-fold-degree-twist}
The class \(\Theta_n\) lies in
\[
   H^{2n-1}(X_n,\mathbb Z/2(n)).
\]
Consequently its Bockstein, associated to
\[
   0\to\mathbb Z(n)\xrightarrow{\times 2}\mathbb Z(n)
   \to\mathbb Z/2(n)\to0,
\]
lies in
\[
   H^{2n}(X_n,\mathbb Z(n)).
\]
\end{proposition}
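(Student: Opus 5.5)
The plan is to copy the bookkeeping of Proposition~\ref{prop:level-three-degree-twist}, with the three factors replaced by \(n\) and an induction on the number of factors. The only tools are that pullback along the projections \(\pi_i:X_n\to S_i\) preserves cohomological degree and Tate twist, and that the cup product is additive in both.

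First I would record the factor classes. We have \(\pi_1^*\alpha_1\in H^1(X_n,\mathbb Z/2(1))\), and \(\pi_i^*\beta_i\in H^2(X_n,\mathbb Z/2(1))\) for \(2\leq i\leq n\).

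Next I would use the cup product
\[
   H^{a}(X_n,\mathbb Z/2(s))\otimes H^{b}(X_n,\mathbb Z/2(t))\longrightarrow H^{a+b}(X_n,\mathbb Z/2(s+t)),
\]
which comes from the multiplication \(\mathbb Z/2(s)\otimes\mathbb Z/2(t)\to\mathbb Z/2(s+t)\). Inducting on \(k\), the partial product \(\pi_1^*\alpha_1\cup\pi_2^*\beta_2\cup\cdots\cup\pi_k^*\beta_k\) lies in \(H^{2k-1}(X_n,\mathbb Z/2(k))\). The base case \(k=1\) is the statement about \(\pi_1^*\alpha_1\). For the step, cupping with \(\pi_{k+1}^*\beta_{k+1}\) adds \(2\) to the degree and \(1\) to the twist. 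Setting \(k=n\) gives total degree \(1+2(n-1)=2n-1\) and total twist \(1+(n-1)=n\), so \(\Theta_n\in H^{2n-1}(X_n,\mathbb Z/2(n))\).

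Finally, the short exact sequence \(0\to\mathbb Z(n)\xrightarrow{\times2}\mathbb Z(n)\to\mathbb Z/2(n)\to0\) has a long exact cohomology sequence, and its connecting map is
\[
   \delta:H^{2n-1}(X_n,\mathbb Z/2(n))\to H^{2n}(X_n,\mathbb Z(n)).
\]
So \(\delta(\Theta_n)\in H^{2n}(X_n,\mathbb Z(n))\). I could also note in passing that \(2\delta(\Theta_n)=0\), although this is not needed for the statement.

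No step here is a genuine obstacle. The only point that needs care is checking that twists add under the cup product; over \(\mathbb C\) with \(\mathbb Z/2\) coefficients all Tate twists are canonically trivial, so this is pure bookkeeping.
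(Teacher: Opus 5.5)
Your proposal is correct and follows essentially the same route as the paper's proof: pullback along the projections preserves degree and twist, the cup product adds them to give $1+2(n-1)=2n-1$ and $1+(n-1)=n$, and the connecting map of the coefficient sequence lands in $H^{2n}(X_n,\mathbb Z(n))$. Your induction on the number of factors is simply a more explicit way of writing the paper's direct summation.
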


\begin{proof}
The class \(\alpha_1\) has cohomological degree \(1\) and twist \(1\):
\[
   \alpha_1\in H^1(S_1,\mathbb Z/2(1)).
\]
For each \(2\leq i\leq n\), the class \(\beta_i\) has cohomological degree
\(2\) and twist \(1\):
\[
   \beta_i\in H^2(S_i,\mathbb Z/2(1)).
\]
Pullback preserves cohomological degree and twist.  Therefore
\[
   \pi_1^*\alpha_1\in H^1(X_n,\mathbb Z/2(1)),
\]
and
\[
   \pi_i^*\beta_i\in H^2(X_n,\mathbb Z/2(1)),
   \qquad 2\leq i\leq n.
\]
The total cohomological degree of the product is
\[
   1+2(n-1)=2n-1.
\]
The total twist is
\[
   1+(n-1)=n.
\]
Hence
\[
   \Theta_n
   =
   \pi_1^*\alpha_1
   \cup
   \pi_2^*\beta_2
   \cup
   \cdots
   \cup
   \pi_n^*\beta_n
   \in
   H^{2n-1}(X_n,\mathbb Z/2(n)).
\]
The coefficient sequence
\[
   0
   \longrightarrow
   \mathbb Z(n)
   \xrightarrow{\times 2}
   \mathbb Z(n)
   \longrightarrow
   \mathbb Z/2(n)
   \longrightarrow
   0
\]
has connecting morphism
\[
   \delta:
   H^{2n-1}(X_n,\mathbb Z/2(n))
   \longrightarrow
   H^{2n}(X_n,\mathbb Z(n)).
\]
Therefore
\[
   \delta(\Theta_n)\in H^{2n}(X_n,\mathbb Z(n)).
\]
\end{proof}

\begin{definition}[\(n\)-fold Bockstein class]
\label{def:n-fold-bockstein-class}
The \emph{\(n\)-fold cup-product Bockstein class} is
\[
   \Delta_n
   :=
   \delta(\Theta_n)
   \in H^{2n}(X_n,\mathbb Z(n)).
\]
\end{definition}

\begin{remark}[The first cases]
\label{rem:first-cases-n-fold-family}
For \(n=2\), the class is Diaz's Enriques-product class:
\[
   \Theta_2
   =
   \pi_1^*\alpha_1\cup\pi_2^*\beta_2
   \in H^3(S_1\times S_2,\mathbb Z/2(2)),
\]
and
\[
   \Delta_2=\delta(\Theta_2)\in H^4(S_1\times S_2,\mathbb Z(2)).
\]
For \(n=3\), one obtains the level-three class
\[
   \Theta_3
   =
   \pi_1^*\alpha_1
   \cup
   \pi_2^*\beta_2
   \cup
   \pi_3^*\beta_3
   \in H^5(S_1\times S_2\times S_3,\mathbb Z/2(3)),
\]
and
\[
   \Delta_3=\delta(\Theta_3)\in H^6(S_1\times S_2\times S_3,\mathbb Z(3)).
\]
Thus the formula for \(\Delta_n\) simultaneously contains Diaz's level-two
class and the first higher level-three class.
\end{remark}

\subsection{Chow-triviality of the \(n\)-fold products}
\label{subsec:n-fold-chow-triviality}

\begin{proposition}[Chow-triviality of \(X_n\)]
\label{prop:n-fold-chow-triviality}
For every \(n\geq 2\), the product
\[
   X_n=S_1\times\cdots\times S_n
\]
is Chow-trivial.
\end{proposition}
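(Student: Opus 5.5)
The plan is to argue by induction on \(n\), reducing everything to the two facts recorded from Diaz \cite[Example 2.2]{Diaz2023IHCTrivialChow}: Enriques surfaces are Chow-trivial, and the product of two Chow-trivial smooth projective varieties is again Chow-trivial. This is the same route as Proposition~\ref{prop:level-three-chow-triviality}, with the two-step iteration replaced by induction.

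\textbf{Base case.} For \(n=1\), or directly for \(n=2\), the factor \(S_1\) is an Enriques surface and hence Chow-trivial. In the base case \(n=2\) the product \(S_1\times S_2\) is Chow-trivial by product stability; this is Diaz's statement for \(V\).

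\textbf{Inductive step.} Assume \(X_{n-1}=S_1\times\cdots\times S_{n-1}\) is Chow-trivial. Write
\[
   X_n=X_{n-1}\times S_n .
\]
Both factors are smooth projective and Chow-trivial, so product stability gives that \(X_n\) is Chow-trivial.

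\textbf{Why product stability holds.} For completeness I would indicate the reason behind it, using the characterization in \cite[Theorem 2.1]{Diaz2023IHCTrivialChow}: \(X\) is Chow-trivial if and only if the rational Chow motive \(M(X)_{\mathbb Q}\) is a finite sum of Lefschetz motives \(\mathbb L^{\otimes k}\). Since
\[
   M(X\times Y)_{\mathbb Q}\cong M(X)_{\mathbb Q}\otimes M(Y)_{\mathbb Q}
\]
and \(\mathbb L^{a}\otimes\mathbb L^{b}=\mathbb L^{a+b}\), the product motive is again a finite sum of Lefschetz motives. Hence \(CH^*(X\times Y)_{\mathbb Q}\) has finite \(\mathbb Q\)-rank.

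\textbf{Main obstacle.} There is no real obstacle; the statement is formal once product stability is cited. The only point needing care is to invoke Diaz's equivalence of characterizations rather than the raw finite-rank definition. Finite rank of \(CH^*(X)_{\mathbb Q}\) and of \(CH^*(Y)_{\mathbb Q}\) does not by itself visibly control \(CH^*(X\times Y)_{\mathbb Q}\) without the motivic decomposition, or equivalently the Künneth-type statement that the cycle class map is an isomorphism on all powers.
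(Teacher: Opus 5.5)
Your proposal is correct and follows the paper's proof: both rest on Diaz's Example 2.2 (Enriques surfaces are Chow-trivial, and Chow-triviality is stable under products) and apply product stability inductively to \(X_n=X_{n-1}\times S_n\). The paper's proof is only that citation plus iteration. Your extra sketch of why product stability holds, via the Lefschetz-motive characterization in Diaz's Theorem 2.1, is correct.
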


\begin{proof}
Diaz records that Enriques surfaces are Chow-trivial and that products of
Chow-trivial varieties are Chow-trivial
\cite[Example 2.2]{Diaz2023IHCTrivialChow}.  Since each \(S_i\) is an
Enriques surface, each \(S_i\) is Chow-trivial.  Applying product stability
inductively gives that
\[
   X_n=S_1\times\cdots\times S_n
\]
is Chow-trivial.
\end{proof}

\begin{remark}[Rational smallness versus integral torsion]
\label{rem:n-fold-rational-smallness-integral-torsion}
The Chow-triviality of \(X_n\) means that the examples remain rationally small
from the viewpoint of Chow groups.  The classes \(\Delta_n\), however, live in
integral cohomology and are torsion.  Thus the obstruction is invisible after
tensoring with \(\mathbb Q\), but remains visible integrally.
\end{remark}

\subsection{The \(n\)-fold MV Bockstein identity}
\label{subsec:n-fold-mv-bockstein-square}

We now apply the MV Bockstein formalism of
Section~\ref{sec:mv-bockstein-survivability} with \(p=n\).  The coefficient
sequence is
\[
   0\to\mathbb Z(n)\xrightarrow{\times 2}\mathbb Z(n)
   \to\mathbb Z/2(n)\to0,
\]
and the Bockstein is
\[
   \delta:
   H^{2n-1}(X_n,\mathbb Z/2(n))
   \longrightarrow
   H^{2n}(X_n,\mathbb Z(n)).
\]

\begin{proposition}[\(n\)-fold tuple-relative MV Bockstein identity]
\label{prop:n-fold-tuple-relative-mv-bockstein-identity}
Let
\[
   \mathcal Z(\Theta_n)
\]
be the MV tuple associated to \(\Theta_n\).  Then
\[
   \rho_{\mathrm{MV},\Theta_n}(\Delta_n)
   =
   \partial_{\mathrm{MV}}\bigl(\mathcal Z(\Theta_n)\bigr)
\]
in the tuple-relative MV obstruction channel
\[
   \mathsf{Obs}^{2n,n}_{\mathbb Z}(X_n;\mathcal Z(\Theta_n)).
\]
\end{proposition}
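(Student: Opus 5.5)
The plan is to obtain this identity as a direct specialization of the general tuple-relative statement, Proposition~\ref{prop:mv-tuple-boundary-identity}. I will take \(X=X_n\), \(p=n\), and \(\Theta=\Theta_n\). The level-three case, Proposition~\ref{prop:level-three-tuple-relative-mv-bockstein-identity}, was obtained the same way, and I will follow that template.

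The first step is to check that \(\Theta_n\) lies in the source group of the Bockstein used in Section~\ref{sec:mv-bockstein-survivability}. By Proposition~\ref{prop:n-fold-degree-twist}, \(\Theta_n\in H^{2n-1}(X_n,\mathbb Z/2(n))\). The connecting map of
\[
   0\to\mathbb Z(n)\xrightarrow{\times 2}\mathbb Z(n)\to\mathbb Z/2(n)\to0
\]
then sends it to \(\Delta_n=\delta(\Theta_n)\in H^{2n}(X_n,\mathbb Z(n))\), as in Definition~\ref{def:n-fold-bockstein-class}.

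The second step is to invoke the compatible MV realization of this coefficient triangle for \(p=n\), as described in Remark~\ref{rem:existence-compatible-mv-realizations}. That realization supplies:
\begin{enumerate}[label=\textup{(\alph*)}]
\item the tuple \(\mathcal Z(\Theta_n)\), via Definition~\ref{def:mv-tuple-associated-to-finite-class};
\item the tuple-relative channel \(\mathsf{Obs}^{2n,n}_{\mathbb Z}(X_n;\mathcal Z(\Theta_n))\) together with the map \(\rho_{\mathrm{MV},\Theta_n}\), via Definition~\ref{def:tuple-relative-mv-obstruction-channel};
\item the commutative MV Bockstein square of Proposition~\ref{prop:mv-bockstein-square}.
\end{enumerate}
Evaluating the square at \(\Theta_n\) gives \(\rho_{\mathrm{MV}}(\Delta_n)=\partial_{\mathrm{MV}}(\delta_{\mathrm{MV}}(\Theta_n))\). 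Retaining the component determined by \(\mathcal Z(\Theta_n)\) then yields the displayed identity.

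The main obstacle is to confirm that nothing in Proposition~\ref{prop:mv-tuple-boundary-identity} depended on \(p\le 3\) or on a two-factor product structure. In particular, the compatible realization and the passage to the tuple-relative channel must make sense for arbitrary \(n\). I expect this to be routine, because the realization was set up for general \(p\geq1\) and arbitrary smooth projective \(X\). To be safe, I would cross-check against Corollary~\ref{cor:n-fold-mv-tuple-decomposition}: the tuple \(\mathcal Z(\Theta_n)\) is the iterated external product of the factor tuples, so it is a legitimate object of the product zig-zag category. Hence the tuple-relative channel is well defined, and the identity holds there with no further input.
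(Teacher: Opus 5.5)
Your proposal is correct and is essentially the paper's proof. The paper simply applies Proposition~\ref{prop:mv-tuple-boundary-identity} with \(X=X_n\), \(p=n\), \(\Theta=\Theta_n\) and uses \(\Delta_n=\delta(\Theta_n)\); your re-evaluation of the MV Bockstein square just unpacks the proof of that proposition. The cross-check through Corollary~\ref{cor:n-fold-mv-tuple-decomposition} is unnecessary. Definitions~\ref{def:mv-tuple-associated-to-finite-class} and~\ref{def:tuple-relative-mv-obstruction-channel} already supply \(\mathcal Z(\Theta)\) and the tuple-relative channel for an arbitrary \(\Theta\in H^{2p-1}(X,\mathbb Z/2(p))\), so invoking the K\"unneth decomposition only imports the Lyubashenko input into a step that does not need it.
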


\begin{proof}
This is Proposition~\ref{prop:mv-tuple-boundary-identity} applied to
\[
   X=X_n,\qquad p=n,\qquad \Theta=\Theta_n.
\]
Since
\[
   \Delta_n=\delta(\Theta_n),
\]
the tuple-relative MV Bockstein identity gives
\[
   \rho_{\mathrm{MV},\Theta_n}(\Delta_n)
   =
   \partial_{\mathrm{MV}}\bigl(\mathcal Z(\Theta_n)\bigr).
\]
\end{proof}

\subsection{Nonzero Enriques--Brauer component}
\label{subsec:brauer-separated-effectiveness-n-fold-mv-tuple}

The formal MV machinery proves that the Bockstein class \(\Delta_n\) has a
nonzero image in the Enriques--Brauer component \(Q_n\) of
Definition~\ref{def:enriques-brauer-separating-component}.  This replaces the
earlier product-separatedness condition.

\begin{proposition}[Nonzero Enriques--Brauer component of \(\Delta_n\)]
\label{prop:n-fold-nonzero-enriques-brauer-component}
One has
\[
   \Pi_{\operatorname{Br},n}
   \left(
      \rho_{\mathrm{MV},\Theta_n}(\Delta_n)
   \right)
   =
   \alpha_1
   \otimes
   q_{\operatorname{Br},2}(\beta_2)
   \otimes
   \cdots
   \otimes
   q_{\operatorname{Br},n}(\beta_n).
\]
In particular,
\[
   \Pi_{\operatorname{Br},n}
   \left(
      \rho_{\mathrm{MV},\Theta_n}(\Delta_n)
   \right)\neq 0.
\]
\end{proposition}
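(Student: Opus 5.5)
This statement is the \(n\)-fold restatement of Proposition~\ref{prop:nonzero-brauer-image-constructed-bockstein}, so the plan is to reduce to that proposition and reassemble its proof step by step, keeping the inputs explicit. First, Proposition~\ref{prop:n-fold-tuple-relative-mv-bockstein-identity} rewrites the left-hand side as
\[
   \Pi_{\operatorname{Br},n}\bigl(\partial_{\mathrm{MV}}(\mathcal Z(\Theta_n))\bigr).
\]
Second, Corollary~\ref{cor:n-fold-mv-tuple-decomposition} factors the tuple as
\[
   \mathcal Z(\alpha_1)\boxtimes\mathcal Z(\beta_2)\boxtimes\cdots\boxtimes\mathcal Z(\beta_n).
\]
That corollary rests on Lyubashenko's theorem (Theorem~\ref{thm:lyubashenko-external-product-perverse}) and Proposition~\ref{prop:mv-zigzag-external-product-compatible}.

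Next I would apply the Leibniz rule, Theorem~\ref{thm:leibniz-mv-boundary}, with signs dropped mod \(2\). This expands the boundary into \(n\) summands: one with \(\partial_{\mathrm{MV}}\) on the \(\alpha_1\)-factor, and \(n-1\) with \(\partial_{\mathrm{MV}}\) on a \(\beta_k\)-factor. I would then apply the projection \(\Pi_{\operatorname{Br},n}\) of Definition~\ref{def:brauer-separating-projection} term by term. This projection first selects the Künneth/MV pattern \(H^1(S_1,\mathbb Z/2(1))\otimes\bigotimes_{i\ge2}H^2(S_i,\mathbb Z/2(1))\), then projects to \(\langle\alpha_1\rangle\), then applies \(q_{\operatorname{Br},i}\) on each factor \(i\ge2\). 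Applied to the \(\alpha_1\)-summand, it returns \(\alpha_1\otimes q_{\operatorname{Br},2}(\beta_2)\otimes\cdots\otimes q_{\operatorname{Br},n}(\beta_n)\).

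For each summand with \(k\ge2\), I would argue that the \(S_k\)-factor is the integral boundary piece \(\partial_{\mathrm{MV}}(\mathcal Z(\beta_k))\). This piece does not lie in the finite-coefficient slot \(H^2(S_k,\mathbb Z/2(1))\), so the pattern-selection step of \(\Pi_{\operatorname{Br},n}\) annihilates the summand.

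Nonvanishing then follows in three steps. The class \(\alpha_1\ne0\) because the K3 cover is connected. Each \(q_{\operatorname{Br},i}(\beta_i)\neq0\) by the Brauer-detecting choice of \(\beta_i\), through the Kummer sequence. Finally, a tensor product of nonzero elements of one-dimensional \(\mathbb Z/2\)-spaces is nonzero.

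The step I expect to be the main obstacle is the bookkeeping of which summands land in the selected component. There are two points to settle. The first is that the summands with \(k\ge2\) are genuinely killed, not merely relabelled. The second is that, in the \(\alpha_1\)-summand, the boundary \(\partial_{\mathrm{MV}}(\mathcal Z(\alpha_1))\) is still recorded, after \(\Pi_{\operatorname{Br},n}\), by the \(\langle\alpha_1\rangle\)-line. The concern here is that it might instead be recorded by its integral Bockstein \(c_1(K_{S_1})\). I would first try to settle both points by appealing to the convention in Definition~\ref{def:brauer-separating-projection} that \(Q_n\) is indexed by the finite-coefficient source classes of the tuple. I would support this with the commutativity of the MV Bockstein square, Proposition~\ref{prop:mv-bockstein-square}, which transports the source labels through \(\partial_{\mathrm{MV}}\). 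If that identification is accepted, the remainder is exactly the computation in the proof of Proposition~\ref{prop:nonzero-brauer-image-constructed-bockstein}, and the statement follows by citing it.
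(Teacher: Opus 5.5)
Your reduction follows the paper's route exactly: the paper proves this statement by citing Proposition~\ref{prop:nonzero-brauer-image-constructed-bockstein}, whose proof is the Leibniz-expansion-and-projection computation you reassemble. The bookkeeping obstacle you flag is a genuine gap, though, and the citation does not close it. That proof makes precisely the asymmetric move you hope to license by convention.

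Apply Proposition~\ref{prop:mv-tuple-boundary-identity} on \(S_1\) with \(p=1\). It gives \(\partial_{\mathrm{MV}}(\mathcal Z(\alpha_1))=\rho_{\mathrm{MV},\alpha_1}(\delta\alpha_1)\), with \(\delta\alpha_1=c_1(K_{S_1})\in H^2(S_1,\mathbb Z(1))\). So in the \(k=1\) summand the \(S_1\)-slot is an integral boundary class, exactly as the \(S_k\)-slot is in the \(k\)-th summand. This forces a contradiction:
\begin{itemize}
\item Your rule for discarding the \(k\geq 2\) summands (``the slot is an integral boundary, not a finite-coefficient class'') discards the \(k=1\) summand too, and the total is \(0\).
\item Your rule for keeping the \(k=1\) summand (\(\Pi_{\operatorname{Br},n}\) reads each slot by the source label transported through \(\partial_{\mathrm{MV}}\)) applies verbatim to every \(k\geq 2\) summand. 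Then all \(n\) summands project to the same element \(\alpha_1\otimes q_{\operatorname{Br},2}(\beta_2)\otimes\cdots\otimes q_{\operatorname{Br},n}(\beta_n)\). Since \(\Pi_{\operatorname{Br},n}\) is additive, the total is \(n\) times this element in \(Q_n\cong\mathbb Z/2\), which is \(0\) for even \(n\), including Diaz's case \(n=2\).
\end{itemize}
Your two points require opposite conventions for the same operation, so ``accepting the identification'' cannot be granted.

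This is also not something a convention can settle. The selected pattern \(H^1(S_1,\mathbb Z/2(1))\otimes\bigotimes_{i\geq 2}H^2(S_i,\mathbb Z/2(1))\) has total degree \(2n-1\), one less than \(\Delta_n\). In the mod-\(2\) reduction, where a Leibniz rule genuinely holds because \(\mathrm{Sq}^1\) is a derivation, every summand has one slot raised: \(\alpha_1^2\) on \(S_1\), or \(\mathrm{Sq}^1\beta_k\) on \(S_k\). So no summand has that pattern. Integrally the expansion does not even typecheck, since \(\delta x\times y\) is a mod-\(2\) class when \(x,y\) are.

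Moreover, the boundary on \(\beta_k\) is \(\delta\beta_k\in H^3(S_k,\mathbb Z(1))\cong\mathbb Z/2\). The map \(q_{\operatorname{Br},k}\) coincides with this Bockstein, since both are the surjection of \(H^2(S_k,\mathbb Z/2(1))\) onto \(\mathbb Z/2\) with kernel \(\operatorname{Pic}(S_k)/2\). So the summands you discard are exactly the ones carrying the Brauer classes.

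A detecting map that treats every boundary factor by one rule does exist: the Tor part of the integral K\"unneth sequence. It sends \(\delta(x\times y)\) to \(\pm\) the Tor class determined by the \(2\)-torsion classes \(\delta x\) and \(\delta y\).
\begin{itemize}
\item Set \(u=\alpha_1\times\beta_2\times\cdots\times\beta_{n-1}\). Then the Tor part sends \(\Delta_n=\delta(u\times\beta_n)\) to \(\delta u\) in \(\operatorname{Tor}(H^{2n-2}(X_{n-1},\mathbb Z),H^3(S_n,\mathbb Z))\cong H^{2n-2}(X_{n-1},\mathbb Z)[2]\). This uses that \(\delta\beta_n\) generates \(H^3(S_n,\mathbb Z)\).
\item Here \(\delta u\) is \(\Delta_{n-1}\) for \(n\geq 3\), and \(c_1(K_{S_1})\) for \(n=2\).
\item By induction from \(c_1(K_{S_1})\neq 0\), this gives \(\Delta_n\neq 0\).
\end{itemize}
But that map is not \(\Pi_{\operatorname{Br},n}\). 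The Brauer-separation hypothesis would have to be restated and justified for it.
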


\begin{proof}
This is Proposition~\ref{prop:nonzero-brauer-image-constructed-bockstein}.
The nonzero tensor is the Enriques--Brauer component selected by the
Brauer-separating projection
\[
   \Pi_{\operatorname{Br},n}.
\]
\end{proof}

\begin{remark}[What replaces product-separatedness]
\label{rem:n-fold-what-replaces-product-separatedness}
The nonvanishing in
Proposition~\ref{prop:n-fold-nonzero-enriques-brauer-component} is not an
extra product-separatedness hypothesis.  It follows from the K\"unneth formula
for MV tuples, the categorical Bockstein interpretation of the MV boundary,
the Leibniz rule, and the Kummer/Brauer quotient maps on the Enriques factors.
The remaining algebraic issue is not nonvanishing of the constructed class;
it is whether algebraic codimension-\(n\) cycle classes have zero image in
the same component.  This is exactly the Brauer-separation hypothesis of
Definition~\ref{def:brauer-separation-hypothesis}.
\end{remark}

\subsection{The \(n\)-fold Brauer-separated integral Hodge obstruction}
\label{subsec:n-fold-integral-hodge-counterexample-theorem}

We now state the general family theorem in its Brauer-separated form.

\begin{theorem}[\(n\)-fold Brauer-separated cup-product Bockstein obstruction]
\label{thm:n-fold-cup-product-bockstein-family}
Let \(n\geq 2\), and let
\[
   X_n=S_1\times\cdots\times S_n
\]
be a product of Enriques surfaces.  Let
\[
   \alpha_1\in H^1(S_1,\mathbb Z/2(1))
\]
be the K3 double-cover class of \(S_1\).  For \(2\leq i\leq n\), let
\[
   \beta_i\in H^2(S_i,\mathbb Z/2(1))
\]
be classes whose images in
\[
   \operatorname{Br}(S_i)[2]
\]
are nonzero.  Define
\[
   \Theta_n
   =
   \pi_1^*\alpha_1
   \cup
   \pi_2^*\beta_2
   \cup
   \cdots
   \cup
   \pi_n^*\beta_n
   \in H^{2n-1}(X_n,\mathbb Z/2(n)),
\]
and let
\[
   \Delta_n
   =
   \delta(\Theta_n)
   \in H^{2n}(X_n,\mathbb Z(n)).
\]

Assume that the \(n\)-fold Enriques--Brauer datum
\[
   (X_n,\Theta_n)
\]
satisfies the Brauer-separation hypothesis of
Definition~\ref{def:brauer-separation-hypothesis}.  Then
\[
   \Delta_n\in H^{2n}(X_n,\mathbb Z(n))
\]
is a non-algebraic \(2\)-torsion integral Hodge class.  Consequently \(X_n\)
violates the integral Hodge conjecture in codimension \(n\).
\end{theorem}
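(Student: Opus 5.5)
The plan is to assemble the theorem from the results already established in Section~\ref{sec:mv-bockstein-survivability} and the preceding subsections, mirroring the level-three argument of Theorem~\ref{thm:level-three-cup-product-bockstein-counterexample} with \(p=n\). No new geometric input is needed beyond the stated hypotheses. The proof splits into three stations: bookkeeping and torsion, nonvanishing in the Enriques--Brauer component, and separation from algebraic classes.

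First, I would invoke Proposition~\ref{prop:n-fold-degree-twist}. It places \(\Theta_n\) in \(H^{2n-1}(X_n,\mathbb Z/2(n))\), and hence \(\Delta_n=\delta(\Theta_n)\) in \(H^{2n}(X_n,\mathbb Z(n))\). Since \(\Delta_n\) lies in the image of the connecting map of the sequence \(0\to\mathbb Z(n)\xrightarrow{\times2}\mathbb Z(n)\to\mathbb Z/2(n)\to0\), exactness gives \(2\Delta_n=0\). A torsion class has zero image in \(H^{2n}(X_n,\mathbb C)\), so it trivially lies in the \((n,n)\)-part. Hence \(\Delta_n\) is an integral Hodge class.

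Second, I would record the nonvanishing. By Proposition~\ref{prop:n-fold-tuple-relative-mv-bockstein-identity}, \(\rho_{\mathrm{MV},\Theta_n}(\Delta_n)=\partial_{\mathrm{MV}}(\mathcal Z(\Theta_n))\). Proposition~\ref{prop:n-fold-nonzero-enriques-brauer-component} then gives
\[
   \Pi_{\operatorname{Br},n}\bigl(\rho_{\mathrm{MV},\Theta_n}(\Delta_n)\bigr)
   =\alpha_1\otimes q_{\operatorname{Br},2}(\beta_2)\otimes\cdots\otimes q_{\operatorname{Br},n}(\beta_n)\neq0.
\]
That proposition rests on three inputs:
\begin{enumerate}[label=\textup{(\roman*)}]
\item the K\"unneth decomposition of Corollary~\ref{cor:n-fold-mv-tuple-decomposition};
\item the Leibniz rule, Theorem~\ref{thm:leibniz-mv-boundary};
\item the observation that only the summand with the boundary on the \(\alpha_1\)-factor lands in the selected tensor pattern.
\end{enumerate}
In passing, the nonvanishing shows \(\Delta_n\neq0\), so the class is a genuine nonzero \(2\)-torsion class.

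Third, I would apply Theorem~\ref{thm:brauer-separated-mv-detection} under the assumed Brauer-separation hypothesis. If \(\Delta_n=\operatorname{cl}_{\mathbb Z}(\Gamma)\) for some \(\Gamma\in CH^n(X_n)\), applying \(\Pi_{\operatorname{Br},n}\circ\rho_{\mathrm{MV},\Theta_n}\) gives zero by hypothesis. This contradicts the second step, so \(\Delta_n\) is not algebraic. Equivalently, one may quote Corollary~\ref{cor:brauer-separated-integral-hodge-obstruction} directly. Failure of the integral Hodge conjecture in codimension \(n\) is then immediate, since \(\Delta_n\) is an integral Hodge class outside the image of \(\operatorname{cl}_{\mathbb Z}\). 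Nothing in the argument uses Chow-triviality (Proposition~\ref{prop:n-fold-chow-triviality}); I would only remark on it for context.

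The main obstacle is the second station, and specifically its reliance on the tuple-relative channel. The projection \(\Pi_{\operatorname{Br},n}\) must be well defined on all of \(\mathsf{Obs}^{2n,n}_{\mathbb Z}(X_n)\), and it must be the same map applied to both \(\Delta_n\) and to algebraic classes. Otherwise the comparison in the third step is not between like objects. I would check this first by verifying that \(\rho_{\mathrm{MV},\Theta_n}\) is a homomorphism on all of \(H^{2n}(X_n,\mathbb Z(n))\), not merely defined on \(\delta\)-images. The remaining genuine difficulty is the Brauer-separation hypothesis itself. It is assumed here, verified only for decomposable cycles in Lemma~\ref{lem:brauer-separation-decomposable-cycles}, and reduced to the \(H^1(S_1,\mathbb Z/2(1))\)-control problem of Remark~\ref{rem:remaining-h1-algebraic-control-problem}.
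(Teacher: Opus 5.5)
Your proposal is correct and follows the paper's own proof. You get $2$-torsion because $\Delta_n$ is a Bockstein image, Hodge type because it is torsion, and non-algebraicity from the Brauer-separated MV detection theorem, i.e.\ the nonzero value $\alpha_1\otimes q_{\operatorname{Br},2}(\beta_2)\otimes\cdots\otimes q_{\operatorname{Br},n}(\beta_n)$ set against the Brauer-separation hypothesis. The obstacle you flag is not really one, since the hypothesis is stated for the very composite $\Pi_{\operatorname{Br},n}\circ\rho_{\mathrm{MV},\Theta_n}$ and only its being a well-defined function on $H^{2n}(X_n,\mathbb Z(n))$ is used; the real weight sits in the cited nonvanishing proposition, whose step (iii) deserves more scrutiny than you give it, because the summand with the boundary on the first factor carries $\partial_{\mathrm{MV}}\bigl(\mathcal Z(\alpha_1)\bigr)$ (degree $2$) rather than $\alpha_1$ in the $S_1$-slot, so the paper's reason for discarding the other summands applies to it as well.
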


\begin{proof}
By Proposition~\ref{prop:n-fold-degree-twist},
\[
   \Theta_n\in H^{2n-1}(X_n,\mathbb Z/2(n)).
\]
The coefficient sequence
\[
   0
   \longrightarrow
   \mathbb Z(n)
   \xrightarrow{\times 2}
   \mathbb Z(n)
   \longrightarrow
   \mathbb Z/2(n)
   \longrightarrow
   0
\]
has connecting morphism
\[
   \delta:
   H^{2n-1}(X_n,\mathbb Z/2(n))
   \longrightarrow
   H^{2n}(X_n,\mathbb Z(n)).
\]
Thus
\[
   \Delta_n=\delta(\Theta_n)\in H^{2n}(X_n,\mathbb Z(n)).
\]
Since \(\Delta_n\) lies in the image of the connecting morphism associated to
multiplication by \(2\), it is killed by \(2\).  Hence \(\Delta_n\) is a
\(2\)-torsion class.

By Theorem~\ref{thm:brauer-separated-mv-detection}, the Brauer-separation
hypothesis implies that \(\Delta_n\) is not algebraic.  Since \(\Delta_n\) is
torsion, its image in complex cohomology is zero.  Therefore \(\Delta_n\) is
an integral Hodge class of type \((n,n)\).  Thus \(\Delta_n\) is a
non-algebraic \(2\)-torsion integral Hodge class.

Consequently \(X_n\) violates the integral Hodge conjecture in codimension
\(n\).
\end{proof}

\begin{corollary}[Diaz as the level-two member]
\label{cor:diaz-level-two-member-n-fold-family}
For \(n=2\), the construction gives Diaz's Enriques-product obstruction:
\[
   \Delta_2
   =
   \delta\left(
   \pi_1^*\alpha_1\cup\pi_2^*\beta_2
   \right)
   \in H^4(S_1\times S_2,\mathbb Z(2)).
\]
Diaz proves the required survivability and non-algebraicity in this case by
the Colliot-Th\'el\`ene--Voisin/Diaz unramified-cohomology criterion.
\end{corollary}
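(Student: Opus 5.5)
The plan is to specialize the general family theorem to \(n=2\) and then compare the resulting class with the one constructed by Diaz, with non-algebraicity coming from Diaz's own argument rather than from the MV route.

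\emph{Step 1: identify the class.} Put \(n=2\) in the definition of \(\Theta_n\), taking \(S_1=S\) and \(S_2\) as in Diaz's Section~4. Then \(\Theta_2=\pi_1^*\alpha_1\cup\pi_2^*\beta_2\) agrees with Diaz's \(\gamma=\pi_S^*\alpha\cup\pi_{S_2}^*\beta\). Here \(\alpha_1\) is the K3 double-cover class, \(\beta_2\) is the Brauer-detecting class, and both \(\Theta_2\) and \(\gamma\) are cup products of the pulled-back classes in the same order. Proposition~\ref{prop:n-fold-degree-twist} with \(n=2\) places \(\Theta_2\) in \(H^3(S_1\times S_2,\mathbb Z/2(2))\). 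The connecting map \(\delta\) is the Bockstein of the same coefficient sequence used in the Diaz sections. It follows that \(\Delta_2=\delta(\Theta_2)=\delta(\gamma)\in H^4(S_1\times S_2,\mathbb Z(2))\), which is the displayed identity.

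\emph{Step 2: survivability and non-algebraicity via Diaz.} For the second sentence I would not use Theorem~\ref{thm:n-fold-cup-product-bockstein-family}, since that theorem is conditional on Brauer separation. Instead I would apply the corollary ``Diaz as level two'' from the cup-product propagation section. The inputs are as follows. First, \(V=S_1\times S_2\) is Chow-trivial by Proposition~\ref{prop:n-fold-chow-triviality} with \(n=2\), or directly by Diaz, Example~2.2. Second, Diaz proves \(\gamma\notin N^1H^3(V,\mathbb Z/2(2))\) by restricting to \(W=E\times S_2\) and running the degeneration argument. The restriction step is the \(n=2\) analogue of Lemma~\ref{lem:restriction-level-three-product}, namely \(i^*\Theta_2=q_E^*\alpha_1'\cup q_2^*\beta_2\), together with functoriality of coniveau. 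The Bockstein propagation theorem in degree four, which is the Colliot-Th\'el\`ene--Voisin/Diaz criterion, then shows that \(\delta(\gamma)\) is a non-algebraic \(2\)-torsion class. Finally, torsion classes are Hodge classes.

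\emph{Main obstacle.} The substantive input is Diaz's survival of \(\gamma\) under the coniveau test. I take this from the cited reference and do not reprove it. The point to watch in the write-up is the logic: the conclusion for \(n=2\) must be derived unconditionally from Diaz's criterion and not from the Brauer-separation hypothesis. The only remaining checks are bookkeeping ones, namely that the choices of \(\alpha_1\) and \(\beta_2\) satisfy Diaz's additional assumptions (the \(\phi\)-stable genus-one curve with multiplicative reduction, and good reduction of \(S_2\)) and that the twists and sign conventions match. With \(\mathbb Z/2\)-coefficients, any reordering of the cup product is harmless.
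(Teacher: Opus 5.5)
Your proposal is correct and follows essentially the same route as the paper: specialize to \(n=2\), identify \(\Theta_2\) with Diaz's class \(\gamma=\pi_S^*\alpha\cup\pi_{S_2}^*\beta\), and take non-algebraicity from Diaz's theorem rather than from the Brauer-separation hypothesis; you additionally make explicit the passage through the degree-four propagation corollary (Chow-triviality plus Diaz's coniveau survival on \(E\times S_2\)), which the paper's brief citation of Diaz leaves implicit. One correction of emphasis: Diaz's conditions (a \(\phi\)-stable genus-one curve with multiplicative reduction, good reduction of \(S_2\)) are genuine restrictions on the surfaces rather than bookkeeping checks, so non-algebraicity is known for Diaz's specific \(S_1,S_2\), which your choice ``\(S_1=S\), \(S_2\) as in Diaz's Section~4'' already secures, and not for an arbitrary Enriques pair.
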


\begin{proof}
When \(n=2\),
\[
   X_2=S_1\times S_2
\]
and
\[
   \Theta_2=\pi_1^*\alpha_1\cup\pi_2^*\beta_2
   \in H^3(X_2,\mathbb Z/2(2)).
\]
This is precisely the finite-coefficient Enriques double-cover/Brauer cup
product appearing in Diaz's construction.  Its Bockstein
\[
   \Delta_2=\delta(\Theta_2)\in H^4(X_2,\mathbb Z(2))
\]
is the corresponding non-algebraic \(2\)-torsion integral Hodge class in
Diaz's theorem.
\end{proof}

\begin{corollary}[The level-three member]
\label{cor:level-three-member-n-fold-family}
For \(n=3\), assume the Brauer-separation hypothesis for
\[
   X_3=S_1\times S_2\times S_3
\]
and
\[
   \Theta_3
   =
   \pi_1^*\alpha_1
   \cup
   \pi_2^*\beta_2
   \cup
   \pi_3^*\beta_3.
\]
Then
\[
   \Delta_3
   =
   \delta(\Theta_3)
   \in H^6(S_1\times S_2\times S_3,\mathbb Z(3))
\]
is a non-algebraic \(2\)-torsion integral Hodge class.
\end{corollary}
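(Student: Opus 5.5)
The plan is to derive this corollary as a direct specialization of Theorem~\ref{thm:n-fold-cup-product-bockstein-family} at \(n=3\). No new survivability argument is needed; the work is checking that the level-three data are an instance of the \(n\)-fold data.

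\textbf{Step 1 (matching the input data).} Take \(X_3=S_1\times S_2\times S_3\), with \(\alpha_1\in H^1(S_1,\mathbb Z/2(1))\) the K3 double-cover class of \(S_1=Y_1/\phi_1\) and \(\beta_2,\beta_3\) Brauer-detecting classes. By Proposition~\ref{prop:n-fold-degree-twist}, or equivalently Proposition~\ref{prop:level-three-degree-twist}, the class \(\Theta_3\) lies in \(H^5(X_3,\mathbb Z/2(3))\). Its Bockstein \(\Delta_3\) therefore lies in \(H^6(X_3,\mathbb Z(3))\), and it coincides with the class of Definition~\ref{def:n-fold-bockstein-class} for \(n=3\).

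\textbf{Step 2 (the hypothesis).} The Brauer-separation hypothesis assumed in the corollary is Definition~\ref{def:brauer-separation-hypothesis} with \(n=3\). This is exactly the hypothesis Theorem~\ref{thm:n-fold-cup-product-bockstein-family} requires.

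\textbf{Step 3 (apply the theorem).} Invoking the theorem gives that \(\Delta_3\) is \(2\)-torsion, being in the image of the connecting map for \(\times 2\). It also gives that \(\Delta_3\) is non-algebraic. That step runs through Proposition~\ref{prop:n-fold-nonzero-enriques-brauer-component}, which produces the nonzero component \(\alpha_1\otimes q_{\operatorname{Br},2}(\beta_2)\otimes q_{\operatorname{Br},3}(\beta_3)\in Q_3\), combined with Theorem~\ref{thm:brauer-separated-mv-detection}. Finally, \(\Delta_3\) is Hodge because torsion classes vanish in complex cohomology. One could instead cite Theorem~\ref{thm:level-three-cup-product-bockstein-counterexample} directly, and I would note that the two routes agree.

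There is no real obstacle. The only point needing care is bookkeeping: the projection \(\Pi_{\operatorname{Br},3}\) and the tuple-relative channel \(\mathsf{Obs}^{6,3}_{\mathbb Z}(X_3;\mathcal Z(\Theta_3))\) used in the level-three section must be the \(n=3\) instances of the general constructions. This holds by definition, so the hypothesis and the conclusion transfer verbatim.
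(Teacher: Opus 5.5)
Your proposal is correct and follows the paper's proof, which specializes Theorem~\ref{thm:n-fold-cup-product-bockstein-family} to \(n=3\) after checking that \(\Theta_3\) has degree \(1+2+2=5\) and twist \(3\), so that \(\Delta_3\in H^6(S_1\times S_2\times S_3,\mathbb Z(3))\). Your more detailed account of the non-algebraicity step (Proposition~\ref{prop:n-fold-nonzero-enriques-brauer-component} combined with Theorem~\ref{thm:brauer-separated-mv-detection}) is just that theorem's proof unpacked, and citing Theorem~\ref{thm:level-three-cup-product-bockstein-counterexample} directly would give the same result.
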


\begin{proof}
This is the case \(n=3\) of
Theorem~\ref{thm:n-fold-cup-product-bockstein-family}.  The class
\[
   \Theta_3
   =
   \pi_1^*\alpha_1
   \cup
   \pi_2^*\beta_2
   \cup
   \pi_3^*\beta_3
\]
has degree \(1+2+2=5\) and twist \(3\), and its Bockstein lies in
\[
   H^6(S_1\times S_2\times S_3,\mathbb Z(3)).
\]
The conclusion follows from the general theorem.
\end{proof}

\subsection{Interpretation of the family}
\label{subsec:n-fold-family-interpretation}

The classes
\[
   \Delta_n
   =
   \delta\left(
   \pi_1^*\alpha_1
   \cup
   \pi_2^*\beta_2
   \cup
   \cdots
   \cup
   \pi_n^*\beta_n
   \right)
   \in H^{2n}(S_1\times\cdots\times S_n,\mathbb Z(n))
\]
form an \(n\)-fold cup-product Bockstein tower.  The MacPherson--Vilonen
formalism proves that each \(\Delta_n\) has a nonzero Enriques--Brauer
obstruction component.  Under the Brauer-separation hypothesis, algebraic
codimension-\(n\) cycle classes have zero image in that same component.
Therefore \(\Delta_n\) becomes a non-algebraic \(2\)-torsion integral Hodge
class.

The hierarchy may be summarized as follows:
\[
   n=1:
   \quad
   \text{Coble/Benoist--Ottem Enriques boundary package},
\]
\[
   n=2:
   \quad
   \text{Diaz's Enriques double-cover/Brauer cup product},
\]
\[
   n=3:
   \quad
   \text{the first higher Brauer-separated cup-product Bockstein obstruction},
\]
and, for general \(n\),
\[
   n\geq 2:
   \quad
   \Delta_n
   =
   \delta\left(
   \pi_1^*\alpha_1
   \cup
   \pi_2^*\beta_2
   \cup
   \cdots
   \cup
   \pi_n^*\beta_n
   \right).
\]
The Bockstein endpoint is always a \(2\)-torsion integral Hodge class.  The
MV/Kummer/Brauer calculation supplies the nonzero obstruction component, and
Brauer separation supplies the comparison with algebraic cycle classes.

Thus Diaz's construction is not an isolated degree-four phenomenon.  It is the
level-two member of a systematic cup-product Bockstein tower whose higher
members are controlled by MacPherson--Vilonen survivability and
Brauer-separating algebraic control.
\section{Motivic finite-coefficient lift of the \(n\)-fold Bockstein family}
\label{sec:motivic-finite-coefficient-lift}

The preceding sections constructed an \(n\)-fold cup-product Bockstein family
of integral Hodge counterexamples.  The construction is carried out at the
level of finite-coefficient cohomology, cup products, MacPherson--Vilonen
gluing, and the Bockstein sequence.  The purpose of this section is to record
the corresponding motivic finite-coefficient lift.

The guiding principle is not that the proof of non-algebraicity is replaced
by a purely motivic proof.  Rather, the point is that the source classes,
cup products, coefficient triangle, Bockstein connecting morphism, and
realization-compatible obstruction packages all have a common motivic origin.
Thus the family
\[
   \Delta_n
   =
   \delta\left(
   \pi_1^*\alpha_1
   \cup
   \pi_2^*\beta_2
   \cup
   \cdots
   \cup
   \pi_n^*\beta_n
   \right)
   \in H^{2n}(X_n,\mathbb Z(n))
\]
is not merely a family of realized cohomology classes.  It is the Betti
realization of a finite-coefficient motivic Bockstein family.

The motivic lift also clarifies the role of the MacPherson--Vilonen formalism.
The MV square used above should be understood through the original
MacPherson--Vilonen zig-zag mechanism: an abelian category
\(\mathcal C(F,G;T)\) built from two functors and a natural transformation,
together with a zig-zag functor from perverse sheaves
\cite{MacPhersonVilonen1986}.  In the present paper, we do not construct a
full integral motivic analogue of the MacPherson--Vilonen equivalence.
Instead, we lift the formal pieces that are available without such a theorem:
the coefficient triangle, the associated Bockstein, the formal zig-zag
category, and the compatibility of these structures with Betti realization.

Thus the Bockstein, the finite-coefficient cup product, and the MV boundary
belong to one realization-compatible diagram, while the full motivic
MacPherson--Vilonen equivalence is deferred to future work.

The relevant background is the modern realization theory for Voevodsky
motives, Nori-type motivic sheaves, and mixed Hodge modules.  Tubach
constructs realization functors from Voevodsky \'etale motives to perverse
Nori motives and mixed Hodge modules, compatible with the six operations
\cite{Tubach2025NoriHodgeRealizations}.  Ruimy--Tubach construct integral
Nori motivic sheaves and integral mixed Hodge modules with integral
coefficient structures suitable for torsion phenomena
\cite{RuimyTubach2026IntegralNori}.  These frameworks provide the
realization-compatible environment in which the finite-coefficient packages
below naturally live.

\subsection{Realization conventions}
\label{subsec:realization-conventions}

We use the following realization notation.  The symbol
\[
   \RealB
\]
denotes Betti realization.  It sends the motivic unit with Tate twist to the
Betti coefficient system:
\[
   \RealB(\mathbf 1_X(r))\simeq \mathbb Z(r),
\]
and it sends the motivic mod-\(m\) object to the finite Betti coefficient
system:
\[
   \RealB(\mathbf 1_X(r)/m)\simeq \mathbb Z/m(r).
\]
We write
\[
   \delta_B
\]
for the Betti Bockstein associated to
\[
   0\to \mathbb Z(r)
   \xrightarrow{\times 2}
   \mathbb Z(r)
   \to
   \mathbb Z/2(r)
   \to 0.
\]
For a general integer \(m\geq 2\), we write \(\delta_{m,B}\).

The symbol
\[
   \Realet
\]
denotes \'etale realization.  Depending on the convention for Tate twists, it
sends
\[
   \mathbf 1_X(r)/m
\]
to either
\[
   \mathbb Z/m(r)
\]
or, equivalently in the usual \(m\)-torsion \'etale convention,
\[
   \mu_m^{\otimes r}.
\]
In the present paper \(m=2\), so the relevant finite coefficient object is
\[
   \mathbb Z/2(r)
\]
or
\[
   \mu_2^{\otimes r}.
\]

We also use
\[
   \RealNori
\]
for Nori or perverse-Nori realization, and
\[
   \RealMHM
\]
for mixed-Hodge-module realization.  These are used only to record that the
same finite-coefficient motivic package has Nori and mixed-Hodge-module
avatars.  The proof-level cohomology classes in this paper are compared
primarily through Betti realization and finite \'etale realization.

\subsection{Finite-coefficient motivic objects}
\label{subsec:finite-coefficient-motivic-objects}

We begin with the finite-coefficient object.

\begin{definition}[Motivic mod-\(m\) object]
\label{def:motivic-mod-m-object}
Let \(X\) be a variety in a stable motivic six-functor formalism, and let
\(\mathbf 1_X\) denote the unit object.  For \(m\geq 2\), define
\[
   \mathbf 1_X/m
   :=
   \operatorname{Cone}
   \left(
   \mathbf 1_X
   \xrightarrow{\times m}
   \mathbf 1_X
   \right).
\]
With Tate twist \(r\), define
\[
   \mathbf 1_X(r)/m
   :=
   \operatorname{Cone}
   \left(
   \mathbf 1_X(r)
   \xrightarrow{\times m}
   \mathbf 1_X(r)
   \right).
\]
\end{definition}

\begin{remark}[Betti and \'etale realizations of the mod-\(m\) object]
\label{rem:betti-etale-realizations-mod-m-object}
The finite-coefficient motivic object is normalized so that
\[
   \RealB(\mathbf 1_X(r)/m)
   \simeq
   \mathbb Z/m(r).
\]
For \'etale realization we use the convention
\[
   \Realet(\mathbf 1_X(r)/m)
   \simeq
   \mathbb Z/m(r),
\]
or equivalently
\[
   \Realet(\mathbf 1_X(r)/m)
   \simeq
   \mu_m^{\otimes r}.
\]
For \(m=2\), this gives
\[
   \RealB(\mathbf 1_X(r)/2)
   \simeq
   \mathbb Z/2(r),
\]
and
\[
   \Realet(\mathbf 1_X(r)/2)
   \simeq
   \mathbb Z/2(r)
   \simeq
   \mu_2^{\otimes r}.
\]
\end{remark}

In this paper \(m=2\).  Thus the basic finite-coefficient motivic object is
\[
   \mathbf 1_X(r)/2
   =
   \operatorname{Cone}
   \left(
   \mathbf 1_X(r)
   \xrightarrow{\times 2}
   \mathbf 1_X(r)
   \right).
\]

\subsection{Source packages: double-cover and Brauer classes}
\label{subsec:motivic-source-packages}

Let
\[
   X_n=S_1\times\cdots\times S_n
\]
be the product of Enriques surfaces used in
Theorem~\ref{thm:n-fold-cup-product-bockstein-family}.  Let
\[
   \alpha_1\in H^1(S_1,\mathbb Z/2(1))
\]
be the K3 double-cover class of \(S_1\).  For \(2\leq i\leq n\), let
\[
   \beta_i\in H^2(S_i,\mathbb Z/2(1))
\]
be Brauer-detecting classes with nonzero image in
\[
   \operatorname{Br}(S_i)[2].
\]

\begin{definition}[Motivic source packages]
\label{def:motivic-source-packages}
A motivic lift of the \(n\)-fold source data consists of classes
\[
   \alpha_1^{\mathrm{mot}}
   \in
   H^1_{\mathrm{mot}}(S_1,\mathbf 1_{S_1}(1)/2)
\]
and, for \(2\leq i\leq n\),
\[
   \beta_i^{\mathrm{mot}}
   \in
   H^2_{\mathrm{mot}}(S_i,\mathbf 1_{S_i}(1)/2),
\]
whose Betti realizations satisfy
\[
   \RealB(\alpha_1^{\mathrm{mot}})=\alpha_1,
   \qquad
   \RealB(\beta_i^{\mathrm{mot}})=\beta_i.
\]
Equivalently, their \'etale realizations satisfy
\[
   \Realet(\alpha_1^{\mathrm{mot}})=\alpha_{1,\acute et},
   \qquad
   \Realet(\beta_i^{\mathrm{mot}})=\beta_{i,\acute et},
\]
where \(\alpha_{1,\acute et}\) is the \'etale double-cover class and
\(\beta_{i,\acute et}\) maps nontrivially to
\[
   \operatorname{Br}(S_i)[2].
\]
\end{definition}

The class \(\alpha_1^{\mathrm{mot}}\) is the motivic finite-coefficient avatar
of the Enriques double-cover package.  The classes
\(\beta_i^{\mathrm{mot}}\) are the motivic finite-coefficient avatars of the
Brauer-detecting Enriques packages.  Thus the family begins with one
degree-one double-cover source and \(n-1\) degree-two Brauer sources:
\[
   \alpha_1^{\mathrm{mot}},
   \qquad
   \beta_2^{\mathrm{mot}},\ldots,\beta_n^{\mathrm{mot}}.
\]

\subsection{The motivic coefficient triangle}
\label{subsec:motivic-coefficient-triangle}

The finite-coefficient object fits into a distinguished triangle.

\begin{proposition}[Motivic coefficient triangle]
\label{prop:motivic-coefficient-triangle}
Let \(X\) be a variety in a stable motivic category, and let \(m\geq 2\).
Then the definition of \(\mathbf 1_X(r)/m\) gives a distinguished triangle
\[
   \mathbf 1_X(r)
   \xrightarrow{\times m}
   \mathbf 1_X(r)
   \longrightarrow
   \mathbf 1_X(r)/m
   \overset{+1}{\longrightarrow}.
\]
For \(m=2\), this is
\[
   \mathbf 1_X(r)
   \xrightarrow{\times 2}
   \mathbf 1_X(r)
   \longrightarrow
   \mathbf 1_X(r)/2
   \overset{+1}{\longrightarrow}.
\]
\end{proposition}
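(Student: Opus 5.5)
The statement is essentially the rotation/definition axiom of a stable (triangulated) category applied to the morphism \(\times m\) on \(\mathbf 1_X(r)\), so I will derive it directly from the axioms rather than from any geometric input.

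First I will check that \(\times m:\mathbf 1_X(r)\to\mathbf 1_X(r)\) is a well-defined morphism. The motivic category is additive (indeed stable, hence enriched in spectra and in particular in abelian groups). Multiplication by \(m\) is therefore the endomorphism \(m\cdot\operatorname{id}_{\mathbf 1_X(r)}\), i.e.\ the \(m\)-fold sum of the identity in the abelian group \(\operatorname{End}(\mathbf 1_X(r))\). Equivalently, it is \(\mathbf 1_X\)'s endomorphism \(m\cdot\operatorname{id}\) tensored with \(\mathbf 1_X(r)\); this shows it is independent of how the twist is formed.

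Next I will invoke the axiom, valid in the homotopy category of any stable \(\infty\)-category and in any triangulated category, that every morphism \(f:A\to B\) sits in a distinguished triangle
\[
A\xrightarrow{f}B\to\operatorname{Cone}(f)\xrightarrow{+1}.
\]
In the stable \(\infty\)-categorical setting the cone is the cofiber, and the cofiber sequence \(A\to B\to\operatorname{cofib}(f)\) together with the canonical identification \(\operatorname{cofib}(f)\to\Sigma A\) is by definition distinguished. Applying this with \(A=B=\mathbf 1_X(r)\) and \(f=\times m\), and using Definition~\ref{def:motivic-mod-m-object}, gives the triangle
\[
\mathbf 1_X(r)\xrightarrow{\times m}\mathbf 1_X(r)\to\mathbf 1_X(r)/m\xrightarrow{+1}.
\]
Setting \(m=2\) gives the second display.

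The only point needing care is that the cone is well-defined only up to non-unique isomorphism in a triangulated category. I will handle this by reading Definition~\ref{def:motivic-mod-m-object} in the stable \(\infty\)-category, where the cofiber is functorial. Any two choices are then isomorphic compatibly with the triangle maps, so the statement holds for any chosen model. I do not expect a genuine obstacle here.

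As a consistency remark, I will note that exact realization functors such as \(\RealB\) preserve distinguished triangles. The image triangle is therefore \(\mathbb Z(r)\xrightarrow{2}\mathbb Z(r)\to\mathbb Z/2(r)\xrightarrow{+1}\), whose connecting map is \(\delta_B\). This is not needed for the proof itself.
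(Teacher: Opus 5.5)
Your proposal is correct and matches the paper's proof: both apply the stable-category axiom that every morphism \(f\colon A\to B\) sits in a distinguished triangle \(A\to B\to\operatorname{Cone}(f)\xrightarrow{+1}\) to \(\times m\colon \mathbf 1_X(r)\to\mathbf 1_X(r)\), and then use Definition~\ref{def:motivic-mod-m-object}. Your extra remarks (that \(\times m=m\cdot\operatorname{id}\), and that the cone is only defined up to non-unique isomorphism) are harmless refinements the paper does not include.
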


\begin{proof}
By Definition~\ref{def:motivic-mod-m-object},
\[
   \mathbf 1_X(r)/m
   =
   \operatorname{Cone}
   \left(
   \mathbf 1_X(r)
   \xrightarrow{\times m}
   \mathbf 1_X(r)
   \right).
\]
In a stable category, every morphism \(A\to B\) fits into a distinguished
triangle
\[
   A\to B\to \operatorname{Cone}(A\to B)\overset{+1}{\longrightarrow}.
\]
Applying this to
\[
   \mathbf 1_X(r)
   \xrightarrow{\times m}
   \mathbf 1_X(r)
\]
gives the claimed triangle.
\end{proof}

For the \(n\)-fold family, the relevant triangle is
\[
   \mathbf 1_{X_n}(n)
   \xrightarrow{\times 2}
   \mathbf 1_{X_n}(n)
   \longrightarrow
   \mathbf 1_{X_n}(n)/2
   \overset{+1}{\longrightarrow}.
\]

\subsection{Betti and finite \'etale realization of the motivic coefficient triangle}
\label{subsec:betti-etale-realization-motivic-bockstein}

The motivic coefficient triangle realizes to the usual Betti coefficient
sequence.

\begin{proposition}[Betti realization of the motivic Bockstein]
\label{prop:betti-realization-motivic-bockstein}
Let \(X\) be a smooth complex variety.  Assume that Betti realization is exact,
sends \(\mathbf 1_X(r)\) to \(\mathbb Z(r)\), and sends
\[
   \mathbf 1_X(r)
   \xrightarrow{\times m}
   \mathbf 1_X(r)
\]
to multiplication by \(m\) on \(\mathbb Z(r)\).  Then the motivic coefficient
triangle
\[
   \mathbf 1_X(r)
   \xrightarrow{\times m}
   \mathbf 1_X(r)
   \longrightarrow
   \mathbf 1_X(r)/m
   \overset{+1}{\longrightarrow}
\]
realizes under \(\RealB\) to the coefficient triangle associated to
\[
   0\to \mathbb Z(r)
   \xrightarrow{\times m}
   \mathbb Z(r)
   \to \mathbb Z/m(r)
   \to 0.
\]
Consequently, the motivic connecting morphism realizes to the Betti Bockstein
\[
   \delta_{m,B}:
   H^q_B(X,\mathbb Z/m(r))
   \longrightarrow
   H^{q+1}_B(X,\mathbb Z(r)).
\]
\end{proposition}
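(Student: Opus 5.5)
The argument is formal: exact functors between stable (triangulated) categories carry distinguished triangles to distinguished triangles, and long exact sequences are functorial. I would apply \(\RealB\) to the triangle of Proposition~\ref{prop:motivic-coefficient-triangle} and identify the image with the Betti coefficient triangle. The result is then compared with the long exact sequence in cohomology.

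\emph{First}, exactness of \(\RealB\) gives a distinguished triangle
\[
   \RealB(\mathbf 1_X(r))
   \xrightarrow{\RealB(\times m)}
   \RealB(\mathbf 1_X(r))
   \longrightarrow
   \RealB(\mathbf 1_X(r)/m)
   \overset{+1}{\longrightarrow}.
\]
By hypothesis the first two terms are \(\mathbb Z(r)\) and the first map is \(\times m\). The short exact sequence \(0\to\mathbb Z(r)\xrightarrow{\times m}\mathbb Z(r)\to\mathbb Z/m(r)\to0\) of constructible sheaves gives another distinguished triangle on the same base morphism, with third term \(\mathbb Z/m(r)\). By the triangulated-category axiom (TR3), the identity maps on the first two terms extend to a morphism of triangles. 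The five-lemma for triangles makes the third component an isomorphism
\[
   \RealB(\mathbf 1_X(r)/m)\simeq\mathbb Z/m(r),
\]
consistent with the normalization in Remark~\ref{rem:betti-etale-realizations-mod-m-object}.

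The key point is that this isomorphism is compatible with the \(+1\) maps. The connecting morphism \(\mathbb Z/m(r)\to\mathbb Z(r)[1]\) of the short exact sequence is the one used to define \(\delta_{m,B}\). The realized \(+1\) map then matches this connecting morphism, because both triangles are cone triangles of the same morphism. This is cleanest if \(\RealB\) is modelled as an exact functor of stable \(\infty\)-categories, where cones, and hence the full triangle, are functorial. That modelling choice is the one I would commit to, and I expect it to be the main obstacle. In a bare triangulated setting the TR3 fill-in is not unique, so one must argue with the canonical cone: the realization of the cone of \(\times m\) is the cone of \(\times m\), and the sheaf cokernel is quasi-isomorphic to that cone via the standard map. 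This pins down the identification uniquely.

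\emph{Finally}, I would apply \(\operatorname{Hom}(\mathbf 1_X,-[q])\), that is, \(R\Gamma(X,-)\), to the identified triangles. The long exact sequence of the realized triangle coincides with the Betti long exact sequence, and its connecting map is \(\delta_{m,B}:H^q_B(X,\mathbb Z/m(r))\to H^{q+1}_B(X,\mathbb Z(r))\). The motivic connecting morphism is obtained by composing a class \(X\to\mathbf 1_X(r)/m[q]\) with the \(+1\) map. Naturality of realization on Hom-groups therefore gives \(\RealB\circ\delta_{\mathrm{mot}}=\delta_{m,B}\circ\RealB\), which is the claim.
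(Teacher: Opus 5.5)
Your proposal is correct and follows the same route as the paper. You realize the cone triangle using exactness of \(\RealB\), identify the realized cone with \(\mathbb Z/m(r)\), and read off \(\delta_{m,B}\) from the long exact sequence; your TR3/five-lemma step just makes explicit what the paper asserts in one line.

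The compatibility of the \(+1\) maps that you flag as the main obstacle is in fact automatic. A TR3 fill-in is by definition a morphism of triangles, so the square involving the connecting maps commutes. Here the fill-in is even unique, because \(\operatorname{Hom}_{D(X)}(\mathbb Z(r)[1],\mathbb Z/m(r))=H^{-1}(X,\mathbb Z/m(r))=0\). So no \(\infty\)-categorical model or canonical-cone argument is needed.
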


\begin{proof}
Since \(\RealB\) is exact, it sends distinguished triangles to distinguished
triangles.  Applying \(\RealB\) to
\[
   \mathbf 1_X(r)
   \xrightarrow{\times m}
   \mathbf 1_X(r)
   \longrightarrow
   \mathbf 1_X(r)/m
   \overset{+1}{\longrightarrow}
\]
gives
\[
   \RealB(\mathbf 1_X(r))
   \xrightarrow{\times m}
   \RealB(\mathbf 1_X(r))
   \longrightarrow
   \RealB(\mathbf 1_X(r)/m)
   \overset{+1}{\longrightarrow}.
\]
By the realization assumptions,
\[
   \RealB(\mathbf 1_X(r))\simeq \mathbb Z(r)
\]
and the first arrow is multiplication by \(m\).  Therefore the realized cone
is
\[
   \mathbb Z/m(r).
\]
Taking cohomology gives the long exact sequence whose connecting morphism is
the Betti Bockstein
\[
   \delta_{m,B}:
   H^q_B(X,\mathbb Z/m(r))
   \to
   H^{q+1}_B(X,\mathbb Z(r)).
\]
\end{proof}

\begin{proposition}[Finite \'etale realization of the motivic coefficient object]
\label{prop:etale-finite-realization-motivic-coefficient-triangle}
Let \(X\) be a smooth complex variety.  Assume that \'etale realization is
exact on the finite-coefficient motivic objects under consideration and sends
\[
   \mathbf 1_X(r)/m
\]
to the finite \'etale coefficient object
\[
   \mathbb Z/m(r),
\]
or equivalently
\[
   \mu_m^{\otimes r}.
\]
Then the finite-coefficient part of the motivic coefficient triangle realizes
to the usual finite \'etale coefficient system.  In particular, for \(m=2\),
\[
   \Realet(\mathbf 1_X(r)/2)
   \simeq
   \mathbb Z/2(r)
   \simeq
   \mu_2^{\otimes r}.
\]
\end{proposition}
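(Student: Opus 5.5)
The statement is essentially a bookkeeping consequence of the hypotheses. I will argue in close parallel with Proposition~\ref{prop:betti-realization-motivic-bockstein}, but only at the level of the finite-coefficient vertex of the coefficient triangle.

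First I apply Definition~\ref{def:motivic-mod-m-object} and Proposition~\ref{prop:motivic-coefficient-triangle}. These exhibit \(\mathbf 1_X(r)/m\) as the cone of \(\times m\) on \(\mathbf 1_X(r)\), sitting in the distinguished triangle
\[
   \mathbf 1_X(r)\xrightarrow{\times m}\mathbf 1_X(r)\to\mathbf 1_X(r)/m\overset{+1}{\longrightarrow}.
\]
The hypothesis only gives exactness of \(\Realet\) on finite-coefficient objects, not on \(\mathbf 1_X(r)\) itself. Integral étale realization is typically \(\ell\)-adic or unavailable, so I will not try to realize the whole triangle. Instead I will realize only its finite-coefficient part.

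Concretely, \(\Realet\) is applied to \(\mathbf 1_X(r)/m\) and to the endomorphisms and triangles built from it. An example is the triangle obtained by tensoring the coefficient triangle with \(\mathbf 1_X/m\), all of whose terms are \(m\)-torsion. Exactness on these objects means that distinguished triangles among them realize to distinguished triangles of \(\mathbb Z/m\)-étale sheaves.

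The identification \(\Realet(\mathbf 1_X(r)/m)\simeq\mathbb Z/m(r)\) is itself a hypothesis. I would then note that over \(\mathbb C\) the choice of a compatible system of roots of unity gives \(\mathbb Z/m(r)\simeq\mu_m^{\otimes r}\), as in Remark~\ref{rem:betti-etale-realizations-mod-m-object}. Hence the two descriptions agree. This shows that the finite part of the triangle realizes to the usual finite étale coefficient system.

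Specializing to \(m=2\) gives the displayed formula. Here \(\mu_2^{\otimes r}\simeq\mathbb Z/2\) canonically, because \(\mu_2=\{\pm1\}\) has a unique generator, so the twist is invisible.

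Finally, I would add a comparison remark. Artin's comparison theorem identifies étale cohomology with \(\mathbb Z/2(r)\) coefficients with Betti cohomology with \(\mathbb Z/2(r)\) coefficients. Through this comparison, the result is compatible with the Betti realization of Proposition~\ref{prop:betti-realization-motivic-bockstein}. This is optional, but it explains why \(\alpha_{1,\acute et}\) and \(\beta_{i,\acute et}\) in Definition~\ref{def:motivic-source-packages} match \(\alpha_1\) and \(\beta_i\).

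The only step needing care is the first: the statement must be confined to the finite-coefficient part, since the Bockstein itself needs integral coefficients. For the integral side I would defer to Betti realization rather than claim an étale Bockstein, except possibly via an \(\ell\)-adic inverse limit, which I would not pursue.
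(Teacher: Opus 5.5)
Your proposal is correct and follows the same route as the paper, whose proof simply combines the assumed exactness of \(\Realet\) on the cone \(\mathbf 1_X(r)/m\) with the chosen Tate-twist normalization \(\mathbb Z/m(r)\simeq\mu_m^{\otimes r}\). Your additional remarks are sound but go beyond what the paper's one-line argument uses: restricting attention to the finite-coefficient vertex, the canonical identification \(\mu_2^{\otimes r}\cong\mathbb Z/2\), and the Artin comparison with the Betti side.
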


\begin{proof}
This follows from exactness of the chosen \'etale realization on the
finite-coefficient object
\[
   \mathbf 1_X(r)/m
   =
   \operatorname{Cone}
   \left(
   \mathbf 1_X(r)\xrightarrow{\times m}\mathbf 1_X(r)
   \right)
\]
and from the normalization of the Tate twist convention.  The finite
realization is the usual \(m\)-torsion \'etale coefficient object
\(\mathbb Z/m(r)\), equivalently \(\mu_m^{\otimes r}\).
\end{proof}

\subsection{The motivic \(n\)-fold cup-product class}
\label{subsec:motivic-n-fold-cup-product-class}

Let
\[
   \pi_i:X_n\to S_i
\]
be the projection maps.

\begin{definition}[Motivic \(n\)-fold cup-product class]
\label{def:motivic-n-fold-cup-product-class}
Given motivic source packages
\[
   \alpha_1^{\mathrm{mot}},
   \qquad
   \beta_2^{\mathrm{mot}},\ldots,\beta_n^{\mathrm{mot}},
\]
define
\[
   \Theta_n^{\mathrm{mot}}
   :=
   \pi_1^*\alpha_1^{\mathrm{mot}}
   \cup
   \pi_2^*\beta_2^{\mathrm{mot}}
   \cup
   \cdots
   \cup
   \pi_n^*\beta_n^{\mathrm{mot}}.
\]
\end{definition}

\begin{proposition}[Degree and twist of the motivic \(n\)-fold class]
\label{prop:degree-twist-motivic-n-fold-class}
One has
\[
   \Theta_n^{\mathrm{mot}}
   \in
   H^{2n-1}_{\mathrm{mot}}(X_n,\mathbf 1_{X_n}(n)/2).
\]
\end{proposition}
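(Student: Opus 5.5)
The plan is to mirror the proof of Proposition~\ref{prop:n-fold-degree-twist}, replacing ordinary cohomology by motivic cohomology with coefficients in the finite objects \(\mathbf 1(r)/2\). The only inputs are two formal properties of the stable motivic six-functor formalism. First, pullback along \(\pi_i\) preserves degree and twist:
\[
   \pi_i^*:H^q_{\mathrm{mot}}(S_i,\mathbf 1_{S_i}(r)/2)\to H^q_{\mathrm{mot}}(X_n,\mathbf 1_{X_n}(r)/2),
\]
because \(\pi_i^*\mathbf 1_{S_i}=\mathbf 1_{X_n}\), \(\pi_i^*\) commutes with Tate twists, and, being exact, it preserves the cone defining \(\mathbf 1(r)/2\). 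Second, the cup product is additive in degree and twist:
\[
   H^a_{\mathrm{mot}}(X,\mathbf 1_X(r)/2)\otimes H^b_{\mathrm{mot}}(X,\mathbf 1_X(s)/2)\to H^{a+b}_{\mathrm{mot}}(X,\mathbf 1_X(r+s)/2).
\]

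The cup product is induced by a pairing
\[
   \mathbf 1_X(r)/2\otimes\mathbf 1_X(s)/2\to\mathbf 1_X(r+s)/2.
\]
To construct it, I would use \(\mathbf 1(r)/2\simeq\mathbf 1(r)\otimes\mathbf 1/2\). This reduces the problem to a map
\[
   \mathbf 1/2\otimes\mathbf 1/2\to\mathbf 1/2,
\]
which I would take to be the map induced from \(\mathbf 1\otimes\mathbf 1/2\simeq\mathbf 1/2\) by the second arrow \(\mathbf 1\to\mathbf 1/2\) of the triangle in Proposition~\ref{prop:motivic-coefficient-triangle}. Equivalently, one works in the \(\mathbb Z/2\)-linear category, where \(\mathbf 1/2\) is the unit. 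This is the step I expect to be the main obstacle. For \(m=2\), the Moore object \(\mathbf 1/2\) need not carry a canonical associative, homotopy-unital multiplication, because of the usual \(\eta\)-type obstruction. So the statement should be read with a fixed choice of this pairing, or be formulated in \(\mathbb Z/2\)-coefficient motives. Either convention suffices for the bookkeeping, since only degrees and twists are being tracked, not associativity of the \(n\)-fold product. I would fix left-to-right bracketing in Definition~\ref{def:motivic-n-fold-cup-product-class} to avoid the associativity issue.

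With the pairing fixed, the count is routine. By Definition~\ref{def:motivic-source-packages},
\[
   \pi_1^*\alpha_1^{\mathrm{mot}}\in H^1_{\mathrm{mot}}(X_n,\mathbf 1_{X_n}(1)/2),
   \qquad
   \pi_i^*\beta_i^{\mathrm{mot}}\in H^2_{\mathrm{mot}}(X_n,\mathbf 1_{X_n}(1)/2)
\]
for \(2\le i\le n\). Iterating the cup product \(n-1\) times gives total degree \(1+2(n-1)=2n-1\) and total twist \(1+(n-1)=n\). Hence
\[
   \Theta_n^{\mathrm{mot}}\in H^{2n-1}_{\mathrm{mot}}(X_n,\mathbf 1_{X_n}(n)/2).
\]

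As a consistency check, I would note that \(\RealB\) is symmetric monoidal and exact. It therefore carries this pairing to the Betti cup product \(\mathbb Z/2(r)\otimes\mathbb Z/2(s)\to\mathbb Z/2(r+s)\). It follows that \(\RealB(\Theta_n^{\mathrm{mot}})=\Theta_n\), consistent with Proposition~\ref{prop:n-fold-degree-twist}.
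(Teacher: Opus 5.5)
Your proposal is correct and is the same argument as the paper's, which consists only of the count \(1+2(n-1)=2n-1\) and \(1+(n-1)=n\) for the cup product of the pulled-back source classes. The paper takes the bigraded product on \(\mathbf 1(r)/2\)-coefficients for granted. Your caveat about the pairing is reasonable but moot in the \(\mathbb Z\)-linear settings the paper invokes (Voevodsky, Nori, mixed Hodge modules), where \(2\cdot\mathrm{id}_{\mathbf 1/2}=0\) and \(\mathbf 1/2\simeq\mathbf 1\otimes\mathbb Z/2\) is a commutative algebra; the \(\eta\)-obstruction arises only in \(SH\)-type settings, where it rules out any unital pairing, not merely a canonical one.
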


\begin{proof}
The class \(\alpha_1^{\mathrm{mot}}\) has motivic cohomological degree \(1\)
and twist \(1\).  Each \(\beta_i^{\mathrm{mot}}\) has motivic cohomological
degree \(2\) and twist \(1\).  Hence their external cup product has degree
\[
   1+2(n-1)=2n-1
\]
and twist
\[
   1+(n-1)=n.
\]
Therefore
\[
   \Theta_n^{\mathrm{mot}}
   \in
   H^{2n-1}_{\mathrm{mot}}(X_n,\mathbf 1_{X_n}(n)/2).
\]
\end{proof}

\begin{proposition}[Betti realization of the motivic \(n\)-fold class]
\label{prop:betti-realization-motivic-n-fold-class}
Assume \(\RealB\) is compatible with pullback and cup product.  Then
\[
   \RealB(\Theta_n^{\mathrm{mot}})
   =
   \Theta_{n,B},
\]
where
\[
   \Theta_{n,B}
   =
   \pi_1^*\alpha_1
   \cup
   \pi_2^*\beta_2
   \cup
   \cdots
   \cup
   \pi_n^*\beta_n
   \in
   H^{2n-1}_B(X_n,\mathbb Z/2(n)).
\]
\end{proposition}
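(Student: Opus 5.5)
The argument is a formal unwinding of the definitions, using only the two stated compatibilities of \(\RealB\): with pullback and with cup product. I would first apply \(\RealB\) to the defining formula of Definition~\ref{def:motivic-n-fold-cup-product-class}. Compatibility with cup product is applied inductively on the number of factors. Writing \(\Theta_n^{\mathrm{mot}}=\Theta_{n-1}^{\mathrm{mot}}\cup\pi_n^*\beta_n^{\mathrm{mot}}\), where \(\Theta_{n-1}^{\mathrm{mot}}\) here denotes the partial product pulled back to \(X_n\), this gives
\[
   \RealB(\Theta_n^{\mathrm{mot}})
   =
   \RealB(\pi_1^*\alpha_1^{\mathrm{mot}})
   \cup
   \RealB(\pi_2^*\beta_2^{\mathrm{mot}})
   \cup\cdots\cup
   \RealB(\pi_n^*\beta_n^{\mathrm{mot}}).
\]
Here the cup product on the right is the Betti cup product
\[
   H^{r}_B(X_n,\mathbb Z/2(p))\otimes H^{r'}_B(X_n,\mathbb Z/2(p'))
   \to H^{r+r'}_B(X_n,\mathbb Z/2(p+p')).
\]
One point must be checked at this stage. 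The motivic product on the mod-\(2\) objects requires a multiplication
\[
   \mathbf 1(p)/2\otimes\mathbf 1(p')/2\to\mathbf 1(p+p')/2,
\]
and this should realize to the Betti multiplication \(\mathbb Z/2(p)\otimes\mathbb Z/2(p')\to\mathbb Z/2(p+p')\). I would take this to be part of the hypothesis ``\(\RealB\) is compatible with cup product'', with \(\RealB\) identifying \(\RealB(\mathbf 1_X(r)/2)\simeq\mathbb Z/2(r)\) as in Remark~\ref{rem:betti-etale-realizations-mod-m-object}. For the Tate-twist bookkeeping on the motivic side I would use Proposition~\ref{prop:degree-twist-motivic-n-fold-class}, and the matching degree count on the Betti side is Proposition~\ref{prop:n-fold-degree-twist}.

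Next, compatibility with pullback gives \(\RealB(\pi_i^*c)=\pi_i^*\RealB(c)\) for each projection \(\pi_i:X_n\to S_i\). Definition~\ref{def:motivic-source-packages} supplies \(\RealB(\alpha_1^{\mathrm{mot}})=\alpha_1\) and \(\RealB(\beta_i^{\mathrm{mot}})=\beta_i\). Substituting these into the displayed product yields exactly \(\pi_1^*\alpha_1\cup\pi_2^*\beta_2\cup\cdots\cup\pi_n^*\beta_n=\Theta_{n,B}\), which is the claim. The result lies in \(H^{2n-1}_B(X_n,\mathbb Z/2(n))\), as asserted.

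The only potentially delicate step is the one flagged above: the identification of the realized cup product on mod-\(2\) coefficients with the Betti cup product. This includes the convention that the product on \(\mathbf 1/2\) is defined through a chosen ring structure on the cone, which exists for \(m=2\) up to the usual homotopy caveats. I would handle it by stating explicitly that ``compatible with cup product'' means compatibility of \(\RealB\) with the monoidal structure together with the chosen pairing on mod-\(2\) objects. Under that reading the proof is a direct substitution, and no Koszul signs arise because the coefficients are \(\mathbb Z/2\).
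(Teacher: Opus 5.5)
Your proposal is correct and follows the paper's own argument. You apply $\RealB$ to the defining cup product, use the assumed compatibility with cup product and pullback, and substitute $\RealB(\alpha_1^{\mathrm{mot}})=\alpha_1$ and $\RealB(\beta_i^{\mathrm{mot}})=\beta_i$. Your explicit folding of the mod-$2$ pairing $\mathbf 1(p)/2\otimes\mathbf 1(p')/2\to\mathbf 1(p+p')/2$ into the cup-product hypothesis is a reasonable clarification that the paper leaves implicit; in the $\mathbb Z$-linear motivic categories used here this pairing exists outright, so no homotopy caveat is needed.
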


\begin{proof}
By definition,
\[
   \Theta_n^{\mathrm{mot}}
   =
   \pi_1^*\alpha_1^{\mathrm{mot}}
   \cup
   \pi_2^*\beta_2^{\mathrm{mot}}
   \cup
   \cdots
   \cup
   \pi_n^*\beta_n^{\mathrm{mot}}.
\]
Betti realization commutes with pullback and cup product.  Hence
\[
   \RealB(\Theta_n^{\mathrm{mot}})
   =
   \pi_1^*\RealB(\alpha_1^{\mathrm{mot}})
   \cup
   \pi_2^*\RealB(\beta_2^{\mathrm{mot}})
   \cup
   \cdots
   \cup
   \pi_n^*\RealB(\beta_n^{\mathrm{mot}}).
\]
By the choice of motivic lifts,
\[
   \RealB(\alpha_1^{\mathrm{mot}})=\alpha_1,
   \qquad
   \RealB(\beta_i^{\mathrm{mot}})=\beta_i.
\]
Substitution gives
\[
   \RealB(\Theta_n^{\mathrm{mot}})
   =
   \pi_1^*\alpha_1
   \cup
   \pi_2^*\beta_2
   \cup
   \cdots
   \cup
   \pi_n^*\beta_n
   =
   \Theta_{n,B}.
\]
\end{proof}

\begin{proposition}[Finite \'etale realization of the motivic \(n\)-fold class]
\label{prop:etale-realization-motivic-n-fold-class}
Assume \(\Realet\) is compatible with pullback and cup product on the
finite-coefficient objects.  Then
\[
   \Realet(\Theta_n^{\mathrm{mot}})
   =
   \Theta_{n,\acute et},
\]
where
\[
   \Theta_{n,\acute et}
   =
   \pi_1^*\alpha_{1,\acute et}
   \cup
   \pi_2^*\beta_{2,\acute et}
   \cup
   \cdots
   \cup
   \pi_n^*\beta_{n,\acute et}
   \in
   H^{2n-1}_{\acute et}(X_n,\mathbb Z/2(n)).
\]
Equivalently, this class may be written using
\[
   \mu_2^{\otimes n}
\]
as the finite \'etale coefficient system.
\end{proposition}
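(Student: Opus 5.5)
The plan mirrors the proof of Proposition~\ref{prop:betti-realization-motivic-n-fold-class}, with Betti realization replaced by the finite étale realization. I would start from Definition~\ref{def:motivic-n-fold-cup-product-class}:
\[
   \Theta_n^{\mathrm{mot}}
   =
   \pi_1^*\alpha_1^{\mathrm{mot}}
   \cup
   \pi_2^*\beta_2^{\mathrm{mot}}
   \cup\cdots\cup
   \pi_n^*\beta_n^{\mathrm{mot}}
   \in H^{2n-1}_{\mathrm{mot}}(X_n,\mathbf 1_{X_n}(n)/2).
\]
By Proposition~\ref{prop:degree-twist-motivic-n-fold-class}, this class has the stated degree and twist. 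By Proposition~\ref{prop:etale-finite-realization-motivic-coefficient-triangle}, $\Realet$ sends the coefficient object $\mathbf 1_{X_n}(n)/2$ to $\mathbb Z/2(n)\simeq\mu_2^{\otimes n}$. Hence $\Realet(\Theta_n^{\mathrm{mot}})$ lands in $H^{2n-1}_{\acute et}(X_n,\mathbb Z/2(n))$.

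Next I would use the assumed compatibility of $\Realet$ with pullback and with cup product on finite-coefficient objects. This moves the realization through each factor:
\[
   \Realet(\Theta_n^{\mathrm{mot}})
   =
   \pi_1^*\Realet(\alpha_1^{\mathrm{mot}})
   \cup
   \pi_2^*\Realet(\beta_2^{\mathrm{mot}})
   \cup\cdots\cup
   \pi_n^*\Realet(\beta_n^{\mathrm{mot}}).
\]
Here the cup product pairs $\mathbb Z/2(1)^{\otimes n}$ into $\mathbb Z/2(n)$. This pairing has to be matched with the multiplication map $\mu_2^{\otimes 1}\otimes\cdots\otimes\mu_2^{\otimes 1}\to\mu_2^{\otimes n}$ under the twist normalization of Remark~\ref{rem:betti-etale-realizations-mod-m-object}. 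Then I would substitute the étale clause of Definition~\ref{def:motivic-source-packages}, namely $\Realet(\alpha_1^{\mathrm{mot}})=\alpha_{1,\acute et}$ and $\Realet(\beta_i^{\mathrm{mot}})=\beta_{i,\acute et}$. This gives $\Theta_{n,\acute et}$.

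The step I expect to need the most care is checking that the twist convention is coherent under products. The identification $\mathbb Z/2(r)\simeq\mu_2^{\otimes r}$ must be multiplicative, so that the realized cup product agrees with the usual étale cup product. For $m=2$, every twist $\mathbb Z/2(r)$ is canonically $\mathbb Z/2$, since $\mu_2=\{\pm1\}$ has no nontrivial automorphisms. Therefore the identification is automatically compatible with tensor products, and no sign or twist ambiguity arises.

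This same observation justifies the final sentence of the statement, that $\Theta_{n,\acute et}$ may be written with coefficients in $\mu_2^{\otimes n}$.
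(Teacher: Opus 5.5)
Your proposal is correct and matches the paper's proof, which simply repeats the argument of Proposition~\ref{prop:betti-realization-motivic-n-fold-class} with \(\Realet\) in place of \(\RealB\) and the \'etale source classes substituted. Your extra observation, that \(\mu_2^{\otimes r}\cong\mathbb Z/2\) canonically so the twist identification is automatically compatible with cup products, is a valid point the paper leaves implicit.
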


\begin{proof}
The proof is identical to
Proposition~\ref{prop:betti-realization-motivic-n-fold-class}, replacing
\(\RealB\) by \(\Realet\) and replacing the Betti source classes by their
\'etale realizations.
\end{proof}

\subsection{The motivic \(n\)-fold Bockstein class}
\label{subsec:motivic-n-fold-bockstein-class}

The motivic coefficient triangle gives the motivic Bockstein.

\begin{definition}[Motivic \(n\)-fold Bockstein class]
\label{def:motivic-n-fold-bockstein-class}
Let
\[
   \delta^{\mathrm{mot}}:
   H^{2n-1}_{\mathrm{mot}}(X_n,\mathbf 1_{X_n}(n)/2)
   \longrightarrow
   H^{2n}_{\mathrm{mot}}(X_n,\mathbf 1_{X_n}(n))
\]
be the connecting morphism associated to
\[
   \mathbf 1_{X_n}(n)
   \xrightarrow{\times 2}
   \mathbf 1_{X_n}(n)
   \longrightarrow
   \mathbf 1_{X_n}(n)/2
   \overset{+1}{\longrightarrow}.
\]
Define
\[
   \Delta_n^{\mathrm{mot}}
   :=
   \delta^{\mathrm{mot}}(\Theta_n^{\mathrm{mot}})
   \in
   H^{2n}_{\mathrm{mot}}(X_n,\mathbf 1_{X_n}(n)).
\]
\end{definition}

\begin{proposition}[Betti realization of the motivic \(n\)-fold Bockstein]
\label{prop:betti-realization-motivic-n-fold-bockstein}
Under Betti realization,
\[
   \Delta_n^{\mathrm{mot}}
\]
realizes to
\[
   \Delta_{n,B}
   =
   \delta_B(\Theta_{n,B})
   \in H^{2n}_B(X_n,\mathbb Z(n)).
\]
\end{proposition}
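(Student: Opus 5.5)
The proof will be a naturality argument: the statement compares two images of the same class obtained by the two possible orders of "realize" and "take the coefficient boundary", and I will show they agree by combining the two realization results already recorded.

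First, apply Proposition~\ref{prop:betti-realization-motivic-bockstein} with \(X=X_n\), \(r=n\), \(m=2\). This gives that \(\RealB\) carries the motivic coefficient triangle
\[
   \mathbf 1_{X_n}(n)\xrightarrow{\times 2}\mathbf 1_{X_n}(n)\to\mathbf 1_{X_n}(n)/2\overset{+1}{\to}
\]
to the Betti triangle of \(0\to\mathbb Z(n)\xrightarrow{\times 2}\mathbb Z(n)\to\mathbb Z/2(n)\to 0\). The key point to extract is that \(\RealB\), being an exact functor, also sends the boundary morphism \(\mathbf 1_{X_n}(n)/2\to\mathbf 1_{X_n}(n)[1]\) to the Betti boundary \(\mathbb Z/2(n)\to\mathbb Z(n)[1]\).

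Second, I would write \(\delta^{\mathrm{mot}}\) concretely. A class \(\Theta\in H^{2n-1}_{\mathrm{mot}}(X_n,\mathbf 1_{X_n}(n)/2)\) is a morphism \(\mathbf 1_{X_n}\to\mathbf 1_{X_n}(n)/2[2n-1]\), and \(\delta^{\mathrm{mot}}(\Theta)\) is its composite with the shifted boundary morphism. Since \(\RealB\) is a functor, it preserves composites. Together with the identification of the realized boundary from the first step, this gives the commutative square
\[
   \RealB\circ\delta^{\mathrm{mot}}=\delta_B\circ\RealB
\]
on \(H^{2n-1}_{\mathrm{mot}}(X_n,\mathbf 1_{X_n}(n)/2)\).

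Third, evaluate this square on \(\Theta_n^{\mathrm{mot}}\). By Proposition~\ref{prop:betti-realization-motivic-n-fold-class}, \(\RealB(\Theta_n^{\mathrm{mot}})=\Theta_{n,B}\). Hence
\[
   \RealB(\Delta_n^{\mathrm{mot}})=\RealB\bigl(\delta^{\mathrm{mot}}(\Theta_n^{\mathrm{mot}})\bigr)=\delta_B(\Theta_{n,B})=\Delta_{n,B}.
\]
The degree and twist bookkeeping is already supplied by Proposition~\ref{prop:degree-twist-motivic-n-fold-class}.

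The main obstacle is the first step, namely the sign and normalization of the realized connecting morphism. An exact functor preserves distinguished triangles only up to the choice of isomorphism \(\RealB(\operatorname{Cone})\simeq\mathbb Z/2(n)\), and a different choice of that isomorphism could a priori change \(\delta_B\) by an automorphism. I would handle this by using the normalization \(\RealB(\mathbf 1_X(r)/m)\simeq\mathbb Z/m(r)\) fixed in Remark~\ref{rem:betti-etale-realizations-mod-m-object}, chosen compatibly with the triangle. With \(\mathbb Z/2\)-coefficients any residual sign is immaterial, and the only automorphism of \(\mathbb Z/2(n)\) is the identity, so the identification is forced.
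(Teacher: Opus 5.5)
Your proposal is correct and is essentially the paper's own argument: it combines Proposition~\ref{prop:betti-realization-motivic-n-fold-class} (giving \(\RealB(\Theta_n^{\mathrm{mot}})=\Theta_{n,B}\)) with Proposition~\ref{prop:betti-realization-motivic-bockstein} (the motivic connecting morphism realizes to \(\delta_B\)), and then evaluates the identity \(\RealB\circ\delta^{\mathrm{mot}}=\delta_B\circ\RealB\) on \(\Theta_n^{\mathrm{mot}}\). Your extra points make explicit what the paper's proof leaves implicit: you write \(\delta^{\mathrm{mot}}\) as composition with the boundary morphism, and you observe that the identification of the realized cone with \(\mathbb Z/2(n)\) is forced, since \(\mathbb Z/2(n)\) has only the identity automorphism on the connected \(X_n\).
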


\begin{proof}
By Proposition~\ref{prop:betti-realization-motivic-n-fold-class},
\[
   \RealB(\Theta_n^{\mathrm{mot}})=\Theta_{n,B}.
\]
By Proposition~\ref{prop:betti-realization-motivic-bockstein}, the motivic
connecting morphism realizes to the Betti Bockstein
\[
   \delta_B:
   H^{2n-1}_B(X_n,\mathbb Z/2(n))
   \to
   H^{2n}_B(X_n,\mathbb Z(n)).
\]
Therefore
\[
   \RealB(\Delta_n^{\mathrm{mot}})
   =
   \RealB\left(\delta^{\mathrm{mot}}(\Theta_n^{\mathrm{mot}})\right)
   =
   \delta_B\left(\RealB(\Theta_n^{\mathrm{mot}})\right)
   =
   \delta_B(\Theta_{n,B})
   =
   \Delta_{n,B}.
\]
\end{proof}

Thus the motivic lifecycle of the full family is
\[
   \alpha_1^{\mathrm{mot}},
   \beta_2^{\mathrm{mot}},\ldots,\beta_n^{\mathrm{mot}}
   \quad\leadsto\quad
   \Theta_n^{\mathrm{mot}}
   \quad\leadsto\quad
   \Delta_n^{\mathrm{mot}},
\]
and its Betti realization is
\[
   \alpha_1,\beta_2,\ldots,\beta_n
   \quad\leadsto\quad
   \Theta_{n,B}
   \quad\leadsto\quad
   \Delta_{n,B}.
\]

\subsection{Motivic form of Diaz and the level-three class}
\label{subsec:motivic-diaz-and-level-three}

The first two nontrivial cases are worth recording explicitly.

\begin{corollary}[Motivic Diaz class]
\label{cor:motivic-diaz-class}
For \(n=2\), the motivic finite-coefficient class is
\[
   \Theta_2^{\mathrm{mot}}
   =
   \pi_1^*\alpha_1^{\mathrm{mot}}
   \cup
   \pi_2^*\beta_2^{\mathrm{mot}}
   \in
   H^3_{\mathrm{mot}}(S_1\times S_2,\mathbf 1(2)/2).
\]
Its motivic Bockstein
\[
   \Delta_2^{\mathrm{mot}}
   =
   \delta^{\mathrm{mot}}(\Theta_2^{\mathrm{mot}})
\]
has Betti realization
\[
   \RealB(\Delta_2^{\mathrm{mot}})
   =
   \delta_B\left(
   \pi_1^*\alpha_1\cup\pi_2^*\beta_2
   \right)
   \in H^4_B(S_1\times S_2,\mathbb Z(2)).
\]
This is Diaz's integral Bockstein class.
\end{corollary}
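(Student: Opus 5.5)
The plan is to read this corollary as the case \(n=2\) of the general motivic construction, so that nothing beyond earlier results and a final identification with Diaz's notation is needed. First specialize Definition~\ref{def:motivic-n-fold-cup-product-class} to \(n=2\). This gives \(\Theta_2^{\mathrm{mot}}=\pi_1^*\alpha_1^{\mathrm{mot}}\cup\pi_2^*\beta_2^{\mathrm{mot}}\). Proposition~\ref{prop:degree-twist-motivic-n-fold-class} then places it in \(H^{3}_{\mathrm{mot}}(S_1\times S_2,\mathbf 1(2)/2)\), since the degree is \(1+2=3\) and the twist is \(1+1=2\).

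Next, apply Definition~\ref{def:motivic-n-fold-bockstein-class} with \(n=2\). Here the connecting morphism comes from the triangle of Proposition~\ref{prop:motivic-coefficient-triangle} with \(r=2\), \(m=2\), and it defines \(\Delta_2^{\mathrm{mot}}\in H^4_{\mathrm{mot}}(S_1\times S_2,\mathbf 1(2))\).

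The realization statement is then Proposition~\ref{prop:betti-realization-motivic-n-fold-bockstein} in the case \(n=2\). Proposition~\ref{prop:betti-realization-motivic-n-fold-class} gives \(\RealB(\Theta_2^{\mathrm{mot}})=\pi_1^*\alpha_1\cup\pi_2^*\beta_2\). Proposition~\ref{prop:betti-realization-motivic-bockstein} identifies the realized connecting map with \(\delta_B\). Composing the two yields the displayed formula in \(H^4_B(S_1\times S_2,\mathbb Z(2))\).

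To finish, match this with Diaz's class. Take \(S=S_1\), \(\alpha=\alpha_1\) and \(\beta=\beta_2\) in the notation of Section~2. Then \(\delta_B(\pi_1^*\alpha_1\cup\pi_2^*\beta_2)\) is literally \(\delta(\gamma)\) with \(\gamma=\pi_S^*\alpha\cup\pi_{S_2}^*\beta\), as in Corollary~\ref{cor:diaz-level-two-member-n-fold-family}.

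The main obstacle is not the computation but the standing input that motivic lifts \(\alpha_1^{\mathrm{mot}},\beta_2^{\mathrm{mot}}\) exist, together with compatibility of \(\RealB\) with pullback, cup product and the coefficient triangle. These are hypotheses of Definition~\ref{def:motivic-source-packages} and of the realization propositions, not things I would prove here.

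If I were to justify the lifts, I would proceed as follows.
\begin{itemize}
\item For \(\alpha_1\), take the class of the étale \(\mu_2\)-torsor \(Y_1\to S_1\) in motivic cohomology with \(\mathbb Z/2(1)\) coefficients. This group agrees with \(H^1_{\acute et}(S_1,\mu_2)\) by the weight-one Kummer identification.
\item For \(\beta_2\), use the Kummer sequence, noting that \(H^2_{\mathrm{mot}}(S_2,\mathbb Z/2(1))\) surjects onto \(\operatorname{Pic}(S_2)[2]\) rather than onto the Brauer part. So I would instead work with étale motivic cohomology, where the mod-\(2\) weight-one groups equal \(H^2_{\acute et}(S_2,\mu_2)\), and compare with Betti cohomology.
\end{itemize}
This caveat about the Brauer class is the delicate point. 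I would flag it and take the lift as part of the hypothesis, as the paper's framework does.
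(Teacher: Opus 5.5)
Your proposal matches the paper's proof, which simply invokes Proposition~\ref{prop:betti-realization-motivic-n-fold-bockstein} at \(n=2\). You additionally unpack that proposition into the realization of the cup product and of the coefficient triangle, and match the result with Diaz's \(\delta(\gamma)\).

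One correction to your aside on \(\beta_2\). For smooth \(X\) one has \(H^3_{\mathrm{mot}}(X,\mathbb Z(1))=0\), so \(H^2_{\mathrm{mot}}(X,\mathbb Z/2(1))\cong\operatorname{Pic}(X)/2\); it is \(H^1\) that surjects onto \(\operatorname{Pic}(X)[2]\), not \(H^2\). This group maps into the kernel of \(q_{\operatorname{Br},2}\), so your conclusion stands: the Brauer-detecting \(\beta_2\) can only be lifted with \'etale motivic coefficients.
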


\begin{proof}
This is Proposition~\ref{prop:betti-realization-motivic-n-fold-bockstein}
with \(n=2\).
\end{proof}

\begin{corollary}[Motivic level-three class]
\label{cor:motivic-level-three-class}
For \(n=3\), the motivic finite-coefficient class is
\[
   \Theta_3^{\mathrm{mot}}
   =
   \pi_1^*\alpha_1^{\mathrm{mot}}
   \cup
   \pi_2^*\beta_2^{\mathrm{mot}}
   \cup
   \pi_3^*\beta_3^{\mathrm{mot}}
   \in
   H^5_{\mathrm{mot}}(S_1\times S_2\times S_3,\mathbf 1(3)/2).
\]
Its motivic Bockstein
\[
   \Delta_3^{\mathrm{mot}}
   =
   \delta^{\mathrm{mot}}(\Theta_3^{\mathrm{mot}})
\]
has Betti realization
\[
   \RealB(\Delta_3^{\mathrm{mot}})
   =
   \delta_B\left(
   \pi_1^*\alpha_1
   \cup
   \pi_2^*\beta_2
   \cup
   \pi_3^*\beta_3
   \right)
   \in H^6_B(S_1\times S_2\times S_3,\mathbb Z(3)).
\]
This is the level-three Bockstein class.
\end{corollary}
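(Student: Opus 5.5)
The statement is the $n=3$ specialization of Proposition~\ref{prop:betti-realization-motivic-n-fold-bockstein}, so the plan is to reduce to that proposition and then check that the resulting formula matches the level-three class of Definition~\ref{def:level-three-bockstein-class}. The argument has three parts.

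\emph{Degree and twist.} Set $n=3$ in Definition~\ref{def:motivic-n-fold-cup-product-class}. Then $\Theta_3^{\mathrm{mot}}=\pi_1^*\alpha_1^{\mathrm{mot}}\cup\pi_2^*\beta_2^{\mathrm{mot}}\cup\pi_3^*\beta_3^{\mathrm{mot}}$. Proposition~\ref{prop:degree-twist-motivic-n-fold-class} gives degree $1+2+2=5$ and twist $3$, so
\[
\Theta_3^{\mathrm{mot}}\in H^5_{\mathrm{mot}}(S_1\times S_2\times S_3,\mathbf 1(3)/2).
\]
Here $\mathbf 1(3)/2$ means $\mathbf 1_{X_3}(3)/2$ as in Definition~\ref{def:motivic-mod-m-object}.

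\emph{Defining and realizing the Bockstein.} The motivic Bockstein $\delta^{\mathrm{mot}}$ is the connecting morphism of the triangle
\[
\mathbf 1_{X_3}(3)\xrightarrow{\times 2}\mathbf 1_{X_3}(3)\to\mathbf 1_{X_3}(3)/2\overset{+1}{\longrightarrow}
\]
from Proposition~\ref{prop:motivic-coefficient-triangle}. This gives $\Delta_3^{\mathrm{mot}}\in H^6_{\mathrm{mot}}(X_3,\mathbf 1_{X_3}(3))$. To compute its realization, I will first use Proposition~\ref{prop:betti-realization-motivic-n-fold-class} to identify $\RealB(\Theta_3^{\mathrm{mot}})$ with $\pi_1^*\alpha_1\cup\pi_2^*\beta_2\cup\pi_3^*\beta_3$. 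This step uses that $\RealB$ is compatible with pullback and cup product, together with the normalizations $\RealB(\alpha_1^{\mathrm{mot}})=\alpha_1$ and $\RealB(\beta_i^{\mathrm{mot}})=\beta_i$. Next, Proposition~\ref{prop:betti-realization-motivic-bockstein} with $r=3$ and $m=2$ identifies the realization of $\delta^{\mathrm{mot}}$ with $\delta_B$. Applying $\RealB$ to the definition of $\Delta_3^{\mathrm{mot}}$ then gives the displayed formula in $H^6_B(S_1\times S_2\times S_3,\mathbb Z(3))$.

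\emph{Matching the level-three class.} This formula is exactly $\delta(\Theta_3)=\Delta_3$ from Definition~\ref{def:level-three-bockstein-class}, once Betti cohomology is identified with the singular cohomology used there.

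The only point needing care is the middle step. The claim that $\RealB$ carries the motivic connecting morphism to the Betti Bockstein requires that realization sends the cone to $\mathbb Z/2(3)$ compatibly with the triangle's third map, not just up to an abstract isomorphism. I would handle this by appealing to exactness of $\RealB$ as a triangulated functor: it preserves the whole triangle, including the $+1$ map. This is exactly the hypothesis already packaged into Proposition~\ref{prop:betti-realization-motivic-bockstein}. The remaining steps are bookkeeping.
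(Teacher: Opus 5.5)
Your proposal is correct and follows the paper's argument. The paper proves the corollary as the case $n=3$ of Proposition~\ref{prop:betti-realization-motivic-n-fold-bockstein}, and your middle step simply unpacks that proposition's proof: you get $\RealB(\Theta_3^{\mathrm{mot}})=\pi_1^*\alpha_1\cup\pi_2^*\beta_2\cup\pi_3^*\beta_3$ from Proposition~\ref{prop:betti-realization-motivic-n-fold-class}, then $\RealB\circ\delta^{\mathrm{mot}}=\delta_B\circ\RealB$ from Proposition~\ref{prop:betti-realization-motivic-bockstein}, whose exactness hypothesis already covers your concern about the $+1$ map.
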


\begin{proof}
This is Proposition~\ref{prop:betti-realization-motivic-n-fold-bockstein}
with \(n=3\).
\end{proof}

\subsection{MacPherson--Vilonen zig-zags and motivic realization}
\label{subsec:mv-zigzags-motivic-realization}

The MacPherson-- Vilonen construction \cite{MacPhersonVilonen1986} is not
merely the assignment of a nearby-cycle or gluing group.  Its primary
categorical mechanism is the construction of an abelian zig-zag category from
functors
\[
   F,G:\mathcal A\longrightarrow \mathcal B
\]
and a natural transformation
\[
   T:F\longrightarrow G.
\]
Given this data, MacPherson and Vilonen define a category
\[
   \mathcal C(F,G;T),
\]
whose objects are diagrams
\[
   F(A)\longrightarrow B\longrightarrow G(A)
\]
whose composite is \(T_A:F(A)\to G(A)\).  Under the appropriate exactness
hypotheses on \(F\) and \(G\), this category is abelian
\cite{MacPhersonVilonen1986}.  In the geometric situation, the construction
is applied to perverse sheaves on a stratified space, producing the
MacPherson--Vilonen zig-zag functor
\[
   Z:\mathsf P(X)\longrightarrow \mathcal C(F,G;T).
\]
Under the hypotheses of the MacPherson--Vilonen theorem, this functor
identifies the relevant perverse-sheaf category with the corresponding
zig-zag category \cite{MacPhersonVilonen1986}.

For the motivic application in this paper, we use only the formal part of
this mechanism.  Namely, the abstract zig-zag category
\[
   \mathcal C(F,G;T)
\]
can be formed in any setting where the input categories, functors, and natural
transformation exist.  We do not claim here to construct a full integral
motivic analogue of the MacPherson--Vilonen equivalence.  Such a theorem would
require a separate treatment of motivic perverse hearts, integral
recollement, exactness, and gluing equivalences.  Instead, we record the
realization-compatible portion needed for the present paper: the
finite-coefficient motivic coefficient triangle gives a motivic Bockstein,
and the same triangle induces a zig-zag boundary whose Betti realization is
the classical MacPherson--Vilonen Bockstein boundary.

\begin{definition}[Motivic MV zig-zag data]
\label{def:motivic-mv-zigzag-data}
Let \(\mathcal A_{\mathrm{mot}}\) and \(\mathcal B_{\mathrm{mot}}\) be
abelian motivic realization categories.  Let
\[
   F_{\mathrm{mot}},G_{\mathrm{mot}}:
   \mathcal A_{\mathrm{mot}}\longrightarrow \mathcal B_{\mathrm{mot}}
\]
be functors, and let
\[
   T_{\mathrm{mot}}:F_{\mathrm{mot}}\longrightarrow G_{\mathrm{mot}}
\]
be a natural transformation.  We call the triple
\[
   (F_{\mathrm{mot}},G_{\mathrm{mot}};T_{\mathrm{mot}})
\]
motivic MV zig-zag data.
\end{definition}

\begin{definition}[Motivic MV zig-zag category]
\label{def:motivic-mv-zigzag-category}
Given motivic MV zig-zag data
\[
   (F_{\mathrm{mot}},G_{\mathrm{mot}};T_{\mathrm{mot}}),
\]
define
\[
   \mathcal C_{\mathrm{mot}}
   :=
   \mathcal C(F_{\mathrm{mot}},G_{\mathrm{mot}};T_{\mathrm{mot}})
\]
to be the category whose objects are quadruples
\[
   (A,B,u,v),
\]
where
\[
   A\in\mathcal A_{\mathrm{mot}},
   \qquad
   B\in\mathcal B_{\mathrm{mot}},
\]
and
\[
   F_{\mathrm{mot}}(A)
   \xrightarrow{u}
   B
   \xrightarrow{v}
   G_{\mathrm{mot}}(A)
\]
is a diagram satisfying
\[
   v\circ u=T_{\mathrm{mot},A}.
\]
Morphisms are pairs of morphisms in
\(\mathcal A_{\mathrm{mot}}\) and \(\mathcal B_{\mathrm{mot}}\) compatible
with the two structure maps.
\end{definition}

\begin{proposition}[Formal motivic lift of the MV zig-zag category]
\label{prop:formal-motivic-lift-mv-zigzag}
Assume that \(\mathcal A_{\mathrm{mot}}\) and
\(\mathcal B_{\mathrm{mot}}\) are abelian categories, that
\(F_{\mathrm{mot}}\) is right exact, and that \(G_{\mathrm{mot}}\) is left
exact.  Then
\[
   \mathcal C_{\mathrm{mot}}
   =
   \mathcal C(F_{\mathrm{mot}},G_{\mathrm{mot}};T_{\mathrm{mot}})
\]
is an abelian category.
\end{proposition}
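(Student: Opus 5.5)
The plan is to reduce the statement to the classical MacPherson--Vilonen argument, which is purely formal. Abelianness will only use the abelian structure of \(\mathcal A_{\mathrm{mot}}\) and \(\mathcal B_{\mathrm{mot}}\), right exactness of \(F_{\mathrm{mot}}\), left exactness of \(G_{\mathrm{mot}}\), and naturality of \(T_{\mathrm{mot}}\). No motivic input enters; the motivic decoration only fixes which categories are fed into the construction \cite{MacPhersonVilonen1986}.

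First I check additivity. The zero object is \((0,0,0,0)\). Biproducts are formed componentwise,
\[
(A,B,u,v)\oplus(A',B',u',v')=(A\oplus A',\,B\oplus B',\,u\oplus u',\,v\oplus v'),
\]
using that \(F_{\mathrm{mot}}\) and \(G_{\mathrm{mot}}\) are additive (any one-sided exact functor between abelian categories is additive). The relation \(v\circ u=T_{\mathrm{mot},A}\) holds componentwise by naturality of \(T_{\mathrm{mot}}\). Hom-sets are subgroups of \(\operatorname{Hom}(A,A')\oplus\operatorname{Hom}(B,B')\), cut out by the two linear compatibility conditions, so they are abelian groups.

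Second, I construct kernels and cokernels. Let \((a,b):(A,B,u,v)\to(A',B',u',v')\) be a morphism.

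For the kernel, put \(K=\ker a\) and \(L=\ker b\). Since \(G_{\mathrm{mot}}\) is left exact, \(G_{\mathrm{mot}}(K)=\ker G_{\mathrm{mot}}(a)\). The composite \(L\to B\xrightarrow{v}G_{\mathrm{mot}}(A)\) is killed by \(G_{\mathrm{mot}}(a)\), because \(G_{\mathrm{mot}}(a)\circ v=v'\circ b\). It therefore factors uniquely through \(G_{\mathrm{mot}}(K)\), and this factorization is \(v_L\). The map \(u_L:F_{\mathrm{mot}}(K)\to L\) is the factorization of \(u\circ F_{\mathrm{mot}}(K\hookrightarrow A)\) through \(\ker b\); it exists because \(b\circ u=u'\circ F_{\mathrm{mot}}(a)\) and \(a\) kills \(K\).

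Dually, for the cokernel put \(C=\operatorname{coker}a\) and \(D=\operatorname{coker}b\). Right exactness of \(F_{\mathrm{mot}}\) gives \(F_{\mathrm{mot}}(C)=\operatorname{coker}F_{\mathrm{mot}}(a)\), which yields \(u_D\). The map \(v_D\) comes from the cokernel property of \(D\) applied to \(G_{\mathrm{mot}}(A'\to C)\circ v'\).

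In both cases, the identity \(v\circ u=T_{\mathrm{mot}}\) follows from naturality of \(T_{\mathrm{mot}}\) together with a monomorphism or epimorphism cancellation. The universal properties are then checked componentwise.

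The main obstacle is the last axiom: that the canonical map \(\operatorname{coim}\to\operatorname{im}\) is an isomorphism. The difficulty is that \(F_{\mathrm{mot}}\) need not preserve kernels and \(G_{\mathrm{mot}}\) need not preserve cokernels, so the image object cannot be described naively through both functors. I would first observe that a morphism in \(\mathcal C_{\mathrm{mot}}\) is a monomorphism (respectively an epimorphism) precisely when both of its components are. This needs care, since the kernel's \(B\)-component was computed as \(\ker b\) in \(\mathcal B_{\mathrm{mot}}\), and one must confirm that this really is the categorical kernel. Granting this, the coimage and image have underlying components \(\operatorname{coim}a\cong\operatorname{im}a\) in \(\mathcal A_{\mathrm{mot}}\) and \(\operatorname{coim}b\cong\operatorname{im}b\) in \(\mathcal B_{\mathrm{mot}}\). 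The canonical comparison map is therefore a componentwise isomorphism. Its inverse automatically commutes with the structure maps \(u\) and \(v\), so it is an isomorphism in \(\mathcal C_{\mathrm{mot}}\). This completes the verification that \(\mathcal C_{\mathrm{mot}}\) is abelian, exactly as in \cite{MacPhersonVilonen1986}.
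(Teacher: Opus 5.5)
Your proposal is correct and follows the same route as the paper, whose proof simply defers to MacPherson--Vilonen's componentwise construction of kernels and cokernels using right exactness of $F_{\mathrm{mot}}$ and left exactness of $G_{\mathrm{mot}}$. You carry that construction out explicitly, and the point you flag as needing care is already settled by your componentwise universal-property check: it shows that the forgetful functors to $\mathcal A_{\mathrm{mot}}$ and $\mathcal B_{\mathrm{mot}}$ preserve kernels and cokernels, hence also images, coimages, and the comparison map between them.
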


\begin{proof}
This is the formal categorical part of the MacPherson--Vilonen construction
applied to
\[
   \mathcal A_{\mathrm{mot}},
   \qquad
   \mathcal B_{\mathrm{mot}},
   \qquad
   F_{\mathrm{mot}},
   \qquad
   G_{\mathrm{mot}},
   \qquad
   T_{\mathrm{mot}}.
\]
The construction of kernels and cokernels uses the right exactness of
\(F_{\mathrm{mot}}\) and the left exactness of \(G_{\mathrm{mot}}\), exactly
as in the construction of the category
\(\mathcal C(F,G;T)\) in \cite{MacPhersonVilonen1986}.
\end{proof}

\begin{remark}[Scope of the motivic lift]
\label{rem:scope-motivic-mv-lift}
Proposition~\ref{prop:formal-motivic-lift-mv-zigzag} is only the formal
zig-zag lift.  It does not assert a full motivic MacPherson--Vilonen theorem.
In particular, we do not claim here an equivalence
\[
   \mathsf P_{\mathrm{mot}}(X)
   \simeq
   \mathcal C_{\mathrm{mot}}.
\]
Establishing such an equivalence would require a separate integral motivic
version of the MacPherson--Vilonen construction.  The present paper uses only
the formal zig-zag category, the motivic coefficient triangle, and their
compatibility with Betti realization.
\end{remark}

\begin{definition}[Betti-realization-compatible motivic MV data]
\label{def:betti-realization-compatible-motivic-mv-data}
Let
\[
   \RealB:\mathcal A_{\mathrm{mot}}\to \mathcal A_B,
   \qquad
   \RealB:\mathcal B_{\mathrm{mot}}\to \mathcal B_B
\]
be exact Betti realization functors.  Let
\[
   (F_B,G_B;T_B)
\]
be the classical Betti MacPherson--Vilonen data.  We say that
\[
   (F_{\mathrm{mot}},G_{\mathrm{mot}};T_{\mathrm{mot}})
\]
is Betti-realization-compatible with
\[
   (F_B,G_B;T_B)
\]
if there are natural isomorphisms
\[
   \RealB\circ F_{\mathrm{mot}}
   \simeq
   F_B\circ\RealB,
\]
\[
   \RealB\circ G_{\mathrm{mot}}
   \simeq
   G_B\circ\RealB,
\]
and, under these identifications,
\[
   \RealB(T_{\mathrm{mot}})=T_B.
\]
\end{definition}

\begin{proposition}[Betti realization of motivic MV zig-zags]
\label{prop:betti-realization-motivic-mv-zigzags}
Assume the motivic MV data are Betti-realization-compatible with the
classical MacPherson--Vilonen data.  Then Betti realization induces a functor
\[
   \operatorname{Real}_{B,\mathcal C}:
   \mathcal C(F_{\mathrm{mot}},G_{\mathrm{mot}};T_{\mathrm{mot}})
   \longrightarrow
   \mathcal C(F_B,G_B;T_B).
\]
On objects it is given by
\[
   (A,B,u,v)
   \longmapsto
   \bigl(
      \RealB(A),
      \RealB(B),
      \RealB(u),
      \RealB(v)
   \bigr).
\]
\end{proposition}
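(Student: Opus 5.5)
The functor is defined componentwise, and the proof checks that this assignment lands in \(\mathcal C(F_B,G_B;T_B)\), sends morphisms to morphisms, and respects identities and composition. The only inputs are the Betti-realization-compatibility isomorphisms of Definition~\ref{def:betti-realization-compatible-motivic-mv-data}, which I denote
\[
   \phi_F:\RealB\circ F_{\mathrm{mot}}\xrightarrow{\ \sim\ }F_B\circ\RealB,
   \qquad
   \phi_G:\RealB\circ G_{\mathrm{mot}}\xrightarrow{\ \sim\ }G_B\circ\RealB,
\]
together with the identity \(\RealB(T_{\mathrm{mot}})=T_B\) under these identifications.

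First, objects. Let \((A,B,u,v)\) be an object, so \(u:F_{\mathrm{mot}}(A)\to B\), \(v:B\to G_{\mathrm{mot}}(A)\) and \(v\circ u=T_{\mathrm{mot},A}\). Applying \(\RealB\) gives \(\RealB(u):\RealB F_{\mathrm{mot}}(A)\to\RealB(B)\) and \(\RealB(v):\RealB(B)\to\RealB G_{\mathrm{mot}}(A)\). Read literally, the displayed formula in the statement does not have the right shape. The precise object is
\[
   \bigl(\RealB(A),\ \RealB(B),\ \RealB(u)\circ\phi_{F,A}^{-1},\ \phi_{G,A}\circ\RealB(v)\bigr).
\]
The zig-zag relation then follows from functoriality of \(\RealB\):
\[
   \phi_{G,A}\circ\RealB(v)\circ\RealB(u)\circ\phi_{F,A}^{-1}
   =\phi_{G,A}\circ\RealB(T_{\mathrm{mot},A})\circ\phi_{F,A}^{-1}
   =T_{B,\RealB(A)}.
\]
The last equality is exactly the compatibility \(\RealB(T_{\mathrm{mot}})=T_B\) evaluated at \(A\). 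So the image is an object of \(\mathcal C(F_B,G_B;T_B)\).

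Second, morphisms. A morphism \((a,b):(A,B,u,v)\to(A',B',u',v')\) satisfies \(b\circ u=u'\circ F_{\mathrm{mot}}(a)\) and \(G_{\mathrm{mot}}(a)\circ v=v'\circ b\). Send it to \((\RealB(a),\RealB(b))\). To see that the two structure squares still commute after realization, apply \(\RealB\) to them and use naturality of \(\phi_F\) and \(\phi_G\) in \(A\). Naturality gives \(\phi_{F,A'}\circ\RealB F_{\mathrm{mot}}(a)=F_B(\RealB a)\circ\phi_{F,A}\), and the analogous identity for \(G\). Preservation of identities and composition is inherited directly from \(\RealB\) on the component categories. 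Hence \(\operatorname{Real}_{B,\mathcal C}\) is a functor.

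The only step needing care is the bookkeeping of the comparison isomorphisms \(\phi_F\) and \(\phi_G\). Without them, \(\RealB(u)\) and \(\RealB(v)\) have the wrong source and target, and the statement's displayed formula is only correct up to these canonical identifications. That is why the relation check for objects and the naturality check for morphisms both go through the \(\phi\)'s. Exactness of \(\RealB\) is not needed for functoriality. It would only matter for a further claim that the functor is exact, which I would deduce from the componentwise description of kernels and cokernels in Proposition~\ref{prop:formal-motivic-lift-mv-zigzag}, and the present statement does not require it.
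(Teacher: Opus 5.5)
Your proof is correct and follows the paper's route: realize componentwise, then deduce the zig-zag relation from functoriality of \(\RealB\) together with \(\RealB(T_{\mathrm{mot}})=T_B\). You are more careful than the paper: it silently absorbs the comparison isomorphisms \(\phi_F,\phi_G\) into the phrase ``this identifies with'' and covers the morphism check in one line, whereas you write out the corrected structure maps and the naturality argument on morphisms.
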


\begin{proof}
Let
\[
   (A,B,u,v)
\]
be an object of
\[
   \mathcal C(F_{\mathrm{mot}},G_{\mathrm{mot}};T_{\mathrm{mot}}).
\]
Thus
\[
   F_{\mathrm{mot}}(A)
   \xrightarrow{u}
   B
   \xrightarrow{v}
   G_{\mathrm{mot}}(A)
\]
satisfies
\[
   v\circ u=T_{\mathrm{mot},A}.
\]
Applying \(\RealB\) gives
\[
   \RealB(F_{\mathrm{mot}}(A))
   \xrightarrow{\RealB(u)}
   \RealB(B)
   \xrightarrow{\RealB(v)}
   \RealB(G_{\mathrm{mot}}(A)).
\]
Using Betti-realization compatibility, this identifies with
\[
   F_B(\RealB(A))
   \xrightarrow{\RealB(u)}
   \RealB(B)
   \xrightarrow{\RealB(v)}
   G_B(\RealB(A)).
\]
Moreover,
\[
   \RealB(v)\circ\RealB(u)
   =
   \RealB(v\circ u)
   =
   \RealB(T_{\mathrm{mot},A})
   =
   T_{B,\RealB(A)}.
\]
Hence the realized object lies in
\[
   \mathcal C(F_B,G_B;T_B).
\]
The same compatibility condition defines the functor on morphisms.
\end{proof}

\subsection{The coefficient triangle inside the motivic zig-zag category}
\label{subsec:coefficient-triangle-motivic-zigzag-category}

We now apply the motivic zig-zag formalism to the coefficient triangle.

\begin{definition}[Motivic coefficient zig-zag]
\label{def:motivic-coefficient-zigzag}
Let
\[
   \mathbf 1_X(p)
   \xrightarrow{\times 2}
   \mathbf 1_X(p)
   \longrightarrow
   \mathbf 1_X(p)/2
   \overset{+1}{\longrightarrow}
\]
be the motivic coefficient triangle.  Applying the motivic MV zig-zag data
to this triangle gives a finite-coefficient motivic zig-zag object
\[
   Z_{\mathrm{mot}}(\mathbf 1_X(p)/2)
   \in
   \mathcal C(F_{\mathrm{mot}},G_{\mathrm{mot}};T_{\mathrm{mot}})
\]
and an integral motivic zig-zag object
\[
   Z_{\mathrm{mot}}(\mathbf 1_X(p))
   \in
   \mathcal C(F_{\mathrm{mot}},G_{\mathrm{mot}};T_{\mathrm{mot}}).
\]
The connecting morphism of the motivic coefficient triangle induces, after
passing to the bounded derived category of the motivic zig-zag category, a
motivic zig-zag Bockstein boundary
\[
   \partial^{\mathrm{mot}}_{\mathcal C}:
   Z_{\mathrm{mot}}(\mathbf 1_X(p)/2)
   \longrightarrow
   Z_{\mathrm{mot}}(\mathbf 1_X(p))[1]
\]
in
\[
   D^b\bigl(\mathcal C(F_{\mathrm{mot}},G_{\mathrm{mot}};T_{\mathrm{mot}})\bigr).
\]
\end{definition}

\begin{theorem}[Betti realization of the motivic MV zig-zag Bockstein]
\label{thm:betti-realization-motivic-zigzag-bockstein}
Assume the motivic MV data are Betti-realization-compatible with the
classical MacPherson--Vilonen data.  Then the motivic coefficient zig-zag
and its motivic Bockstein boundary in the bounded derived category of the
motivic zig-zag category realize to the classical MacPherson--Vilonen
finite-coefficient zig-zag and its Bockstein boundary in the bounded derived
category of the classical zig-zag category.  More precisely,
\[
   \operatorname{Real}_{B,\mathcal C}
   \bigl(
      Z_{\mathrm{mot}}(\mathbf 1_X(p)/2)
   \bigr)
   \simeq
   Z_B(\mathbb Z/2(p)),
\]
\[
   \operatorname{Real}_{B,\mathcal C}
   \bigl(
      Z_{\mathrm{mot}}(\mathbf 1_X(p))
   \bigr)
   \simeq
   Z_B(\mathbb Z(p)),
\]
and the diagram
\[
\begin{array}{ccc}
\operatorname{Real}_{B,\mathcal C}
\bigl(
Z_{\mathrm{mot}}(\mathbf 1_X(p)/2)
\bigr)
& \xrightarrow{\ \operatorname{Real}_{B,\mathcal C}(\partial^{\mathrm{mot}}_{\mathcal C})\ } &
\operatorname{Real}_{B,\mathcal C}
\bigl(
Z_{\mathrm{mot}}(\mathbf 1_X(p))[1]
\bigr)
\\[1.2em]
\bigg\downarrow{\scriptstyle \simeq}
&&
\bigg\downarrow{\scriptstyle \simeq}
\\[1.2em]
Z_B(\mathbb Z/2(p))
& \xrightarrow{\ \partial_{\mathcal C,B}\ } &
Z_B(\mathbb Z(p))[1]
\end{array}
\]
commutes.
\end{theorem}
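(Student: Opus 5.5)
The plan is to reduce the statement to three formal facts: Betti realization is exact, it preserves distinguished triangles, and connecting morphisms are natural.

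First, I would extend $\operatorname{Real}_{B,\mathcal C}$ from Proposition~\ref{prop:betti-realization-motivic-mv-zigzags} to an exact functor $\mathcal C(F_{\mathrm{mot}},G_{\mathrm{mot}};T_{\mathrm{mot}})\to\mathcal C(F_B,G_B;T_B)$. By Proposition~\ref{prop:formal-motivic-lift-mv-zigzag} both categories are abelian. Kernels and cokernels in a zig-zag category are computed componentwise in $\mathcal A$ and $\mathcal B$, with the structure maps induced through $F$ and $G$. Since $\RealB$ is exact on $\mathcal A_{\mathrm{mot}}$ and $\mathcal B_{\mathrm{mot}}$, and it intertwines $F$ and $G$ by the natural isomorphisms of Definition~\ref{def:betti-realization-compatible-motivic-mv-data}, the realized functor preserves short exact sequences. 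An exact functor between abelian categories extends termwise to a triangulated functor on bounded derived categories, so $\operatorname{Real}_{B,\mathcal C}$ carries distinguished triangles in $D^b(\mathcal C_{\mathrm{mot}})$ to distinguished triangles in $D^b(\mathcal C_B)$.

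Second, I would identify the objects. The zig-zag $Z_{\mathrm{mot}}(\mathbf 1_X(p))$ is built by applying $F_{\mathrm{mot}}$, $G_{\mathrm{mot}}$ and the recollement pieces to $\mathbf 1_X(p)$. Its realization is therefore built from $F_B,G_B$ applied to $\RealB(\mathbf 1_X(p))\simeq\mathbb Z(p)$, which gives $Z_B(\mathbb Z(p))$. For the mod-$2$ object I would use the triangle of Proposition~\ref{prop:motivic-coefficient-triangle}. Applying $Z_{\mathrm{mot}}$ produces a triangle
\[
Z_{\mathrm{mot}}(\mathbf 1_X(p))\xrightarrow{\times2}Z_{\mathrm{mot}}(\mathbf 1_X(p))\to Z_{\mathrm{mot}}(\mathbf 1_X(p)/2)\xrightarrow{\partial^{\mathrm{mot}}_{\mathcal C}}
\]
in $D^b(\mathcal C_{\mathrm{mot}})$, by Definition~\ref{def:motivic-coefficient-zigzag}. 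Realizing it gives a triangle whose first map is $\times2$ on $Z_B(\mathbb Z(p))$, by Proposition~\ref{prop:betti-realization-motivic-bockstein}. Its third term is then a cone of $\times 2$, which is identified with $Z_B(\mathbb Z/2(p))$; this yields the two displayed isomorphisms.

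Third, commutativity of the square follows because the realized triangle and the classical coefficient triangle $Z_B(\mathbb Z(p))\to Z_B(\mathbb Z(p))\to Z_B(\mathbb Z/2(p))\xrightarrow{\partial_{\mathcal C,B}}$ share their first two terms and first map. By axiom TR3 there is a morphism of triangles that is the identity on the first two terms. Its third component is an isomorphism by the five lemma, and we take this as the left vertical arrow. The right vertical arrow is the shift of the identity under the identification of step two, and the square is exactly the third square of the morphism of triangles.

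The main obstacle is that the comparison map supplied by TR3 is not canonical, since cones are only unique up to non-unique isomorphism. I would handle this by fixing the left vertical isomorphism to be that TR3 map. An alternative is to argue that $\operatorname{Hom}(Z_B(\mathbb Z(p))[1],Z_B(\mathbb Z/2(p)))$ introduces no ambiguity in the relevant degree, checked componentwise. A secondary issue is that $Z_{\mathrm{mot}}$ is defined on the heart, not on triangles. I would work with two-term complexes, using the short exact sequence in the heart that the mod-$2$ object determines when $\times 2$ is a monomorphism, which holds componentwise since the coefficients are torsion-free.
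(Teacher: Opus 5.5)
Your route is the paper's: Betti realization is exact, it carries triangles to triangles, and connecting morphisms are natural. Your step one (exactness of $\operatorname{Real}_{B,\mathcal C}$ via componentwise kernels and cokernels) and your move to a short exact sequence in the heart are more careful than the paper's proof.

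The gap is in step two, where you claim $\operatorname{Real}_{B,\mathcal C}(Z_{\mathrm{mot}}(\mathbf 1_X(p)))\simeq Z_B(\mathbb Z(p))$ because the realization ``is built from $F_B,G_B$ applied to $\RealB(\mathbf 1_X(p))$.'' Betti-realization-compatibility only gives three things: $\RealB\circ F_{\mathrm{mot}}\simeq F_B\circ\RealB$, the analogous isomorphism for $G$, and $\RealB(T_{\mathrm{mot}})=T_B$. In a MacPherson--Vilonen zig-zag $(A,B,u,v)$ attached to a perverse sheaf, the $\mathcal B$-component and the maps $u,v$ come from the zig-zag functor and the local structure of the object along the closed stratum. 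They are not obtained by applying $F$ and $G$ to $A$. The ``recollement pieces'' you invoke are exactly what the hypothesis does not control. Moreover, by Remark~\ref{rem:what-is-lifted-and-deferred-mv}, neither $\mathsf P_{\mathrm{mot}}(X)$ nor $Z_{\mathrm{mot}}$ is constructed in the paper.

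So you need an extra hypothesis: a motivic heart containing $\mathbf 1_X(p)[\dim X]$, on which $\times2$ is a monomorphism, together with a natural isomorphism $\operatorname{Real}_{B,\mathcal C}\circ Z_{\mathrm{mot}}\simeq Z_B\circ\RealB$ on it. The paper's proof makes the same leap. It reads the displayed isomorphisms off Proposition~\ref{prop:betti-realization-motivic-mv-zigzags}, which only shows that realized quadruples lie in $\mathcal C(F_B,G_B;T_B)$, not which quadruple they are. Your reduction to matching only the integral term is a genuine improvement, since the mod-$2$ term and the square then follow formally.

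On canonicity, your primary fix (taking the TR3 fill-in as the left vertical arrow) makes the square commute by construction. That only shows the two cones are abstractly isomorphic, not that realization sends $\partial^{\mathrm{mot}}_{\mathcal C}$ to $\partial_{\mathcal C,B}$ under the natural identification. Your alternative is the right fix, and it needs no componentwise check. Both objects lie in the heart, so $\operatorname{Hom}(Z_B(\mathbb Z(p))[1],Z_B(\mathbb Z/2(p)))=\operatorname{Ext}^{-1}=0$ in $D^b(\mathcal C(F_B,G_B;T_B))$. By the long exact $\operatorname{Hom}$-sequence of the first triangle, any isomorphism of third terms compatible with the reduction maps therefore equals the TR3 fill-in, and so it intertwines the boundaries. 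In particular this applies to the isomorphism coming from the natural isomorphism above. Equivalently, you can use naturality of the connecting morphism of a short exact sequence under an exact functor.

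Two smaller corrections:
\begin{enumerate}
\item The realized first map is $\times2$ simply because $Z_{\mathrm{mot}}$ and $\operatorname{Real}_{B,\mathcal C}$ are additive. Proposition~\ref{prop:betti-realization-motivic-bockstein} concerns $\RealB$ on motives, not on zig-zags.
\item That $\times2$ is monic on the $\mathcal B$-component does not follow from torsion-freeness of the coefficients. On the Betti side it holds because $\mathbb Z/2(p)[\dim X]$ is perverse and $Z_B$ is exact. On the motivic side it is part of the missing hypothesis.
\end{enumerate}
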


\begin{proof}
The coefficient object
\[
   \mathbf 1_X(p)/2
\]
is defined as the cone of
\[
   \mathbf 1_X(p)
   \xrightarrow{\times 2}
   \mathbf 1_X(p).
\]
Since Betti realization is exact, it sends this motivic coefficient triangle
to
\[
   \mathbb Z(p)
   \xrightarrow{\times 2}
   \mathbb Z(p)
   \longrightarrow
   \mathbb Z/2(p)
   \overset{+1}{\longrightarrow}.
\]
By Proposition~\ref{prop:betti-realization-motivic-mv-zigzags}, Betti
realization carries motivic zig-zag objects to classical
MacPherson--Vilonen zig-zag objects.  Therefore
\[
   \operatorname{Real}_{B,\mathcal C}
   \bigl(
      Z_{\mathrm{mot}}(\mathbf 1_X(p)/2)
   \bigr)
   \simeq
   Z_B(\mathbb Z/2(p)),
\]
and
\[
   \operatorname{Real}_{B,\mathcal C}
   \bigl(
      Z_{\mathrm{mot}}(\mathbf 1_X(p))
   \bigr)
   \simeq
   Z_B(\mathbb Z(p)).
\]
The boundary map
\[
   \partial^{\mathrm{mot}}_{\mathcal C}
\]
is the connecting morphism induced by the motivic coefficient triangle inside
the bounded derived category of the motivic zig-zag category.  Since Betti
realization is exact and compatible with the MV data, it sends this connecting
morphism to the connecting morphism of the realized coefficient triangle
inside the bounded derived category of the classical zig-zag category.  This
is precisely the classical MV zig-zag Bockstein boundary
\[
   \partial_{\mathcal C,B}.
\]
Therefore the displayed diagram commutes.
\end{proof}

\begin{corollary}[Motivic zig-zag lift of the \(n\)-fold MV obstruction]
\label{cor:motivic-zigzag-lift-n-fold-mv-obstruction}
Let
\[
   \Theta_n^{\mathrm{mot}}
   \in
   H^{2n-1}_{\mathrm{mot}}(X_n,\mathbf 1_{X_n}(n)/2)
\]
be the motivic \(n\)-fold cup-product class, regarded as a morphism
\[
   \mathbf 1_{X_n}
   \longrightarrow
   \mathbf 1_{X_n}(n)/2[2n-1].
\]
Assume the motivic MV data are Betti-realization-compatible.  Then the
motivic zig-zag Bockstein associated to \(\Theta_n^{\mathrm{mot}}\) realizes
to the classical MacPherson--Vilonen zig-zag Bockstein associated to
\[
   \Theta_{n,B}
   =
   \pi_1^*\alpha_1
   \cup
   \pi_2^*\beta_2
   \cup
   \cdots
   \cup
   \pi_n^*\beta_n.
\]
Equivalently,
\[
   \operatorname{Real}_{B,\mathcal C}
   \left(
      \partial^{\mathrm{mot}}_{\mathcal C}
      (Z_{\mathrm{mot}}(\Theta_n^{\mathrm{mot}}))
   \right)
   =
   \partial_{\mathcal C,B}
   \left(
      Z_B(\Theta_{n,B})
   \right),
\]
where both sides are interpreted in the corresponding derived zig-zag
category.
\end{corollary}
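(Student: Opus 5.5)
The plan is to reduce the statement to Theorem~\ref{thm:betti-realization-motivic-zigzag-bockstein}. We interpret the zig-zag object attached to a class as the pullback of the coefficient zig-zag along that class. Concretely, I would regard \(\Theta_n^{\mathrm{mot}}\) as a morphism \(\mathbf 1_{X_n}\to\mathbf 1_{X_n}(n)/2[2n-1]\). I would then define \(Z_{\mathrm{mot}}(\Theta_n^{\mathrm{mot}})\) as the image of this morphism under the motivic zig-zag functor, so it is a morphism \(Z_{\mathrm{mot}}(\mathbf 1_{X_n})\to Z_{\mathrm{mot}}(\mathbf 1_{X_n}(n)/2)[2n-1]\) in \(D^b(\mathcal C_{\mathrm{mot}})\). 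Its boundary is then defined as the composite
\[
   \partial^{\mathrm{mot}}_{\mathcal C}\bigl(Z_{\mathrm{mot}}(\Theta_n^{\mathrm{mot}})\bigr)
   :=
   \partial^{\mathrm{mot}}_{\mathcal C}[2n-1]\circ Z_{\mathrm{mot}}(\Theta_n^{\mathrm{mot}}),
\]
and the Betti side is defined in the same way from \(\Theta_{n,B}\). With this convention, the claim is an identity of composites in \(D^b(\mathcal C(F_B,G_B;T_B))\).

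The key steps, in order, are as follows.

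\begin{enumerate}
\item Extend \(\operatorname{Real}_{B,\mathcal C}\) from Proposition~\ref{prop:betti-realization-motivic-mv-zigzags} to bounded derived categories. This works because \(\RealB\) is exact on \(\mathcal A_{\mathrm{mot}}\) and \(\mathcal B_{\mathrm{mot}}\), and kernels and cokernels in the zig-zag categories are computed componentwise (Proposition~\ref{prop:formal-motivic-lift-mv-zigzag}). Hence \(\operatorname{Real}_{B,\mathcal C}\) is exact and derives to a triangulated functor commuting with shifts and composition.
\item Check that the zig-zag functors intertwine realization on morphisms, \(\operatorname{Real}_{B,\mathcal C}\circ Z_{\mathrm{mot}}\simeq Z_B\circ\RealB\). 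This follows from the Betti-realization compatibility isomorphisms of Definition~\ref{def:betti-realization-compatible-motivic-mv-data}.
\item Apply step~2 together with Proposition~\ref{prop:betti-realization-motivic-n-fold-class} to obtain
\[
   \operatorname{Real}_{B,\mathcal C}\bigl(Z_{\mathrm{mot}}(\Theta_n^{\mathrm{mot}})\bigr)\simeq Z_B(\Theta_{n,B}).
\]
\item Use the commutative square of Theorem~\ref{thm:betti-realization-motivic-zigzag-bockstein}, with \(p=n\) and shifted by \([2n-1]\), to identify the realized boundary with \(\partial_{\mathcal C,B}\).
\item Because the functor is triangulated, it preserves composites. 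The realization of the motivic composite is therefore \(\partial_{\mathcal C,B}[2n-1]\circ Z_B(\Theta_{n,B})\), which is the right-hand side of the statement.
\end{enumerate}

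I expect the main obstacle to be step~2 together with the derived extension in step~1, i.e.\ the naturality of the isomorphisms. Theorem~\ref{thm:betti-realization-motivic-zigzag-bockstein} gives isomorphisms only on the two coefficient objects. To transport the composite, I need them to be natural, in particular compatible with the map \(Z(\mathbf 1)\to Z(\mathbf 1(n)/2)[2n-1]\) induced by \(\Theta\). I would handle this by choosing all identifications from a single natural isomorphism \(\operatorname{Real}_{B,\mathcal C}\circ Z_{\mathrm{mot}}\Rightarrow Z_B\circ\RealB\) built componentwise from the data in Definition~\ref{def:betti-realization-compatible-motivic-mv-data}. The square in Theorem~\ref{thm:betti-realization-motivic-zigzag-bockstein} is then the instance of this natural isomorphism applied to the connecting morphism. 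The remaining bookkeeping is that shifts commute with the natural isomorphism and that the identification \(\RealB(\mathbf 1_{X_n}(n)/2)\simeq\mathbb Z/2(n)\) is compatible with the triangle, which Proposition~\ref{prop:betti-realization-motivic-bockstein} supplies.
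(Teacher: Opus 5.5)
Your route is the paper's. Its proof applies Theorem~\ref{thm:betti-realization-motivic-zigzag-bockstein} to $\Theta_n^{\mathrm{mot}}$ and uses $\RealB(\Theta_n^{\mathrm{mot}})=\Theta_{n,B}$ from Proposition~\ref{prop:betti-realization-motivic-n-fold-class}. Several parts of your write-up are fine:
\begin{itemize}
\item the composite convention;
\item the derived extension in step~1 (with $F$ right exact and $G$ left exact, kernels and cokernels in $\mathcal C(F,G;T)$ are componentwise, so $\operatorname{Real}_{B,\mathcal C}$ is exact);
\item steps~3--5, provided step~2 holds.
\end{itemize}
You are also right that the Theorem only gives isomorphisms on the two coefficient objects. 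Something must therefore be said about the morphism $\Theta_n^{\mathrm{mot}}$, and the paper's proof does not address this.

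The gap is step~2, the point you flagged yourself. Definition~\ref{def:betti-realization-compatible-motivic-mv-data} only supplies $\RealB\circ F_{\mathrm{mot}}\simeq F_B\circ\RealB$, $\RealB\circ G_{\mathrm{mot}}\simeq G_B\circ\RealB$ and $\RealB(T_{\mathrm{mot}})=T_B$. That is what makes $\operatorname{Real}_{B,\mathcal C}$ well defined (Proposition~\ref{prop:betti-realization-motivic-mv-zigzags}). It says nothing about a zig-zag functor $Z_{\mathrm{mot}}$, and the paper never constructs one: Remark~\ref{rem:what-is-lifted-and-deferred-mv} defers both $\mathsf P_{\mathrm{mot}}(X)$ and $Z_{\mathrm{mot}}$.

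The components of a tuple $Z(M)$ are produced from $M$ by the recollement functors ($j^*$, $i^*$, $i^!$, $j_*$, \dots), not by $F$ and $G$. So an isomorphism $\operatorname{Real}_{B,\mathcal C}\circ Z_{\mathrm{mot}}\Rightarrow Z_B\circ\RealB$ cannot be ``built componentwise'' from that definition.

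Moreover, to form $Z_{\mathrm{mot}}(\Theta_n^{\mathrm{mot}})$ at all, you need $Z_{\mathrm{mot}}$ as a triangulated functor into $D^b(\mathcal C_{\mathrm{mot}})$. Its source must be the motivic category where $\Theta_n^{\mathrm{mot}}\colon\mathbf 1_{X_n}\to\mathbf 1_{X_n}(n)/2[2n-1]$ lives. That presupposes something like a derived motivic MacPherson--Vilonen equivalence, which is exactly what the paper defers.

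Your argument closes only once you add an explicit hypothesis: a triangulated $Z_{\mathrm{mot}}$ together with a natural isomorphism $\operatorname{Real}_{B,\mathcal C}\circ Z_{\mathrm{mot}}\simeq Z_B\circ\RealB$. This could come, for instance, from six-functor compatibility of Betti realization with the recollement functors. The paper's Theorem and Corollary silently rely on the same input. Your proposal makes that dependence explicit, but as written step~2 is assumed rather than proved.
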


\begin{proof}
Apply Theorem~\ref{thm:betti-realization-motivic-zigzag-bockstein} to the
motivic finite-coefficient class
\[
   \Theta_n^{\mathrm{mot}}
   \in
   H^{2n-1}_{\mathrm{mot}}(X_n,\mathbf 1_{X_n}(n)/2).
\]
Betti realization sends \(\Theta_n^{\mathrm{mot}}\) to
\[
   \Theta_{n,B}.
\]
The theorem identifies the realized motivic zig-zag Bockstein with the
classical MacPherson--Vilonen zig-zag Bockstein.
\end{proof}

\begin{remark}[What is lifted and what is deferred]
\label{rem:what-is-lifted-and-deferred-mv}
The straightforward motivic lift consists of the formal zig-zag category
\[
   \mathcal C(F_{\mathrm{mot}},G_{\mathrm{mot}};T_{\mathrm{mot}}),
\]
the motivic coefficient triangle, and the induced Bockstein boundary inside
that zig-zag category.  Betti realization carries these objects to the
classical MacPherson--Vilonen zig-zag category and its Bockstein boundary.

What is deferred to future work is the full integral motivic analogue of the
MacPherson--Vilonen theorem: namely, the construction of a motivic perverse
heart \(\mathsf P_{\mathrm{mot}}(X)\), a motivic zig-zag functor
\[
   Z_{\mathrm{mot}}:\mathsf P_{\mathrm{mot}}(X)
   \longrightarrow
   \mathcal C(F_{\mathrm{mot}},G_{\mathrm{mot}};T_{\mathrm{mot}}),
\]
and an equivalence theorem parallel to the classical result of
MacPherson--Vilonen \cite{MacPhersonVilonen1986}.  The present paper needs
only the formal zig-zag lift and its Betti-realization compatibility.
\end{remark}

\subsection{Motivic \(n\)-fold hierarchy principle}
\label{subsec:motivic-hierarchy-principle}

We now record the motivic hierarchy principle in its \(n\)-fold form.

\begin{principle}[Motivic \(n\)-fold Bockstein hierarchy]
\label{prin:motivic-n-fold-bockstein-hierarchy}
Let \(X\) be a smooth complex variety and suppose that finite-coefficient
motivic classes
\[
   \theta_i^{\mathrm{mot}}
   \in
   H^{r_i}_{\mathrm{mot}}(X,\mathbf 1_X(p_i)/2),
   \qquad 1\leq i\leq n,
\]
are given, with
\[
   \sum_{i=1}^n r_i=2p-1,
   \qquad
   \sum_{i=1}^n p_i=p.
\]
Form the motivic cup product
\[
   \Theta^{\mathrm{mot}}
   =
   \theta_1^{\mathrm{mot}}\cup\cdots\cup\theta_n^{\mathrm{mot}}
   \in
   H^{2p-1}_{\mathrm{mot}}(X,\mathbf 1_X(p)/2).
\]
The motivic coefficient triangle
\[
   \mathbf 1_X(p)
   \xrightarrow{\times 2}
   \mathbf 1_X(p)
   \longrightarrow
   \mathbf 1_X(p)/2
   \overset{+1}{\longrightarrow}
\]
gives a motivic Bockstein class
\[
   \delta^{\mathrm{mot}}(\Theta^{\mathrm{mot}})
   \in
   H^{2p}_{\mathrm{mot}}(X,\mathbf 1_X(p)).
\]
Under Betti realization, this becomes
\[
   \delta_B(\Theta_B)
   \in
   H^{2p}_B(X,\mathbb Z(p)).
\]
The finite source class also has a finite \'etale realization
\[
   \Theta_{\acute et}
   \in
   H^{2p-1}_{\acute et}(X,\mathbb Z/2(p)),
\]
equivalently with coefficients \(\mu_2^{\otimes p}\).
\end{principle}

\begin{remark}[What the motivic hierarchy does and does not prove]
\label{rem:motivic-hierarchy-scope}
The motivic hierarchy constructs the finite-coefficient classes and their
Bockstein images in a Betti-realization-compatible way, together with their
finite \'etale source realizations.  It does not by itself prove
non-algebraicity.  In this paper, non-algebraicity is detected after
realization by either the Diaz/Colliot-Th\'el\`ene--Voisin unramified
criterion in degree four or by the MV obstruction-channel criterion in the
higher cup-product cases.  Thus the motivic hierarchy supplies the common
source of the classes, while the realized obstruction theory supplies the
algebraicity test.
\end{remark}

\subsection{Operational filtration for the \(n\)-fold family}
\label{subsec:operational-filtration-n-fold-family}

The \(n\)-fold construction has a natural operational filtration.  Its
stations are not asserted to be weight-graded pieces.  They are the maps,
sources, and tests that produce the integral torsion obstruction.

For the \(n\)-fold family, the stations are:
\[
   \text{double-cover source},
\]
\[
   \text{Brauer sources},
\]
\[
   \text{finite-coefficient \(n\)-fold cup product},
\]
\[
   \text{MV survivability},
\]
\[
   \text{Bockstein image}.
\]
In formulas:
\[
   \alpha_1,\beta_2,\ldots,\beta_n
   \quad\leadsto\quad
   \Theta_{n,B}
   =
   \pi_1^*\alpha_1
   \cup
   \pi_2^*\beta_2
   \cup
   \cdots
   \cup
   \pi_n^*\beta_n,
\]
\[
   \Theta_{n,B}
   \quad\leadsto\quad
   \rho_{\mathrm{MV}}(\delta_B(\Theta_{n,B}))
   =
   \partial_{\mathrm{MV}}(\delta_{\mathrm{MV}}(\Theta_{n,B})),
\]
and finally
\[
   \Theta_{n,B}
   \quad\leadsto\quad
   \Delta_{n,B}=\delta_B(\Theta_{n,B}).
\]

Motivically, this becomes
\[
   \alpha_1^{\mathrm{mot}},
   \beta_2^{\mathrm{mot}},\ldots,\beta_n^{\mathrm{mot}}
   \quad\leadsto\quad
   \Theta_n^{\mathrm{mot}}
   \quad\leadsto\quad
   \Delta_n^{\mathrm{mot}},
\]
with
\[
   \RealB(\Theta_n^{\mathrm{mot}})=\Theta_{n,B},
   \qquad
   \RealB(\Delta_n^{\mathrm{mot}})=\Delta_{n,B}.
\]

\begin{remark}[Operational versus weight/perverse/Nori filtrations]
\label{rem:operational-versus-weight-perverse-nori}
The filtration used here is operational.  It is organized by the construction:
source packages, cup product, survivability, and Bockstein.  We do not claim
that these stations are the graded pieces of a weight filtration, a perverse
filtration, or a Nori filtration.

The realization-compatible frameworks of Tubach and Ruimy--Tubach make it
natural to ask whether this operational filtration is reflected by canonical
filtrations after Nori or mixed-Hodge-module realization.  For example, one may
study
\[
   \RealNori(\Theta_n^{\mathrm{mot}}),
   \qquad
   \RealNori(\Delta_n^{\mathrm{mot}}),
\]
or
\[
   \RealMHM(\Theta_n^{\mathrm{mot}}),
   \qquad
   \RealMHM(\Delta_n^{\mathrm{mot}}),
\]
and ask whether the double-cover source, Brauer sources, cup product, MV
survivability, and Bockstein endpoint are visible in the realized filtration
structure.  This question is not needed for the proof of the counterexamples,
but it is a natural motivic refinement of the hierarchy.
\end{remark}

\subsection{Role of Tubach and Ruimy--Tubach}
\label{subsec:role-tubach-ruimy-tubach}

The role of the motivic lift is to provide a common home for the finite
coefficient, Betti, \'etale, Hodge, Nori, and MV-gluing stations of the
construction.  Tubach constructs realization functors
\[
   DM^{\acute et}_c(X)
   \longrightarrow
   D^b(M_{\mathrm{perv}}(X))
   \longrightarrow
   D^b(\operatorname{MHM}(X))
\]
compatible with the six operations
\cite{Tubach2025NoriHodgeRealizations}.  This supplies a functorial bridge
between Voevodsky motives, perverse Nori motives, and mixed Hodge modules.

For the present paper, the integral and finite-coefficient aspects are
essential.  Ruimy--Tubach construct integral Nori motivic sheaves and integral
mixed Hodge modules with integral coefficient structures suitable for torsion
phenomena
\cite{RuimyTubach2026IntegralNori}.  This is the kind of ambient technology
needed for finite-coefficient objects such as
\[
   \mathbf 1_X/2,
   \qquad
   \mathbf 1_X(r)/2,
   \qquad
   i_*(\mathbf 1_\Sigma/2)[s],
\]
and for comparing their Betti, \'etale, Nori, and mixed-Hodge-module
realizations.

In this paper, we do not require the full motivic formalism as a proof input.
The proof-level construction remains in finite-coefficient cohomology,
cup products, Bockstein maps, and MV obstruction channels.  The motivic lift
is recorded because it explains why the same torsion package appears
simultaneously in several realizations:
\[
   \text{finite Betti coefficients},
   \qquad
   \text{finite \'etale coefficients},
   \qquad
   \text{Brauer groups},
\]
\[
   \text{Bockstein cohomology},
   \qquad
   \text{MV gluing},
   \qquad
   \text{integral Hodge torsion}.
\]

\subsection{Motivic closure of the counterexample family}
\label{subsec:motivic-closure-counterexample-family}

We conclude by summarizing the motivic closure of the construction.

\begin{theorem}[Motivic lift of the \(n\)-fold Bockstein counterexample family]
\label{thm:motivic-lift-n-fold-family}
Let \(X_n=S_1\times\cdots\times S_n\) be as in
Theorem~\ref{thm:n-fold-cup-product-bockstein-family}.  Suppose the source
classes admit motivic lifts
\[
   \alpha_1^{\mathrm{mot}},
   \qquad
   \beta_2^{\mathrm{mot}},\ldots,\beta_n^{\mathrm{mot}}.
\]
Then the \(n\)-fold finite-coefficient class
\[
   \Theta_{n,B}
   =
   \pi_1^*\alpha_1
   \cup
   \pi_2^*\beta_2
   \cup
   \cdots
   \cup
   \pi_n^*\beta_n
\]
and its Betti Bockstein
\[
   \Delta_{n,B}=\delta_B(\Theta_{n,B})
\]
are Betti realizations of motivic classes
\[
   \Theta_n^{\mathrm{mot}}
   \in
   H^{2n-1}_{\mathrm{mot}}(X_n,\mathbf 1_{X_n}(n)/2)
\]
and
\[
   \Delta_n^{\mathrm{mot}}
   \in
   H^{2n}_{\mathrm{mot}}(X_n,\mathbf 1_{X_n}(n)).
\]
More precisely,
\[
   \RealB(\Theta_n^{\mathrm{mot}})=\Theta_{n,B},
   \qquad
   \RealB(\Delta_n^{\mathrm{mot}})=\Delta_{n,B}.
\]
The finite source class also has \'etale realization
\[
   \Realet(\Theta_n^{\mathrm{mot}})=\Theta_{n,\acute et}.
\]
\end{theorem}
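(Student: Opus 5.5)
The plan is to assemble the theorem from the realization results already proved in this section; no new geometric input is needed. First, take the assumed motivic lifts \(\alpha_1^{\mathrm{mot}}\in H^1_{\mathrm{mot}}(S_1,\mathbf 1_{S_1}(1)/2)\) and \(\beta_i^{\mathrm{mot}}\in H^2_{\mathrm{mot}}(S_i,\mathbf 1_{S_i}(1)/2)\), as in Definition~\ref{def:motivic-source-packages}. Form \(\Theta_n^{\mathrm{mot}}\) by Definition~\ref{def:motivic-n-fold-cup-product-class}. Proposition~\ref{prop:degree-twist-motivic-n-fold-class} places it in \(H^{2n-1}_{\mathrm{mot}}(X_n,\mathbf 1_{X_n}(n)/2)\), which settles the first membership claim.

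Second, define \(\Delta_n^{\mathrm{mot}}=\delta^{\mathrm{mot}}(\Theta_n^{\mathrm{mot}})\) using the connecting morphism of the coefficient triangle. That triangle is Proposition~\ref{prop:motivic-coefficient-triangle} with \(X=X_n\) and \(r=n\), and the construction is Definition~\ref{def:motivic-n-fold-bockstein-class}. Shifting degrees by one then places \(\Delta_n^{\mathrm{mot}}\) in \(H^{2n}_{\mathrm{mot}}(X_n,\mathbf 1_{X_n}(n))\).

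Third, apply the realization statements in turn:
\begin{enumerate}[label=\textup{(\alph*)}]
\item Proposition~\ref{prop:betti-realization-motivic-n-fold-class} gives \(\RealB(\Theta_n^{\mathrm{mot}})=\Theta_{n,B}\). It uses compatibility of \(\RealB\) with pullback along the projections \(\pi_i\) and with cup product.
\item Proposition~\ref{prop:betti-realization-motivic-n-fold-bockstein} gives \(\RealB(\Delta_n^{\mathrm{mot}})=\delta_B(\Theta_{n,B})=\Delta_{n,B}\). It relies on Proposition~\ref{prop:betti-realization-motivic-bockstein}, which identifies the realized connecting morphism with the Betti Bockstein.
\item Proposition~\ref{prop:etale-realization-motivic-n-fold-class} gives \(\Realet(\Theta_n^{\mathrm{mot}})=\Theta_{n,\acute et}\). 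It uses Proposition~\ref{prop:etale-finite-realization-motivic-coefficient-triangle} to identify the finite coefficient object with \(\mathbb Z/2(n)\simeq\mu_2^{\otimes n}\).
\end{enumerate}

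The one point needing care, and the main obstacle, is (b): whether realization commutes with the connecting morphism, and not merely with the objects of the triangle. I would handle this by naturality of connecting maps under an exact (triangulated) functor. The motivic class \(\Theta_n^{\mathrm{mot}}\) is a morphism \(\mathbf 1_{X_n}\to\mathbf 1_{X_n}(n)/2[2n-1]\). Its Bockstein is the composite of this morphism with the boundary \(\mathbf 1_{X_n}(n)/2\to\mathbf 1_{X_n}(n)[1]\). \(\RealB\) sends composites to composites and sends the distinguished triangle to the Betti coefficient triangle, including its third morphism up to the standard sign convention. The realized composite is therefore \(\delta_B\circ\Theta_{n,B}\).

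A possible sign discrepancy in the identification of the realized boundary is harmless. The coefficients are \(2\)-torsion, so \(\pm\Delta_{n,B}\) coincide. With this settled, the three displayed equalities and the two membership statements together give the theorem.
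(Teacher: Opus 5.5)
Your proposal is correct and follows the paper's proof: the paper also just chains Definition~\ref{def:motivic-n-fold-cup-product-class}, Proposition~\ref{prop:degree-twist-motivic-n-fold-class}, Propositions~\ref{prop:betti-realization-motivic-n-fold-class} and~\ref{prop:etale-realization-motivic-n-fold-class}, Definition~\ref{def:motivic-n-fold-bockstein-class} and Proposition~\ref{prop:betti-realization-motivic-n-fold-bockstein}. The naturality of the connecting morphism that you spell out is handled inside Proposition~\ref{prop:betti-realization-motivic-bockstein}. One small wording point: the sign is harmless not because the coefficients of \(\Delta_{n,B}\) are \(2\)-torsion (they are \(\mathbb Z(n)\)), but because \(\Delta_{n,B}\) lies in the image of the Bockstein for \(\times 2\), so \(2\Delta_{n,B}=0\); equivalently, \(-\Theta_{n,B}=\Theta_{n,B}\) in \(\mathbb Z/2\)-cohomology.
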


\begin{proof}
The motivic \(n\)-fold class is constructed in
Definition~\ref{def:motivic-n-fold-cup-product-class}.  Its degree and twist
are computed in
Proposition~\ref{prop:degree-twist-motivic-n-fold-class}.  Its Betti
realization is computed in
Proposition~\ref{prop:betti-realization-motivic-n-fold-class}, and its finite
\'etale realization is computed in
Proposition~\ref{prop:etale-realization-motivic-n-fold-class}.  The motivic
Bockstein class is constructed in
Definition~\ref{def:motivic-n-fold-bockstein-class}.  Its Betti realization
is computed in
Proposition~\ref{prop:betti-realization-motivic-n-fold-bockstein}.  Together
these results give the claimed identities.
\end{proof}

\begin{remark}[Motivic closure versus motivic proof]
\label{rem:motivic-closure-versus-motivic-proof}
Theorem~\ref{thm:motivic-lift-n-fold-family} gives motivic closure of the
construction: the classes used in the counterexample family have motivic
finite-coefficient representatives and motivic Bockstein ancestors.  It does
not claim that non-algebraicity is proven purely inside the motivic category.
Non-algebraicity is still detected after realization: for Diaz, by the
Colliot-Th\'el\`ene--Voisin/Diaz criterion, and for the higher cup-product
family, by the MV obstruction-channel criterion applied to the Betti
realization.
\end{remark}
\section{Kummer fixed points as a separate calibration}

Although Diaz's construction is Enriques-product rather than Kummer
fixed-point, the Kummer fixed-point calculation remains important for the
larger program.  It supplies the simplest test case for comparing an ordinary
singular \(E\)-package on a coarse quotient with the corresponding stacky
stabilizer package on a quotient stack.  Thus the Kummer calculation is not
the main Diaz mechanism, but it is a necessary calibration for the future
stacky \(E_G\)-theory.

\subsection{Why Kummer still matters}

The Diaz obstruction is built from the finite-coefficient class
\[
        \gamma
        =
        \pi_S^*\alpha\cup\pi_{S_2}^*\beta
        \in
        H^3(S\times S_2,\mathbb Z/2(2)),
\]
where \(\alpha\) is an Enriques double-cover class and \(\beta\) is a
Brauer-detecting class on another Enriques surface
\cite[Section 4]{Diaz2023IHCTrivialChow}.  This is not a fixed-point
quotient construction.

Nevertheless, the Kummer fixed-point model remains essential because it
tests the local compatibility principle
\[
        \text{coarse quotient singularity}
        \qquad\longleftrightarrow\qquad
        \text{stacky stabilizer}.
\]
The basic local model is the involution
\[
        \mu_2\curvearrowright \mathbb C^2,
        \qquad
        -1\cdot(x,y)=(-x,-y).
\]
The coarse quotient
\[
        \mathbb C^2/\mu_2
\]
is the \(A_1\) surface singularity, while the quotient stack
\[
        [\mathbb C^2/\mu_2]
\]
retains the stabilizer \(\mu_2\) at the fixed point.

Thus Kummer fixed points calibrate the passage from singular geometry to
stacky geometry.  This calibration will be needed when the torsion-trajectory
framework is extended from ordinary singular packages \(E\) to stacky or
equivariant packages \(E_G\).

\subsection{Local Kummer compatibility}

The local Kummer theorem states that the ordinary singular package of the
coarse quotient and the stacky stabilizer package agree:
\[
        E_{A_1}^{\mathrm{sing}}
        \cong
        H^2(B\mu_2,\mathbb Z)
        \cong
        \mathbb Z/2.
\]
On the singular side, the link of the \(A_1\) singularity is
\[
        S^3/\mu_2\cong \mathbb RP^3,
\]
so
\[
        H^2(\mathbb RP^3,\mathbb Z)_{\mathrm{tors}}
        \cong
        \mathbb Z/2.
\]
This agrees with the local surface package
\[
        E_{A_1}\cong H^2(\mathbb RP^3,\mathbb Z)_{\mathrm{tors}}.
\]
The same group is recovered from the exceptional lattice of the minimal
resolution:
\[
        \Lambda=[-2],
        \qquad
        \Lambda^\vee/\Lambda\cong\mathbb Z/2.
\]
The compatibility of the local package \(E\) with link torsion and the
exceptional-lattice discriminant group is the local realization theorem from
the integral local-discriminant framework \cite{RahmanIntegralPerverseObstructions}.

On the stacky side, the stabilizer is \(\mu_2\), and
\[
        B\mu_2\simeq \mathbb RP^\infty.
\]
Hence
\[
        H^2(B\mu_2,\mathbb Z)\cong\mathbb Z/2.
\]
The classifying map of the principal double cover
\[
        S^3\longrightarrow \mathbb RP^3
\]
is
\[
        \mathbb RP^3\longrightarrow B\mu_2.
\]
Since \(\mathbb RP^3\) is the \(3\)-skeleton of \(\mathbb RP^\infty\), this
map induces an isomorphism in degree \(2\):
\[
        H^2(B\mu_2,\mathbb Z)
        \xrightarrow{\sim}
        H^2(\mathbb RP^3,\mathbb Z).
\]
Therefore
\[
        E_{A_1}^{\mathrm{sing}}
        \cong
        H^2(B\mu_2,\mathbb Z)
        \cong
        \mathbb Z/2.
\]

This proves that the singular \(E\)-package and the stacky stabilizer package
agree in the basic involution fixed-point model.  The Kummer fixed point is
therefore the first local test of the expected equality
\[
        E_{\mathrm{sing}}
        =
        E_{\mathrm{stack}}
\]
for finite quotient singularities.
The monodromy realization from the earlier local-discriminant paper plays a
different role.  There, for isolated hypersurface surface singularities, the
local package \(E\) may be realized as
\[
        \operatorname{coker}(T-\mathrm{id})_{\mathrm{tors}},
\]
where \(T\) is the Milnor monodromy.  The Diaz construction is not of this
type: it is a smooth Enriques-product construction governed by a
finite-coefficient cup product, unramified survival, and a Bockstein.  Thus
the active realizations in the Diaz mechanism are the smooth
finite-coefficient, Brauer, unramified, and Bockstein stations.  Kummer
remains relevant only as a separate calibration of the singular/stacky
interface
\[
        E_{A_1}^{\mathrm{sing}}
        \cong
        H^2(B\mu_2,\mathbb Z).
\]
\subsection{Global Kummer lattice calibration}

This subsection is included only as a calibration for the future stacky \(E_G\)-theory; it is not used in the proof of Diaz's obstruction. The global Kummer surface provides the corresponding global calibration. Let \(A\) be an abelian surface, and let
\[
        \iota:A\to A,
        \qquad
        x\mapsto -x
\]
be the involution.  The fixed locus is
\[
        A[2],
\]
which consists of \(16\) points.  The quotient
\[
        X=A/(\pm1)
\]
has \(16\) \(A_1\) singularities.  If
\[
        \pi:\widetilde X\to X
\]
is the minimal resolution, then
\[
        \widetilde X=\operatorname{Kum}(A)
\]
is the associated Kummer K3 surface.

Each fixed point contributes a local package
\[
        E_a\cong\mathbb Z/2.
\]
Thus the direct sum of the local packages is
\[
        \bigoplus_{a\in A[2]}E_a
        \cong
        (\mathbb Z/2)^{16}.
\]
Let
\[
        L_0:=\bigoplus_{a\in A[2]}\mathbb Z[E_a]
\]
be the lattice generated by the \(16\) exceptional curves.  These curves are
pairwise disjoint and each has self-intersection \(-2\), so
\[
        L_0\cong A_1(-1)^{\oplus 16}.
\]
Its discriminant group is
\[
        A_{L_0}:=L_0^\vee/L_0\cong(\mathbb Z/2)^{16}.
\]
The local packages therefore assemble as
\[
        \bigoplus_{a\in A[2]}E_a
        \cong
        A_{L_0}.
\]

The lattice \(L_0\) is not primitive in the K3 lattice
\[
        H^2(\widetilde X,\mathbb Z).
\]
Its primitive closure is the Kummer lattice
\[
        K=L_0^{\mathrm{prim}}.
\]
The overlattice
\[
        L_0\subset K
\]
is controlled by an isotropic subgroup
\[
        H\subset A_{L_0}.
\]
For a Kummer surface, this subgroup is the classical Kummer-code subgroup,
with
\[
        \dim_{\mathbb F_2}H=5.
\]
Consequently,
\[
        A_K\cong H^\perp/H.
\]
Equivalently, since
\[
        |A_{L_0}|=2^{16}
        \qquad
        \text{and}
        \qquad
        |H|=2^5,
\]
one obtains
\[
        |A_K|
        =
        \frac{|A_{L_0}|}{|H|^2}
        =
        \frac{2^{16}}{2^{10}}
        =
        2^6.
\]
This is the classical Kummer lattice gluing computation, expressed in the
language of local packages and discriminant groups.

The point is that the local classes do not survive globally as independent
torsion classes in \(H^2(\widetilde X,\mathbb Z)\), which is torsion-free.
Instead, they assemble into the discriminant group of the exceptional
configuration, and the global primitive closure kills the Kummer-code
isotropic subgroup.  Thus the torsion-trajectory framework recovers the
classical Kummer lattice gluing:
\[
        (\mathbb Z/2)^{16}
        \cong
        A_{L_0}
        \quad\leadsto\quad
        A_K\cong H^\perp/H.
\]

This is the global calibration corresponding to the local Kummer
compatibility theorem.  Locally, the singular \(E\)-package equals the stacky
stabilizer package.  Globally, the \(16\) local packages assemble exactly as
the discriminant data of the exceptional lattice, and the Kummer lattice
records the global gluing relations.

\section{Main theorem and mechanism statement}
\label{sec:main-theorem-mechanism}

This section records the main mechanism theorem of the paper.  Diaz's
construction remains the level-two case, but the preceding sections show that
the same finite-coefficient cup-product Bockstein mechanism extends to an
\(n\)-fold tower.  The MacPherson--Vilonen formalism supplies the
survivability mechanism: the Bockstein class has a nonzero image in a
distinguished Enriques--Brauer component of the MV obstruction channel.  The
comparison with algebraic cycle classes is made through the
Brauer-separation hypothesis of
Definition~\ref{def:brauer-separation-hypothesis}.

\subsection{The \(n\)-fold Brauer-separated Bockstein theorem}
\label{subsec:main-n-fold-theorem}

\begin{theorem}[Main \(n\)-fold Brauer-separated Bockstein theorem]
\label{thm:main-n-fold-cup-product-bockstein}
Let \(n\geq 2\), and let
\[
        X_n=S_1\times\cdots\times S_n
\]
be a product of Enriques surfaces.  Let
\[
        \alpha_1\in H^1(S_1,\mathbb Z/2(1))
\]
be the K3 double-cover class of \(S_1\).  For \(2\leq i\leq n\), let
\[
        \beta_i\in H^2(S_i,\mathbb Z/2(1))
\]
be classes whose images in
\[
        \operatorname{Br}(S_i)[2]
\]
are nonzero.  Define
\[
        \Theta_n
        :=
        \pi_1^*\alpha_1
        \cup
        \pi_2^*\beta_2
        \cup
        \cdots
        \cup
        \pi_n^*\beta_n.
\]
Then
\[
        \Theta_n\in H^{2n-1}(X_n,\mathbb Z/2(n)).
\]
Let
\[
        \Delta_n
        :=
        \delta(\Theta_n)
        \in H^{2n}(X_n,\mathbb Z(n))
\]
be the Bockstein class associated to
\[
        0\to\mathbb Z(n)
        \xrightarrow{\times 2}
        \mathbb Z(n)
        \to
        \mathbb Z/2(n)
        \to0.
\]
Assume that the \(n\)-fold Enriques--Brauer datum
\[
        (X_n,\Theta_n)
\]
satisfies the Brauer-separation hypothesis of
Definition~\ref{def:brauer-separation-hypothesis}.  Then
\[
        \Delta_n\in H^{2n}(X_n,\mathbb Z(n))
\]
is a non-algebraic \(2\)-torsion integral Hodge class.  Consequently
\(X_n\) violates the integral Hodge conjecture in codimension \(n\).
\end{theorem}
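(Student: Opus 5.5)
The plan is to assemble results already established in Sections~\ref{sec:mv-bockstein-survivability}--\ref{sec:n-fold-cup-product-bockstein-family}; the theorem is essentially a restatement of Theorem~\ref{thm:n-fold-cup-product-bockstein-family}, so the proof reduces to bookkeeping plus one invocation of the Brauer-separated detection theorem. First, Proposition~\ref{prop:n-fold-degree-twist} places
\[
   \Theta_n\in H^{2n-1}(X_n,\mathbb Z/2(n)),
   \qquad
   \Delta_n=\delta(\Theta_n)\in H^{2n}(X_n,\mathbb Z(n)),
\]
by adding degrees \(1+2(n-1)\) and twists \(1+(n-1)\). Second, since \(\Delta_n\) lies in the image of the connecting map of the multiplication-by-\(2\) sequence, exactness gives \(2\Delta_n=0\). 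Its image in \(H^{2n}(X_n,\mathbb C)\) therefore vanishes, so \(\Delta_n\) is an integral Hodge class of type \((n,n)\).

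For non-algebraicity I would proceed in three steps. First, apply the tuple-relative identity of Proposition~\ref{prop:n-fold-tuple-relative-mv-bockstein-identity}, which gives \(\rho_{\mathrm{MV},\Theta_n}(\Delta_n)=\partial_{\mathrm{MV}}(\mathcal Z(\Theta_n))\). Second, invoke Proposition~\ref{prop:n-fold-nonzero-enriques-brauer-component}; it rests on the K\"unneth decomposition of Corollary~\ref{cor:n-fold-mv-tuple-decomposition} and the Leibniz rule of Theorem~\ref{thm:leibniz-mv-boundary}. This yields
\[
   \Pi_{\operatorname{Br},n}\bigl(\rho_{\mathrm{MV},\Theta_n}(\Delta_n)\bigr)
   =\alpha_1\otimes q_{\operatorname{Br},2}(\beta_2)\otimes\cdots\otimes q_{\operatorname{Br},n}(\beta_n)\neq0 .
\]
Third, suppose \(\Delta_n=\operatorname{cl}_{\mathbb Z}(\Gamma)\) for some \(\Gamma\in CH^n(X_n)\). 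The Brauer-separation hypothesis forces the left side to vanish, which is a contradiction. This is exactly Theorem~\ref{thm:brauer-separated-mv-detection}, and Corollary~\ref{cor:brauer-separated-integral-hodge-obstruction} packages the conclusion. Failure of the integral Hodge conjecture in codimension \(n\) then follows by definition.

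The main obstacle is not in this assembly but in the inputs it depends on. The delicate step is the nonvanishing in Proposition~\ref{prop:nonzero-brauer-image-constructed-bockstein}. There one must check that the Leibniz summands with the boundary on a \(\beta_k\)-factor really have zero projection to \(Q_n\), and that \(\Pi_{\operatorname{Br},n}\) is well defined on the tuple-relative channel. This step also carries a tacit claim that \(\Delta_n\neq0\) integrally, since a nonzero projection forces it. I would double-check this against a sanity case: if \(\Theta_n\) lifted to integral cohomology, \(\Delta_n\) would vanish, so the MV computation must be consistent with the non-liftability of \(\alpha_1\), whose Bockstein is \(c_1(K_{S_1})\neq0\). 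Beyond that, the genuinely open input is the Brauer-separation hypothesis itself. The theorem assumes it, and it is verified only for decomposable cycles (Lemma~\ref{lem:brauer-separation-decomposable-cycles}), with the \(H^1(S_1,\mathbb Z/2(1))\) direction left as the residual algebraic-control problem of Remark~\ref{rem:remaining-h1-algebraic-control-problem}.
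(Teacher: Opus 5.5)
Your proposal is correct relative to the paper's established results and follows its proof essentially step for step. The steps are: degree/twist bookkeeping via Proposition~\ref{prop:n-fold-degree-twist}; $2$-torsion, hence Hodge, from the Bockstein; the nonzero $Q_n$-component from Proposition~\ref{prop:nonzero-brauer-image-constructed-bockstein}; and the contradiction with the Brauer-separation hypothesis via Theorem~\ref{thm:brauer-separated-mv-detection}. The points you flag as delicate (vanishing of the Leibniz summands with the boundary on a $\beta_k$-factor, and well-definedness of $\Pi_{\operatorname{Br},n}$ on the tuple-relative channel) lie in those cited inputs rather than in this assembly, and the paper's own proof of this statement simply invokes them as well.
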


\begin{proof}
This is Theorem~\ref{thm:n-fold-cup-product-bockstein-family}.  The degree
and twist calculation gives
\[
        \Theta_n\in H^{2n-1}(X_n,\mathbb Z/2(n)).
\]
The coefficient sequence gives the Bockstein
\[
        \delta:
        H^{2n-1}(X_n,\mathbb Z/2(n))
        \longrightarrow
        H^{2n}(X_n,\mathbb Z(n)),
\]
so
\[
        \Delta_n=\delta(\Theta_n)\in H^{2n}(X_n,\mathbb Z(n)).
\]
Since \(\Delta_n\) lies in the image of the connecting morphism associated to
multiplication by \(2\), it is \(2\)-torsion.

By Proposition~\ref{prop:nonzero-brauer-image-constructed-bockstein}, the
Brauer-separating projection detects a nonzero Enriques--Brauer component:
\[
        \Pi_{\operatorname{Br},n}
        \left(
        \rho_{\mathrm{MV},\Theta_n}(\Delta_n)
        \right)
        =
        \alpha_1
        \otimes
        q_{\operatorname{Br},2}(\beta_2)
        \otimes
        \cdots
        \otimes
        q_{\operatorname{Br},n}(\beta_n)
        \neq 0.
\]
By the Brauer-separation hypothesis,
\[
        \Pi_{\operatorname{Br},n}
        \left(
        \rho_{\mathrm{MV},\Theta_n}
        \bigl(
        \operatorname{cl}_{\mathbb Z}(CH^n(X_n))
        \bigr)
        \right)=0.
\]
Therefore Theorem~\ref{thm:brauer-separated-mv-detection} implies that
\(\Delta_n\) is not algebraic.

Since \(\Delta_n\) is torsion, its image in complex cohomology is zero.  Thus
\(\Delta_n\) is an integral Hodge class of type \((n,n)\).  Hence
\[
        \Delta_n
\]
is a non-algebraic \(2\)-torsion integral Hodge class.
\end{proof}

\begin{remark}[Role of the Brauer-separation hypothesis]
\label{rem:role-of-brauer-separation-main}
The Brauer-separation hypothesis is the only algebraic-control input in the
main theorem.  The MV/K\"unneth/Bockstein formalism proves that the constructed
class \(\Delta_n\) has nonzero image in the Enriques--Brauer component
\[
        Q_n
        =
        \langle\alpha_1\rangle
        \otimes
        \operatorname{Br}(S_2)[2]
        \otimes
        \cdots
        \otimes
        \operatorname{Br}(S_n)[2].
\]
The Brauer-separation hypothesis asserts that algebraic codimension-\(n\)
cycle classes have zero image in this same component.  For decomposable
algebraic cycles this vanishing follows from
Lemma~\ref{lem:brauer-separation-decomposable-cycles}; the remaining issue is
the possible contribution of non-decomposable correspondence-type cycles.
Thus the hypothesis isolates the precise algebraic-control condition needed
for the \(n\)-fold theorem.
\end{remark}

\subsection{Diaz as the level-two member}
\label{subsec:diaz-level-two-main}

For \(n=2\), the construction gives
\[
        X_2=S_1\times S_2
\]
and
\[
        \Theta_2
        =
        \pi_1^*\alpha_1\cup\pi_2^*\beta_2
        \in H^3(X_2,\mathbb Z/2(2)).
\]
The associated Bockstein is
\[
        \Delta_2
        =
        \delta(\Theta_2)
        \in H^4(X_2,\mathbb Z(2)).
\]
This is the class appearing in Diaz's Enriques-product construction.

Diaz proves survivability of \(\Theta_2\) by the coniveau/function-field
method.  Namely, after restricting to
\[
        E\times S_2,
\]
Diaz proves that the corresponding restricted class has nonzero image in
function-field cohomology.  By the Colliot-Thélène--Voisin/Diaz criterion,
the Bockstein \(\Delta_2\) is then a non-algebraic \(2\)-torsion integral
Hodge class.

Thus Diaz is the level-two member of the tower:
\[
        \Delta_2
        =
        \delta\left(
        \pi_1^*\alpha_1\cup\pi_2^*\beta_2
        \right).
\]

\begin{remark}[Two survivability routes]
\label{rem:two-survivability-routes}
For \(n=2\), survivability is proved by Diaz's unramified-cohomology
argument.  For the higher cup-product tower, the present paper uses the
MacPherson--Vilonen obstruction-channel route.  Both routes feed into the same
Bockstein endpoint:
\[
        \Theta_n
        \quad\leadsto\quad
        \Delta_n=\delta(\Theta_n).
\]
\end{remark}

\subsection{The level-three obstruction}
\label{subsec:level-three-main}

The first higher member beyond Diaz is obtained when \(n=3\).  In that case
\[
        X_3=S_1\times S_2\times S_3
\]
and
\[
        \Theta_3
        =
        \pi_1^*\alpha_1
        \cup
        \pi_2^*\beta_2
        \cup
        \pi_3^*\beta_3
        \in H^5(X_3,\mathbb Z/2(3)).
\]
The Bockstein is
\[
        \Delta_3
        =
        \delta(\Theta_3)
        \in H^6(X_3,\mathbb Z(3)).
\]
By Proposition~\ref{prop:nonzero-brauer-image-constructed-bockstein}, the
class \(\Delta_3\) has nonzero image in the Enriques--Brauer component
\[
        Q_3
        =
        \langle\alpha_1\rangle
        \otimes
        \operatorname{Br}(S_2)[2]
        \otimes
        \operatorname{Br}(S_3)[2].
\]
If the Brauer-separation hypothesis holds for
\[
        (X_3,\Theta_3),
\]
then Theorem~\ref{thm:brauer-separated-mv-detection} gives that
\[
        \Delta_3
\]
is a non-algebraic \(2\)-torsion integral Hodge class.  Hence
\[
        X_3=S_1\times S_2\times S_3
\]
violates the integral Hodge conjecture in codimension \(3\).

This is the first higher cup-product Bockstein obstruction beyond Diaz.  It
has the same formal shape as the Diaz class, but it lands in degree \(6\)
and codimension \(3\).

\subsection{MacPherson--Vilonen survivability mechanism}
\label{subsec:main-mv-survivability}

The common survivability mechanism for the higher tower is the
MacPherson--Vilonen Bockstein square.  For the coefficient sequence
\[
        0\to\mathbb Z(p)
        \xrightarrow{\times 2}
        \mathbb Z(p)
        \to
        \mathbb Z/2(p)
        \to0,
\]
the ordinary Bockstein is
\[
        \delta:
        H^{2p-1}(X,\mathbb Z/2(p))
        \longrightarrow
        H^{2p}(X,\mathbb Z(p)).
\]
A compatible MV realization gives a finite-coefficient gluing channel, an
integral obstruction channel, and a commutative square
\[
\begin{array}{ccc}
H^{2p-1}(X,\mathbb Z/2(p))
& \xrightarrow{\ \delta\ } &
H^{2p}(X,\mathbb Z(p))
\\[1.2em]
\bigg\downarrow{\scriptstyle \delta_{\mathrm{MV}}}
&&
\bigg\downarrow{\scriptstyle \rho_{\mathrm{MV}}}
\\[1.2em]
\mathsf{Glu}^{2p-1,p}_{2}(X)
& \xrightarrow{\ \partial_{\mathrm{MV}}\ } &
\mathsf{Obs}^{2p,p}_{\mathbb Z}(X).
\end{array}
\]
In tuple-relative form, for a finite-coefficient class
\[
        \Theta\in H^{2p-1}(X,\mathbb Z/2(p)),
\]
one has
\[
        \rho_{\mathrm{MV},\Theta}(\delta(\Theta))
        =
        \partial_{\mathrm{MV}}\bigl(\mathcal Z(\Theta)\bigr).
\]
Thus nonzero MV boundary gives survivability of the Bockstein class in the
corresponding obstruction channel.

For the \(n\)-fold class \(\Theta_n\), the external product compatibility of
MV tuples, the categorical Bockstein interpretation, and the Leibniz rule give
a nonzero Enriques--Brauer obstruction component.  The Brauer-separation
hypothesis then compares that component with algebraic cycle classes.

\subsection{Motivic finite-coefficient closure}
\label{subsec:main-motivic-closure}

The \(n\)-fold tower also admits a motivic finite-coefficient lift.  The basic
motivic object is
\[
        \mathbf 1_X(n)/2
        =
        \operatorname{Cone}
        \left(
        \mathbf 1_X(n)
        \xrightarrow{\times 2}
        \mathbf 1_X(n)
        \right).
\]
The source classes admit motivic lifts
\[
        \alpha_1^{\mathrm{mot}},
        \qquad
        \beta_2^{\mathrm{mot}},\ldots,\beta_n^{\mathrm{mot}},
\]
and their motivic cup product is
\[
        \Theta_n^{\mathrm{mot}}
        =
        \pi_1^*\alpha_1^{\mathrm{mot}}
        \cup
        \pi_2^*\beta_2^{\mathrm{mot}}
        \cup
        \cdots
        \cup
        \pi_n^*\beta_n^{\mathrm{mot}}
        \in
        H^{2n-1}_{\mathrm{mot}}(X_n,\mathbf 1_{X_n}(n)/2).
\]
The motivic coefficient triangle gives a motivic Bockstein class
\[
        \Delta_n^{\mathrm{mot}}
        =
        \delta^{\mathrm{mot}}(\Theta_n^{\mathrm{mot}})
        \in
        H^{2n}_{\mathrm{mot}}(X_n,\mathbf 1_{X_n}(n)).
\]
Betti realization sends these motivic classes to the finite-coefficient class
and Bockstein class used above:
\[
        \RealB(\Theta_n^{\mathrm{mot}})=\Theta_{n,B},
        \qquad
        \RealB(\Delta_n^{\mathrm{mot}})=\Delta_{n,B}.
\]

The formal MacPherson--Vilonen zig-zag category also lifts motivically under
realization-compatible hypotheses.  The paper does not construct a full
integral motivic MacPherson--Vilonen equivalence; it records only the formal
zig-zag lift, the motivic coefficient triangle, and the Betti realization of
the motivic zig-zag Bockstein boundary.  Thus the motivic section supplies a
common finite-coefficient origin for the source packages, cup products, and
Bockstein classes, while non-algebraicity remains detected after realization.

\subsection{Torsion-trajectory summary}
\label{subsec:main-torsion-trajectory-summary}

The final mechanism can be summarized as the trajectory
\[
        \alpha_1,\beta_2,\ldots,\beta_n
        \quad\leadsto\quad
        \Theta_n
        =
        \pi_1^*\alpha_1
        \cup
        \pi_2^*\beta_2
        \cup
        \cdots
        \cup
        \pi_n^*\beta_n,
\]
\[
        \Theta_n
        \quad\leadsto\quad
        \rho_{\mathrm{MV},\Theta_n}(\Delta_n)
        =
        \partial_{\mathrm{MV}}\bigl(\mathcal Z(\Theta_n)\bigr),
\]
and
\[
        \Theta_n
        \quad\leadsto\quad
        \Delta_n=\delta(\Theta_n).
\]
The first arrow forms the finite-coefficient cup product.  The second tests
survivability through the MV obstruction channel.  The third produces the
integral Bockstein class.

The Enriques--Brauer component
\[
        Q_n
        =
        \langle\alpha_1\rangle
        \otimes
        \operatorname{Br}(S_2)[2]
        \otimes
        \cdots
        \otimes
        \operatorname{Br}(S_n)[2]
\]
is the separating component used in the proof.  The constructed Bockstein
class has nonzero image in \(Q_n\); the Brauer-separation hypothesis says
that algebraic codimension-\(n\) cycle classes have zero image there.

Thus Diaz's example is not an isolated degree-four phenomenon.  It is the
level-two member of an \(n\)-fold cup-product Bockstein mechanism whose
higher members are controlled by MacPherson--Vilonen survivability,
Kummer/Brauer separation, and the Brauer-separation condition on algebraic
cycle classes.

\section{Outlook}

The \(n\)-fold cup-product Bockstein family clarifies the role of
product-level torsion in the torsion-trajectory program.  It also separates
several future directions.  The first is stacky: extend the singular package
\(E\) to a quotient-stack package \(E_G\).  The second is structural:
classify hybrid torsion mechanisms built from multiple finite-coefficient
sources.  The third is categorical: develop the full integral motivic
MacPherson--Vilonen theorem whose formal zig-zag shadow was used in this
paper.  The fourth is motivic: compare the operational filtration of the
\(n\)-fold family with weight, perverse, or Nori filtrations after realization.

\subsection{From Diaz to stacky \texorpdfstring{\(E_G\)}{EG}}
\label{subsec:outlook-stacky-eg}

Diaz's construction is not a Kummer fixed-point example.  Its main class is
built on the smooth product
\[
        S_1\times S_2
\]
from an Enriques double-cover class, a Brauer-detecting class, their cup
product, and a Bockstein.  The \(n\)-fold family constructed in this paper
extends that mechanism to
\[
        X_n=S_1\times\cdots\times S_n
\]
using one double-cover class and \(n-1\) Brauer-detecting classes.

Nevertheless, the Kummer fixed-point calculation remains an essential
calibration for the next stage of the program.  The local Kummer model gives
\[
        E_{A_1}^{\mathrm{sing}}
        \cong
        H^2(B\mu_2,\mathbb Z)
        \cong
        \mathbb Z/2.
\]
This identifies the ordinary local package of the coarse quotient
\[
        \mathbb C^2/\mu_2
\]
with the degree-two stabilizer package on the quotient stack
\[
        [\mathbb C^2/\mu_2].
\]
Thus Kummer fixed points provide the first test of the expected passage
\[
        E
        \rightsquigarrow
        E_G.
\]

The next stage is to define \(E_G\) for quotient stacks
\[
        [V/G]
\]
in a way that satisfies two compatibility requirements.  First, when the
coarse quotient has finite quotient singularities, \(E_G\) should recover the
ordinary local singular package \(E\).  Second, when the group action is free
and the coarse quotient is smooth, ordinary local \(E\) vanishes but \(E_G\)
should still detect finite-group cohomology such as
\[
        H^*(BG,\mathbb Z).
\]
This is the direction needed for smooth finite-group examples such as the
Atiyah--Hirzebruch and Godeaux--Serre constructions.

In this sense, Diaz, Kummer, and the \(n\)-fold Enriques--Brauer family play
complementary roles.  Diaz supplies the level-two product mechanism.  The
\(n\)-fold construction supplies a higher cup-product tower.  Kummer supplies
the local quotient-stack calibration.  Together they point toward a torsion
formalism that includes local singular packages, stacky stabilizer packages,
and global finite-coefficient cup-product packages.

\subsection{From Enriques products to hybrid mechanisms}
\label{subsec:outlook-hybrid-mechanisms}

The \(n\)-fold family shows that an integral Hodge obstruction need not come
from a single torsion source.  Diaz's class
\[
        \Delta_2
        =
        \delta\left(
        \pi_1^*\alpha_1\cup\pi_2^*\beta_2
        \right)
        \in H^4(S_1\times S_2,\mathbb Z(2))
\]
is produced from two different inputs:
\[
        \alpha_1\in H^1(S_1,\mathbb Z/2(1))
\]
and
\[
        \beta_2\in H^2(S_2,\mathbb Z/2(1)).
\]
The higher classes
\[
        \Delta_n
        =
        \delta\left(
        \pi_1^*\alpha_1
        \cup
        \pi_2^*\beta_2
        \cup
        \cdots
        \cup
        \pi_n^*\beta_n
        \right)
        \in H^{2n}(X_n,\mathbb Z(n))
\]
are built from one double-cover source and \(n-1\) Brauer sources.

Thus the mechanism is hybrid:
\[
\begin{aligned}
&\bigl(
  \text{double-cover torsion}
  +
  \text{Brauer torsion}
  \bigr) \\
&\qquad+\quad
  \text{finite-coefficient cup product} \\
&\qquad+\quad
  \text{MV survivability} \\
&\qquad+\quad
  \text{Bockstein}.
\end{aligned}
\]
This supports the broader torsion-lifecycle thesis that integral Hodge
failures should be classified by mechanism rather than by example alone.

Some failures may be pure local \(E\)-package phenomena.  Some may be pure
stacky or finite-group phenomena.  Some may be unramified or residue
phenomena.  Others, like the \(n\)-fold family studied here, are naturally
hybrid.  If an obstruction does not lie in the image of a local package map,
that does not mean the torsion-trajectory framework has failed.  It may mean
that the obstruction has a global tail, a Brauer component, a stacky
component, a MacPherson--Vilonen gluing component, or a refined unramified
component.

The correct task is therefore not to force every example into a single
mechanism, but to identify which stations are active and how they interact.

\subsection{Full integral motivic MacPherson--Vilonen theory}
\label{subsec:outlook-integral-motivic-mv}

This paper used only the formal part of the MacPherson--Vilonen construction:
the zig-zag category
\[
        \mathcal C(F,G;T),
\]
the coefficient triangle, and the compatibility of the resulting Bockstein
boundary with Betti realization.  This was enough to formulate the MV
survivability mechanism used in the \(n\)-fold family.

A natural next step is to construct the full integral motivic analogue of
MacPherson--Vilonen gluing.  Such a theory would require:

\[
        \text{an integral motivic perverse heart } \mathsf P_{\mathrm{mot}}(X),
\]
\[
        \text{motivic zig-zag data }
        (F_{\mathrm{mot}},G_{\mathrm{mot}};T_{\mathrm{mot}}),
\]
\[
        \text{a motivic zig-zag functor }
        Z_{\mathrm{mot}}:
        \mathsf P_{\mathrm{mot}}(X)
        \longrightarrow
        \mathcal C(F_{\mathrm{mot}},G_{\mathrm{mot}};T_{\mathrm{mot}}),
\]
and an equivalence theorem whose Betti realization recovers the classical
MacPherson--Vilonen equivalence.

Such a theorem would turn the formal motivic zig-zag lift used in this paper
into a genuine motivic gluing theory.  It would also provide the natural
setting for studying whether MV obstruction-channel nonvanishing is visible
before Betti realization.

The narrowest remaining algebraic-control issue in the \(n\)-fold theorem is
the \(H^1(S_1,\mathbb Z/2(1))\)-factor of the selected Enriques--Brauer
component.  Section~\ref{sec:mv-bockstein-survivability} reduces the Brauer-separation
problem to this coefficient-level question.  Resolving
it would make the \(n\)-fold tower unconditional.  Two natural routes are a
Bloch--Srinivas-style decomposition argument for products of Chow-trivial
surfaces with mod-\(2\) coefficients, and a multiplicative Chow--K\"unneth
analysis of products of Enriques surfaces with enough integral or mod-\(2\)
control to isolate the Enriques double-cover component.

\subsection{Motivic next step}
\label{subsec:outlook-motivic-next-step}

The motivic next step is now broader than the Diaz class alone.  The goal is
to promote the entire \(n\)-fold finite-coefficient lifecycle
\[
        \alpha_1,\beta_2,\ldots,\beta_n,
        \Theta_n,
        \Delta_n
\]
to a diagram in integral motivic sheaves, integral Nori motives, or integral
mixed Hodge modules.

At the finite-coefficient level, the relevant motivic objects have the form
\[
        \mathbf 1_X/2
        =
        \operatorname{Cone}
        \left(
        \mathbf 1_X
        \xrightarrow{\times 2}
        \mathbf 1_X
        \right),
\]
and, with twists,
\[
        \mathbf 1_X(r)/2
        =
        \operatorname{Cone}
        \left(
        \mathbf 1_X(r)
        \xrightarrow{\times 2}
        \mathbf 1_X(r)
        \right).
\]
The \(n\)-fold finite-coefficient motivic class is
\[
        \Theta_n^{\mathrm{mot}}
        =
        \pi_1^*\alpha_1^{\mathrm{mot}}
        \cup
        \pi_2^*\beta_2^{\mathrm{mot}}
        \cup
        \cdots
        \cup
        \pi_n^*\beta_n^{\mathrm{mot}}
        \in
        H^{2n-1}_{\mathrm{mot}}(X_n,\mathbf 1_{X_n}(n)/2).
\]
The motivic coefficient triangle
\[
        \mathbf 1_{X_n}(n)
        \xrightarrow{\times 2}
        \mathbf 1_{X_n}(n)
        \longrightarrow
        \mathbf 1_{X_n}(n)/2
        \overset{+1}{\longrightarrow}
\]
then gives the motivic Bockstein
\[
        \Delta_n^{\mathrm{mot}}
        =
        \delta^{\mathrm{mot}}(\Theta_n^{\mathrm{mot}})
        \in
        H^{2n}_{\mathrm{mot}}(X_n,\mathbf 1_{X_n}(n)).
\]

The desired realization-compatible diagram is therefore
\[
        \alpha_1^{\mathrm{mot}},\beta_2^{\mathrm{mot}},\ldots,\beta_n^{\mathrm{mot}}
        \quad\leadsto\quad
        \Theta_n^{\mathrm{mot}}
        \quad\leadsto\quad
        \Delta_n^{\mathrm{mot}},
\]
realizing to
\[
        \alpha_1,\beta_2,\ldots,\beta_n
        \quad\leadsto\quad
        \Theta_{n,B}
        \quad\leadsto\quad
        \Delta_{n,B}.
\]

Tubach's realization framework connects Voevodsky étale motives to perverse
Nori motives and mixed Hodge modules compatibly with the six operations,
while Ruimy--Tubach provide the integral-coefficient setting needed for
integral Nori motivic sheaves and integral mixed Hodge modules
\cite{Tubach2025NoriHodgeRealizations,RuimyTubach2026IntegralNori}.  These
frameworks are the natural home for the motivic version of the \(n\)-fold
torsion lifecycle.

The proof-level construction in this paper remains in finite-coefficient
cohomology, cup products, Bockstein sequences, and MV obstruction channels.
The motivic program is the next layer: it seeks to explain why the same
torsion package appears coherently across Betti, finite étale, Hodge-module,
Nori, Brauer, and MV-gluing realizations.

\subsection{Operational filtrations and future refinements}
\label{subsec:outlook-operational-filtrations}

A sharper future version would compare the operational lifecycle filtration
with the weight, perverse, or Nori filtrations available after applying
integral Nori or mixed-Hodge-module realization.  The operational filtration
has stations
\[
        \text{double-cover source},
        \qquad
        \text{Brauer sources},
        \qquad
        \text{finite-coefficient cup product},
\]
\[
        \text{MV survivability},
        \qquad
        \text{Bockstein image}.
\]
The question is whether these stations are reflected by canonical
filtrations after applying
\[
        \RealNori
        \quad\text{or}\quad
        \RealMHM.
\]

Such a comparison is not needed for the proof of the \(n\)-fold family, but
it would be the appropriate setting in which to ask whether integral Hodge
failures of this kind are intrinsic motivic finite-coefficient phenomena
rather than only Betti or étale cohomological phenomena.

Finally, the MacPherson–Vilonen survivability mechanism developed in Section 9 is methodologically distinct from the Colliot-Thélène–Voisin/Diaz unramified-cohomology route. The two routes detect the same Bockstein endpoint $\Delta_n$ through different intermediaries — residue nonvanishing in function-field cohomology versus categorical boundary nonvanishing in the MV obstruction channel. At level two, both routes are available and agree. At higher levels, the MV route is the one that extends, modulo the algebraic-control issue isolated above. The existence of two survivability theories detecting the same integral torsion class suggests that integral Hodge survivability is a more robust phenomenon than either route alone reveals, and that a unified survivability theory — whose specializations recover both unramified and MV detection — should exist. Constructing such a theory, and characterizing the class of integral torsion obstructions it can access, is a natural direction beyond the present paper.
%
%
\printbibliography

\end{document}